\documentclass[12pt,reqno]{amsart}

\usepackage{amsbsy}
\usepackage{amscd}
\usepackage{amsfonts}
\usepackage{amsmath}
\usepackage{amssymb}
\usepackage{amsthm}
\usepackage{amsxtra}

\usepackage[utf8]{inputenc}
\usepackage[margin=1in]{geometry}

\usepackage{comment}
\usepackage{datetime}
\usepackage{tikz-cd}
\usepackage{tikz}
\usepackage{todonotes}
\usepackage{verbatim}

\theoremstyle{definition}

\newtheorem{thm}{Theorem}[section]
\newtheorem*{thm*}{Theorem}

\newtheorem*{conj*}{Conjecture}

\newtheorem*{conv*}{Convention}
\newtheorem{cor}[thm]{Corollary}
\newtheorem*{cor*}{Corollary}

\newtheorem*{defn*}{Definition}

\newtheorem*{exa*}{Example}
\newtheorem{exc}[thm]{Exercise}
\newtheorem*{exc*}{Exercise}
\newtheorem{lem}[thm]{Lemma}
\newtheorem*{lem*}{Lemma}

\newtheorem*{prob*}{Problem}

\newtheorem*{prop*}{Proposition}
\newtheorem{ques}[thm]{Question}
\newtheorem*{ques*}{Question}
\newtheorem{rmk}[thm]{Remark}
\newtheorem*{rmk*}{Remark}

\newcommand{\C}{\mathbb{C}}

\newcommand{\Q}{\mathbb{Q}}

\newcommand{\R}{\mathbb{R}}
\newcommand{\X}{\mathbb{X}}
\newcommand{\Z}{\mathbb{Z}}

\newcommand{\cA}{\mathcal{A}}

\newcommand{\cC}{\mathcal{C}}

\newcommand{\cF}{\mathcal{F}}

\newcommand{\cI}{\mathcal{I}}
\newcommand{\cK}{\mathcal{K}}
\newcommand{\cM}{\mathcal{M}}

\newcommand{\cS}{\mathcal{S}}
\newcommand{\cT}{\mathcal{T}}
\newcommand{\cU}{\mathcal{U}}
\newcommand{\cV}{\mathcal{V}}

\newcommand{\al}{\alpha}
\newcommand{\bet}{\beta}
\newcommand{\Gam}{\Gamma}
\newcommand{\gam}{\gamma}
\newcommand{\Del}{\Delta}
\newcommand{\del}{\delta}
\newcommand{\eps}{\varepsilon}

\newcommand{\tht}{\theta}

\newcommand{\kap}{\kappa}
\newcommand{\Lam}{\Lambda}

\newcommand{\sig}{\sigma}

\newcommand{\Om}{\Omega}
\newcommand{\om}{\omega}

\newcommand{\ra}{\rightarrow}

\newcommand{\hra}{\hookrightarrow}

\newcommand{\ol}{\overline}
\newcommand{\pr}{\prime}

\newcommand{\wt}{\widetilde}

\newcommand{\hmid}{\hspace{-3pt}\mid} % to remove space before \mid
\newcommand{\hmod}{\hspace{-7pt}\mod} % to remove space before \mod

\newcommand{\lr}[1]{\langle #1 \rangle}

\newcommand{\sm}{\setminus}

\DeclareMathOperator{\GL}{GL}
\DeclareMathOperator{\SL}{SL}
\DeclareMathOperator{\SO}{SO}
\DeclareMathOperator{\Sp}{Sp}

\DeclareMathOperator{\Area}{Area}

\DeclareMathOperator{\Aut}{Aut}

\DeclareMathOperator{\im}{Im}
\DeclareMathOperator{\ind}{ind}

\DeclareMathOperator{\Mod}{Mod}

\DeclareMathOperator{\lcm}{lcm}

\DeclareMathOperator{\Per}{Per}

\DeclareMathOperator{\re}{Re}

\newcommand{\be}{\begin{equation*}}
\newcommand{\ee}{\end{equation*}}
\newcommand{\bex}{\begin{exc}}
\newcommand{\eex}{\end{exc}}
\newcommand{\bpf}{\begin{proof}}
\newcommand{\epf}{\end{proof}}

\settimeformat{hhmmsstime}

\title[Dynamics of the absolute period foliation of a stratum]{Dynamics of the absolute period foliation of a stratum of holomorphic 1-forms}
%\author[Karl Winsor]{Karl Winsor \\ \\ \monthname[8] 25, 2021}
\author[Karl Winsor]{Karl Winsor \\ \\ \monthname[\the\month] \the\day, \the\year}
%\date{\today}

\begin{document}

\begin{abstract}
Let $\cC$ be a connected component of a stratum of the moduli space of holomorphic $1$-forms of genus $g$. We show that the absolute period foliation of $\cC$ is ergodic on the area-$1$ locus, and that the non-dense leaves lie in an explicit countable union of suborbifolds, subject to a mild assumption on $\cC$. We show similar results for subspaces of $\cC$ defined by topological restrictions on the absolute periods. We obtain these dynamical results by showing that for a typical positive cohomology class in $H^1(S_g;\C)$, the associated space of isoperiodic forms in $\cC$ is connected. Lastly, we show that certain covering constructions provide examples of spaces of isoperiodic forms with positive dimension and infinitely many connected components.
\end{abstract}

\maketitle

%%%%%%%%%%%%%%%%%%%%%%%%%%%%%%%%%%%%%%%%%%%%%%%%%%
%%%%%%%%%%%%%%%%%%%%%%%%%%%%%%%%%%%%%%%%%%%%%%%%%%
%%%%%%%%%%%%%%%%%%%%%%%%%%%%%%%%%%%%%%%%%%%%%%%%%%

\section{Introduction}

Let $\cM_g$ be the moduli space of closed Riemann surfaces of genus $g \geq 2$. Let $\Om\cM_g \ra \cM_g$ be the bundle of pairs $(X,\om)$ with $X \in \cM_g$ and $\om$ a nonzero holomorphic $1$-form on $X$. The space $\Om\cM_g$ is a union of strata $\Om\cM_g(\kap)$ consisting of holomorphic $1$-forms whose zero orders form a given partition $\kap = (m_1,\dots,m_n)$ of $2g - 2$. The stratum $\Om\cM_g(\kap)$ admits a holomorphic absolute period foliation $\cA(\kap)$, whose leaves are navigated by varying a holomorphic $1$-form without changing its integrals along closed loops.

In this paper, we study the dynamics of the absolute period foliation and the topology of spaces of isoperiodic forms in a stratum. Let $\cC$ be a connected component of $\Om\cM_g(\kap)$. We show that for a typical $(X,\om) \in \cC$, the space of holomorphic $1$-forms in $\cC$ isoperiodic to $(X,\om)$ is connected, with a mild assumption on $\cC$. This topological result provides a bridge from dynamics on homogeneous spaces to dynamics on strata, with strong consequences for $\cA(\kap)$. In particular, we show that $\cA(\kap)$ is ergodic on the area-1 locus of $\cC$, and that the non-dense leaves in the area-1 locus lie in an explicit countable union of suborbifolds. We obtain similar results for subspaces of $\cC$ defined by restricting the absolute periods to a closed subgroup of the complex numbers $\C$. Our results suggest that a version of Ratner's theorems for unipotent flows on homogeneous spaces may hold for the absolute period foliation of a stratum. \\

\paragraph{\bf Absolute periods.} Let $S_g$ be the connected closed oriented surface of genus $g$. The absolute periods of a cohomology class $\phi \in H^1(S_g;\C)$ are defined by
\be
\Per(\phi) = \left\{\phi(c) : c \in H_1(S_g;\Z)\right\} \subset \C .
\ee
A holomorphic $1$-form $(X,\om) \in \Om\cM_g$ determines a cohomology class $[\om] \in H^1(X;\C)$, and we define $\Per(\om)$ similarly. Equivalently, $\Per(\om)$ is the set of integrals of $\om$ along closed loops on $X$. For $\kap = (m_1,\dots,m_n)$ a partition of $2g-2$, we denote $|\kap| = n$. Leaves of $\cA(\kap)$ have complex dimension $|\kap| - 1$. Two holomorphic $1$-forms lie on the same leaf of $\cA(\kap)$ if and only if they can be joined by a path in $\Om\cM_g(\kap)$ along which the absolute periods are constant. \\

\paragraph{\bf Main results: dynamics and topology.} Let $\cC$ be a connected component of a stratum $\Om\cM_g(\kap)$, and fix $\phi \in H^1(S_g;\C)$. The space of isoperiodic forms in $\cC$ representing $\phi$ is
\be
\cC(\phi) = \{(X,\om) \in \cC : \phi = f^\ast[\om] \text{ for some } f \in {\rm Homeo}^+(S_g,X) \} .
\ee
We say that $\phi$ is positive if it has positive self-intersection. Note that $\phi$ is positive whenever $\cC(\phi)$ is nonempty. Two holomorphic $1$-forms $(X,\om),(Y,\eta) \in \cC(\phi)$ are isoperiodic, meaning there is a symplectic isomorphism $m : H_1(X;\Z) \ra H_1(Y;\Z)$ such that $\int_{m(c)} \eta = \int_c \om$ for all $c \in H_1(X;\Z)$. Our main results show that $\cC(\phi)$ is typically connected, and that the only obstructions to the connectivity of $\cC(\phi)$ come from algebraic coincidences among absolute periods and the topology of $\cC$.

We state our main results in Theorems \ref{thm:conn}-\ref{thm:ergdenseR+iZ} below. In all of these theorems, $\cC$ is a connected stratum $\Om\cM_g(\kap)$ with $|\kap| > 1$, or the nonhyperelliptic component of $\Om\cM_g(g-1,g-1)$ with $g - 1 \geq 3$ odd. \\

\begin{thm} \label{thm:conn}
Let $\phi \in H^1(S_g;\C)$ be a positive cohomology class such that $\Per(\phi) \cong \Z^{2g}$ and $\Per(\phi) \cap \R z \subset \Q z$ for all $z \in \Per(\phi)$. Then $\cC(\phi)$ is connected.
\end{thm}

The hypotheses on $\Per(\phi)$ ensure that the absolute periods do not satisfy any atypical $\Q$-linear relations, and that there are no atypical pairs of parallel absolute periods.

By \cite{KZ:components}, a stratum $\Om\cM_g(\kap)$ is connected if and only if some $m_j \in \kap$ is odd and not equal to $g - 1$, or $g = 2$. In particular, Theorem \ref{thm:conn} applies to most strata. We exclude the case $|\kap| = 1$, since in that case leaves of $\cA(\kap)$ are points. The orbifold fundamental group $\pi_1(\cC)$ admits a homomorphism $\pi_1(\cC) \ra \Sp(2g,\Z)$ given by the monodromy action on homology. By \cite{Gut:Zorich}, the stratum components in Theorem \ref{thm:conn} are precisely the components of strata $\Om\cM_g(\kap)$ with $|\kap| > 1$ for which this monodromy homomorphism is surjective. Theorem \ref{thm:conn} has strong consequences for the dynamics of $\cA(\kap)$, as follows. Let $\cC_1$ be the set of holomorphic $1$-forms in $\cC$ with area $1$.

\begin{thm} \label{thm:ergdense}
The absolute period foliation of $\cC_1$ is ergodic. For all $(X,\om) \in \cC_1$ such that $\Per(\om) \cong \Z^{2g}$ and $\Per(\om) \cap \R z \subset \Q z$ for all $z \in \Per(\om)$, the leaf of the absolute period foliation of $\cC_1$ through $(X,\om)$ is dense in $\cC_1$.
\end{thm}

Here, ergodicity means a measurable union of leaves has either zero Lebesgue measure or full Lebesgue measure. Ergodicity implies that a typical leaf is dense, but does not provide explicit examples of dense leaves. The density result in Theorem \ref{thm:ergdense} shows that the non-dense leaves in $\cC_1$ lie in an explicit countable union of suborbifolds.

The closure of a leaf of the absolute period foliation in its stratum is constrained by the closure of the associated group of absolute periods in $\C$. Fix $(X,\om) \in \Om\cM_g(\kap)$. If $\Per(\om)$ is closed in $\C$, then $\Per(\om) \cong \Z^2$ is a lattice in $\C$ and the leaf of $\cA(\kap)$ through $(X,\om)$ is closed in $\Om\cM_g(\kap)$. If $\Per(\om)$ is neither closed nor dense, then its closure has the form $\Lam = M \cdot (\R + i\Z)$ for some $M \in \SL(2,\R)$. Let $\Lam_0 = M \cdot \R$ be the identity component of $\Lam$. We define $\cC^\Lam$ to be the set of holomorphic $1$-forms $(X,\om) \in \cC$ whose absolute periods are contained in $\Lam$ and intersect every component of $\Lam$. Equivalently, $\Per(\om) + \Lam_0 = \Lam$. We define $\cC^\Lam_1 = \cC^\Lam \cap \cC_1$. Our next results provide an analogue of Theorems \ref{thm:conn} and \ref{thm:ergdense} for absolute periods that are dense in $\Lam$, with similar dynamical consequences for the absolute period foliation of $\cC^\Lam$. We remark that a priori, it is not even clear that $\cC^\Lam$ is connected.

\begin{thm} \label{thm:connR+iZ}
Let $\phi \in H^1(S_g;\C)$ be a positive cohomology class such that $\Per(\phi) \cong \Z^{2g}$ is not dense in $\C$. Then $\cC(\phi)$ is connected.
\end{thm}

\begin{thm} \label{thm:ergdenseR+iZ}
The absolute period foliation of $\cC_1^\Lam$ is ergodic. For all $(X,\om) \in \cC_1^\Lam$ such that $\Per(\om) \cong \Z^{2g}$, the leaf of the absolute period foliation of $\cC_1^\Lam$ through $(X,\om)$ is dense in $\cC_1^\Lam$.
\end{thm}

Theorem \ref{thm:ergdense} can be deduced from Theorem \ref{thm:conn} using the transfer principle from \cite{CDF:transfer}, by applying Moore's ergodicity theorem and Ratner's orbit closure theorem to the action of $\Sp(2g,\Z)$ on $\Sp(2g,\R)/\Sp(2g-2,\R)$ as done in \cite{Kap:periods}. We review this argument in Section \ref{sec:conn}. Theorem \ref{thm:ergdenseR+iZ} can be deduced similarly from Theorem \ref{thm:connR+iZ}.

Our proofs are inductive, and the inductive steps can be applied to all nonhyperelliptic components of strata $\Om\cM_g(\kap)$ with $|\kap| > 1$. To complete the proof of Theorem \ref{thm:ergdense} for the remaining nonhyperelliptic components, we would need to establish one additional base case, namely, the case of the stratum $\Om\cM_3(2,2)$. \\

\paragraph{\bf Disconnected spaces of isoperiodic forms.}

As a complement to Theorem \ref{thm:conn}, we show that certain covering constructions yield examples where $\cC(\phi)$ is highly disconnected.

\begin{thm} \label{thm:disconn}
Fix $g \geq 4$ even, and let $\cC = \Om\cM_g(2g-3,1)$. There is $\phi \in H^1(S_g;\C)$ such that $\cC(\phi)$ has infinitely many connected components.
\end{thm}

The construction in our proof of Theorem \ref{thm:disconn} admits many variations in most stratum components, and we did not attempt to state this theorem in the greatest possible generality. A similar phenomenon was studied in the stratum $\Om\cM_2(2)$ in \cite{McM:isoperiodic}, where an infinite family of isoperiodic ``fake pentagons'' is described, only one of which has an order $5$ automorphism. In one of our examples with $\cC = \Om\cM_4(5,1)$, each fake pentagon gives rise to a connected component of $\cC(\phi)$ that consists of connected sums of two copies of that fake pentagon. In general, the phenomenon in Theorem \ref{thm:disconn} arises from closed $\GL^+(2,\R)$-invariant subsets of strata with an absolute period foliation that inherits the trivial behavior of the absolute period foliation of $\Om\cM_h(2h-2)$ through a covering construction.

Lastly, we show that Theorems \ref{thm:conn} and \ref{thm:connR+iZ} cannot be extended to any other stratum components, by showing that in the remaining stratum components, spaces of isoperiodic forms are typically disconnected. See Section 2 for definitions regarding types of stratum components. Specifically, we will show that if there is a positive $\phi \in H^1(S_g;\C)$ such that $\Per(\phi) \cong \Z^{2g}$ and $\cC(\phi)$ is connected, then the monodromy homomorphism $\pi_1(\cC) \ra \Sp(2g,\Z)$ is surjective. As a byproduct, we recover the surjectivity of these monodromy homomorphisms for the stratum components in Theorem \ref{thm:conn} from \cite{Gut:Zorich}.

\begin{thm} \label{thm:mono}
Fix $g \geq 3$, and let $\cC$ be a component of a stratum $\Om\cM_g(\kap)$ with $|\kap| > 1$, such that $\cC$ is a spin component or a hyperelliptic component. For all positive $\phi \in H^1(S_g;\C)$ such that $\Per(\phi) \cong \Z^{2g}$, $\cC(\phi)$ is disconnected.
\end{thm}

In fact, we will prove a more general result in Section \ref{sec:conn} that applies to all positive $\phi \in H^1(S_g;\C)$ such that $\Per(\phi)$ is not discrete. \\

\paragraph{\bf Open questions.} Theorems \ref{thm:ergdense} and \ref{thm:ergdenseR+iZ} give hope for a complete classification of closures of leaves of $\cA(\kap)$ in the area-$1$ locus $\Om_1\cM_g(\kap)$. Here, we raise some open questions that suggest a possible classification, in the spirit of Ratner's theorems for unipotent flows on homogeneous spaces \cite{Rat:ICM} and Eskin-Mirzakhani-Mohammadi's theorems for $\GL^+(2,\R)$-orbit closures in strata \cite{EMM:closures}. Let $L$ be the leaf of $\cA(\kap)$ through $(X,\om) \in \Om_1\cM_g(\kap)$, and let $\ol{L}$ be its closure in $\Om_1\cM_g(\kap)$.

\begin{ques} \label{ques:real}
Is $\ol{L}$ always a ``nice'' subset of $\Om_1\cM_g(\kap)$? For instance, is $\ol{L}$ a properly immersed real-analytic suborbifold of $\Om_1\cM_g(\kap)$?
\end{ques}

\begin{ques} \label{ques:clos}
If $\Per(\om)$ is dense in $\C$ and $\ol{L} \neq L$, is it the case that $\ol{L} = \ol{\SL(2,\R) \cdot L}$?
\end{ques}

A more detailed classification question will be raised in Section \ref{sec:conn}. Question \ref{ques:clos} addresses the three known obstructions to the density of $L$ in its connected component in $\Om_1\cM_g(\kap)$. First, $\Per(\om)$ might be contained in a proper closed subgroup of $\C$. Since $\Per(\om)$ contains a lattice in $\C$, up to the action of $\GL^+(2,\R)$ the only possible subgroups are $\R + i\Z$ and $\Z + i\Z$. Second, $L$ might lie in a proper closed $\SL(2,\R)$-invariant subset of its connected component in $\Om_1\cM_g(\kap)$. This occurs, for instance, in certain loci of double covers of quadratic differentials when $|\kap| = 2$. Third, $L$ might be closed and consist of branched covers of holomorphic $1$-forms of lower genus. This occurs in our examples in Theorem \ref{thm:disconn}. An answer to the following question is likely needed for a complete classification of closures of leaves of $\cA(\kap)$. A subset of $\Om\cM_g(\kap)$ is saturated for $\cA(\kap)$ if it is a union of leaves of $\cA(\kap)$.

\begin{ques} \label{ques:invar}
What are the closed $\SL(2,\R)$-invariant subsets of $\Om_1\cM_g(\kap)$ that are saturated for $\cA(\kap)$?
\end{ques}

Although an understanding of the connected components of $\cC(\phi)$ has strong implications for the dynamics of $\cA(\kap)$, Theorems \ref{thm:disconn} and \ref{thm:mono} suggest that a complete classification of connected components of $\cC(\phi)$ may be delicate. Here, we formulate a question which may still have a positive answer. We say that a holomorphic $1$-form in $\cC$ is generic if its $\GL^+(2,\R)$-orbit is dense in $\cC$. Generic holomorphic $1$-forms have no nontrivial automorphisms. Fix generic $(X,\om),(Y,\eta) \in \cC$. Parallel transport along a path $\gam : [0,1] \ra \cC$ from $(X,\om)$ to $(Y,\eta)$ determines a symplectic isomorphism $m_\gam : H_1(X;\Z) \ra H_1(Y;\Z)$. We say that $(X,\om)$ and $(Y,\eta)$ are $\cC$-isoperiodic if there is a path $\gam$ in $\cC$ from $(X,\om)$ to $(Y,\eta)$ such that $\int_{m_\gam(c)} \eta = \int_c \om$ for all $c \in H_1(X;\Z)$.

\begin{ques} \label{ques:conn}
Let $\cC$ be a component of a stratum $\Om\cM_g(\kap)$ with $|\kap| > 1$. Do generic $\cC$-isoperiodic holomorphic 1-forms lie on the same leaf of the absolute period foliation of $\cC$?
\end{ques}

When the monodromy homomorphism $\pi_1(\cC) \ra \Sp(2g,\Z)$ is not surjective, being $\cC$-isoperiodic is a stronger condition than being isoperiodic, since not all symplectic isomorphisms of homology groups arise from paths in $\cC$. Indeed, if $\gam_1,\gam_2 : [0,1] \ra \cC$ are two paths from $(X,\om)$ to $(Y,\eta)$, then the automorphism $m_{\gam_2}^{-1} \circ m_{\gam_1}$ of $H_1(X;\Z)$ must arise from an element of the image of the monodromy homomorphism.

In our examples in Theorem \ref{thm:disconn}, the space $\cC(\phi)$ contains infinitely many connected components that each consist of degree $2$ branched covers of a single holomorphic $1$-form in $\Om\cM_h(2h-2)$, where $g = 2h$. The union of these components is contained in a proper closed $\GL^+(2,\R)$-invariant subset of $\Om\cM_g(2g-3,1)$. However, $\cC(\phi)$ also has at least one connected component containing generic holomorphic $1$-forms. We do not know whether there is exactly one such connected component. \\

\paragraph{\bf Methods.} We now outline the proof of Theorem \ref{thm:conn}, which consists of two inductive arguments. The proof of Theorem \ref{thm:connR+iZ} has a similar structure.

The first inductive argument addresses the case of strata $\Om\cM_g(\kap)$ with $|\kap| = 2$, and we induct on genus. The base case of genus $2$ only involves the stratum $\Om\cM_2(1,1)$, and Theorem \ref{thm:conn} is already known in this case by Theorem 3.3 in \cite{CDF:transfer}, building on an observation in \cite{McM:isoperiodic}. For the inductive step, we construct holomorphic $1$-forms of genus $g+1$ from holomorphic $1$-forms of genus $g$ by forming connected sums with a torus in the following way. Choose a flat torus $T = (\C/(\Z z + \Z w), dz)$ and a closed geodesic $\al \subset T$. Given a holomorphic $1$-form $(X,\om) \in \Om\cM_g(m_1,m_2)$, we can slit $T$ along $\al$, slit $(X,\om)$ along a parallel segment $\al^\pr$ of the same length from a zero $Z$ of order $m_1$ to a regular point, and reglue opposite sides to obtain a new holomorphic $1$-form $(Y,\eta) \in \Om\cM_{g+1}(m_1+2,m_2)$. The connected sum construction is well-defined provided $(X,\om)$ does not have a saddle connection that is parallel to $\al$ and whose length is less than or equal to that of $\al$. A crucial property of this construction is that the starting surface $X$ is only modified on an embedded segment, as opposed to a closed loop.

The leaf of $\cA(m_1,m_2)$ through $(X,\om)$ can be navigated by moving the other zero $Z^\pr$ of $\om$ relative to $Z$. Since $\al^\pr$ is an embedded segment in $X$, one might hope that by moving $Z^\pr$ around $X$ while avoiding $\al^\pr$, one can sweep out essentially the entire leaf of $\cA(m_1,m_2)$ through $(X,\om)$. This is not always possible, which presents a major difficulty in carrying out our inductive approach. However, in Section \ref{sec:leaves}, we will show that this is typically possible in the following sense. Let $L$ be the leaf of $\cA(m_1,m_2)$ through $(X,\om)$, and let $L^\pr$ be the leaf of $\cA(m_1+2,m_2)$ through $(Y,\eta)$. In Section \ref{sec:leaves}, we show that if $\al^\pr$ is not parallel to an absolute period of $(X,\om)$, then the above connected sum construction is well-defined on a path-connected subset of $L$ whose complement in $L$ is a countable union of line segments. This is one of the main observations in this paper, and is broadly applicable beyond the scope of this paper. Applying this observation requires studying when leaves of $\cA(m_1,m_2)$ lift to leaves of the absolute period foliation of a certain finite cover of $\Om\cM_g(m_1,m_2)$, which we also do in Section \ref{sec:leaves}.

For simplicity, we now assume that $g + 1 \geq 4$ and that $m_1,m_2$ are odd. Let $\cC^\pr$ be the component of $\Om\cM_g(m_1,m_2)$ containing $(X,\om)$, and let $\cC$ be the component of $\Om\cM_{g+1}(m_1+2,m_2)$ containing $(Y,\eta)$. Let $\phi \in H^1(S_{g+1};\C)$ be such that $(Y,\eta) \in \cC(\phi)$, and suppose ${\rm Area}(Y,\eta) = 1$. The results in Section \ref{sec:induct} and the hypotheses on $\Per(\eta)$ will ensure that every component of $\cC(\phi)$ contains holomorphic $1$-forms with a presentation as a connected sum with a torus as above. Our simplifying assumption allows us to assume there are two such presentations for which the associated tori $T_1,T_2$ are disjoint. See Figure \ref{fig:sumdisjoint}. Moreover, the hypotheses on $\Per(\eta)$ ensure that for each connected sum presentation, the associated slits are not parallel to an absolute period of the complementary holomorphic $1$-form.

Let $L_1,L_2$ be the leaves of $\cA(m_1,m_2)$ through the complementary holomorphic $1$-forms obtained from $Y \sm T_1$ and $Y \sm T_2$, respectively. By applying the inductive hypothesis to $Y \sm T_1$, we show that $\cC(\phi)$ essentially contains a ``copy'' of $L_1$ mapped into $\cC$ by forming connected sums with $T_1$ whenever this is well-defined. This copy is a connected subset of $\cC(\phi)$ determined by a pair of absolute periods $(z_1,w_1)$ arising from closed loops in $T_1$ that intersect exactly once. The pair $(z_1,w_1)$ satisfies an area constraint $0 < \im(\ol{z}_1 w_1) < 1$. Similarly, $\cC(\phi)$ contains a ``copy'' of $L_2$ determined by a pair of absolute periods $(z_2,w_2)$. Moreover, $(z_1,w_1)$ and $(z_2,w_2)$ determine the same connected component of $\cC(\phi)$, since both components contains $(Y,\eta)$. In this way, we get a function from certain pairs of absolute periods to connected components of $\cC(\phi)$. The construction in Figure \ref{fig:sumdisjoint} can be used to show that two such pairs $(z_1,w_1)$, $(z_2,w_2)$, arising from orthogonal pairs of homology classes and satisfying $\im(\ol{z_1} w_1) + \im(\ol{z_2} w_2) < 1$, determine the same component of $\cC(\phi)$. One can then show that any two such pairs $(z_1,w_1)$, $(z_2,w_2)$ with $0 < \im(\ol{z_1} w_1) < 1$ and $0 < \im(\ol{z}_2 w_2) < 1$ determine the same component of $\cC(\phi)$, by finding a third pair $(z_3,w_3)$ arising from homology classes orthogonal to those of $(z_1,w_1)$, $(z_2,w_2)$ such that $\im(\ol{z}_3 w_3) > 0$ is sufficiently small. This step crucially uses the assumption $g + 1 \geq 4$ and the assumptions on $\Per(\phi)$. To summarize, we are reducing the connectivity of $\cC(\phi)$ to an algebraic problem in terms of pairs of absolute periods.

Unfortunately, the simplified argument above does not work in genus $3$, which presents another major difficulty since our inductive approach relies on this case. To address this difficulty, in Section \ref{sec:pair} we carry out a more complicated study of how a single holomorphic $1$-form in genus at least $3$ can be presented as a connected sum with a torus in multiple ways. This study leads to substantially more difficult algebraic problems, which are solved in Sections \ref{sec:pair} and \ref{sec:conn}. In general, if Theorem \ref{thm:conn} holds for a connected component $\cC^\pr$ of $\Om\cM_g(m_1,m_2)$, and if $\cC$ is a connected component of $\Om\cM_{g+1}(m_1+2,m_2)$ that contains connected sums of holomorphic $1$-forms in $\cC$ with a torus as above, then Theorem \ref{thm:conn} also holds for $\cC$.

Our second inductive argument addresses the general case, and we induct on $|\kap|$. The base case $|\kap| = 2$ was discussed above. The inductive step is easier, and we use the surgery of splitting a zero. Given $(X,\om) \in \Om\cM_g(\kap)$ with a zero $Z$ of order $m \geq 2$, and $1 \leq j < m$, there is a local surgery which splits $Z$ into a pair of zeros of orders $m - j$ and $j$, respectively. This surgery does not change the absolute periods of $(X,\om)$. Let $\kap^\pr = (\kap \sm (m)) \cup (m-j,j)$. We show that if Theorem \ref{thm:conn} holds for a connected component $\cC^\pr$ of $\Om\cM_g(\kap)$, and if $\cC$ is a connected component of $\Om\cM_g(\kap^\pr)$ that contains holomorphic $1$-forms arising from splitting a zero on a holomorphic $1$-form in $\Om\cM_g(\kap)$, then Theorem \ref{thm:conn} also holds for $\cC$. The stratum components appearing in Theorem \ref{thm:conn} are precisely those that can be accessed from $\Om\cM_2(1,1)$ by iteratively forming a connected sum with a torus and then iteratively splitting a zero.

Our methods apply more generally to $\GL^+(2,\R)$-orbit closures with a non-trivial absolute period foliation and to complex relative period geodesics in strata. We pursue these topics, along with Questions \ref{ques:real}-\ref{ques:conn}, in forthcoming work. \\

\paragraph{\bf Notes and references.} The particular case of the dynamics of the absolute period foliation of $\Om\cM_g$ are studied in \cite{CDF:transfer}, \cite{Ham:ergodicity}, and \cite{McM:isoperiodic}. For $g = 2$ and $g = 3$, the fact that any principally polarized abelian variety is the Jacobian of a stable curve is exploited in \cite{McM:isoperiodic} to prove ergodicity on $\Om_1\cM_g$, and this idea is pushed further in \cite{CDF:transfer} to obtain a classification of leaf closures. The approach in \cite{CDF:transfer} then uses an inductive argument involving isoperiodic degenerations to the boundary of moduli space, in order to classify leaf closures and to prove ergodicity on $\Om_1\cM_g$ for all $g \geq 2$. An independent and simpler proof of ergodicity on $\Om_1\cM_g$ for $g \geq 2$ is given in \cite{Ham:ergodicity}, also using induction and degenerations. All of these results apply to the principal stratum $\Om\cM_g(1,\dots,1)$ as well. We remark that the boundary of moduli space does not play a role in our proofs, and so we obtain a new proof of ergodicity on $\Om_1\cM_g$ for $g \geq 3$. The methods in \cite{CDF:transfer}, \cite{Ham:ergodicity}, and \cite{McM:isoperiodic} do not seem to be easily adaptable to non-principal strata, due to our limited understanding of the Schottky locus and a lack of available base cases for induction.

Less is known about the dynamics of the absolute period foliation of non-principal strata. In \cite{HW:Rel}, it is shown that the Arnoux-Yoccoz surfaces of genus $g \geq 3$ give examples of dense leaves in a fixed-area locus in a certain connected component of $\Om\cM_g(g-1,g-1)$. Additional examples of dense leaves in $\Om_1\cM_3(2,1,1)$ arising from Prym loci are given in \cite{Ygo:dense}. In \cite{Win:dense}, it is shown that there exist dense relative period geodesics in the area-$1$ locus of every stratum component with at least two zeros, and explicit examples of dense leaves of many complex $1$-dimensional subfoliations of the absolute period foliation of these stratum components are given. In \cite{McM:foliations} and \cite{Ygo:ergodic}, it is shown that leaves of the absolute period foliation of eigenform loci in $\Om\cM_2(1,1)$, and more generally of rank $1$ affine invariant manifolds, are either closed or dense in the area-$1$ locus.

After the first version of this article was posted, Jon Chaika and Barak Weiss posted a conditional proof that real Rel flows are ergodic on the area-$1$ locus of all stratum components with multiple zeros \cite{CW:realrel}, conditional on a generalization of the measure rigidity results of \cite{EM:stationary}. Their result implies the ergodicity of the absolute period foliation on the area-$1$ locus of all stratum components with multiple zeros. We remark that our proof of the ergodicity part of Theorem \ref{thm:ergdense} can be made to only rely on Moore's ergodicity theorem \cite{Zim:book} and the ergodicity of the $\GL^+(2,\R)$-action on stratum components \cite{Mas:IET}, \cite{Vee:Gauss}, \cite{Vee:flow}. See Remark \ref{rmk:noEMM}. Additionally, the methods in our paper apply to loci that are not $\SL(2,\R)$-invariant, as seen in Theorems \ref{thm:connR+iZ} and \ref{thm:ergdenseR+iZ}. Our proof of Theorems \ref{thm:conn} and the density part of Theorem \ref{thm:ergdense} rely on the explicit full measure sets of dense $\GL^+(2,\R)$-orbits in strata given in \cite{Wri:field}, which in turn relies on the rigidity results for $\GL^+(2,\R)$-orbit closures in strata in \cite{EMM:closures}.

The connected sums we consider are special cases of the surgery of bubbling a handle in \cite{KZ:components} and the figure-eight construction in \cite{EMZ:principal}. These surgeries play an important role in the classification of connected components of strata in \cite{KZ:components}, and in the computation of Siegel-Veech constants for strata in \cite{EMZ:principal}. Detailed studies of presentations of holomorphic $1$-forms in genus $2$ as connected sums are carried out in \cite{McM:SL2R} and \cite{CM:non-ergodic}. In \cite{McM:SL2R}, connected sums are used to classify all $\SL(2,\R)$-orbit closures and invariant measures in $\Om_1\cM_2(1,1)$, and in \cite{CM:non-ergodic}, connected sums are used to exhibit minimal non-uniquely ergodic straight-line flows on every non-Veech surface in genus $2$.

The intrinsic geometry of leaves of the absolute period foliation of $\Om\cM_g$ and of strata are studied in \cite{BSW:horocycle}, \cite{McM:navigating}, \cite{McM:isoperiodic}, and \cite{MW:cohomology}. Completeness results for the natural metric on leaves are given in these papers. In \cite{McM:isoperiodic}, it is shown that the metric completion of a typical leaf in $\Om\cM_2$ is a Riemann surface biholomorphic to the upper half-plane. In contrast, examples of infinite-genus leaves in certain strata of holomorphic $1$-forms with exactly $2$ zeros are given in \cite{Win:genus}. The geometry of leaves in $\Om\cM_2$ is studied in \cite{EMS:billiards} in order to count periodic billiard trajectories in a square with a barrier, and in \cite{Dur:square} to make progress toward classifying square-tiled surfaces in $\Om\cM_2(1,1)$. \\

\paragraph{\bf Acknowledgements.} The author thanks Curt McMullen for his interest, encouragement, and extensive feedback on earlier versions of this work. The author thanks Dawei Chen, Kathryn Lindsey, Tina Torkaman, and Yongquan Zhang for helpful comments and discussions. The author acknowledges support from the National Science Foundation under grants DGE-1144152 and DMS-2303185, and support from the Fields Institute.

%%%%%%%%%%%%%%%%%%%%%%%%%%%%%%%%%%%%%%%%%%%%%%%%%%
%%%%%%%%%%%%%%%%%%%%%%%%%%%%%%%%%%%%%%%%%%%%%%%%%%
%%%%%%%%%%%%%%%%%%%%%%%%%%%%%%%%%%%%%%%%%%%%%%%%%%

\section{Splitting zeros and connected sums} \label{sec:surgery}

We recall relevant material on strata of holomorphic $1$-forms and the $\GL^+(2,\R)$-action on strata. We then discuss the surgeries of splitting zeros and forming a connected sum with a torus. For our purposes, we will need to treat these surgeries as globally defined operations on strata, which requires passing to a certain finite cover by marking a prong. We also set some notation for the rest of the paper. For additional background material, we refer to \cite{Bai:Euler}, \cite{Wri:survey}, \cite{Zor:survey}. \\

\paragraph{\bf Holomorphic $1$-forms.} We denote by $(X,\om)$ a closed Riemann surface $X$ of genus $g \geq 2$ equipped with a holomorphic $1$-form $\om$. We always assume $\om \neq 0$. The zero set $Z(\om)$ is finite and nonempty, and the orders of the zeros form a partition of $2g-2$. Integration of $\om$ on $X \sm Z(\om)$ gives an atlas of charts to the complex plane $\C$, whose transition maps are translations. Geometric structures on $\C$ that are invariant under translations can be pulled back to $X \sm Z(\om)$ using this atlas. In particular, the Euclidean metric on $\C$ determines a singular flat metric $|\om|$ on $X$ with a cone point with angle $2\pi(k+1)$ at a zero of order $k$.

In our figures, we will present holomorphic $1$-forms as finite disjoint unions of polygons in $\C$, possibly with slits, with pairs of edges identified by translations in $\C$. In most cases, the edge identifications will be implicit from the requirement that identified edges must be parallel and of the same length.

A {\em saddle connection} on $(X,\om)$ is an oriented geodesic segment $\gam$ for the metric $|\om|$ with endpoints in $Z(\om)$ and otherwise disjoint from $Z(\om)$. The {\em holonomy} of $\gam$ is the nonzero complex number $\int_\gam \om$. If $s$ is any oriented geodesic segment on $X$, we will also refer to the integral $\int_s \om$ as the {\em holonomy} of $s$. A closed geodesic $\al$ in $X \sm Z(\om)$ is contained in a maximal connected open subset of $X \sm Z(\om)$ foliated by parallel closed geodesics. Such an open subset $C$ is called a {\em cylinder}. The boundary of $C$ consists of a finite union of parallel saddle connections. Each homotopy class of paths in $X$ with endpoints in $Z(\om)$ has a unique {\em geodesic representative} of minimal length in the metric $|\om|$, consisting of finitely many saddle connections such that each angle formed by two consecutive saddle connections is at least $\pi$. Let
\be
\Per(\om) = \left\{\int_c \om : c \in H_1(X;\Z)\right\}
\ee
be the subgroup of $\C$ of {\em absolute periods} of $\om$. Let
\be
\Gam(\om) = \left\{\int_\gam \om : \gam \text{ is a saddle connection on } (X,\om) \right\}
\ee
be the subset of $\C$ of holonomies of saddle connections. The subset $\Gam(\om)$ is discrete. In particular, for any $B > 0$, there are only finitely many saddle connections on $(X,\om)$ of length at most $B$. Let $\C^\ast = \C \sm \{0\}$, and let
\be
\Del(\om) = \C^\ast \sm \left\{t z : t \geq 1, \; z \in \Gam(\om) \right\}
\ee
be the complement of the rays starting at a saddle connection holonomy and emanating away from the origin. Since $\Gam(\om)$ is discrete, $\Del(\om)$ is open. \\

\paragraph{\bf Strata.} Let $S_g$ be the connected closed oriented surface of genus $g \geq 2$. The Teichm\"{u}ller space $\cT_g$ of marked Riemann surfaces $f : S_g \ra X$ of genus $g$ is a complex manifold of dimension $3g-3$. The mapping class group $\Mod_g$ acts properly discontinuously on $\cT_g$ by biholomorphisms. The moduli space of Riemann surfaces of genus $g$ is the complex orbifold $\cM_g = \cT_g / \Mod_g$. The action of $\Mod_g$ on $\cT_g$ induces an action on the bundle $\Om\cT_g \ra \cT_g$ of nonzero holomorphic $1$-forms on marked Riemann surfaces. The {\em moduli space of holomorphic $1$-forms of genus $g$} is the complex orbifold $\Om\cM_g = \Om\cT_g / \Mod_g$. The space $\Om\cT_g$ decomposes into strata $\Om\cT_g(\kap)$ indexed by partitions $\kap = (m_1,\dots,m_n)$ of $2g-2$. The stratum $\Om\cT_g(\kap)$ consists of holomorphic $1$-forms on marked Riemann surfaces with exactly $n$ distinct zeros of orders $m_1,\dots,m_n$. The action of $\Mod_g$ preserves each stratum, and $\Om\cM_g$ decomposes into {\em strata} $\Om\cM_g(\kap) = \Om\cT_g(\kap) / \Mod_g$ which are complex suborbifolds of $\Om\cM_g$. We denote $|\kap| = n$. \\

\paragraph{\bf Period coordinates.} Fix $(X_0,\om_0) \in \Om\cT_g(\kap)$. There is a neighborhood $\cU \subset \Om\cT_g(\kap)$ of $(X_0,\om_0)$, and a natural isomorphism $H^1(X,Z(\om);\C) \cong H^1(X_0,Z(\om_0);\C)$ for any $(X,\om) \in \cU$, provided by the Gauss-Manin connection on the bundle of relative cohomology groups over $\Om\cT_g(\kap)$. {\em Period coordinates} on $\cU$ are defined using these isomorphisms by
\be
\cU \ra H^1(X_0,Z(\om_0);\C), \quad (X,\om) \mapsto [\om] ,
\ee
and this map is a biholomorphism from an open subset of $\Om\cT_g(\kap)$ to an open subset of a complex vector space of dimension $2g+|\kap|-1$. Given a choice of basis $c_1,\dots,c_{2g+|\kap|-1}$ for $H_1(X_0,Z(\om_0);\Z)$, we get a map
\be
\cU \mapsto \C^{2g+|\kap|-1}, \quad (X,\om) \mapsto \left(\int_{c_1} \om, \dots, \int_{c_{2g+|\kap|-1}} \om\right) .
\ee
The components $\int_{c_j} \om$ are the {\em period coordinates} of $(X,\om)$, and they depend on the choice of basis for the integral relative homology group. Transition maps between period coordinate charts are integral linear maps that preserve $H^1(X_0,Z(\om_0);\Z)$. Period coordinates give $\Om\cM_g(\kap)$ the structure of an affine orbifold. \\

\paragraph{\bf Area.} The {\em area} of $(X,\om)$ is the area of $X$ with respect to the metric $|\om|$, and is given by
\be
\Area(X,\om) = \frac{i}{2} \int_X \om \wedge \ol{\om} = \sum_{j=1}^g \im \left(\int_{a_j}\ol{\om} \int_{b_j}\om\right)
\ee
where $\{a_j,b_j\}_{j=1}^g$ is any symplectic basis for $H_1(X;\Z)$. The area of $(X,\om)$ is an invariant of the absolute cohomology class $[\om] \in H^1(X;\C)$. Let
\be
\Om_1\cM_g(\kap) = \left\{(X,\om) \in \Om\cM_g(\kap) : \Area(X,\om) = 1 \right\}
\ee
be the area-$1$ locus in $\Om\cM_g(\kap)$. The area-$1$ locus $\Om_1\cM_g(\kap)$ is a real-analytic orbifold and has a canonical Lebesgue measure class. \\

\paragraph{\bf The $\GL^+(2,\R)$-action.} Let $\GL^+(2,\R)$ be the group of linear automorphisms of $\R^2$ with positive determinant. Let $\SL(2,\R)$ be the subgroup of matrices with determinant $1$. The standard $\R$-linear action of $\GL^+(2,\R)$ on $\C = \R + \R i$ induces an action on $\Om\cM_g$ by postcomposition with an atlas of charts on $X \sm Z(\om)$ as above. The action of $\GL^+(2,\R)$ preserves each stratum $\Om\cM_g(\kap)$, and the action of $\SL(2,\R)$ preserves $\Om_1\cM_g(\kap)$. The action of $\SL(2,\R)$ is {\em ergodic} on each connected component of $\Om_1\cM_g(\kap)$ with respect to the Lebesgue measure class \cite{Mas:IET}, \cite{Vee:Gauss}, \cite{Vee:flow}, meaning that any measurable $\SL(2,\R)$-invariant subset of $\Om_1\cM_g(\kap)$ has either zero measure or full measure. \\

\paragraph{\bf Connected components of strata.} Most strata in $\Om\cM_g$ are connected. However, strata can have up to $3$ connected components, which are classified by hyperellipticity and the parity of a spin structure. We recall their classification from \cite{KZ:components}, and we refer to \cite{KZ:components} and the references therein for further details.

Let $\kap = (m_1,\dots,m_n)$ be a partition of $2g-2$ with all $m_j$ even, and fix $(X,\om) \in \Om\cM_g(\kap)$. The {\em index} of a smooth oriented closed loop $\gam \subset X \sm Z(\om)$ is the degree of the associated Gauss map $\gam \ra S^1$, that is, $1/2\pi$ times the total change in angle of a tangent vector travelling once around $\gam$. We denote the index of $\gam$ by $\ind(\gam)$. Let $\{\al_j,\bet_j\}_{j=1}^g$ be a collection of smooth oriented closed loops in $X \sm Z(\om)$ representing a symplectic basis for $H_1(X;\Z)$. The {\em parity of the spin structure} $\phi(\om)$ is defined by
\be
\phi(\om) = \sum_{j=1}^g (\ind(\al_j) + 1)(\ind(\bet_j) + 1) \hmod 2 .
\ee
It is a fact that $\phi(\om)$ is independent of the choice of symplectic basis of $H_1(X;\Z)$ and the choice of representatives for the symplectic basis. Moreover, $\phi(\om)$ is an invariant of the connected component of $(X,\om)$ in $\Om\cM_g(\kap)$. Components of $\Om\cM_g(\kap)$ are called {\em spin} components. A component $\cC \subset \Om\cM_g(\kap)$ is {\em even} or {\em odd} according to whether $\phi(\om) = 0$ or $\phi(\om) = 1$ for $(X,\om) \in \cC$.

If $\cC \subset \Om\cM_g(2g-2)$ consists of holomorphic $1$-forms on hyperelliptic curves, or if $\cC \subset \Om\cM_g(g-1,g-1)$ consists of holomorphic $1$-forms on hyperelliptic curves whose hyperelliptic involution exchanges the two zeros, then $\cC$ is {\em hyperelliptic}. A component which is not hyperelliptic is {\em nonhyperelliptic}.

\begin{thm} \label{thm:KZ} (\cite{KZ:components}, Theorems 1-2 and Corollary 5)
For $g \geq 4$, the connected components of $\Om\cM_g(\kap)$ are as follows.
\begin{enumerate}
    \item If $\kap = (2g-2)$ or $\kap = (g-1,g-1)$, then $\Om\cM_g(\kap)$ has a unique hyperelliptic component.
    \item If all $m_j \in \kap$ are even, then $\Om\cM_g(\kap)$ has exactly two nonhyperelliptic components: one even component and one odd component.
    \item If some $m_j \in \kap$ is odd, then $\Om\cM_g(\kap)$ has a unique nonhyperelliptic component.
\end{enumerate}
For $2 \leq g \leq 3$, the stratum $\Om\cM_g(\kap)$ is connected unless $\kap = (4)$ or $\kap = (2,2)$, in which case $\Om\cM_g(\kap)$ has exactly two connected components: one odd component, and one hyperelliptic component which is also an even component.
\end{thm}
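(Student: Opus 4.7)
The plan is to establish both lower and upper bounds on the number of connected components of $\Om\cM_g(\kap)$, using two invariants: hyperellipticity and the parity of the spin structure.

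First, I would verify that the proposed invariants are locally constant, so that they must be constant on each connected component, thereby producing the \emph{lower bound} on the number of components. Hyperellipticity of the underlying Riemann surface is a closed condition cut out by vanishing of certain modular forms; moreover, for $\kap = \{2g-2\}$ the Weierstrass condition that $Z(\om)$ is a Weierstrass point, and for $\kap = \{g-1,g-1\}$ the condition that the hyperelliptic involution swaps the two zeros, are each locally constant among hyperelliptic forms. For the spin parity, I would first prove well-definedness: given a symplectic basis $\{\al_j,\bet_j\}$ of $H_1(X;\Z)$ realized by smooth loops in $X \sm Z(\om)$, the expression $\sum (\ind(\al_j)+1)(\ind(\bet_j)+1) \bmod 2$ is invariant under (i) homotopy of the representatives in $X \sm Z(\om)$, which changes indices by even integers when all $m_j$ are even, and (ii) the action of $\Sp(2g,\Z)$, since the function $q(\gam) = \ind(\gam)+1 \bmod 2$ defines a $\Z/2$-valued quadratic refinement of the mod $2$ intersection form. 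Local constancy of $\phi(\om)$ then follows since the indices vary continuously with $(X,\om)$ in period coordinates and take values in $\Z$.

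Next, for the \emph{existence} of components with each combination of invariants, I would produce explicit models. Hyperelliptic components in $\kap = \{2g-2\}$ and $\{g-1,g-1\}$ arise from $y^2 = f(x)$ with appropriate branching: a single Weierstrass point of $dx/y$ in the first case, and a pair of zeros swapped by the involution in the second. For the nonhyperelliptic components with all $m_j$ even, I would construct explicit square-tiled surfaces, or alternatively perform iterated connected sums of a genus $1$ torus with a well-chosen genus $2$ example, and verify by direct computation that both parities $\phi(\om) = 0,1$ are realized. One then checks that hyperellipticity can coexist with the appropriate spin parity, yielding the stated duplications in low genus.

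The hard part is the \emph{upper bound}: two holomorphic $1$-forms with the same invariants lie in the same component. I would argue this by induction on $g$, using two kinds of surgery. First, the operation of \emph{bubbling a handle} (a local surgery forming a connected sum with a torus near a regular or singular point) can be used to reduce a generic form in $\Om\cM_g(\kap)$ to a form in a lower-genus stratum times a parameter of handle attachments; the connected component of the result depends only on the invariants of the original, by direct tracking of the spin parity and hyperellipticity under the surgery. Second, the operation of \emph{breaking up a zero} (merging or splitting zeros locally) is used to normalize $\kap$. The inductive step is to show that any two forms with the same invariants can be deformed, through the stratum and through these surgeries, to a common normal form in the base case. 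The main obstacle is controlling the interaction of surgeries with the invariants and handling the low-genus base cases carefully: the base cases $g = 2,3$ must be analyzed by hand (essentially by explicit enumeration of generic pentagons and their connected sums), and the hyperelliptic locus must be excised and treated separately since bubbling and breaking operations do not remain within the hyperelliptic locus. Once the base cases are settled and the surgery-invariance of the invariants is fully tracked, the induction closes and the classification follows.
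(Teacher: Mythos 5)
First, note that the paper does not prove this statement at all: Theorem \ref{thm:KZ} is quoted directly from Kontsevich--Zorich (\cite{KZ:components}, Theorems 1--2 and Corollary 5) and is used as an external input, so there is no internal proof to compare against. Your proposal is therefore being measured against the original classification argument, which is a long and delicate paper in its own right.

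Your sketch correctly identifies the two invariants (hyperellipticity and the parity of the spin structure), the standard well-definedness argument for the parity (it is the Arf invariant of the quadratic refinement $q(\gam)=\ind(\gam)+1 \bmod 2$, which requires all $m_j$ even precisely because homotoping a representative across a zero of order $m_j$ changes the index by $m_j$), and the general shape of the Kontsevich--Zorich strategy: induction on genus using bubbling a handle and breaking up a zero, with low-genus base cases treated by hand. But the central step --- the upper bound, i.e.\ that the invariants are \emph{complete} --- is not actually argued. The sentences ``the connected component of the result depends only on the invariants of the original, by direct tracking'' and ``any two forms with the same invariants can be deformed \dots to a common normal form'' are restatements of the theorem, not proofs of it; as written the argument is circular. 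What is missing is the actual mechanism by which \cite{KZ:components} closes the induction: showing that every connected component contains distinguished representatives (one-cylinder Jenkins--Strebel differentials, equivalently square-tiled surfaces analyzed through extended Rauzy classes), together with a nontrivial chain of lemmas proving that bubbling a handle with different slopes or at different points, and breaking up a zero in different ways, lands in a component determined only by the parity and hyperellipticity data. Without supplying these ideas (or some substitute for them), and without a genuine treatment of the exceptional coincidences in genus $2$ and $3$, the proposal is an outline of where a proof would go rather than a proof, and the hard part of the theorem remains open in your write-up.
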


\begin{cor} \label{cor:KZconn}
A stratum $\Om\cM_g(\kap)$ is connected if and only if there is $m_j \in \kap$ that is odd and not equal to $g-1$, or $g = 2$.
\end{cor}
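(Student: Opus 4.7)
The plan is to deduce Corollary \ref{cor:KZconn} from Theorem \ref{thm:KZ} by direct case analysis on the genus $g$ and on the partition $\kap$, checking when the total number of components contributed by the three clauses of the theorem equals one. First I would treat the small-genus cases $g = 2$ and $g = 3$ separately, since Theorem \ref{thm:KZ} handles them through its special last sentence rather than through the enumerated clauses. Then I would handle $g \geq 4$ by combining clauses (1)--(3).

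For $g = 2$, the only partitions of $2g-2 = 2$ are $\kap = \{2\}$ and $\kap = \{1,1\}$; neither equals $\{4\}$ or $\{2,2\}$, so the final sentence of Theorem \ref{thm:KZ} declares both strata connected, which agrees with the ``$g = 2$'' clause in the statement. For $g = 3$ one has $g-1 = 2$, so the qualification ``odd and not equal to $g-1$'' is equivalent to simply ``odd'', and the partitions of $4$ with at least one odd part are precisely $\{3,1\}$, $\{2,1,1\}$, and $\{1,1,1,1\}$, which Theorem \ref{thm:KZ} lists as the connected strata of genus $3$.

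For $g \geq 4$ I would argue both directions. If $\kap$ contains an odd $m_j \neq g-1$, then $\kap$ is neither $\{2g-2\}$ (whose only part is even since $g \geq 4$) nor $\{g-1,g-1\}$ (whose parts are both $g-1$), so clause (1) of Theorem \ref{thm:KZ} does not apply and clause (3) contributes a unique nonhyperelliptic component, giving exactly one component in total. Conversely, suppose every odd part of $\kap$ equals $g-1$. If no part is odd, clause (2) gives two nonhyperelliptic components and the stratum is disconnected. If some part equals the odd number $g-1$, so $g$ is even, let $k$ be the number of parts equal to $g-1$; the remaining parts are even and sum to $(g-1)(2-k)$. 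A non-negativity and parity check then forces $k = 2$ and $\kap = \{g-1,g-1\}$, because $k = 1$ would require the remaining even parts to sum to the odd number $g-1$. In this remaining case, clause (1) adds a hyperelliptic component on top of the nonhyperelliptic component from clause (3), so the stratum is again disconnected.

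The argument is a straightforward combinatorial unpacking of Theorem \ref{thm:KZ}, so I do not anticipate a substantive obstacle. The only mildly delicate point, and the one that explains the precise wording of the statement, is the parity argument in the converse above: for $g \geq 4$, the partition $\{g-1,g-1\}$ is the unique partition of $2g-2$ in which some part is odd and every odd part equals $g-1$. This is what forces the exclusion ``not equal to $g-1$'' in the corollary to be correct and sharp.
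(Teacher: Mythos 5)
Your proposal is correct, and it is essentially the paper's own route: the paper states the corollary as an immediate consequence of Theorem \ref{thm:KZ} (the Kontsevich--Zorich classification) without writing out the case analysis, and your argument simply supplies that bookkeeping. The parity argument showing that $\{g-1,g-1\}$ is the only partition of $2g-2$ whose odd parts all equal $g-1$ is exactly the point that makes the exclusion ``not equal to $g-1$'' sharp, and you have it right.
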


\paragraph{\bf Finite covers of strata.} Let $\kap$ be a partition of $2g-2$, and choose $m \in \kap$. We will need to work with a finite cover of a stratum
\begin{equation} \label{eq:prong}
p : \wt{\Om}\cM_g(\kap;m) \ra \Om\cM_g(\kap)
\end{equation}
consisting of holomorphic $1$-forms in $\Om\cM_g(\kap)$ equipped with a distinguished rightward horizontal direction at a zero $Z$ of order $m$. We denote elements of $\wt{\Om}\cM_g(\kap;m)$ by $(X,\wt{\om})$. We denote the distinguished direction by $\tht(\wt{\om})$, and we refer to $\tht(\wt{\om})$ as a {\em prong}. The degree of $p$ is $(m+1)N_m$, where $N_m$ is the number of times $m$ appears in $\kap$. An automorphism of $(X,\wt{\om})$ is required to fix the distinguished zero $Z$ and the prong $\tht(\wt{\om})$, so $(X,\wt{\om})$ has no nontrivial automorphisms. \\

\paragraph{\bf Compactness.} A subset $\cK \subset \Om\cM_g(\kap)$ is compact if and only if $\cK$ is closed and there exists $\eps > 0$ such that every saddle connection on every holomorphic $1$-form in $\cK$ has length at least $\eps$. The analogous statement holds for $\wt{\Om}\cM_g(\kap;m)$.

Let $\cU \subset \wt{\Om}\cM_g(\kap;m)$ be a contractible open subset whose closure is compact. Recall that elements of $\wt{\Om}\cM_g(\kap;m)$ have no nontrivial automorphisms. Fix $(X,\wt{\om}) \in \cU$. For each homotopy class $\gam$ of paths on $(X,\wt{\om})$ with endpoints in $Z(\om)$, there is a well-defined continuous {\em length function}
\be
\ell_\gam : \cU \ra \R_{>0}
\ee
whose value at $(Y,\wt{\eta}) \in \cU$ is the length of the geodesic representative of the corresponding homotopy class of paths on $(Y,\wt{\eta})$. The geodesic representative of $\gam$ on $(X,\wt{\om})$ is a finite union of saddle connections $\gam_1,\dots,\gam_j$ on $(X,\wt{\om})$ where the angles between consecutive saddle connections $\gam_k,\gam_{k+1}$ are all at least $\pi$. Saddle connections persist on open neighborhoods in strata, so let $\cU_1 \subset \cU$ be a neighborhood of $(X,\wt{\om})$ on which $\gam_1,\dots,\gam_j$ all persist. The angles between these consecutive saddle connections vary continuously on $\cU_1$. If all of these angles are strictly greater than $\pi$, then the geodesic representative of $\gam$ remains the union of $\gam_1,\dots,\gam_j$ on all holomorphic $1$-forms in a neighborhood $\cU_2 \subset \cU_1$ of $(X,\wt{\om})$. If some of these angles are equal to $\pi$, then in the geodesic representative of $\gam$ on nearby holomorphic $1$-forms, some sequences of consecutive saddle connections $\gam_{j_1},\gam_{j_1+1},\dots,\gam_{j_2}$ may be replaced by a single saddle connection $\gam_{j_1}^\pr$. Here, $\gam_{j_1}^\pr$ is homotopic to the union $\gam_{j_1} \cup \cdots \cup \gam_{j_2}$, and the length of $\gam_{j_1}^\pr$ remains close to the sum of the lengths of $\gam_{j_1},\dots,\gam_{j_2}$.

For any $B > 0$, since there are only finitely many saddle connections on $(X,\wt{\om})$ with length at most $B$, there are only finitely many homotopy classes $\gam$ such that $\ell_\gam(X,\wt{\om}) \leq B$. We will need the following strengthening of this observation.

\begin{lem} \label{lem:Ufin}
For any $B > 0$, there are only finitely many homotopy classes $\gam$ as above such that $\inf_\cU \ell_\gam < B$.
\end{lem}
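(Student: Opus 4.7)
The plan is to argue by compactness and continuity of length functions. The key tool is the compactness characterization recalled just above the lemma: since $\overline{\cU}$ is compact in $\Om\cM_g(\kap)$, there exists $\eps > 0$ such that every saddle connection on every $(X,\om) \in \overline{\cU}$ has length at least $\eps$. Consequently, the geodesic representative of any homotopy class $\gam$ at any point of $\overline{\cU}$ with $\ell_\gam < B$ is a concatenation of at most $\lfloor B/\eps \rfloor$ saddle connections, each of length less than $B$.

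The argument proceeds by contradiction. Suppose there are infinitely many distinct homotopy classes $\gam_k$ with $\inf_\cU \ell_{\gam_k} < B$, witnessed by points $(X_k,\om_k) \in \cU$ satisfying $\ell_{\gam_k}(X_k,\om_k) < B$. By compactness of $\overline{\cU}$, after passing to a subsequence we may assume $(X_k,\om_k) \to (X_\infty,\om_\infty) \in \overline{\cU}$. Choose a period coordinate chart $\cV \subset \Om\cM_g(\kap)$ centered at $(X_\infty,\om_\infty)$, so that $(X_k,\om_k) \in \cV$ for all sufficiently large $k$. The Gauss-Manin connection trivializes the bundle of relative homotopy classes of paths with endpoints in $Z(\om)$ over $\cV$, and contractibility of $\cU$ ensures that this trivialization agrees with the one on $\cU$ after being transported along any path in $\cV \cup \cU$. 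Hence the homotopy classes $\gam_k$ have a well-defined meaning at $(X_\infty,\om_\infty)$, and the length functions $\ell_{\gam_k}$ extend continuously across $\cV \cup \cU$.

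By continuity, $\ell_{\gam_k}(X_\infty,\om_\infty) < B + 1$ for all large $k$. But at the single point $(X_\infty,\om_\infty)$, there are only finitely many saddle connections of length at most $B + 1$, and every homotopy class with $\ell < B + 1$ is realized by a geodesic representative consisting of at most $\lfloor (B+1)/\eps \rfloor$ such saddle connections. Since a geodesic representative determines its homotopy class, there are only finitely many homotopy classes $\gam$ with $\ell_\gam(X_\infty,\om_\infty) < B + 1$. This contradicts the distinctness of the $\gam_k$, completing the proof.

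The main technical obstacle is verifying that the identifications of homotopy classes between the various $(X_k,\om_k)$ and the limit point $(X_\infty,\om_\infty)$ are consistent, so that the $\gam_k$ genuinely collapse into the finite set of short-geodesic homotopy classes at $(X_\infty,\om_\infty)$. This is handled by combining the contractibility of $\cU$ with the local trivialization over the period coordinate chart $\cV$: both give monodromy-free identifications, and their restrictions to the overlap $\cU \cap \cV$ agree up to a choice of homotopy-class labeling which we can fix once and for all at $(X_\infty,\om_\infty)$.
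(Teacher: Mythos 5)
Your overall strategy (compactness of $\overline{\cU}$ plus finiteness of short saddle connections at a single form) is in the same spirit as the paper's, but the argument as written has a genuine gap at the step ``By continuity, $\ell_{\gam_k}(X_\infty,\om_\infty) < B+1$ for all large $k$.'' Continuity of each individual length function $\ell_{\gam_k}$ only says: for \emph{fixed} $k$ there is a neighborhood of $(X_\infty,\om_\infty)$, depending on $\gam_k$, on which $\ell_{\gam_k}$ varies by less than $1$. Your hypothesis is $\ell_{\gam_k}(X_k,\om_k) < B$ with the witness point $(X_k,\om_k)$ varying with $k$, and to transfer this bound to the limit point uniformly over infinitely many classes you need equicontinuity of the family $\{\ell_{\gam_k}\}$ near $(X_\infty,\om_\infty)$ (equivalently, a length comparison between the flat metrics of $(X_k,\om_k)$ and $(X_\infty,\om_\infty)$ that is uniform over all homotopy classes at once). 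That uniform estimate is exactly the nontrivial content of the lemma --- each $\ell_\gam$ being continuous is essentially free --- so asserting it with the phrase ``by continuity'' begs the question. Abstractly, $f_k(x_k)<B$ with $x_k\to x_\infty$ and each $f_k$ continuous does not give $f_k(x_\infty)<B+1$ for large $k$.

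The paper closes precisely this hole by a local persistence argument: on a fixed neighborhood of a point, every saddle connection of length at most $B$ persists with length changing by less than $\eps/2$, and since a geodesic representative with $\ell_\gam < B$ decomposes into at most $B/\eps$ saddle connections each of length in $[\eps,B]$, one gets a comparison $\ell_\gam < 2B$ valid simultaneously for \emph{all} such classes $\gam$ on that neighborhood; a finite cover of $\overline{\cU}$ by such neighborhoods then reduces the count to finitely many basepoints. Your proof would be repaired by inserting the same kind of statement at $(X_\infty,\om_\infty)$: a single neighborhood $\cV'$ and a constant $C$ such that any homotopy class with $\ell_\gam(Y,\eta)<B$ at some $(Y,\eta)\in\cV'$ satisfies $\ell_\gam(X_\infty,\om_\infty)<CB$, proved via persistence of the (at most $B/\eps$) saddle connections in its geodesic representative together with the uniform lower bound $\eps$. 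Without such a uniform two-sided comparison, the contradiction does not materialize. (The bookkeeping you highlight about identifying homotopy classes across $\cU$ and the chart $\cV$ is comparatively harmless; also note your $\cV$ may leave $\overline{\cU}$, where the $\eps$-bound is not asserted, so the uniform estimate should be set up on a compact neighborhood inside the region where it holds.)
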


\begin{proof}
Since the closure of $\cU$ is compact, there is $0 < \eps < B$ such that every saddle connection on every holomorphic $1$-form in $\cU$ has length at least $\eps$. For all $(X,\wt{\om}) \in \cU$, there is an open neighborhood $\cV \subset \cU$ of $(X,\wt{\om})$ such that for all $(Y,\wt{\eta}) \in \cV$, every saddle connection $\gam^\pr$ on $(X,\wt{\om})$ of length at most $B$ persists as a saddle connection on $(Y,\wt{\eta})$ and satisfies
\be
|\ell_{\gam^\pr}(X,\wt{\om}) - \ell_{\gam^\pr}(Y,\wt{\eta})| < \frac{\eps}{2} .
\ee
If $\gam$ is a homotopy class such that $\ell_\gam(X,\wt{\om}) < B$, then on $(X,\wt{\om})$, the geodesic representative of $\gam$ is a finite union of saddle connections $\gam_1,\dots,\gam_j$ whose lengths lie in the interval $[\eps,B]$. For each $\gam_k$, and for any $(Y,\wt{\eta}) \in \cV$, we have
\be
\sup_\cV \ell_{\gam_k} < \ell_{\gam_k}(Y,\wt{\eta}) + \eps \leq 2\ell_{\gam_k}(Y,\wt{\eta}).
\ee
Therefore, since $\inf_\cV \ell_\gam < B$, we have $\sup_\cV \ell_\gam \leq 2 \inf_\cV \ell_\gam < 2 B$.

Now let $\gam$ be any homotopy class such that $\inf_\cU \ell_\gam < B$. Since the closure of $\cU$ is compact, there is a finite covering $\cU = \bigcup_{k=1}^N \cV_k$ by open neighborhoods as above. For each $1 \leq k \leq N$, fix $(X_k,\wt{\om}_k) \in \cV_k$. Since $\inf_{\cV_k} \ell_\gam < B$ for some $1 \leq k \leq N$, we have $\ell_\gam(X_k,\wt{\om}_k) < 2 B$ for some $1 \leq k \leq N$, and thus there are only finitely many possibilities for $\gam$.
\end{proof}

Fix $z \in \C^\ast$, let $I = \{tz : 0 \leq t \leq 1\}$, and let $\Om\cM_g(\kap;I)$ be the set of holomorphic $1$-forms in $\Om\cM_g(\kap)$ with a saddle connection whose holonomy is in $I$.

\begin{lem} \label{lem:Iclosed}
The subset $\Om\cM_g(\kap;I) \subset \Om\cM_g(\kap)$ is closed.
\end{lem}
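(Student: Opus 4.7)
The plan is to argue sequential closedness directly. Take $(X_n,\om_n) \in \Om\cM_g(\kap;I)$ converging to $(X,\om) \in \Om\cM_g(\kap)$, and pick saddle connections $\gam_n$ on $(X_n,\om_n)$ with holonomies $z_n \in I$. Since $I$ is compact, after passing to a subsequence we may assume $z_n \to z_\infty \in I$. The aim is to produce a saddle connection on $(X,\om)$ with holonomy in $I$.

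First I would fix a contractible neighborhood $\cU \subset \Om\cM_g(\kap)$ of $(X,\om)$ with compact closure, supplying $\eps > 0$ such that every saddle connection on every holomorphic $1$-form in $\overline{\cU}$ has length at least $\eps$. For $n$ large we have $(X_n,\om_n) \in \cU$, and $\gam_n$ determines a homotopy class of arcs on $\cU$ with $\ell_{\gam_n}(X_n,\om_n) = |z_n| \leq |z|$. Applying Lemma \ref{lem:Ufin} with $B = |z|+1$, only finitely many homotopy classes on $\cU$ satisfy $\inf_\cU \ell < |z|+1$, so after a further subsequence all $\gam_n$ represent a single homotopy class $\gam$. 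By continuity of $\ell_\gam$ and of the period map, $\ell_\gam(X,\om) = |z_\infty|$, and the holonomy of $\gam$ along $(X,\om)$ equals $z_\infty$. The lower bound $|z_n| \geq \eps$ forces $z_\infty \neq 0$.

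The concluding step is to extract a saddle connection on $(X,\om)$ with holonomy in $I$. The geodesic representative of $\gam$ on $(X,\om)$ is a concatenation of saddle connections with holonomies $w_1,\dots,w_k$ satisfying $\sum_j w_j = z_\infty$ and $\sum_j |w_j| = \ell_\gam(X,\om) = |z_\infty|$. Equality in the triangle inequality forces every $w_j$ to be a positive real multiple of $z_\infty$, and since $z_\infty = t z$ for some $t \in (0,1]$, writing $w_j = t_j z$ gives $t_j > 0$ and $\sum_j t_j = t \leq 1$. Hence each $w_j \in I$, and in particular the first saddle connection of the geodesic representative witnesses $(X,\om) \in \Om\cM_g(\kap;I)$.

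The step I expect to require the most care is the possibility that a saddle connection $\gam_n$ on $(X_n,\om_n)$ fails to persist as a single saddle connection on the limit $(X,\om)$, instead degenerating into a broken geodesic through additional zeros encountered along the way. The triangle-inequality bookkeeping handles this: the constraint that every $z_n$ lies on the ray through $z$ propagates to the limit and forces each piece of the broken geodesic to lie along that same ray, so that any one piece supplies a saddle connection with holonomy in $I$. Apart from this geometric point, everything reduces to continuity of period coordinates and the finiteness provided by Lemma \ref{lem:Ufin}.
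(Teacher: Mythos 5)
Your proof is correct and is essentially the paper's argument in dual, sequential form: the paper shows the complement is open by combining Lemma \ref{lem:Ufin} with the dichotomy that each of the finitely many short homotopy classes is either longer than $|z|$ at the given point or has a geodesic representative containing a saddle connection not parallel to $z$, while you run the complementary computation at a limit point using the same Lemma \ref{lem:Ufin}, continuity of $\ell_\gam$ and of the relative period, and the equality case of the triangle inequality to force every piece of the limiting geodesic representative to have holonomy in $I$. Since both arguments rest on the same two ingredients (finiteness of short classes over a relatively compact contractible neighborhood, and geodesic representatives being concatenations of saddle connections), this is the same proof up to taking complements.
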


\begin{proof}
Choose $m \in \kap$, and let $p : \wt{\Om}\cM_g(\kap;m) \ra \Om\cM_g(\kap)$ be the stratum cover by prong-marked holomorphic $1$-forms from (\ref{eq:prong}). Fix $(X,\om) \in \Om\cM_g(\kap)$ such that $(X,\om) \notin \Om\cM_g(\kap;I)$, and fix $(X,\wt{\om}) \in p^{-1}(X,\om)$. By Lemma \ref{lem:Ufin}, there is an open neighborhood $\cU \subset \wt{\Om}\cM_g(\kap;m)$ of $(X,\wt{\om})$ such that there are only finitely many homotopy classes $\gam_1,\dots,\gam_j$ of paths on $X$ with endpoints in $Z(\om)$ satisfying $\inf_{\cU} \ell_{\gam_k} \leq |z|$.

Fix $1 \leq k \leq j$. If $\ell_{\gam_k}(X,\wt{\om}) > |z|$, then there is a small open neighborhood $\cU_k \subset \cU$ of $(X,\wt{\om})$ such that $\inf_{\cU_k} \ell_{\gam_k}(X,\wt{\om}) > |z|$. If $\ell_{\gam_k}(X,\wt{\om}) \leq |z|$, then the geodesic representative of $\gam_k$ on $(X,\wt{\om})$ must be a union of saddle connections $\del_1,\dots,\del_r$ that are not parallel to $z$, meaning $\int_{\del_s} \om \notin \R z$ for $1 \leq s \leq r$. If the saddle connections $\del_s$ are pairwise non-parallel, then there is a small open neighborhood $\cU_k \subset \cU$ of $(X,\wt{\om})$ in which the geodesic representative of $\gam_k$ remains the union of $\del_1,\dots,\del_r$, and these saddle connections remain pairwise non-parallel and also not parallel to $z$. Lastly, suppose some of the $\del_s$ are parallel. On nearby holomorphic $1$-forms $(Y,\wt{\eta})$, for $1 \leq s \leq r$, the geodesic representative of $\gam_k$ either contains $\del_s$ or contains a new saddle connection $\del_{r_1}^\pr$ homotopic to a union of consecutive saddle connections $\del_{r_1},\dots,\del_{r_2}$ with $r_1 \leq s \leq r_2$. On $(Y,\wt{\eta})$, the holonomies of these saddle connections satisfy $\int_{\del_{r_1}^\pr} \eta = \int_{\del_{r_1}} \eta + \cdots + \int_{\del_{r_2}} \eta$, and the saddle connections $\del_{r_1},\dots,\del_{r_2}$ remain nearly parallel. Then on $(Y,\wt{\eta})$, the length of $\del_{r_1}^\pr$ is close to the sum of the lengths of $\del_{r_1},\dots,\del_{r_2}$, and $\del_{r_1}^\pr$ is also nearly parallel to these saddle connections. Thus, there is a small open neighborhood $\cU_k \subset \cU$ of $(X,\wt{\om})$ in which the geodesic representative of $\gam_k$ is a union of saddle connections that are not parallel to $z$.

Let $\cV = \bigcap_{k=1}^j \cU_k$. By definition, for all $(Y,\wt{\eta}) \in \cV$, and for all homotopy classes of paths $\gam$ on $Y$ with endpoints in $Z(\eta)$, either $\ell_\gam(Y,\wt{\eta}) > |z|$ or $\gam$ contains a saddle connection $\del$ with $\int_\del \eta \notin I$. In particular, $(Y,\wt{\eta})$ does not have any saddle connections with holonomy in $I$. Thus, since $p$ is an open map, the image $p(\cV)$ is an open neighborhood of $(X,\om)$ that is disjoint from $\Om\cM_g(\kap;I)$.
\end{proof}

\paragraph{\bf Domains of surgeries.} Below, we will discuss two well-known surgeries that involve slitting and regluing geodesic segments emanating from a zero of a holomorphic $1$-form $(Y,\eta)$. If $Z$ is a zero of $\eta$ and $s$ is a segment emanating from $Z$, then the holonomy $\int_s \eta \in \C^\ast$ does not uniquely determine $s$. Rather, the geodesic segments emanating from $Z$ are parameterized by an open subset of a finite cover of $\C^\ast$. Let
\be
\sig : \wt{\C}^\ast \ra \C^\ast
\ee
be the universal covering group of $\C^\ast$. We have polar coordinates $\C^\ast \cong \R_{>0} \times \R/2\pi\Z$ and $\wt{\C}^\ast \cong \R_{>0} \times \R$ in which the identity elements correspond to $(1,0)$. In polar coordinates, $\sig$ is given by reduction mod $2\pi$ in the angular coordinate. The fundamental group of $\GL^+(2,\R)$ is isomorphic to $\Z$ and generated by the rotation subgroup $\SO(2,\R)$. Let
\be
\zeta : \wt{\GL}^+(2,\R) \ra \GL^+(2,\R)
\ee
be the universal covering group of $\zeta$. The action of $\GL^+(2,\R)$ on $\C^\ast$ determines a continuous action of $\wt{\GL}^+(2,\R)$ on $\wt{\C}^\ast$ that is equivariant in the sense that for $M \in \wt{\GL}^+(2,\R)$ and $z \in \wt{\C}^\ast$, we have $\sig(M z) = \zeta(M) \sig(z)$. The kernel of $\zeta$ is generated by an element $R$ that acts on $\wt{\C}^\ast$ by rotating counterclockwise by $2\pi$.

Now fix $(X,\wt{\om}) \in \wt{\Om}\cM_g(\kap;m)$. There is a degree $m+1$ connected covering of topological groups $\wt{\C}^\ast_{m+1} \ra \C^\ast$ which is unique up to isomorphism. Let
\be
\Del(\wt{\om}) \ra \Del(\om)
\ee
be the degree $m+1$ covering consisting of oriented geodesic segments $\gam$ starting at the distinguished zero $Z \in Z(\om)$ such that $\int_\gam \om \in \Del(\om)$. The choice of prong determines a natural inclusion $\Del(\wt{\om}) \hra \wt{\C}^\ast_{m+1}$ by sending segments along the direction of the prong $\tht(\wt{\om})$ into $\R_{>0} \times \{0\}$ in polar coordinates, and requiring that the image of $\gam \in \Del(\wt{\om})$ projects to $\int_\gam \om \in \Del(\om)$. The action of $\GL^+(2,\R)$ determines a continuous action of $\wt{\GL}^+(2,\R)$ on $\wt{\Om}\cM_g(\kap;m)$ that is equivariant in the sense that for $M \in \wt{\GL}^+(2,\R)$ and $(Y,\wt{\eta}) \in \wt{\Om}\cM_g(\kap;m)$, we have $p(M(Y,\wt{\eta})) = \zeta(M)(Y,\eta)$. The element $R$ acts on $\wt{\Om}\cM_g(\kap;m)$ by rotating the chosen prong clockwise by $2\pi$.

Define $S(\wt{\om}) \subset \Del(\wt{\om}) \times \C^\ast$ to be the subset of pairs $(\gam,w)$ such that the holonomy $z = \int_\gam \om$ satisfies $\im(\ol{z} w) > 0$. We similarly have a natural inclusion $S(\wt{\om}) \hra \wt{\C}^\ast_{m+1} \times \C^\ast$. Let
\be
\cS(\kap;m) \ra \wt{\Om}\cM_g(\kap;m)
\ee
be the bundle of holomorphic $1$-forms equipped with an oriented geodesic segment in $\Del(\wt{\om})$. Elements of $\cS(\kap;m)$ are denoted $(X,\wt{\om},\gam)$, where $(X,\wt{\om}) \in \wt{\Om}\cM_g(\kap;m)$ and $\gam \in \Del(\wt{\om})$. Let
\be
\cT(\kap;m) \ra \wt{\Om}\cM_g(\kap;m)
\ee
be the bundle of holomorphic $1$-forms equipped with a pair in $S(\wt{\om})$. Elements of $\cT(\kap;m)$ are denoted $(X,\wt{\om},T)$, where $(X,\wt{\om}) \in \wt{\Om}\cM_g(\kap;m)$ and $T \in S(\wt{\om})$. The actions of $\wt{\GL}^+(2,\R)$ on $\wt{\Om}\cM_g(\kap;m)$ and $\wt{\C}^\ast$ induce actions on $\cS(\kap;m)$ and $\cT(\kap;m)$.

We will implicitly regard elements of $\Del(\wt{\om})$ and $S(\wt{\om})$ as elements of $\wt{\C}^\ast_{m+1}$ and $\wt{\C}^\ast_{m+1} \times \C^\ast$, respectively, using the inclusions above. We then obtain $\wt{\GL}^+(2,\R)$-equivariant inclusions
\be
\cS(\kap;m) \hra \wt{\Om}\cM_g(\kap;m) \times \wt{\C}^\ast_{m+1}
\ee
and
\be
\cT(\kap;m) \hra \wt{\Om}\cM_g(\kap;m) \times \wt{\C}^\ast_{m+1} \times \C^\ast
\ee
that respect the projections to $\wt{\Om}\cM_g(\kap;m)$. \\

\paragraph{\bf Splitting a zero.} Suppose $m \geq 1$, and fix $1 \leq j \leq m$. Given $(X,\wt{\om},\gam) \in \cS(\kap;m)$, let $I = [0,\int_\gam \om]$ be the oriented segment in $\C$ from $0$ to $\int_\gam \om$, and let
\be
\gam_1,\dots,\gam_{j+1} : I \ra X
\ee
be the isometric embeddings that preserve the direction of $I$, such that for $1 \leq k \leq j+1$, $\gam_k(0)$ is the distinguished zero $Z$ and the counterclockwise angle around $Z$ from $\gam$ to $\gam_k(I)$ is $2\pi(k-1)$. In particular, $\gam_1(I) = \gam$. Since $\int_{\gam_k(I)} \om \in \Del(\om)$, the segments $\gam_k(I)$ are disjoint from $Z(\om)$ and from each other except at their common starting point. Slit $X$ along $\gam_1(I) \cup \cdots \cup \gam_{j+1}(I)$ to obtain a surface with boundary $X_0$, and let $\gam_k^+ : I \ra X_0$ and $\gam_k^- : I \ra X_0$ be the left and right edges of the slit coming from $\gam_k$, respectively. Glue $\gam_k^+(z)$ to $\gam_{k+1}^-(z)$ for $1 \leq k \leq j$, and glue $\gam_{j+1}^+(z)$ to $\gam_1^-(z)$. The complex structure and the holomorphic $1$-form on the interior of $X_0$ extend over the slits to give a holomorphic $1$-form $(X^\pr,\om^\pr)$. If $j < m$, then $|Z(\om^\pr)| = |Z(\om)| + 1$ and the distinguished zero $Z$ is split into two zeros joined by a single saddle connection $\gam^\pr$ such that
\be
\int_{\gam^\pr} \om^\pr = \int_\gam \om .
\ee
The order of $\om^\pr$ at the starting point of $\gam^\pr$ is $m - j$, and the order of $\om^\pr$ at the ending point of $\gam^\pr$ is $j$. Let $\kap^\pr$ be the partition of $2g-2$ given by the orders of the zeros of $\om^\pr$. If $j < m$, then
\be
\kap^\pr = (\kap \sm (m)) \cup (m - j,j) ,
\ee
and if $j = m$, then $\kap^\pr = \kap$. We regard $(X^\pr,\om^\pr)$ as an element of $\Om\cM_g(\kap^\pr)$, and we say that $(X^\pr,\om^\pr)$ arises from $(X,\om)$ by {\em splitting a zero}. See Figure \ref{fig:split} for an example. The above surgery defines a {\em zero splitting map}
\be
\Phi = \Phi(\kap;m,j) : \cS(\kap;m) \ra \Om\cM_g(\kap^\pr)
\ee
that is a local covering of orbifolds and is equivariant for the actions of $\wt{\GL}^+(2,\R)$ and $\GL^+(2,\R)$. In suitable local period coordinates on $\wt{\Om}\cM_g(\kap;m)$ and $\Om\cM_g(\kap^\pr)$, the map $\Phi$ can be viewed as a map defined on an open subset of $\C^{2g+|\kap|-1} \times \C$ that concatenates the two inputs. The zero splitting map preserves the area of the underlying holomorphic $1$-form. \\

\begin{figure}
    \centering
    \includegraphics[width=0.9\textwidth]{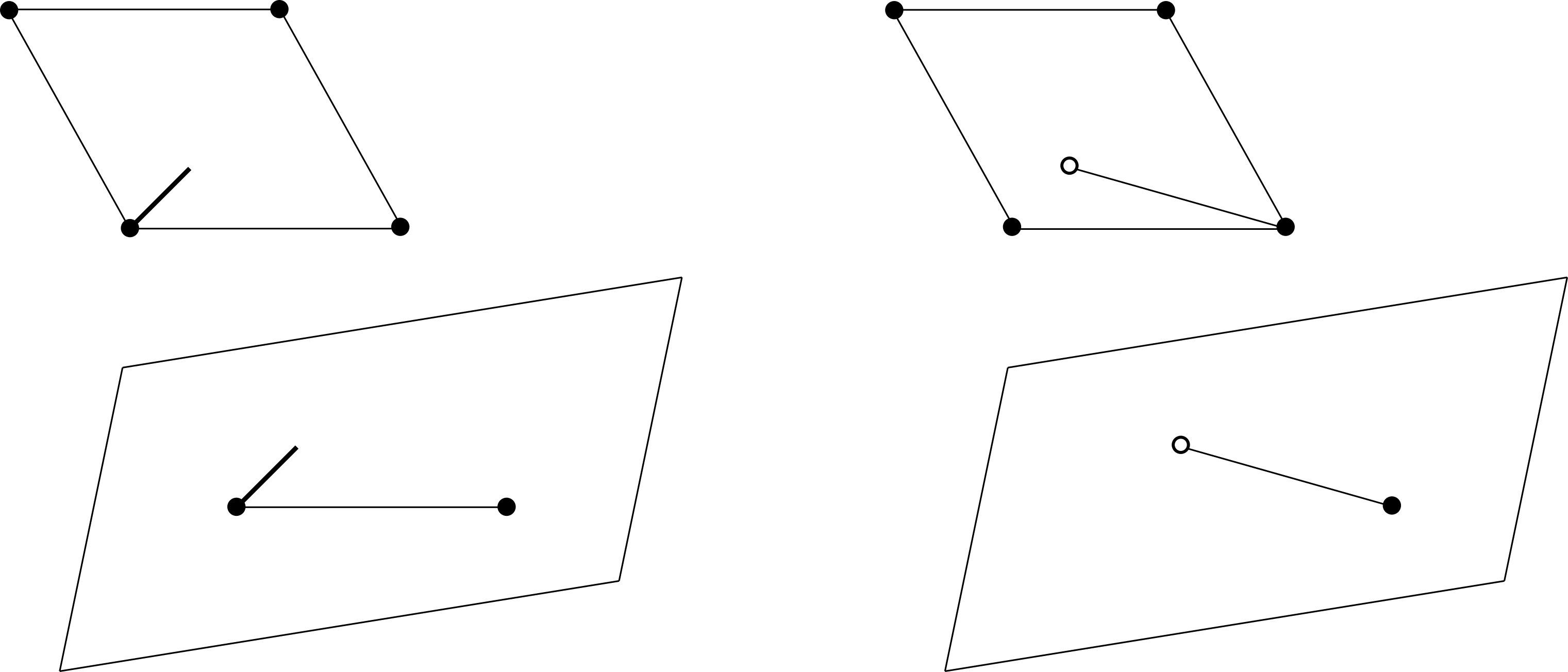}
    \caption{A holomorphic $1$-form in $\Om\cM_2(1,1)$ (right) that arises from a holomorphic $1$-form in $\Om\cM_2(2)$ (left) by splitting a zero. The two segments being slit are shown in bold.}
    \label{fig:split}
\end{figure}

\paragraph{\bf Connected sums with a torus.} Given $(X,\wt{\om},T) \in \cT(\kap;m)$ with $T = (\gam,w)$, let $I = [0,\int_\gam \om]$ be the oriented segment in $\C$ from $0$ to $\int_\gam \om$. The pair $(\gam,w)$ determines a flat torus
\be
T_0 = (\C/(\Z \int_\gam \om + \Z w), dz) .
\ee
Let $\gam_1 : I \ra X$ be the isometric embedding that preserves the direction of $I$ and satisfies $\gam_1(I) = \gam$. Let $\gam_2 : I \ra T_0$ be the projection of $I$, which gives a closed geodesic in $T_0$. Slit $X$ along $\gam_1(I)$ to obtain a surface with boundary $X_0$, and let $\gam_1^+ : I \ra X_0$ and $\gam_1^- : I \ra X_0$ be the left and right edges of the slit coming from $\gam_1$, respectively. Slit $T_0$ along $\gam_2(I)$ to obtain a cylinder with boundary $C_0$, and let $\gam_2^+ : I \ra C_0$ and $\gam_2^- : I \ra C_0$ be the left and right edges of the slit coming from $\gam_2$, respectively. Glue $\gam_1^+(z)$ to $\gam_2^-(z)$, and glue $\gam_2^+(z)$ to $\gam_1^-(z)$. The result is a holomorphic $1$-form $(X^\pr,\om^\pr)$ with a pair of homologous saddle connections $\gam^\pm$ forming a figure-eight on $X^\pr$ and arising from $\gam_1^\pm(I) \subset X_0$. The order of $\om^\pr$ at the zero $Z^\pr$ arising from the distinguished zero $Z$ is $m + 2$. The counterclockwise angle around $Z^\pr$ from the end of $\gam^-$ to the end of $\gam^+$ is $2\pi$. Let
\be
\kap^\pr = (\kap \sm (m)) \cup (m+2)
\ee
be the partition of $2g$ given by the orders of the zeros of $\om^\pr$. We regard $(X^\pr,\om^\pr)$ as an element of $\Om\cM_{g+1}(\kap^\pr)$, and we say that $(X^\pr,\om^\pr)$ arises from $(X,\om)$ by a {\em connected sum with a torus}. A pair of homologous saddle connections that presents $(X^\pr,\om^\pr)$ as a connected sum with a torus is a {\em splitting} of $(X^\pr,\om^\pr)$. See Figure \ref{fig:sum} for an example. The above surgery defines a {\em connected sum map}
\be
\Psi = \Psi(\kap;m) : \cT(\kap;m) \ra \Om\cM_{g+1}(\kap^\pr)
\ee
that is a local covering of orbifolds and is equivariant for the actions of $\wt{\GL}^+(2,\R)$ and $\GL^+(2,\R)$. In suitable local period coordinates, $\Psi$ can be viewed as a map defined on an open subset of $\C^{2g+|\kap|-1} \times \C^2$ that concatenates the two inputs. A connected sum of a holomorphic $1$-form of area $A_1 > 0$ with a flat torus of area $A_2 > 0$ has area $A_1 + A_2$. \\

\begin{figure}
    \centering
    \includegraphics[width=0.5\textwidth]{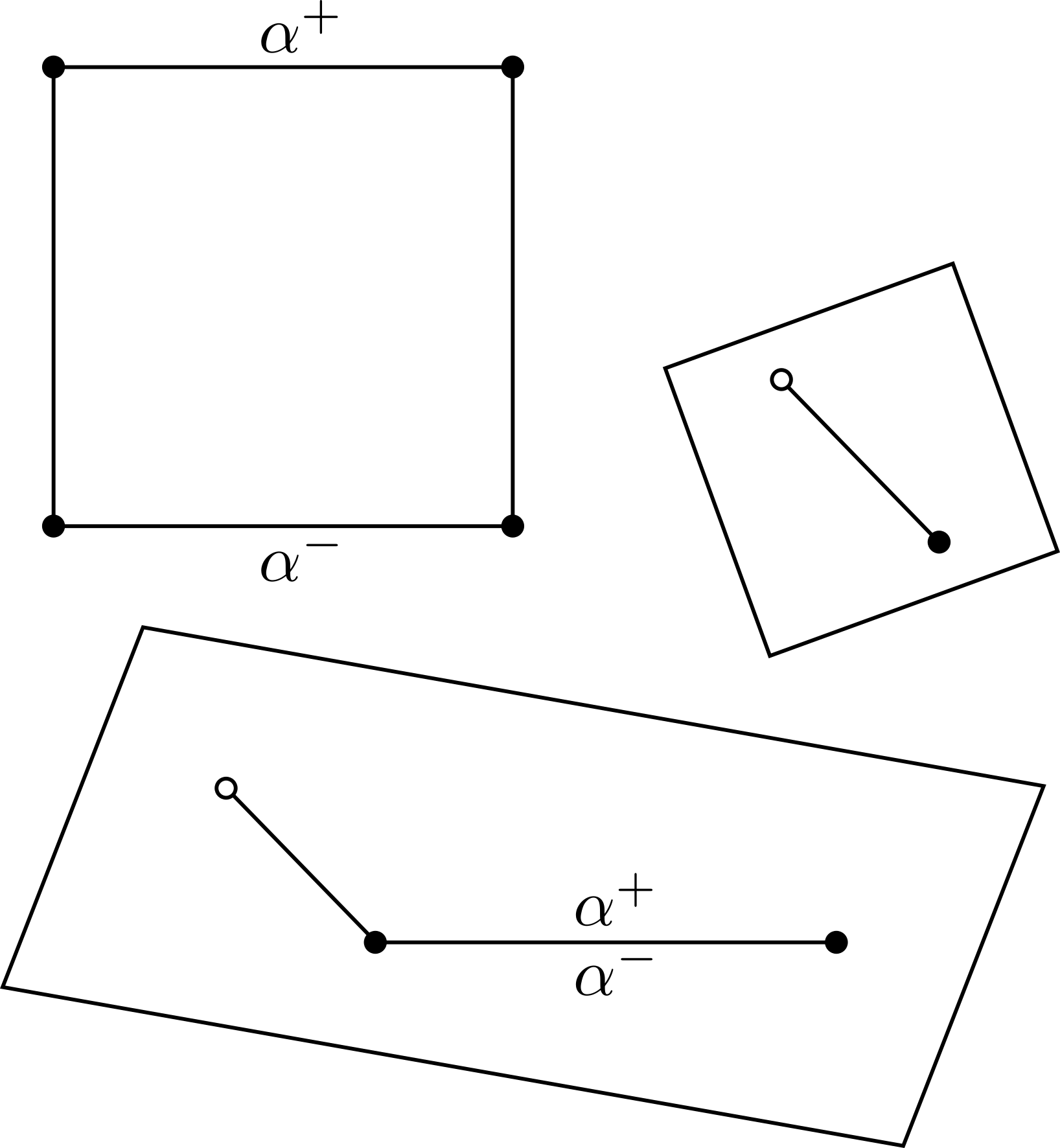}
    \caption{A holomorphic $1$-form $(X,\om) \in \Om\cM_3(3,1)$ arising from a holomorphic $1$-form in $\Om\cM_2(1,1)$ by a connected sum with a torus. The pair of rightward homologous saddle connections $\al^{\pm}$ is a splitting of $(X,\om)$.}
    \label{fig:sum}
\end{figure}

\paragraph{\bf Surgeries and stratum components.} For our inductive arguments, we will need to understand the relationship between the stratum components containing $(X^\pr,\om^\pr)$ and $(X,\om)$, respectively, when $(X^\pr,\om^\pr)$ arises from $(X,\om)$ by splitting a zero or by a connected sum with a torus. We refer to \cite{EMZ:principal} and \cite{KZ:components} for more general results.

\begin{lem} \label{lem:split} Let $\Om\cM_g(\kap^\pr)$ be a connected stratum with $|\kap^\pr| \geq 3$. There is a connected stratum $\Om\cM_g(\kap)$ with $|\kap| = |\kap^\pr| - 1$ such that $\Om\cM_g(\kap^\pr)$ contains holomorphic $1$-forms that arise from holomorphic $1$-forms in $\Om\cM_g(\kap)$ by splitting a zero.
\end{lem}

\begin{proof}
Note that $g \geq 3$ since $|\kap^\pr| \geq 3$. By Corollary \ref{cor:KZconn}, there is $m^\pr \in \kap^\pr$ such that $m^\pr$ is odd and not equal to $g - 1$. Choose $m_1,m_2 \in \kap^\pr \sm (m^\pr)$, let $m = m_1 + m_2$, and let $\kap = (\kap^\pr \sm (m_1,m_2)) \cup (m)$. We have $|\kap| = |\kap^\pr| - 1$, and $\Om\cM_g(\kap)$ is connected by Corollary \ref{cor:KZconn} since $m^\pr \in \kap$. By splitting a zero of order $m$ into two zeros of orders $m_1$ and $m_2$, respectively, we obtain holomorphic $1$-forms in $\Om\cM_g(\kap^\pr)$ that arise from holomorphic $1$-forms in $\Om\cM_g(\kap)$ by splitting a zero.
\end{proof}

\begin{lem} \label{lem:splitspin}
If $\cC^\pr$ is an even (respectively, odd) component of $\Om\cM_g(\kap^\pr)$ where $|\kap^\pr| \geq 3$, then there is an even (respectively, odd) component $\cC$ of a stratum $\Om\cM_g(\kap)$ with $|\kap| = |\kap^\pr| - 1$ such that $\cC^\pr$ contains holomorphic $1$-forms that arise from holomorphic $1$-forms in $\cC$ by splitting a zero.
\end{lem}

\begin{proof}
By Theorem \ref{thm:KZ}, since $|\kap^\pr| \geq 3$, the parts of $\kap^\pr$ are even and $\Om\cM_g(\kap^\pr)$ has a unique even component and a unique odd component. Choose $m_1,m_2 \in \kap^\pr$, let $m = m_1 + m_2$, and let $\kap = (\kap^\pr \sm (m_1,m_2)) \cup (m)$. We have $|\kap| = |\kap^\pr| - 1$, and by Theorem \ref{thm:KZ}, since the parts of $\kap$ are even, $\Om\cM_g(\kap)$ contains an even component and an odd component. Splitting a zero is a local surgery that only modifies a holomorphic $1$-form $(X,\om) \in \Om\cM_g(\kap)$ in a contractible neighborhood of a zero, so the parity of the spin structure $\phi(\om)$ is preserved. Let $\cC$ be an even or odd component of $\Om\cM_g(\kap)$, according to whether $\cC^\pr$ is an even or odd component of $\Om\cM_g(\kap^\pr)$. By splitting a zero of order $m$ into two zeros of orders $m_1$ and $m_2$, respectively, we obtain holomorphic $1$-forms in $\cC^\pr$ that arise from holomorphic $1$-forms in $\cC$ by splitting a zero.
\end{proof}

A holomorphic $1$-form that has a splitting cannot lie in the hyperelliptic component $\cC$ of $\Om\cM_g(g-1,g-1)$. For $(X,\om) \in \cC$, the hyperelliptic involution exchanges the two zeros of $\om$. Moreover, as shown in Lemma 2.1 in \cite{Lin:hyperelliptic}, since we can increase the height of a cylinder on $(X,\om)$ arbitrarily while remaining in the same stratum component, the hyperelliptic involution preserves every cylinder on $(X,\om)$. Therefore, every cylinder contains both zeros of $\om$ in its boundary. A cylinder arising from a splitting of $(X,\om)$ would be bounded by a pair of saddle connections that form a figure-eight at a zero of $\om$, so such cylinders do not occur in $(X,\om)$.

\begin{lem} \label{lem:sum}
Let $\Om\cM_{g+1}(\kap^\pr)$ be a stratum with $g + 1 \geq 3$ such that $|\kap^\pr| = 2$ and the elements of $\kap^\pr$ are odd. There is a connected stratum $\Om\cM_g(\kap)$ with $|\kap| = 2$ such that the nonhyperelliptic component of $\Om\cM_{g+1}(\kap^\pr)$ contains holomorphic $1$-forms that arise from holomorphic $1$-forms in $\Om\cM_g(\kap)$ by a connected sum with a torus.
\end{lem}

\begin{proof}
By Theorem \ref{thm:KZ}, since the elements of $\kap^\pr$ are odd, $\Om\cM_{g+1}(\kap^\pr)$ has a unique nonhyperelliptic component. If $g+1 \geq 4$, then there is $m^\pr \in \kap^\pr$ such that $m^\pr \geq 3$ and such that the elements of $\kap = (\kap^\pr \sm (m^\pr)) \cup (m^\pr-2)$ are odd and distinct. If $g+1 = 3$, then $\kap^\pr = (3,1)$ and we let $m^\pr = 3$ and $\kap = (1,1)$. In either case, $\Om\cM_g(\kap)$ is connected by Corollary \ref{cor:KZconn}, and by applying the connected sum construction at a zero of order $m^\pr - 2$, we obtain holomorphic $1$-forms in $\Om\cM_{g+1}(\kap^\pr)$ that arise from holomorphic $1$-forms in $\Om\cM_g(\kap)$ by a connected sum with a torus.
\end{proof}

\begin{lem} \label{lem:sumspin}
Let $\Om\cM_{g+1}(\kap^\pr)$ be a stratum with $g + 1 \geq 4$ such that $|\kap^\pr| = 2$ and the elements of $\kap^\pr$ are even. If $g + 1 \geq 5$, then there is a stratum $\Om\cM_g(\kap)$ with $|\kap| = 2$ such that the elements of $\kap$ are even, and such that the even (respectively, odd) nonhyperelliptic component $\cC^\pr$ of $\Om\cM_{g+1}(\kap^\pr)$ contains holomorphic $1$-forms that arise from holomorphic $1$-forms in the even (respectively, odd) nonhyperelliptic component $\cC$ of $\Om\cM_g(\kap)$ by a connected sum with a torus. If $g + 1 = 4$, then the same holds except that $\cC$ is hyperelliptic in the even case.
\end{lem}

\begin{proof}
Since $g +1 \geq 4$, there is $m^\pr \in \kap^\pr$ such that $m^\pr \geq 4$. Let $\kap = (\kap^\pr \sm (m^\pr)) \cup (m^\pr - 2)$. By Theorem \ref{thm:KZ}, $\Om\cM_g(\kap)$ has an even nonhyperelliptic component and an odd nonhyperelliptic component, unless $g + 1 = 4$ in which case the even component is hyperelliptic. By Lemma 11 in \cite{KZ:components}, applying the connected sum construction at a zero of order $m^\pr - 2$ does not change the parity of the associated spin structure. Therefore, $\cC^\pr$ contains holomorphic $1$-forms that arise from holomorphic $1$-forms in $\Om\cM_g(\kap)$ with the same spin parity by a connected sum with a torus.
\end{proof}

\paragraph{\bf Surgeries and constrained absolute periods.} Lastly, we will need to use zero splitting maps and connected sum maps in settings where the absolute periods are restricted to lie in certain closed subgroups of $\C$. Let $\Lam$ be a proper closed subgroup of $\C$ that is not discrete and that contains a lattice in $\C$. There is $M \in \SL(2,\R)$ such that $\Lam = M \cdot (\R + i\Z)$. Let $\Lam_0$ be the identity component of $\Lam$, so $\Lam_0 = M \cdot \R$. Define
\be
\Om^\Lam\cM_g(\kap) = \{(X,\om) \in \Om\cM_g(\kap) : \Per(\om) + \Lam_0 = \Lam \} .
\ee
A holomorphic $1$-form $(X,\om) \in \Om\cM_g(\kap)$ lies in $\Om^\Lam\cM_g(\kap)$ if and only if $\Per(\om) \subset \Lam$ and $\Per(\om)$ intersects every connected component of $\Lam$. We denote by $\Om_1^\Lam\cM_g(\kap)$ the area-$1$ locus in $\Om^\Lam\cM_g(\kap)$. For $\cC$ a connected component of $\Om\cM_g(\kap)$, we similarly define $\cC^\Lam$ and $\cC_1^\Lam$. The preimage in $\cS(\kap;m)$ of $\Om^\Lam\cM_g(\kap^\pr)$ under the zero splitting map is given by
\be
\cS^\Lam(\kap;m) = \{(X,\wt{\om},\gam) \in \cS(\kap;m) : \Per(\om) + \Lam_0 = \Lam\} .
\ee
The preimage in $\cT(\kap;m)$ of $\Om^\Lam\cM_g(\kap^\pr)$ under the connected sum map is slightly more complicated, since we are adding new absolute periods in the process of forming a connected sum with a torus, and is given by
\be
\cT^\Lam(\kap;m) = \{(X,\wt{\om},(\gam,w)) \in \cT(\kap;m) : \Per(\om) + \Lam_0 + \Z \int_\gam \om + \Z w = \Lam \} .
\ee
For $(X,\wt{\om},(\gam,w)) \in \cT^\Lam(\kap;m)$, we have $\Per(\om) + \Lam_0 = n_1 \Lam$ and $\Lam_0 + \Z\int_\gam \om + \Z w = n_2 \Lam$ for some $n_1,n_2 \in \Z_{>0}$ with $\gcd(n_1,n_2) = 1$.

%%%%%%%%%%%%%%%%%%%%%%%%%%%%%%%%%%%%%%%%%%%%%%%%%%
%%%%%%%%%%%%%%%%%%%%%%%%%%%%%%%%%%%%%%%%%%%%%%%%%%
%%%%%%%%%%%%%%%%%%%%%%%%%%%%%%%%%%%%%%%%%%%%%%%%%%

\section{The absolute period foliation and surgeries} \label{sec:leaves}

We review the absolute period foliation of a stratum of holomorphic $1$-forms. We then study how leaves of these foliations behave under passing to the finite covers of strata from Section \ref{sec:surgery}, and we study the behavior of leaves when applying the surgeries from Section \ref{sec:surgery}. In the literature, the absolute period foliation is also referred to as the isoperiodic foliation, the Rel foliation, and the kernel foliation. For related discussions and further background, we refer to \cite{BSW:horocycle}, \cite{CDF:transfer}, \cite{McM:navigating}, \cite{McM:isoperiodic}, \cite{Zor:survey}. \\

\paragraph{\bf The period map.} For $X \in \cM_g$, a {\em marking} of $H^1(X;\C)$ is a symplectic isomorphism $m : H^1(S_g;\C) \ra H^1(X;\C)$ that sends $H^1(S_g;\Z)$ to $H^1(X;\Z)$. Let $\cS_g \ra \cM_g$ be the {\em Torelli cover} of moduli space, whose points are closed Riemann surfaces equipped with a marking of their cohomology. Let $\Om\cS_g \ra \cS_g$ be the associated bundle of nonzero holomorphic $1$-forms. The space $\Om\cS_g$ decomposes into strata $\Om\cS_g(\kap)$ indexed by partitions $\kap = (m_1,\dots,m_n)$ of $2g-2$. The {\em period map}
\be
\Per_g : \Om\cS_g \ra H^1(S_g;\C), \quad (X,\om,m) \mapsto m^{-1}([\om])
\ee
sends a holomorphic $1$-form to the associated cohomology class on $S_g$. The period map is a holomorphic submersion, and the connected components of nonempty fibers of $\Per_g$ are leaves of a holomorphic foliation of $\Om\cS_g$. This foliation descends to a holomorphic foliation $\cA$ of $\Om\cM_g$, called the {\em absolute period foliation} of $\Om\cM_g$. The restriction of $\Per_g$ to a stratum $\Om\cS_g(\kap)$ is also a holomorphic submersion, and we similarly obtain a holomorphic foliation $\cA(\kap)$ of $\Om\cM_g(\kap)$, called the {\em absolute period foliation} of $\Om\cM_g(\kap)$. Leaves of $\cA(\kap)$ are immersed complex suborbifolds of dimension $|\kap| - 1$. \\

\paragraph{\bf Geometry of leaves.} Let $\Om\cM_g(\kap)$ be a stratum with $|\kap| > 1$. Fix $(X_0,\om_0) \in \Om\cM_g(\kap)$, and let $L$ be the leaf of $\cA(\kap)$ through $(X_0,\om_0)$. We will sometimes denote this leaf by $L(\om_0)$. Let $v = (1,\dots,1) \in \C^{|\kap|}$, let $\X = \C^{|\kap|}/\C v$, and let $G = (\C^{|\kap|}/\C v) \rtimes {\rm Sym}(|\kap|)$, where the symmetric group ${\rm Sym}(|\kap|)$ acts on vectors by permuting their components. Choose an open disk $U \subset L$ containing $(X_0,\om_0)$, a labelling $Z_1,\dots,Z_{|\kap|}$ of $Z(\om)$, a point $x \in X_0$, and paths $\gam_j$ from $x$ to $Z_j$. The {\em relative period map}
\begin{equation} \label{eq:relative}
\rho : U \ra \X, \quad (X,\om) \mapsto \left(\int_{\gam_1}\om,\dots,\int_{\gam_{|\kap|}}\om\right)
\end{equation}
provides local coordinates on $U$. The map $\rho$ is independent of the choice of $x$, in the sense that choosing a path $\gam_0$ from $x^\pr$ to $x$ and replacing each $\gam_j$ with $\gam_0 \cup \gam_j$ does not change $\rho$. Different choices of labellings and paths may permute the components of $\rho$ and may translate the components of $\rho$ by absolute periods, which are constant on $L$. Thus, $L$ has a $(G,\mathbb{X})$-structure, and in particular a canonical locally Euclidean metric. In general, this metric is incomplete, since the holonomy of a saddle connection with distinct endpoints may approach $0$ along a path in $L$ of finite length. For all $M \in \GL^+(2,\R)$, the action of $\GL^+(2,\R)$ on $\Om\cM_g(\kap)$ induces a homeomorphism $L \ra M \cdot L$ to another leaf of $\cA(\kap)$, and this homeomorphism is affine in the coordinates provided by relative period maps. \\

\paragraph{\bf Finite covers and surgeries.} Choose $m \in \kap$, and let $p : \wt{\Om}\cM_g(\kap;m) \ra \Om\cM_g(\kap)$ be the stratum cover by prong-marked holomorphic $1$-forms from (\ref{eq:prong}). The foliation $\cA(\kap)$ lifts to a foliation $\cA(\kap;m)$ of $\wt{\Om}\cM_g(\kap;m)$, the {\em absolute period foliation} of $\wt{\Om}\cM_g(\kap;m)$. The action of $\wt{\GL}^+(2,\R)$ on $\wt{\Om}\cM_g(\kap;m)$ induces affine homeomorphisms between leaves of $\cA(\kap;m)$. We will sometimes denote the leaf of $\cA(\kap;m)$ through $(X,\wt{\om})$ by $L(\wt{\om})$.

Next, the foliation $\cA(\kap;m)$ lifts to a foliation $\cF_\cS$ of $\cS(\kap;m)$. The leaf of $\cF_\cS$ through $(X,\wt{\om},\gam)$ consists of the elements of $\cS(\kap;m)$ that can be reached from $(X,\wt{\om},\gam)$ by a path in $\cS(\kap;m)$ along which the absolute periods are constant. The segment $\gam$ may vary along the leaf. The foliation $\cA(\kap;m)$ also lifts to a foliation $\cF_\cT$ of $\cT(\kap;m)$. The leaf of $\cF_\cT$ through $(X,\wt{\om},T)$ consists of the elements of $\cT(\kap;m)$ that can be reached from $(X,\wt{\om},T)$ by a path in $\cT(\kap;m)$ along which the absolute periods and $T$ are constant.

Recall that our goal is to analyze the foliations $\cA(\kap)$ inductively, by using the surgeries from Section \ref{sec:surgery} to map a leaf $L$ of $\cA(\kap)$ into a leaf $L^\pr$ of $\cA(\kap^\pr)$. More precisely, we first lift $L$ to $\wt{\Om}\cM_g(\kap;m)$ by taking its preimage $L_1 = p^{-1}(L)$. In the case of splitting a zero, we consider the preimage $L_2$ of $L_1$ under the projection $\cS(\kap;m) \ra \wt{\Om}\cM_g(\kap;m)$, and then use a zero splitting map to map $L_2$ to a subset $L_3$ of a new stratum $\Om\cM_g(\kap^\pr)$ with $|\kap^\pr| = |\kap| + 1$. In the case of forming a connected sum with a torus, we fix $(\gam,w) \in \wt{\C}^\ast_{m+1} \times \C^\ast$ and take the preimage $L_2$ of $L_1$ in $\cT(\kap;m) \cap \left(\wt{\Om}\cM_g(\kap;m) \times \{(\gam,w)\}\right)$ under the projection to $\wt{\Om}\cM_g(\kap;m)$, and then use a connected sum map to map $L_2$ to a subset $L_3$ of a new stratum $\Om\cM_{g+1}(\kap^\pr)$ with $|\kap^\pr| = |\kap|$. Our hope is that $L_3$ is contained in a leaf of $\cA(\kap^\pr)$. However, there are two difficulties in pursuing this approach, and the purpose of the rest of this section is to address these difficulties. First, we need to show that $L_1$ is a single leaf of $\cA(\kap;m)$, as opposed to a disjoint union of leaves. We will show this is typically the case by showing that it holds for a dense open $\GL^+(2,\R)$-invariant set of leaves in $\Om\cM_g(\kap)$, using small loops around points in the metric completion of a leaf of $\cA(\kap;m)$ and some numerology. Second, in the case of connected sums with a torus, $L_2$ might not be a single leaf of $\cF_\cT$. When passing from $L_1$ to $L_2$, we remove some of the holomorphic $1$-forms in $L_1$ with a saddle connection parallel to $\gam$, and this can result in $L_2$ being highly disconnected. This issue appears to be much more subtle and is the main reason for the assumptions imposed on the absolute periods in our main theorems. \\

\paragraph{\bf Lifting leaves to finite covers.} We first address the question of when a leaf of $\cA(\kap)$ lifts to a single leaf of $\cA(\kap;m)$. We are assuming $|\kap| > 1$. Choose $\ell \in \kap \sm (m)$, $1 \leq j \leq \min(\ell+1,m+1)$, an ordered partition $\kap_1 = (a_1,\dots,a_j)$ of $m+1$ with $j$ parts, and an ordered partition $\kap_2 = (b_1,\dots,b_j)$ of $\ell+1$ with $j$ parts. We define $\wt{A}(\kap,\kap_1,\kap_2)$ to be the set of $(X,\wt{\om}) \in \wt{\Om}\cM_g(\kap;m)$ with a collection of $j$ homologous saddle connections $\gam_1,\dots,\gam_j$ from the distinguished zero $Z$ to a different zero $Z^\pr$ of order $\ell$, cyclically ordered in counterclockwise order around $Z$, and with the following properties.
\begin{enumerate}
    \item If $\gam_k$ has length $\eps > 0$ for $1 \leq k \leq j$, then every other saddle connection on $(X,\wt{\om})$ has length at least $3\eps$.
    \item Let $X_1,\dots,X_j$ be the connected components of $X \sm (\gam_1 \cup \cdots \cup \gam_j)$, where $X_k$ is bounded by $\gam_k \cup \gam_{k+1}$, indices taken modulo $j$. The cone angle around $Z$ inside $X_k$ is $2\pi a_k$, and the cone angle around $Z^\pr$ inside $X_k$ is $2\pi b_k$.
\end{enumerate}
Note that homologous saddle connections have the same holonomy, and thus the same length. We also define $A(\kap,\kap_1,\kap_2) = p(\wt{A}(\kap,\kap_1,\kap_2))$. A collection of saddle connections as above persists on an open neighborhood, so $\wt{A}(\kap,\kap_1,\kap_2)$ and $A(\kap,\kap_1,\kap_2)$ are open subsets of $\wt{\Om}\cM_g(\kap;m)$ and $\Om\cM_g(\kap)$, respectively.

The question of which configurations of homologous saddle connections can occur on a holomorphic $1$-form in a given connected component of $\Om\cM_g(\kap)$ was studied in detail in \cite{EMZ:principal}. As a consequence of some special cases of their results, we have the following.

\begin{lem} \label{lem:nonempty} Let $\Om\cM_g(\kap)$ be a stratum with $|\kap| > 1$, and fix $m \in \kap$.
\begin{enumerate}
\item For all $\ell \in \kap \sm (m)$, $A(\kap,(m+1),(\ell+1))$ intersects each component of $\Om\cM_g(\kap)$.
\item If some $m_j \in \kap$ is odd and $g \geq 3$, then for all $\ell \in \kap \sm (m)$, $A(\kap,(m,1),(\ell,1))$ intersects the nonhyperelliptic component of $\Om\cM_g(\kap)$.
\item If all $m_j \in \kap$ are even and $g \geq 5$, then for all $\ell \in \kap \sm (m)$, $A(\kap,(m-1,1,1),(\ell-1,1,1))$ intersects both nonhyperelliptic components of $\Om\cM_g(\kap)$.
\end{enumerate}
\end{lem}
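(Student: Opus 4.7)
The plan is to construct, in each required connected component, an explicit holomorphic $1$-form realizing the prescribed configuration, using the surgeries of Section \ref{sec:surgery} and spin-parity invariants.

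For (1), I would apply the zero-splitting surgery in reverse. Set $\kap_0 = (\kap \sm \{m, \ell\}) \cup \{m + \ell\}$. Given a connected component $\cC$ of $\Om\cM_g(\kap)$, Lemma \ref{lem:split} yields a connected component $\cC_0$ of $\Om\cM_g(\kap_0)$ whose $1$-forms, after splitting a zero of order $m + \ell$ into zeros of orders $m$ and $\ell$, lie in $\cC$. Pick any $(X_0, \om_0, \tht_0) \in \wt{\Om}\cM_g(\kap_0; m + \ell)$ in the preimage of $\cC_0$, and choose a splitting segment $\gam \in S(\om_0)$ with $|\sig(\gam)|$ less than, say, one-third of the length of the shortest saddle connection on $(X_0, \om_0)$. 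The splitting creates a new saddle connection of length $|\sig(\gam)|$ joining the two new zeros of orders $m$ and $\ell$, while every other saddle connection is perturbed by at most $O(|\sig(\gam)|)$ and remains strictly longer. The resulting $1$-form then lies in $A(\kap, (m+1), (\ell+1)) \cap \cC$.

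For (2) and (3), I would use multi-slit analogues of the splitting surgery which simultaneously create $j = 2$ or $j = 3$ parallel homologous saddle connections joining a pair of zeros. One starts from a $1$-form in the boundary stratum obtained by collapsing all the prescribed saddle connections to zero length, and performs a surgery that slits the resulting zero along $j$ parallel segments and reglues them according to the cyclic combinatorics prescribed by the ordered pair $(\kap_1, \kap_2)$; this redistributes the cone angle at the merged zero among the $j$ resulting components in the required way. Surgeries of this form preserve the spin-parity of the underlying $1$-form (Lemma 11 of \cite{KZ:components}), so by combining this with the spin-parity analysis of the boundary stratum one can reach each prescribed nonhyperelliptic component of $\Om\cM_g(\kap)$.

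The main obstacle is the detailed component-by-component accounting needed to verify that every connected component is in fact realized. This is precisely the content of Lemmas 9.1, 10.2, and 10.3 of \cite{EMZ:principal}, on which I rely. The hyperelliptic components are excluded from (2) and (3) because the hyperelliptic involution must permute any family of homologous saddle connections and therefore forbids the asymmetric cone-angle distributions $(m, 1), (\ell, 1)$ and $(m-1, 1, 1), (\ell-1, 1, 1)$. In (1), by contrast, the single saddle connection is trivially invariant under any involution, so the statement extends to every connected component, including the hyperelliptic ones.
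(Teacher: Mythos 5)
The paper offers no proof of this lemma: it is stated verbatim as a special case of Lemmas 9.1, 10.2, and 10.3 of \cite{EMZ:principal}, merely rephrased in the notation $A(\kap,\kap_1,\kap_2)$. Since your proposal explicitly defers ``the detailed component-by-component accounting'' to exactly those lemmas, in substance it is the same proof by citation as the paper's, and that is acceptable.

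However, the constructive sketch you wrap around the citation should not be read as an independent argument, because it has concrete gaps. For (1), Lemma \ref{lem:split} applies only when the target stratum has $|\kap^\pr| \geq 3$, so it says nothing in the case $|\kap| = 2$ --- which is precisely where hyperelliptic components (of $\Om\cM_g(g-1,g-1)$) exist and must be reached by part (1); moreover, Lemma \ref{lem:split} does not let you prescribe that the two merged zeros are exactly those of orders $m$ and $\ell$ (it chooses the merge so as to control connectivity or spin parity), and its part (2) addresses only even and odd components, never hyperelliptic ones, so the reduction does not deliver the full statement even where it applies. Your short-slit length estimate for making the new saddle connection strictly shortest is fine, but it does not resolve which component of $\Om\cM_g(\kap)$ a given component of $\Om\cM_g(\kap_0)$ splits into --- that bookkeeping is the entire content of the cited lemmas. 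For (2) and (3), the ``multi-slit analogues'' producing $j=2$ or $j=3$ homologous saddle connections with prescribed cone-angle distributions are not among the surgeries defined in this paper, and asserting that they reach every nonhyperelliptic component again presupposes the EMZ analysis rather than reproving it. Finally, the closing heuristic about why hyperelliptic components are excluded from (2)--(3) is both unnecessary (the lemma simply makes no claim about them) and unsound as stated: the only hyperelliptic components with $|\kap|>1$ occur for $\kap=\{g-1,g-1\}$, where $m=\ell$, so the configurations $(m,1),(\ell,1)$ and $(m-1,1,1),(\ell-1,1,1)$ are not asymmetric between the two zeros and the symmetry argument does not forbid them on its own.
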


\begin{proof}
Each of statements (1), (2), and (3) in Lemma \ref{lem:nonempty} follows from Lemmas 9.1, 10.2, and 10.3 in \cite{EMZ:principal}. Statement (1) is part of the case of these lemmas where $p = 1$ in the notation of \cite{EMZ:principal}. Statement (2) is part of the case where $p = 2$. Statement (3) is part of the case where $p = 3$. We will sketch how to construct holomorphic $1$-forms in the intersection of $A(\kap,\kap_1,\kap_2)$ with a component of $\Om\cM_g(\kap)$ from each statement.

For statement (1), let $\kap^\pr = (\kap \sm (\ell,m)) \cup (\ell+m)$. By \cite{KZ:components}, each component of $\Om\cM_g(\kap)$ can be accessed from a component of $\Om\cM_g(\kap^\pr)$ by splitting a zero of order $\ell+m$ into two zeros of orders $\ell$ and $m$. By making the resulting saddle connection sufficiently short, we obtain holomorphic $1$-forms in $A(\kap,(m+1),(\ell+1))$ in each component of $\Om\cM_g(\kap)$.

For statement (2), we consider several cases. If $\ell > 1$ and $m > 1$, let $\kap^\pr = (\kap \sm (\ell,m)) \cup (\ell + m - 2)$. Since some $m_j \in \kap$ is odd, we have $g \geq 4$, so by Theorem \ref{thm:KZ} we can choose a holomorphic $1$-form $(X,\om)$ in a nonhyperelliptic component of $\Om\cM_{g-1}(\kap^\pr)$. Split a zero of $\om$ of order $\ell + m - 2$ into two zeros of orders $\ell - 1$ and $m - 1$, then slit along the resulting saddle connection and glue in a flat torus using a parallel slit of the same length to get a holomorphic $1$-form $(X^\pr,\om^\pr) \in \Om\cM_g(\kap)$. If exactly one of $\ell = 1$ or $m = 1$, let $\kap^\pr = (\kap \sm (\ell,m)) \cup (\ell + m - 2)$. If $g \geq 4$, choose $(X,\om)$ in a nonhyperelliptic component of $\Om\cM_{g-1}(\kap^\pr)$, and if $g = 3$, choose $(X,\om)$ in $\Om\cM_2(\kap^\pr)$. Slit $(X,\om)$ along a segment from a zero of order $\ell + m - 2$ to a regular point, and glue in a flat torus using a parallel slit of the same length to get $(X^\pr,\om^\pr) \in \Om\cM_g(\kap)$. Lastly, if $\ell = m = 1$, then $|\kap| \geq 3$ since $g \geq 3$, so let $\kap^\pr = \kap \sm (1,1)$. Choose $(X,\om) \in \Om\cM_{g-1}(\kap^\pr)$, slit along a segment joining two regular points, and glue in a flat torus using a parallel slit of the same length to get $(X^\pr,\om^\pr) \in \Om\cM_g(\kap)$. In all cases, $(X^\pr,\om^\pr)$ lies in the nonhyperelliptic component of $\Om\cM_g(\kap)$, and the two slits become a pair of homologous saddle connections on $(X^\pr,\om^\pr)$ that certify membership in $A(\kap,(m,1),(\ell,1))$.

For statement (3), we also consider several cases. If $\ell > 2$ and $m > 2$, let $\kap^\pr = (\kap \sm (\ell,m)) \cup (\ell + m - 4)$, and choose a holomorphic $1$-form $(X,\om) \in \Om\cM_{g-2}(\kap^\pr)$. Split a zero of $\om$ of order $\ell + m - 4$ into two zeros of orders $\ell - 2$ and $m - 2$, slit along the resulting saddle connection $\gam_1$, and let $\gam_1^\pm$ be the left and right sides of this slit. Slit two flat tori $T_2,T_3$ along slits $\gam_2,\gam_3$ that are parallel to $\gam_1$ and of the same length as $\gam_1$, and let $\gam_2^\pm,\gam_3^\pm$ be the resulting left and right sides. Then glue $\gam_j^+$ to $\gam_{j+1}^-$, indices taken modulo $3$, to get a holomorphic $1$-form $(X^\pr,\om^\pr) \in \Om\cM_g(\kap)$. If exactly one of $\ell = 2$ or $m = 2$, let $\kap^\pr = (\kap \sm (\ell,m)) \cup (\ell + m - 4)$, choose $(X,\om) \in \Om\cM_{g-2}(\kap^\pr)$, slit along a segment from a zero of order $\ell + m - 4$ to a regular point, and glue in two flat tori using parallel slits of the same length as before to get $(X^\pr,\om^\pr) \in \Om\cM_g(\kap)$. If $\ell = m = 2$, then $|\kap| \geq 3$ since $g \geq 5$, so let $\kap^\pr = \kap \sm (2,2)$. Choose $(X,\om) \in \Om\cM_{g-2}(\kap^\pr)$, slit along a segment joining two regular points, and glue in two flat tori using parallel slits of the same length as before to get $(X^\pr,\om^\pr) \in \Om\cM_g(\kap)$. In all cases, $(X^\pr,\om^\pr)$ lies in a nonhyperelliptic component of $\Om\cM_g(\kap)$ with the same spin parity as the component containing $(X,\om) \in \Om\cM_{g-2}(\kap^\pr)$, and the three slits become a triple of homologous saddle connections on $(X^\pr,\om^\pr)$ that certify membership in $A(\kap,(m-1,1,1),(\ell-1,1,1))$. Since $g - 2 \geq 3$, by Theorem \ref{thm:KZ} the stratum $\Om\cM_{g-2}(\kap^\pr)$ contains both an odd component and an even component. Thus, we have produced elements in $A(\kap,(m-1,1,1),(\ell-1,1,1))$ in both nonhyperelliptic components of $\Om\cM_g(\kap)$.
\end{proof}

See Figure \ref{fig:split} (right) for an illustration of Case 1 in the stratum $\Om\cM_2(1,1)$, where the saddle connection arises from the slits on the left. See Figure \ref{fig:case2} for an illustration of Case 2 in the stratum $\Om\cM_4(3,3)$.

\begin{figure}
    \centering
    \includegraphics[width=0.6\textwidth]{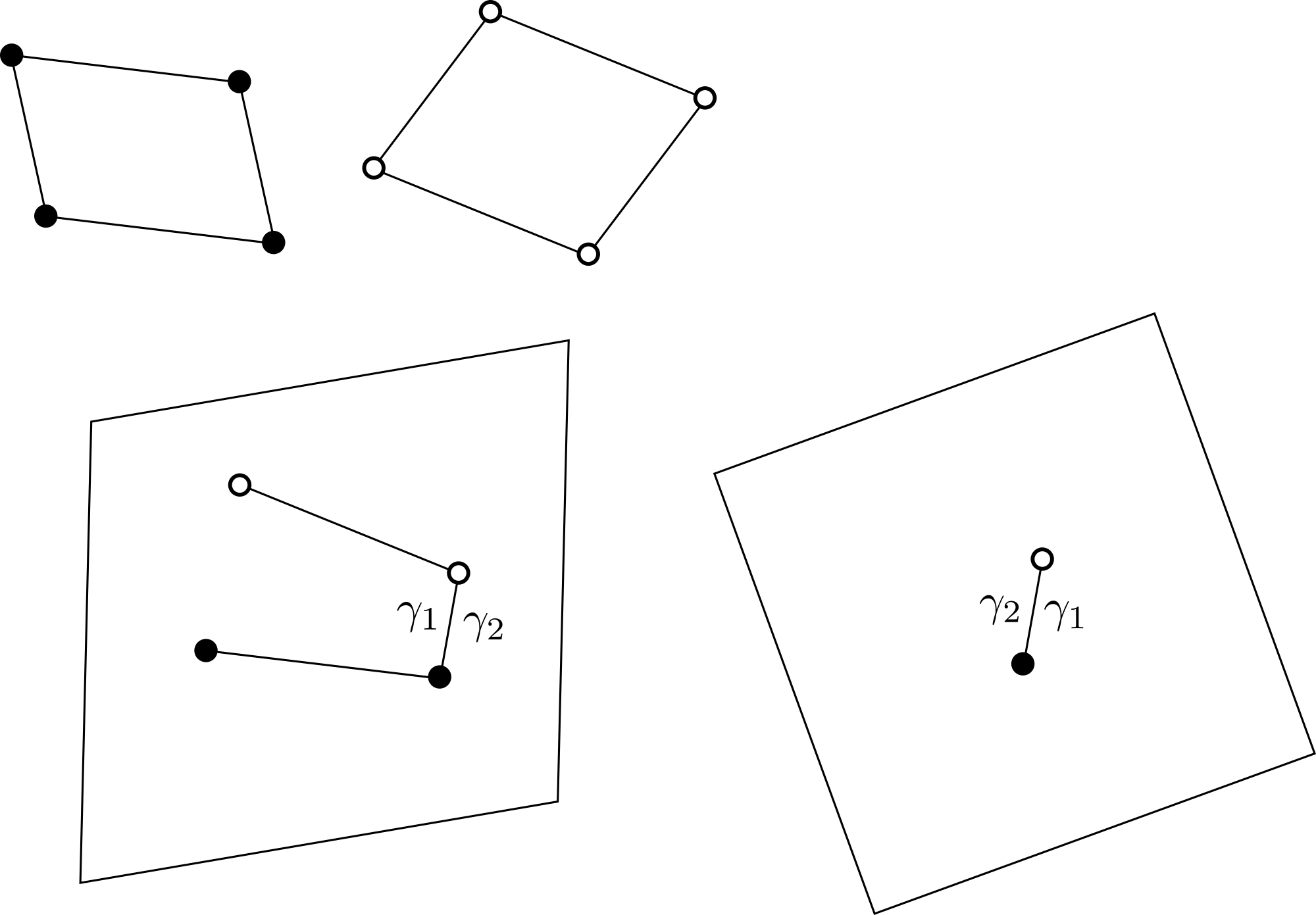}
    \caption{A holomorphic $1$-form in the intersection of $A((3,3),(3,1),(3,1))$ with the nonhyperelliptic component of $\Om\cM_4(3,3)$.}
    \label{fig:case2}
\end{figure}

The next lemma shows that leaves of $\cA(\kap)$ typically lift to leaves of $\cA(\kap;m)$.

\begin{lem} \label{lem:coverleaf}
Let $\Om\cM_g(\kap)$ be a stratum with $|\kap| > 1$. Fix $m \in \kap$, and let $p : \wt{\Om}\cM_g(\kap;m) \ra \Om\cM_g(\kap)$ be the stratum cover in (\ref{eq:prong}). There is an open $\GL^+(2,\R)$-invariant subset $A \subset \Om\cM_g(\kap)$ that intersects each connected component of $\Om\cM_g(\kap)$, such that if $L$ is a leaf of $\cA(\kap)$ that intersects $A$, then $p^{-1}(L)$ is a leaf of $\cA(\kap;m)$.
\end{lem}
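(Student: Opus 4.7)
My approach is to construct $A$ explicitly using short-saddle-connection configurations from Lemma~\ref{lem:nonempty}, and then to verify transitivity of the monodromy of the finite cover $p^{-1}(L) \to L$ on its fibers by exhibiting explicit loops in $L$. I would take
\[
A = \bigcup_{\ell \in \kap \sm \{m\}} A(\kap, (m+1), (\ell+1)),
\]
an open $\GL^+(2,\R)$-invariant subset which by Lemma~\ref{lem:nonempty}(1) meets every connected component of $\Om\cM_g(\kap)$. Since $\cA(\kap;m)$ is the lift of $\cA(\kap)$ under $p$, the preimage $p^{-1}(L)$ is a disjoint union of leaves of $\cA(\kap;m)$ covering $L$ with total degree $(m+1)N_m$, so the lemma reduces to showing that the monodromy representation of $\pi_1(L,(X_0,\om_0))$ on the fiber $p^{-1}(X_0,\om_0)$ is transitive. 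This fiber is parametrized by pairs (distinguished zero of order $m$, prong at that zero).

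Let $\gam$ be the unique shortest saddle connection on $(X_0, \om_0) \in A$, joining a zero $Z$ of order $m$ to a zero $Z'$ of order $\ell$, with relative period $w_0 = \int_\gam \om_0$. The quantity $w = \int_\gam \om$ is part of the relative-period chart on $L$ given by \eqref{eq:relative}, and the small circle $w(t) = w_0 e^{2\pi i t}$, $t \in [0,1]$, traces a loop $\sig_1$ in $L$ that remains inside the stratum because $\gam$ stays strictly shorter than every other saddle connection along the circle. This loop winds once around the discriminant locus $\{w = 0\}$, which is outside the stratum, and its monodromy in the cover $p$ rotates the prong at $Z$ by one sector of the cone angle $2\pi(m+1)$; concretely, this can be read off from the local model for the splitting surgery in Section~\ref{sec:surgery}, where the relevant local covering is $\sig \colon \wt{\C}^\ast \to \C^\ast$. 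Thus $\sig_1$ generates a cyclic $\Z/(m+1)$-action on the $m+1$ prongs at $Z$ while fixing $Z$ as distinguished zero; when $N_m = 1$ this alone is transitive on the fiber and the lemma follows.

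For $N_m \geq 2$, I would additionally work in the smaller subset $A(\kap,(m+1),(m+1)) \subset A$ where $\ell = m$, available because $m$ then appears at least twice in $\kap$. Both endpoints $Z,Z'$ of $\gam$ now have order $m$, and the transposition swapping their labels lies in the ${\rm Sym}(|\kap|)$ factor of the holonomy group $G = \C^{|\kap|}/\C v \rtimes {\rm Sym}(|\kap|)$; in $\X$-coordinates this transposition sends $w_0$ to $-w_0$ modulo $\C v$. Any arc in $\X$ from $w_0$ to $-w_0$ avoiding the discriminant $w = 0$ therefore descends to a loop $\sig_2$ in $L$ whose monodromy swaps $Z$ and $Z'$ as distinguished zero; combined with $\sig_1$ this yields transitivity on the $2(m+1)$ preimages with distinguished zero in $\{Z,Z'\}$. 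For $N_m \geq 3$, I would promote this to the full symmetric group on the order-$m$ zeros as follows: for each other order-$m$ zero $Z''$, openness of $A(\kap,(m+1),(m+1))$ together with the richness of the relative-period chart on $L$ let us reach some $(X_1,\om_1) \in L \cap A(\kap,(m+1),(m+1))$ whose shortest saddle connection joins $Z$ to $Z''$; performing the analogous $\sig_2$ there and conjugating by a path in $L$ from $(X_0,\om_0)$ to $(X_1,\om_1)$ produces the transposition $(Z,Z'')$ in $\pi_1(L,(X_0,\om_0))$. These transpositions generate ${\rm Sym}(N_m)$, which combined with $\sig_1$ gives a transitive action on the entire fiber.

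The main obstacle I expect is to verify the monodromy of $\sig_1$ precisely: one must match the \emph{rotating the slit by $2\pi$ in the cone} picture from the splitting surgery in Section~\ref{sec:surgery} against the cyclic deck transformation group of the cover $p$ near the discriminant. The cleanest way should be to realize $p$ locally as pulled back from $\sig \colon \wt{\C}^\ast \to \C^\ast$ via the relative period of $\gam$, using the zero splitting map $\Phi$. The $N_m \geq 3$ extension is secondary but also requires care: one must verify that motion within a single leaf $L$, rather than the ambient stratum, suffices to shift which order-$m$ zeros are joined by the shortest saddle connection, which should follow from the fact that $L$ is open in the linear slice of the period-coordinate chart cut out by fixing the absolute periods.
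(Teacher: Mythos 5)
Your reduction to transitivity of the monodromy of $\pi_1(L)$ on the fiber of $p$ is the same framework as the paper, but the key step --- the monodromy of your loop $\sig_1$ --- is computed incorrectly, and the error is fatal to your choice of $A$. Rotating the relative period of the shortest saddle connection $\gam$ once around the circle does \emph{not} return to the same point of $L$: in the local model where collapsing $\gam$ merges the zeros of orders $m$ and $\ell$ into one of order $m+\ell$, the splittings with a given small relative period $w$ form $m+\ell+1$ distinct choices that are permuted cyclically as $w$ winds around $0$, so the path only closes up after $m+\ell+1$ full turns; this is exactly the paper's quantity $N(\kap_1,\kap_2)=a_1+b_1-1$ for the configuration $((m+1),(\ell+1))$. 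The resulting closed loop shifts the prong by $N(\kap_1,\kap_2)\equiv \ell \pmod{m+1}$ sectors, not by one sector, so the cyclic group it generates is $\langle \ell \bmod (m+1)\rangle$, which is a proper subgroup whenever $\gcd(\ell,m+1)>1$. Concretely, for the connected stratum $\Om\cM_5(5,3)$ with $m=5$ (so $N_m=1$ and your ${\rm Sym}$-step is unavailable), your set $A=A(\kap,(6),(4))$ yields prong shifts in $\{0,3\}\subset\Z/6$ only, so transitivity fails and your claim ``when $N_m=1$ this alone is transitive'' is false. This is precisely why the paper cannot get by with the $j=1$ configurations alone: it invokes Lemma \ref{lem:nonempty}(2)--(3) to bring in the configurations $((m,1),(\ell,1))$ and $((m-1,1,1),(\ell-1,1,1))$ on nonhyperelliptic components, and the whole content of its proof is the gcd computation $\gcd\bigl(m+1,\{N(\kap_1,\kap_2)\}\bigr)=1$ carried out case by case; the hyperelliptic/two-equal-zeros case is saved separately by $\ell=m$, where $\gcd(m+1,2m+1)=1$.

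A secondary gap is your treatment of the zero-swapping loop $\sig_2$ and its $N_m\geq 3$ extension. The $(G,\X)$-structure on $L$ only says that transition maps of relative-period charts lie in $G$; it does not say that an arc in $\X$ joining two $G$-equivalent points descends to a \emph{closed} loop in $L$, so ``any arc from $w_0$ to $-w_0$ avoiding $w=0$'' does not automatically produce the transposition --- one must exhibit an actual path in $L$ returning to the same unmarked surface with the zeros exchanged. The paper does this concretely when the two zeros in the configuration have equal order, by rotating through the half-period $\pi(2m+1)$, which visibly reproduces the same surface with the prong at the other zero of order $m$; an analogous explicit argument (not an appeal to ``richness of the chart'') would be needed to reach a third zero of order $m$ along the same leaf.
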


\begin{proof}
Fix $\ell \in \kap \sm (m)$, $1 \leq j \leq \min(\ell+1,m+1)$, $\kap_1 = (a_1,\dots,a_j)$ an ordered partition of $m+1$, and $\kap_2 = (b_1,\dots,b_j)$ an ordered partition of $\ell+1$. Suppose that $A(\kap,\kap_1,\kap_2)$ is nonempty. Fix $(X,\om) \in A(\kap,\kap_1,\kap_2)$, fix $(X,\wt{\om}) \in p^{-1}(X,\om)$, and let $\gam_1,\dots,\gam_j$ be homologous saddle connections on $(X,\om)$ as in the definition of $A(\kap,\kap_1,\kap_2)$. Let $L$ be the leaf of $\cA(\kap)$ through $(X,\om)$, and let $\wt{L}$ be the leaf of $\cA(\kap;m)$ through $(X,\wt{\om})$.

By slitting $X$ along $\gam_1 \cup \cdots \cup \gam_j$ and gluing the left side of $\gam_k$ to the right side of $\gam_{k+1}$, indices taken modulo $j$, we obtain a finite collection of holomorphic $1$-forms, $(X_1,\om_1), \dots, (X_j,\om_j)$. Each $(X_k,\om_k)$ has an oriented geodesic segment $\del_k$ from a point $Z_k$ to a point $Z_k^\pr$ coming from gluing the left side of $\gam_k$ to the right side of $\gam_{k+1}$, and satisfying $\int_{\del_k} \om_k = \int_{\gam_k} \om$. The order of $\om_k$ at $Z_k$ is $a_k - 1$, and the order of $\om_k$ at $Z_k^\pr$ is $b_k - 1$. Let $\del_{k,1},\dots,\del_{k,b_k}$ be the oriented geodesic segments on $X_k$ starting at $Z_k^\pr$ such that
\be
\int_{\del_{k,r}} \om_k = -\int_{\del_k} \om_k
\ee
for $1 \leq r \leq b_k$, cyclically ordered in counterclockwise order around $Z_k^\pr$. We may assume that $\del_{k,1}$ is $\del_k$ with the opposite orientation. Slit $X_k$ along $\del_{k,1} \cup \cdots \cup \del_{k,b_k}$ and glue the left side of $\del_{k,r}$ to the right side of $\del_{k,r+1}$, indices taken modulo $b_k$, to obtain a holomorphic $1$-form $(X_k^\pr,\om_k^\pr)$. This surgery combines $Z_k,Z_k^\pr$ into a single zero $Z_k$, and when $a_k > 1$ or $b_k > 1$, $(X_k,\om_k)$ arises from $(X_k^\pr,\om_k^\pr)$ by splitting the zero $Z_k$. The order of $\om_k^\pr$ at $Z_k$ is $a_k + b_k - 2$, and the order of $\om_k^\pr$ at $Z_k^\pr$ is $0$. The union $(X_1^\pr,\om_1^\pr) \cup \cdots \cup (X_j^\pr,\om_j^\pr)$, with the points $Z_1,\dots,Z_j$ identified to a single node, can be viewed as a point in the metric completion of the leaf $L$. Note that in the special case $j = 1$, we are just combining two zeros of $\om$ together by collapsing the saddle connection $\gam_1$.

We can reverse the process above to recover $(X,\om)$ from the $(X_k^\pr,\om_k^\pr)$. More generally, for $1 \leq k \leq j$, choose a collection of oriented geodesic segments $\del_{k,1}^\pr,\dots,\del_{k,b_k}^\pr$ on $X_k^\pr$ starting at $Z_k$ such that $\del_{k,r}^\pr$ has length $\eps$ and the counterclockwise angle around $Z_k$ from $\del_{k,1}^\pr$ to $\del_{k,r}^\pr$ is $2\pi(r-1)$ for $1 \leq r \leq b_k$. Slit $X_k^\pr$ along $\del_{k,1}^\pr \cup \cdots \cup \del_{k,b_k}^\pr$ and glue the left side of $\del_{k,r}^\pr$ to the right side of $\del_{k,r+1}^\pr$, indices taken modulo $b_k$. The resulting holomorphic $1$-form has a distinguished oriented geodesic segment $\del_k$ coming from gluing the left side of $\del_{k,b_k}^\pr$ to the right side of $\del_{k,1}^\pr$, and $\del_k$ is a saddle connection when $a_k > 1$ and $b_k > 1$. Next, slit along the segments $\del_k$, $1 \leq k \leq j$, and glue the left side of $\del_k$ to the right side of $\del_{k+1}$, indices taken modulo $j$, to obtain a holomorphic $1$-form in $A(\kap,\kap_1,\kap_2)$.

The oriented geodesic segments of length $\eps$ on $(X_k^\pr,\om_k^\pr)$ starting at $Z_k$ are parameterized by $\R/2\pi(a_k + b_k - 1)\Z$. By rotating the chosen segments $\del_{k,1}^\pr,\dots,\del_{k,b_k}^\pr$ counterclockwise around $Z_k$ in the construction above, we obtain a family of holomorphic $1$-forms $s_k(t) = (X_{k,t},\om_{k,t})$ such that $s_k(0) = (X_k,\om_k)$ and
\be
\int_{\del_k} \om_{k,t} = e^{it} \int_{\del_k} \om_k .
\ee
Moreover, since $s_k(t)$ is obtained from $s_k(0)$ by only modifying a contractible neighborhood of $Z_k$, the absolute periods do not change. Thus, by slitting $s_k(t)$ along $\del_k$ for $1 \leq k \leq j$ and gluing the left side of $\del_k$ to the right side of $\del_{k+1}$, indices taken modulo $j$, we obtain a path
\be
s : \R \ra L
\ee
such that $s(0) = (X,\om)$, and such that for $t \in \R$ and $1 \leq k \leq j$, the holonomy of $\gam_k$ on $s(t)$ is given by $e^{it}\int_{\gam_k} \om$. Informally, $s(t)$ is obtained from $s(0)$ by rotating each saddle connection $\gam_k$ around its starting point $Z$ counterclockwise through an angle $t$. The image of $s$ is a small loop around a point in the metric completion of $L$. The choice of $(X,\wt{\om}) \in p^{-1}(X,\om)$ then determines a lift
\be
\wt{s} : \R \ra \wt{L}
\ee
such that $\wt{s}(0) = (X,\wt{\om})$ and $p(\wt{s}(t)) = s(t)$ for all $t \in \R$. See Figure \ref{fig:rotate31} for an example in $\wt{A}((3,1),(3,1),(1,1))$. 

\begin{figure}
    \centering
    \includegraphics[width=\textwidth]{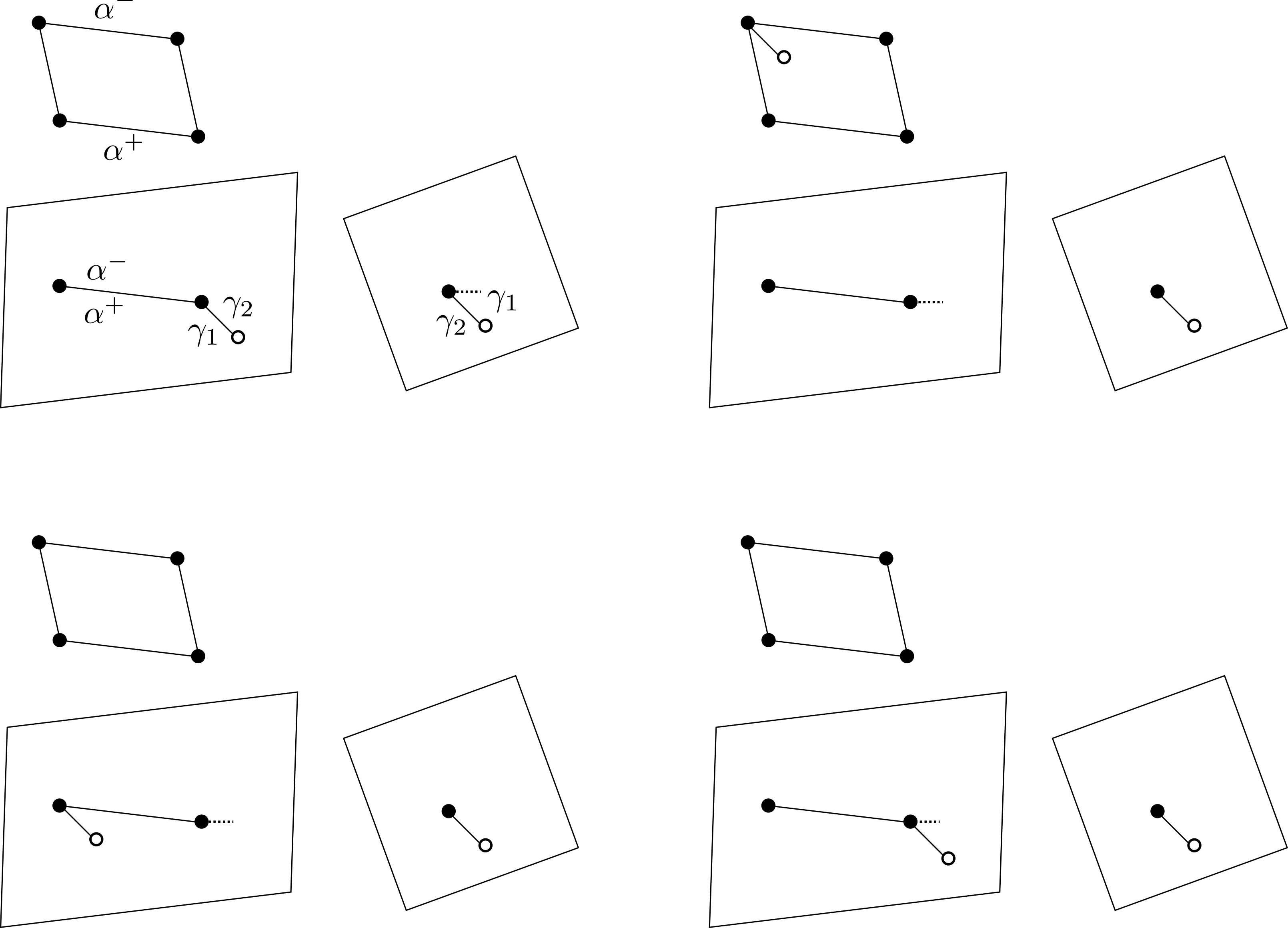}
    \caption{A loop $\wt{s} : \R \ra \wt{L}$ around a point in the metric completion of a leaf $\wt{L}$ of $\cA(\kap;m)$, where $\kap = (3,1)$, $m = 3$, and $\wt{s}(\R) \subset \wt{A}((3,1),(3,1),(1,1))$. Along $\wt{s}(\R)$, the saddle connections $\gam_1,\gam_2$ are rotated counterclockwise around the zero of order $3$. The $4$ images show $\wt{s}(0)$ (top-left), $\wt{s}(2\pi)$ (top-right), $\wt{s}(4\pi)$ (bottom-left), and $\wt{s}(6\pi)$ (bottom-right). We have $s(0) = s(6 \pi)$, but the prongs (shown with dashes) are different. In this case, the $4$ possible choices of prongs are realized in $p^{-1}(s(0)) = \{\wt{s}(0),\wt{s}(6\pi),\wt{s}(12\pi),\wt{s}(18\pi)\}$, and $\wt{s}(24\pi) = \wt{s}(0)$.}
    \label{fig:rotate31}
\end{figure}

Rotating these saddle connections counterclockwise through an angle $2\pi(a_k + b_k - 1)$ does not change $(X_k,\om_k)$, that is, $s_k(t + 2\pi(a_k + b_k - 1)) = s_k(t)$ for $t \in \R$. Therefore, letting
\be
N(\kap_1,\kap_2) = \lcm_{1 \leq k \leq j}(a_k + b_k - 1) ,
\ee
we have
\be
s(t) = s(t + 2\pi N(\kap_1,\kap_2))
\ee
for $t \in \R$. Letting $c(t)$ be the counterclockwise angle around $Z$ from the prong on $s(t)$ to the saddle connection $\gam_1$ on $s(t)$, we have $c(t) = c(0) + t$ for $t \in \R$. For $n \in \Z$, let $\tht_n$ be the prong on $(X,\wt{\om})$ such that the counterclockwise angle from $\tht(\wt{\om})$ to $\tht_n$ is $2\pi n$. Let $(X,\wt{\om}_n)$ be the element of $p^{-1}(X,\om)$ with $\tht(\wt{\om}_n) = \tht_n$. Then
\be
\wt{s}(2\pi N(\kap_1,\kap_2)) = (X,\wt{\om}_{-N(\kap_1,\kap_2)}) \in \wt{L} .
\ee
The cone angle around $Z$ is $2\pi(m+1)$, meaning $(X,\wt{\om}_{m+1}) = (X,\wt{\om})$.

Since the action of $\wt{\GL}^+(2,\R)$ on $\wt{\Om}\cM_g(\kap;m)$ respects leaves of $\cA(\kap;m)$, and since $p$ is equivariant for the actions of $\wt{\GL}^+(2,\R)$ and $\GL^+(2,\R)$, for any $(Y,\wt{\eta}) \in \wt{\Om}\cM_g(\kap;m)$ such that $L(\eta)$ intersects $\GL^+(2,\R) \cdot A(\kap,\kap_1,\kap_2)$, we similarly have $(Y,\wt{\eta}_{-N(\kap_1,\kap_2)}) \in L(\wt{\eta})$. Therefore, letting $N_1(\kap_1,\kap_2) = \gcd(m+1,N(\kap_1,\kap_2))$, we have
\be
(Y,\wt{\eta}_{n N_1(\kap_1,\kap_2)}) \in L(\wt{\eta})
\ee
for all $n \in \Z$ whenever $L(\eta)$ intersects $\GL^+(2,\R) \cdot A(\kap,\kap_1,\kap_2)$.

Now let $\cC$ be a connected component of $\Om\cM_g(\kap)$. Since $A(\kap,\kap_1,\kap_2) \cap \cC$ is open, it has positive measure whenever it is nonempty. By Lemma \ref{lem:nonempty}, there are ordered partitions $\kap_1,\kap_2$ as above such that $A(\kap,\kap_1,\kap_2) \cap \cC$ is nonempty. Let $A_\cC$ be the intersection of the finitely many nonempty subsets of the form $\GL^+(2,\R) \cdot (A(\kap,\kap_1,\kap_2) \cap \cC)$. By ergodicity of the $\GL^+(2,\R)$-action on $\cC$, we have that $A_\cC$ is nonempty. Moreover, $A_\cC$ is open and $\GL^+(2,\R)$-invariant. We claim that
\begin{equation} \label{eq:gcd1}
\gcd \left(\left\{m+1\right\} \cup \left\{N(\kap_1,\kap_2) : A_\cC \subset A(\kap,\kap_1,\kap_2)\right\}\right) = 1 .
\end{equation}
We verify this claim in 3 cases. \\

\paragraph{\bf Case 1:} Suppose that $\ell = m$. By Lemma \ref{lem:nonempty},  we have
\be
A_\cC \subset A(\kap,(m+1),(m+1)) .
\ee
Since $N((m+1),(m+1)) = 2m+1$, the $\gcd$ in (\ref{eq:gcd1}) divides $\gcd(m+1,2m+1) = 1$. \\

\paragraph{\bf Case 2:} Some part of $\kap$ is odd. By Case 1, we may assume that $\cC$ is nonhyperelliptic, which implies $g \geq 3$. Note that $\kap$ contains at least two odd parts, so we may assume that $\ell$ is odd. By Lemma \ref{lem:nonempty},
\be
A_\cC \subset A(\kap,(m+1),(\ell+1)), \quad A_\cC \subset A(\kap,(m,1),(\ell,1)) .
\ee
Since $\ell$ is odd and
\be
N((m+1),(\ell+1)) = m + \ell + 1, \quad N((m,1),(\ell,1)) = m + \ell - 1,
\ee
the $\gcd$ in (\ref{eq:gcd1}) divides
\be
\gcd(m+1,m+\ell+1,m+\ell-1) = \gcd(m+1,\ell,2) = 1.
\ee

\paragraph{\bf Case 3:} All parts of $\kap$ are even. By Case 1, we may assume that $\cC$ is nonhyperelliptic. If $g \geq 5$, then by Lemma \ref{lem:nonempty},
\be
A_\cC \subset A(\kap,(m+1),(\ell+1)), \quad A_\cC \subset A(\kap,(m-1,1,1),(\ell-1,1,1)) .
\ee
Since $m+1$ is odd and
\be
N((m-1,1,1),(\ell-1,1,1)) = m + \ell - 3 ,
\ee
the $\gcd$ in (\ref{eq:gcd1}) divides
\be
\gcd(m+1,m+\ell+1,m+\ell-3) = \gcd(m+1,\ell,4) = 1 .
\ee
If $g \leq 4$, then $\kap$ is one of $(2,2)$, $(2,2,2)$, $(4,2)$. The cases $(2,2)$ and $(2,2,2)$ are already covered by Case 1. For $\kap = (4,2)$, Lemma \ref{lem:nonempty} implies $A_\cC \subset A(\kap,(5),(3))$. Since $N((5),(3)) = 7$ and $\gcd(5,7) = \gcd(3,7) = 1$, the $\gcd$ in (\ref{eq:gcd1}) is $1$ in this case. \\

It remains to show that in the case where $\ell = m$, the leaf $\wt{L}$ also contains elements of $p^{-1}(X,\om)$ for which the prong is at a different zero of order $m$. In this case, $(X,\om) \in A(\kap,(m+1),(m+1))$ arises from a holomorphic $1$-form in $\Om\cM_g(\kap^\pr)$ by splitting a zero, where $\kap^\pr = (\kap \sm (m,m)) \cup (2m)$. Let $\gam_1$ be the resulting saddle connection. If we apply the zero splitting map again to $(X,\om)$, by slitting along the $m+1$ segments emanating from the starting point of $\gam_1$ with holonomy $-\int_{\gam_1} \om$, then $\gam_1$ is preserved while the starting point of $\gam_1$ is moved, and on the resulting holomorphic $1$-form $(X^\pr,\om^\pr)$, the holonomy of $\gam_1$ is $2 \int_{\gam_1} \om$. Here, $(X^\pr,\om^\pr) \in \Om\cM_g(\kap)$ as well. Rotating the choice of segments for both zero splitting operations simultaneously counterclockwise then gives us a small loop $\wt{s}_1 : \R \ra \wt{L}$, and we have $p(\wt{s}_1((m+1)\pi)) = p(\wt{s}_1(0))$. However, on $\wt{s}_1((m+1)\pi)$, the prong is on the other zero of order $m+1$. See Section 8.1 of \cite{EMZ:principal} for a similar discussion in strata with labelled singularities.

To conclude, let $A = \bigcup_\cC A_\cC$ where $\cC$ ranges over the connected components of $\Om\cM_g(\kap)$. Then $A$ is open, $\GL^+(2,\R)$-invariant, and intersects every connected component of $\Om\cM_g(\kap)$. Let $L$ be a leaf of $\cA(\kap)$ that intersects $A$, and fix $(X,\om) \in L \cap A$. By the claim in (\ref{eq:gcd1}) and the surgery in the previous paragraph, for any $(X,\wt{\om}) \in p^{-1}(X,\om)$, the leaf of $\cA(\kap;m)$ through $(X,\wt{\om})$ contains $p^{-1}(X,\om)$. Thus, $p^{-1}(L)$ is a leaf of $\cA(\kap;m)$.
\end{proof}

When $|\kap| > 1$, Lemma \ref{lem:coverleaf} implies that the preimage under $p$ of a connected component of $\Om\cM_g(\kap)$ is a connected component of $\wt{\Om}\cM_g(\kap;m)$. The same holds when $|\kap| = 1$, since in that case the orbit of $(X,\wt{\om})$ under the rotation subgroup of $\wt{\GL}^+(2,\R)$ contains $p^{-1}(X,\om)$. \\

\paragraph{\bf Splitting zeros along leaves.} Here, we show that leaves of $\cF_\cS$ admit a simple global description in terms of leaves of $\cA(\kap;m)$.

\begin{lem} \label{lem:splitleaf}
Let $L_\cS$ be the leaf of $\cF_\cS$ through $(X,\wt{\om},\gam)$. Then $(X^\pr,\wt{\om}^\pr,\gam^\pr) \in L_\cS$ if and only if $(X^\pr,\wt{\om}^\pr)$ is in the leaf of $\cA(\kap;m)$ through $(X,\wt{\om})$ and $\gam^\pr \in \Del(\wt{\om}^\pr)$.
\end{lem}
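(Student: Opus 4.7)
The plan is to handle the two directions separately. The forward (``only if'') direction is essentially a tautology: any path in $\cS_1(\kap;m)$ along which the absolute periods are constant projects to such a path in $\wt{\Om}_1\cM_g(\kap;m)$ connecting $(X,\om,\tht)$ to $(X^\pr,\om^\pr,\tht^\pr)$, and $\gam^\pr \in S(\om^\pr)$ holds by the definition of $\cS_1(\kap;m) \subset \cS(\kap;m)$.

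For the reverse direction, I would use the embedding $\cS(\kap;m) \hra \wt{\Om}\cM_g(\kap;m) \times \wt{\C}^\ast$ from Section \ref{sec:surgery}, under which $\cS(\kap;m) = \{((Y,\eta,\tau),w) : \sig(w) \in \Del(\eta)\}$. Letting $\wt{L}$ be the leaf of $\cA(\kap;m)$ through $(X,\om,\tht)$, the leaf $L_\cS$ is the connected component containing $((X,\om,\tht),\gam)$ of $(\wt{L} \times \wt{\C}^\ast) \cap \cS(\kap;m)$, so it suffices to produce a continuous path inside this intersection from $((X,\om,\tht),\gam)$ to $((X^\pr,\om^\pr,\tht^\pr),\gam^\pr)$.

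The construction has three stages. Fix a path $(X_t,\om_t,\tht_t)$, $t \in [0,1]$, in $\wt{L}$ connecting the projections. Its image is a compact subset of the stratum, so by the compactness characterization recalled in Section \ref{sec:surgery} there exists $\eps_0 > 0$ such that every saddle connection on every $(X_t,\om_t,\tht_t)$ has length at least $\eps_0$. Choose $w^\ast \in \wt{\C}^\ast$ with $|\sig(w^\ast)| < \eps_0$; since every excluded ray in $\C^\ast \sm \Del(\om_t)$ starts at a point of $\Gam(\om_t)$ of modulus at least $\eps_0$ and proceeds outward, $\sig(w^\ast) \in \Del(\om_t)$ for every $t$. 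In stage one, I keep $(X,\om,\tht)$ fixed and deform $w$ from $\gam$ to $w^\ast$ within $\sig^{-1}(\Del(\om))$: moving $\gam$ radially inward in the polar coordinates $\wt{\C}^\ast \cong \R_{>0} \times \R/2\pi(m+1)\Z$ crosses no excluded ray, since the excluded rays proceed outward from their starting points, and once $|\sig(w)| < \eps_0$ one can freely traverse the subset $\{0 < |\sig(w)| < \eps_0\}$ to reach $w^\ast$. In stage two, I traverse the path $(X_t,\om_t,\tht_t)$ while holding $w = w^\ast$ fixed; the resulting path stays in $\cS_1(\kap;m)$ by the choice of $w^\ast$ and because area is constant along leaves of $\cA(\kap;m)$. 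In stage three, at $(X^\pr,\om^\pr,\tht^\pr)$, I connect $w^\ast$ to $\gam^\pr$ by the symmetric construction inside $\sig^{-1}(\Del(\om^\pr))$. Concatenating the three stages produces the required path.

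The main technical point is the path-connectedness of each slice $\sig^{-1}(\Del(\eta))$, which is what makes stages one and three go through. This ultimately reduces to observing that the preimage $\{0 < |w| < \eps\}$ of any small punctured disk in $\wt{\C}^\ast$ is connected and meets every sheet of the $(m+1)$-fold cover. The only other delicate ingredient is securing the uniform lower bound $\eps_0$ on saddle connection lengths along the trajectory in $\wt{L}$, which is an immediate consequence of the compactness characterization for subsets of strata applied to the compact image of $[0,1]$.
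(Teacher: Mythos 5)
Your proposal is correct and follows essentially the same route as the paper's proof: choose a path in the leaf $\wt{L}$, use compactness of its image to get a uniform lower bound $\eps_0$ on saddle connection lengths, shorten the segment below $\eps_0$, transport it along the path via the inclusions $S(\om_t) \hra \wt{\C}^\ast$, and reconnect to $\gam^\pr$ at the end, with the converse direction being definitional. The only difference is cosmetic: where the paper simply asserts that $S(\om)$ is path-connected, you justify it explicitly by radial contraction into the punctured disk $\{0 < |\sig(w)| < \eps_0\}$ and connectivity of its preimage in the $(m+1)$-fold cover, which is a worthwhile clarification but not a different argument.
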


\begin{proof}
Let $\wt{L}$ be the leaf of $\cA(\kap;m)$ through $(X,\wt{\om})$, fix $(X^\pr,\wt{\om}^\pr) \in \wt{L}$, and fix $\gam^\pr \in \Del(\wt{\om}^\pr)$. Let $s : [0,1] \ra \wt{L}$ be a path such that $s(0) = (X,\wt{\om})$ and $s(1) = (X^\pr,\wt{\om}^\pr)$. Let $(X_t,\wt{\om}_t) = s(t)$. By compactness, there is $\eps > 0$ such that for all $t \in [0,1]$, every saddle connection on $s(t)$ has length at least $\eps$. Since $\Del(\wt{\om})$ is path-connected, there is a path $s_1 : [0,1] \ra L_\cS$ such that $s_1(0) = (X,\wt{\om},\gam)$ and $s_1(1) = (X,\wt{\om},\gam_1)$, where $\gam_1$ has length less than $\eps$. Using the natural inclusions $\Del(\wt{\om}_t) \hra \wt{\C}^\ast_{m+1}$, we obtain a well-defined path $\wt{s} : [0,1] \ra L_\cS$ given by $\wt{s}(t) = (s(t),\gam_1)$. Then since $\Del(\wt{\om}^\pr)$ is path-connected, there is a path $s_2 : [0,1] \ra L_\cS$ such that $s_2(0) = (X^\pr,\wt{\om}^\pr,\gam_1)$ and $s_2(1) = (X^\pr,\wt{\om}^\pr,\gam^\pr)$. By concatenating $s_1,\wt{s},s_2$, we see that $(X^\pr,\wt{\om}^\pr,\gam^\pr) \in L_\cS$. The other containment is clear by definition of $\cF_\cS$.
\end{proof}

Fix $1 \leq j < m$, let $\kap^\pr = (\kap \sm (m)) \cup (m-j,j)$, and consider the associated zero splitting map $\Phi : \cS(\kap;m) \ra \Om\cM_g(\kap^\pr)$. Splitting a zero is a local surgery that only modifies a holomorphic $1$-form in a contractible neighborhood of one of its zeros, so it does not change the absolute periods. Therefore, $\Phi$ sends leaves of $\cF_\cS$ into leaves of $\cA(\kap^\pr)$. \\

\paragraph{\bf Geodesics on leaves.} We will address the question of when leaves of $\cA(\kap;m)$ lift to leaves of $\cF_\cT$ in the case $|\kap| = 2$, which will be sufficient for our purposes. Before doing this, we study the geometry of leaves of $\cA(\kap)$ in the case $|\kap| = 2$ in greater detail. In this case, a leaf $L$ of $\cA(\kap)$ is a Riemann surface equipped with a canonical quadratic differential $q$. To describe $q$, fix $(X_0,\om_0) \in L$ and let $\gam$ be a saddle connection on $(X_0,\om_0)$ with distinct endpoints. Let $Z_1$ and $Z_2$ be the starting point of $\gam$ and the ending point of $\gam$, respectively. The map
\be
r : (X,\om) \mapsto \int_\gam \om \in \C
\ee
provides a local coordinate on $L$ near $(X_0,\om_0)$, and we have $q = dr^2$. For any $z \in \C^\ast$, there is a locally defined geodesic with respect to $|q|$ through $(X_0,\om_0)$,
\be
s : (-\eps,\eps) \ra L, \quad s(t) = (X_t,\om_t),
\ee
such that $\frac{d}{dt}\int_\gam \om_t = z$. The maximal domain of definition of $s$ is not necessarily $\R$. However, the only obstruction is the existence of a saddle connection on $(X_0,\om_0)$ with distinct endpoints and with holonomy in $\R z$.

\begin{cor} \label{cor:geodesic} (\cite{BSW:horocycle}, Corollary 6.2)
The maximal domain of definition of $s$ contains $t_0 \in \R$ if and only if $(X_0,\om_0)$ does not have a saddle connection from $Z_2$ to $Z_1$ with holonomy in $\{t t_0 z : 0 \leq t \leq 1\}$.
\end{cor}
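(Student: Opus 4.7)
The strategy is to use the constancy of absolute periods along the leaf $L$ to control how holonomies of relative homology classes evolve along the geodesic $s$, and to combine this with Mumford's compactness criterion. The key preliminary observation is that, since $\int_\gam \om_t = \int_\gam \om_0 + tz$ by definition of $s$ and since $|\kap| = 2$ makes the quotient $H_1(X, Z(\om); \Z)/H_1(X; \Z) \cong \Z$ generated by $[\gam]$, Gauss-Manin parallel transport along $s$ forces every class $[\delta] \in H_1(X, Z(\om); \Z)$ to satisfy
\be
\int_{[\delta]} \om_t = \int_{[\delta]} \om_0 + n t z,
\ee
where $n \in \{-1, 0, 1\}$ according to whether $\delta$ is from $Z_2$ to $Z_1$, is a loop at a single zero, or is from $Z_1$ to $Z_2$.

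The ``only if'' direction is immediate from this formula: if $(X_0, \om_0)$ has a saddle connection $\delta$ from $Z_2$ to $Z_1$ with holonomy $s t_0 z$ for some $s \in (0, 1]$ (the value $s = 0$ being excluded because saddle connections have nonzero holonomy), then $\int_\delta \om_t = (s t_0 - t)z$ vanishes at the intermediate time $t = s t_0$, so $\delta$ shrinks to zero length there. By Mumford's criterion, $s(t)$ leaves every compact subset of $\Om\cM_g(\kap)$ as $t \to s t_0$, and $s$ cannot be defined at $s t_0$, let alone at $t_0$.

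The ``if'' direction is by contradiction. Assuming no such saddle connection exists on $(X_0, \om_0)$ and that $s$ fails to reach $t_0$, let $t^*$ be the endpoint of the maximal subinterval of the domain of $s$ that contains $0$ and lies between $0$ and $t_0$. Fix a contractible precompact neighborhood $\cU \subset \Om\cM_g(\kap)$ containing a final segment of $s|_{[0, t^*)}$. By Mumford's criterion applied to $\cU$, some saddle connection on $s(t)$ must have length tending to zero as $t \to t^*$, else $s$ would continuously extend to $t^*$ and, by local existence of geodesics on $L$, beyond, contradicting maximality. By Lemma \ref{lem:Ufin} applied to $\cU$, only finitely many homotopy classes rel $Z(\om)$ can have short length representatives, and by the affine formula each such class has a well-defined holonomy-vanishing time. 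The degenerating class $[\delta]$ must therefore have $n \neq 0$, and its vanishing at $t^*$ yields $\int_{[\delta]} \om_0 = \pm t^* z \in \{t t_0 z : t \in [0, 1]\}$.

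The main obstacle is the final step: converting this homological conclusion into an actual saddle connection on $(X_0, \om_0)$ from $Z_2$ to $Z_1$ with the prescribed holonomy, so as to reach the desired contradiction. This is accomplished by continuously deforming the very short saddle connection representing $[\delta]$ on $s(t^* - \eps)$ backwards along $s$ toward $(X_0, \om_0)$; it persists as a saddle connection except at finitely many transient splitting times where its interior crosses a zero, at each of which it splits off a loop summand whose holonomy is an absolute period and hence constant along $L$. Carefully tracking this finite combinatorial decomposition isolates a saddle connection on $(X_0, \om_0)$ from $Z_2$ to $Z_1$ whose holonomy lies in $\{t t_0 z : t \in [0, 1]\}$, contradicting the hypothesis and completing the proof.
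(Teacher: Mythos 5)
The paper does not prove this statement at all: it is quoted as Corollary 6.2 of \cite{BSW:horocycle}, so your proposal is really an attempt to reprove that external result. Your setup (the exact sequence giving $H_1(X,Z(\om);\Z)/H_1(X;\Z)\cong\Z$ generated by $[\gam]$, hence the affine formula $\int_{[\del]}\om_t=\int_{[\del]}\om_0+ntz$) is correct and is the right starting point, but both directions have genuine gaps precisely at the points where the cited proof does real work. In the ``only if'' direction, from the vanishing of the relative period of $[\del]$ at $t=st_0$ you conclude that ``$\del$ shrinks to zero length there.'' That does not follow: the period of a class is a lower bound for the length of its geodesic representative, not an upper bound, so you must first show that $\del$ persists as a single saddle connection (length $=$ modulus of its period) up to that time, i.e.\ rule out that a zero crosses its interior earlier and the class becomes a chain of saddle connections. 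This can be done in the two-zero, motion-parallel-to-$z$ setting (a prong-continuity argument shows a crossing would force a shorter parallel saddle connection to have been an initial subsegment of $\del$ already), but it is exactly the content of the statement being proved and cannot be skipped.

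The ``if'' direction has a more serious problem. You fix ``a contractible precompact neighborhood $\cU$ containing a final segment of $s|_{[0,t^*)}$'' and invoke Lemma \ref{lem:Ufin} to reduce to finitely many homotopy classes; but in the degenerating scenario the path leaves every compact subset of $\Om\cM_g(\kap)$ as $t\ra t^*$, so no such precompact $\cU$ exists and Lemma \ref{lem:Ufin} is unavailable exactly where you need it. Without it, knowing only that the systole of $s(t)$ tends to $0$ does not identify a single class $[\del]$ with $n\neq 0$ whose period vanishes at $t^*$: since $\Per(\om_0)$ is in general dense in $\C$, one must rule out, for instance, that the short saddle connections are an infinite sequence of distinct loops whose (constant) absolute periods are nonzero but arbitrarily small, or varying classes $c_t+n_t[\gam]$ with $n_t\neq 0$ whose periods are small without any single one vanishing. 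Ruling this out is the heart of \cite{BSW:horocycle} (via an explicit surgery construction of the Rel deformation, respectively a quantitative non-degeneration argument), and your proposal supplies no substitute. Finally, the backward-transport step (``persists except at finitely many transient splitting times\dots carefully tracking this finite combinatorial decomposition'') asserts rather than proves both the finiteness of the splitting times and the existence, after transport to $(X_0,\om_0)$, of a piece going from $Z_2$ to $Z_1$ with holonomy a nonnegative multiple of $t_0z$ of coefficient at most $1$; note also that a splitting need not only ``split off a loop summand'' --- the chain can contain several segments joining the two zeros. As it stands, the argument is a correct outline of the expected mechanism but not a proof; the clean course is to do what the paper does and cite \cite{BSW:horocycle}, or else fill in the persistence and class-identification arguments in full.
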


A more general version of Corollary \ref{cor:geodesic} is proven in \cite{BSW:horocycle}, which applies to any stratum $\Om\cM_g(\kap)$ with $|\kap| > 1$. Note that \cite{BSW:horocycle} work with strata with labelled singularities. See also \cite{McM:navigating}, \cite{MW:cohomology}.

Fix $z \in \C^\ast$. Let $s : (a,b) \ra L$ be a geodesic with respect to $|q|$, and suppose that $(a,b)$ is the maximal domain of definition of $s$ and that $-\infty < a < b < +\infty$. Choose a square root $\sqrt{q}$ along $s(a,b)$. Corollary \ref{cor:geodesic} implies that $\int_{s(a,b)}\sqrt{q}$ is constrained by the absolute periods of the holomorphic $1$-forms in $L$.

\begin{lem} \label{lem:scleaf}
Fix $(X,\om) \in L$. If $s : (a,b) \ra L$ is a geodesic with respect to $|q|$ such that $(a,b)$ is the maximal domain of definition of $s$, and $-\infty < a < b < +\infty$, then
\be
\int_{s(a,b)} \sqrt{q} \in \Per(\om) .
\ee
\end{lem}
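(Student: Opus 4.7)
My plan is to produce two specific saddle connections on $(X,\om)$, from $Z_2$ to $Z_1$, whose holonomies are exactly $az$ and $bz$, and then recognize that their formal difference is an absolute $1$-cycle whose period is the desired integral $\int_{s(a,b)}\sqrt{q}$.

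First I would reparametrize so that $s(0) = (X,\om)$; the maximal domain then has the form $a < 0 < b$. The key observation is the following. Applying Corollary \ref{cor:geodesic} at $t_0 = b$, the fact that $b \notin (a,b)$ forces the existence of a saddle connection $\del_b$ on $(X,\om)$ from $Z_2$ to $Z_1$ with $\int_{\del_b}\om \in \{tbz : t \in [0,1]\}$. On the other hand, applying the same corollary at every $t_0 \in (a,b)$ with $t_0 < b$, where $t_0$ does lie in the maximal domain, and taking the union of the exclusions rules out any saddle connection from $Z_2$ to $Z_1$ with holonomy in the half-open segment $\{sz : s \in [0,b)\}$. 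Comparing these two statements forces $\int_{\del_b}\om = bz$ exactly, with no appeal to the discreteness of $\Gam(\om)$. By the symmetric argument at the lower endpoint, there is a saddle connection $\eta_a$ on $(X,\om)$ from $Z_2$ to $Z_1$ with $\int_{\eta_a}\om = az$.

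Since $\del_b$ and $\eta_a$ share the same pair of endpoints, the formal difference $\del_b - \eta_a$ is a closed $1$-cycle on $X$, and its class lies in $H_1(X;\Z)$; pairing with $[\om]$ gives
\be
\int_{\del_b - \eta_a}\om \;=\; bz - az \;=\; (b-a)z \;\in\; \Per(\om).
\ee
To finish, I would identify $\int_{s(a,b)}\sqrt{q}$ with $(b-a)z$: with a consistent branch of $\sqrt{q}$ chosen along the geodesic we have $\sqrt{q} = d(\int_\gam \om)$ locally, so the defining equation $\tfrac{d}{dt}\int_\gam \om_t = z$ integrates to the total change $(b-a)z$ of the relative period between the two endpoints of the maximal interval.

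The essence of the argument, and the only step that requires care, is the set-theoretic maneuver that pins down the holonomy of the obstructing saddle connection to exactly $bz$ by combining the positive and negative conclusions of Corollary \ref{cor:geodesic} as $t_0$ sweeps up to $b$. Everything else is a direct computation from the definitions.
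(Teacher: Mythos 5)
Your argument is correct and is essentially the paper's proof: the paper likewise applies Corollary \ref{cor:geodesic} at both ends of the maximal interval to produce, at a point of the geodesic, saddle connections between the two zeros with holonomies exactly $(t-a)z$ and $(b-t)z$, and combines them (as a loop rather than your formal difference, which is the same cycle up to orientation) to conclude $(b-a)z \in \Per(\om_t) = \Per(\om)$. Your ``sweep $t_0$ up to the endpoint'' step just makes explicit the pinning-down of the holonomy that the paper asserts directly, and the only cosmetic slip is reparametrizing so that $s(0)=(X,\om)$ --- the fixed $(X,\om)\in L$ need not lie on the geodesic, but since absolute periods are constant along $L$ it suffices to run the argument at any point $s(t)$, exactly as the paper does.
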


\begin{proof}
For $t \in (a,b)$, let $(X_t,\om_t) = s(t)$. By Corollary \ref{cor:geodesic}, there is $z \in \C^\ast$ and a consistent labelling $Z_1,Z_2$ of the zeros of $(X_t,\om_t)$, such that $(X_t,\om_t)$ has a saddle connection $\gam_1$ from $Z_1$ to $Z_2$ and another saddle connection $\gam_2$ from $Z_2$ to $Z_1$ with holonomies
\be
\int_{\gam_1} \om_t = (t-a)z, \quad \int_{\gam_2} \om_t = (b-t)z .
\ee
Then $\gam = \gam_1 \cup \gam_2$ is an oriented loop in $X_t$, and there is a choice of $\sqrt{q}$ along $s(a,b)$ for which
\be
\int_{s(a,b)}\sqrt{q} = (b-a)z = \int_\gam \om_t \in \Per(\om_t) = \Per(\om) .
\ee
\end{proof}

Segments $s$ as in Lemma \ref{lem:scleaf} can disconnect $L$ when removed. Lemma \ref{lem:scleaf} is not used in the rest of the paper, but is included to help motivate the following crucial lemma.

Fix $(X,\om) \in \Om\cM_g(\kap)$. When forming a connected sum with a torus, we are adding new absolute periods, one of which arises from the segment $\del$ being slit on $(X,\om)$. This makes the question of whether $L(\om)$ lifts to a leaf of $\cF_\cT$ subtle, due to the possible presence of a saddle connection $\gam$ on $(X,\om)$ that is parallel to and shorter than the slit $\del$. When $\gam$ is a closed loop, the connected sum construction may fail to be defined on large portions of $L(\om)$, but the holonomy of $\gam$ must be an absolute period of $\om$. Thus, if $\del$ is not parallel to an absolute period of $\om$, then $\gam$ must have distinct endpoints. In this case, the holonomy of $\gam$ changes as we move along $L(\om)$, and $\gam$ remains parallel to and shorter than $\del$ only on a geodesic segment in $L(\om)$. Unlike the setting of Lemma \ref{lem:scleaf}, this segment is not parallel to an absolute period of $\om$, and removing this segment from $L(\om)$ does not disconnect $L(\om)$.

The following lemma shows that leaves of $\cA(\kap)$ do not become disconnected after removing all of the holomorphic $1$-forms with a saddle connection parallel to and shorter than a given $z \in \C^\ast$, provided that $z$ is not parallel to any of the associated absolute periods. This provides a sufficient condition for lifting leaves of $\cA(\kap)$ to leaves of $\cF_\cT$.

\begin{lem} \label{lem:LIconn}
Let $\Om\cM_g(\kap)$ be a stratum with $|\kap| = 2$. Fix $(X,\om) \in \Om\cM_g(\kap)$, let $L$ be the leaf of $\cA(\kap)$ through $(X,\om)$, and let $q$ be the canonical quadratic differential on $L$. Fix $z \in \C^\ast$ such that
\be
z \notin \bigcup_{z_0 \in \Per(\om)} \R z_0 .
\ee
Let $I = \{tz : 0 \leq t \leq 1\}$, and let
\be
L(I) = \left\{(Y,\eta) \in L : \Gam(\eta) \cap I \neq \emptyset \right\} .
\ee
The subspace $L(I) \subset L$ is closed, and is a countable union of embedded isolated parallel line segments in the metric $|q|$. The complement $L \sm L(I)$ is path-connected.
\end{lem}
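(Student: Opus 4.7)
The plan is to exploit two key consequences of the hypothesis $z \notin \bigcup_{z_0 \in \Per(\om)} \R z_0$, equivalently $\R z \cap \Per(\om) = \{0\}$. First, no saddle connection from a zero to itself can have holonomy in $I$, since such a holonomy lies in $\Per(\om) \cap I \subseteq \R z \cap \Per(\om) = \{0\}$, contradicting that saddle connections have nonzero holonomy. Second, given $(Y_0,\eta_0) \in L(I)$, any two saddle connections $\gam, \gam'$ with holonomy in $I$ and matching endpoint pairs have holonomy difference in $\R z \cap \Per(\om) = \{0\}$, so they agree; and a $Z_1 \to Z_2$ saddle connection with holonomy in $I$ precludes any $Z_2 \to Z_1$ one (their sum would contribute a nonzero element of $\R z \cap \Per(\om)$). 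Thus at each point of $L(I)$ there is essentially a single saddle connection with holonomy in $I$, and its persistence provides a local line segment through that point in direction $z$.

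Closedness of $L(I)$ in $L$ is immediate from Lemma \ref{lem:Iclosed}, since $L(I) = L \cap \Om\cM_g(\kap;I)$ and the inclusion is continuous. The local analysis above shows $L(I)$ is locally a line segment in direction $z$ with respect to $q$. To find the maximal extent of such a segment, I fix $\gam$ with $\int_\gam \eta_0 = a z$, $a \in (0,1]$, and consider the $|q|$-geodesic $s$ through $(Y_0,\eta_0)$ with $\tfrac{d}{dt}\int_\gam \om_t = z$. By Corollary \ref{cor:geodesic}, the only obstruction to extending $s$ is a saddle connection $\del$ from $Z_2$ to $Z_1$ on $(Y_0,\eta_0)$ with holonomy $sz \in \R z$; such a $\del$ forces $(a+s)z \in \Per(\om)$ and hence $s = -a$. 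So $s$ extends on $(-a,\infty)$, and the subset lying in $L(I)$ is the embedded half-open arc $s((-a, 1-a])$ of $|q|$-length $|z|$, with closed endpoint (where $\int_\gam \eta = z$) in $L$ and open end approaching the stratum boundary where $\gam$ collapses. The same hypothesis gives injectivity of $s$ on $(-a, 1-a]$ (since $s(\tau_1)=s(\tau_2)$ would place $(\tau_1-\tau_2)z$ in $\Per(\om)$), and shows distinct branches are globally disjoint (any common point would make the holonomy difference of the two defining saddle connections a nonzero element of $\R z \cap \Per(\om)$). Isolation and countability then follow by applying Lemma \ref{lem:Ufin} to precompact open neighborhoods of compact subsets of $L$, combined with second-countability.

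The main difficulty will be the path-connectedness of $L \sm L(I)$. Since $L$ is a locally path-connected 2-manifold, it suffices to prove connectedness, and the decisive geometric feature is that each branch $J$ is a half-open arc with closed endpoint $p^* \in L$: in a small disk $D$ around $p^*$ chosen small enough to meet no other branch (possible by isolation), $D \cap L(I)$ is a single half-line emanating from $p^*$ in direction $-z$, and $D \sm L(I)$ is a slit disk, which is connected. Given any two points in $L \sm L(I)$, I join them by a path in the connected space $L$, which by isolation and compactness of its image meets $L(I)$ transversally in finitely many branches; for each such branch I detour the path around the corresponding closed endpoint within a neighborhood small enough to avoid all other branches (uniform separation of $p^*$ from other branches is again guaranteed by isolation). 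The resulting path lies in $L \sm L(I)$, completing the proof.
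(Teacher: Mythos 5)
Your proposal is correct and follows essentially the same route as the paper's proof: closedness via Lemma \ref{lem:Iclosed}, the segment structure via Corollary \ref{cor:geodesic} together with $\R z \cap \Per(\om)=\{0\}$, isolation/countability via Lemma \ref{lem:Ufin} on precompact sets, and path-connectedness by detouring around the closed endpoint of each crossed segment. The only slips are minor: two parallel saddle connections with equal holonomy need not ``agree'' (the paper keeps the whole finite family $\gam_1,\dots,\gam_j$, which collapses simultaneously and defines the same segment, so nothing breaks), and the detour needs the isolation of the full compact sub-segment from the crossing point to the closed endpoint (plus a transversality/PL adjustment of the path), both of which your Ufin-based isolation sketch does supply once spelled out as in the paper.
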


\begin{proof}
By Lemma \ref{lem:Iclosed}, the subset $\Om\cM_g(\kap;I)$ of holomorphic $1$-forms in $\Om\cM_g(\kap)$ with a saddle connection holonomy in $I$ is closed. By definition,
\be
L(I) = L \cap \Om\cM_g(\kap;I),
\ee
so $L(I)$ is closed in the subspace topology on $L$. Then since $L$ is immersed, $L(I)$ is closed in the intrinsic topology on $L$.

Fix $(Y,\eta) \in L(I)$, and let $\gam$ be a saddle connection on $(Y,\eta)$ with holonomy in $I$. Since $(Y,\eta) \in L$, we have $\Per(\eta) = \Per(\om)$, and by assumption,
\be
\R z \cap \left(\bigcup_{z_0 \in \Per(\eta)} \R z_0\right) = \{0\} .
\ee
Then since $I \subset \R z$, the holonomy of $\gam$ is not an absolute period of $\eta$, so $\gam$ has distinct endpoints. Let $Z$ be the starting point of $\gam$, and let $Z^\pr$ be the ending point of $\gam$. If $\gam^\pr$ is another saddle connection on $(Y,\eta)$ with holonomy in $\R z$, then since $|\kap| = 2$, possibly after reversing orientation, $\gam^\pr$ starts at $Z$ and ends at $Z^\pr$. Concatenating $\gam$ with the reverse of $\gam^\pr$ gives a closed loop whose associated absolute period is $0$ since it lies in $\R z$, so $\gam$ and $\gam^\pr$ have the same holonomy. Thus, there are only finitely many saddle connections $\gam_1,\dots,\gam_m$ on $(Y,\eta)$ with holonomy in $\R_{>0} z$, and they all start at $Z$ and end at $Z^\pr$. By Corollary \ref{cor:geodesic}, there is a geodesic ray in $L$ in the metric $|q|$ through $(Y,\eta)$ given by
\be
s : \R_{>0} \ra L, \quad s(t) = (Y_t,\eta_t),
\ee
such that for all $t > 0$ and $1 \leq k \leq m$,
\be
\int_{\gam_k} \eta_t = t z .
\ee
In particular, $s$ is injective and $s^{-1}(L(I)) = (0,1]$. The period coordinates of $s(1)$ lie in the $\Q$-span of $\Per(\om)$ and $z$, so there are only countably many possibilities for $s(1)$. Thus, $L(I)$ is a countable union of embedded parallel line segments in the metric $|q|$.

Now, $\ell = s((0,1])$ is one of the countably many maximal line segments in $L(I)$. Fix $0 < \eps < 1$, and let $\ell_\eps = s([\eps,1])$. Since $\ell_\eps$ is a compact line segment in $L$, there is $\eps_1 > 0$ such that the $\eps_1$-neighborhood $U$ of $\ell_\eps$ in the metric $|q|$ is an embedded disk in $L$ and has compact closure. By Lemma \ref{lem:Ufin}, there are only finitely many homotopy classes of paths $\del_1,\dots,\del_n$ on $s(1)$ from $Z$ to $Z^\pr$ for which the geodesic representative on some holomorphic $1$-form in $U$ has length less than $2|z|$. Fix $0 < \eps_2 < |z|$. Shrinking $\eps_1$ if necessary, we may assume that along any straight path in $U$ of length at most $\eps_1$, the length of the geodesic representative of each $\del_k$ changes by at most $\eps_2$. Suppose that some $(Y^\pr,\eta^\pr) \in U \sm \ell$ has a saddle connection $\gam^\pr$ with holonomy in $I$. There is a straight path $\varphi_0$ in $L$ of length less than $\eps_1$ from $(Y^\pr,\eta^\pr)$ to $s(t_0) = (Y_{t_0},\eta_{t_0})$ for some $t_0 \in [\eps,1]$. Parallel transport of the homotopy class of $\gam^\pr$ along $\varphi_0$ gives a homotopy class of paths on $(Y_{t_0},\eta_{t_0})$ such that the length of the geodesic representative on $(Y_{t_0},\eta_{t_0})$ is less than $|z|+\eps_2$. Since $\eps_2 < |z|$, the homotopy class of $\gam^\pr$ is $\del_j$ for some $1 \leq j \leq n$. Since $\varphi_0$ is not parallel to $\ell$, we have $\int_{\del_j} \eta_{t_0} \notin \R z$. Since $\varphi_0$ has length less than $\eps_1$ in the metric $|q|$, the change in $\int_{\del_j} \eta^\pr$ along $\varphi_0$ has absolute value less than $\eps_1$. Thus, the Euclidean distance in $\C$ from $\int_{\del_j} \eta_{t_0}$ to the line $\R z$ is less than $\eps_1$. For $1 \leq k \leq n$, let $z_k = \int_{\del_k} \eta_1$. Then
\be
\int_{\del_k} \eta_t = z_k + (t - 1) z \in z_k + \R z
\ee
so the distance $D_k$ from $\int_{\del_k} \eta_t$ to $\R z$ is constant. Let $S$ be the subset of $k \in \{1,\dots,n\}$ such that $D_k > 0$, and let $D = \min_{k \in S} D_k > 0$. By shrinking $\eps_1$ further so that $\eps_1 < D$, we get a contradiction. Thus, the $\eps_1$-neighborhood of $\ell_\eps$ is disjoint from $L(I) \sm \ell$. Letting $\eps \ra 0$ and taking a union of these neighborhoods of $\ell_\eps$, we get a neighborhood of $\ell$ that is disjoint from $L(I) \sm \ell$. Thus, the maximal line segments in $L(I)$ are isolated.

Choose a path $\varphi : [0,1] \ra L$ such that $\varphi(0) \notin L(I)$ and $\varphi(1) \notin L(I)$. Applying a homotopy to $\varphi$, keeping the endpoints fixed, we may assume that $\varphi$ is piecewise-geodesic with finitely many pieces in the metric $|q|$, that the endpoints of each piece do not lie in $L(I)$, and that each piece is not parallel to the line segments in $L(I)$. By compactness, there is $0 < \eps < |z|$ such that for all $t \in [0,1]$, the length of the shortest saddle connection on $\varphi(t)$ is at least $\eps$. Since $L(I) \subset L$ is closed, $\varphi([0,1]) \cap L(I)$ is compact, and since the maximal line segments in $L(I)$ are isolated, $\varphi([0,1]) \cap L(I)$ is discrete. Therefore, $\varphi([0,1]) \cap L(I)$ is finite, so let $0 < t_1 < \cdots < t_n < 1$ be the finitely many times such that $\varphi(t_j) \in L(I)$. For $1 \leq j \leq n$, let $s_j : \R_{>0} \ra L$ be the geodesic ray through $\varphi(t_j)$ such that $\ell_j = s_j((0,1])$ is a maximal line segment in $L(I)$, and let $\ell_{j,\eps} = s_j([\eps,1])$. Fix $\eps_1 > 0$ such that for $1 \leq j \leq n$, the $\eps_1$-neighborhood of $\ell_{j,\eps}$ is an embedded disk disjoint from $L(I) \sm \ell$. Fix $\eps_2 > 0$ such that for $1 \leq j \leq n$ and $t \in (t_j - \eps_2, t_j + \eps_2)$, the distance from $\varphi(t)$ to $\ell_j$ is less than $\eps_1$ in the metric $|q|$. Then for $1 \leq j \leq n$, we can apply a homotopy to the restriction $\varphi\hmid_{[t_j-\eps_2,t_j+\eps_2]}$, keeping the endpoints $\varphi(t_j-\eps_2)$ and $\varphi(t_j+\eps_2)$ fixed, to arrange that the image of $\varphi\hmid_{[t_j-\eps_2,t_j+\eps_2]}$ is contained in the $\eps_1$-neighborhood of $\ell_{j,\eps}$ and disjoint from $\ell_j$. In other words, instead of crossing $\ell_j$ at time $t_j$, we go around $\ell_j$ while staying close to $\ell_j$. This gives us a path $[0,1] \ra L \sm L(I)$ with the same starting point $\varphi(0)$ and the same ending point $\varphi(1)$. Thus, $L \sm L(I)$ is path-connected.
\end{proof}

\begin{rmk} \label{rmk:parallel}
The hypothesis in Lemma \ref{lem:LIconn} that $z$ is not parallel to an absolute period of $\om$ can be weakened slightly to include the case where the group $\Per(\om) \cap \R z$ is cyclic with a generator $w$ such that $|w| > |z|$.
\end{rmk}

\begin{rmk} \label{rmk:sqtiled}
In the special case where $\Per(\om)$ is the lattice $\Z + i\Z$, the leaf $L = L(\om)$ is tiled by finitely many unit squares for the metric $|q|$. See \cite{Dur:square} for illustrations of these leaves in the stratum $\Om\cM_2(1,1)$. For $z \in \C^\ast$ not parallel to any element of $\Z + i\Z$, the subset $L(I)$ is a union of finitely many embedded parallel line segments with irrational slope, and in this case it is easy to see that $L \sm L(I)$ is path-connected.
\end{rmk}

\paragraph{\bf Connected sums along leaves.} Lemma \ref{lem:LIconn} also holds with $\wt{\Om}\cM_g(\kap;m)$ and $\cA(\kap;m)$ in place of $\Om\cM_g(\kap)$ and $\cA(\kap)$, and the proof is the same. For $\wt{L}$ a leaf of $\cA(\kap;m)$, define $\wt{L}(I) = \{(Y,\wt{\eta}) \in \wt{L} : \Gam(\eta) \cap I \neq \emptyset\}$ as in Lemma \ref{lem:LIconn}. Consider the full measure subset of $\cT(\kap;m)$ given by
\be
\cT_{\rm conn}(\kap;m) = \left\{(X,\wt{\om},(\gam,w)) \in \cT(\kap;m) : \int_\gam \om \notin \bigcup_{z \in \Per(\om)} \R z\right\} .
\ee
Note that $\cT_{\rm conn}(\kap;m)$ is a union of leaves of $\cF_\cT$.

\begin{cor} \label{cor:sumleaf}
Suppose $|\kap| = 2$, fix $(X,\wt{\om},T) \in \cT_{\rm conn}(\kap;m)$, and write $T = (\gam,w)$. Let $\wt{L}$ be the leaf of $\cA(\kap;m)$ through $(X,\wt{\om})$, let $L_\cT$ be the leaf of $\cF_\cT$ through $(X,\wt{\om},T)$, and let $I = \left\{t \int_\gam \om : 0 \leq t \leq 1\right\}$. Then we have $L_\cT = (\wt{L} \sm \wt{L}(I)) \times \{T\}$.
\end{cor}

Lastly, let $\kap^\pr = (\kap \sm (m)) \cup (m+2)$, and consider the associated connected sum map $\Psi : \cT(\kap;m) \ra \Om\cM_{g+1}(\kap^\pr)$. Then $\Psi$ sends leaves of $\cF_\cT$ into leaves of $\cA(\kap^\pr)$.

%%%%%%%%%%%%%%%%%%%%%%%%%%%%%%%%%%%%%%%%%%%%%%%%%%
%%%%%%%%%%%%%%%%%%%%%%%%%%%%%%%%%%%%%%%%%%%%%%%%%%
%%%%%%%%%%%%%%%%%%%%%%%%%%%%%%%%%%%%%%%%%%%%%%%%%%

\section{Pairs of splittings} \label{sec:pair}

This section focuses on holomorphic $1$-forms in a stratum component that can presented as a connected sum with a torus. Our goal is to show that any two such holomorphic $1$-forms with the same area are related by a finite sequence of moves as follows. In each move, we choose a presentation as a connected sum, keep the torus fixed, and change the complementary holomorphic $1$-form while preserving its area. Our main tool will be a criterion for presenting a holomorphic $1$-form as a connected sum in two different ways. In a later section, we will prove our main theorems using stronger versions of this goal obtained by combining the results of Sections \ref{sec:leaves} and \ref{sec:pair}. \\

\paragraph{\bf Splittings.} Recall from Section \ref{sec:surgery} that a splitting of $(X,\om)$ is a pair of homologous saddle connections $\al^\pm$ on $(X,\om)$ that form a figure-eight at a zero $Z$ of $\om$, such that
\begin{enumerate}
    \item the counterclockwise angle around $Z$ from the end of $\al^-$ to the end of $\al^+$ is $2\pi$;
    \item one of the connected components of $X \sm (\al^+ \cup \al^-)$ is a cylinder $C$.
\end{enumerate}
We will refer to $C$ as the {\em associated cylinder} of $\al^\pm$. The homology class in $H_1(X;\Z)$ represented by $\al^\pm$ is denoted $[\al^\pm]$. Slitting $(X,\om)$ along $\al^{\pm}$ and regluing the sides of the slits gives a holomorphic $1$-form of genus $g-1$ and a flat torus. For $z,w \in \C$, the signed area of the parallelogram spanned by $z$ and $w$ is given by $\im(\ol{z}w)$.

\begin{lem} \label{lem:newsum}
Let $\al^\pm$ be a splitting of $(X,\om)$ with associated cylinder $C$, and choose a saddle connection $\bet \subset C \cup Z(\om)$. Suppose that $(X,\om)$ has an embedded open parallelogram $P$ with one pair of parallel sides given by $\al^\pm$ and the other pair given by a pair of homologous saddle connections $\gam_0^\pm$. Let
\be
z = \int_{\al^\pm} \om, \quad w = \int_\bet \om, \quad z_1 = \int_{\gam_0^\pm} \om,
\ee
and suppose that
\begin{equation} \label{eq:direction}
\im(\ol{z} w) > 0, \quad \im(\ol{z} z_1) > 0, \quad \im(\ol{z}_1 w) > 0 , \quad \im\left(\ol{(z + w)} (z_1 + w)\right) > 0 .
\end{equation}
Then $P \cup \ol{C}$ contains another splitting $\gam^\pm$ of $(X,\om)$ with associated cylinder $C^\pr$ and with the same starting point and ending point as $\al^\pm$, and there is a saddle connection $\del \subset C^\pr \cup Z(\om)$ such that
\be
[\gam^\pm] = -[\gam_0^\pm] - [\bet], \quad [\del] = [\al^\pm] + [\bet]
\ee
in $H_1(X;\Z)$.
\end{lem}

\begin{proof}
For $M \in \GL^+(2,\R)$, $\im(\ol{z}w)$ and $\im(\ol{Mz} \; Mw)$ have the same sign. There is an affine homeomorphism $(X,\om) \ra M(X,\om)$ that sends zeros to zeros, and sends a saddle connection on $(X,\om)$ with holonomy $z_0$ to a saddle connection on $M(X,\om)$ with holonomy $Mz_0$. A pair of homologous saddle connections is a splitting of $(X,\om)$ if and only if the corresponding pair on $M(X,\om)$ is a splitting of $M(X,\om)$. Thus, it is enough to show that Lemma \ref{lem:newsum} holds for $M(X,\om)$. Since $\im(\ol{z}w) > 0$, by applying an appropriate element of $\GL^+(2,\R)$ to $(X,\om)$, we may assume that $z = 1$ and $w = i$.

We regard $C$ as a unit square with its vertical sides glued together to form $\bet$. The bottom side of $C$ is $\al^-$, and the top side of $C$ is $\al^+$. Since $z = 1$ and $w = i$, the inequalities in (\ref{eq:direction}) reduce to
\be
\im(z_1) > 0, \quad \re(z_1) > 0, \quad \im(z_1) - \re(z_1) + 1 > 0 .
\ee
These inequalities imply $0 < \re(z_1 + i) < \im(z_1 + i)$, so there is a geodesic segment in the direction of $z_1 + i$ from the bottom-left corner of $C$ to a point in the interior of $\al^+$ that is disjoint from the interior of the vertical saddle connection $\bet$. These inequalities also imply $\im(\ol{z}_1 (z_1 + i)) > 0$ and $\im\left(\ol{(z_1 + i)}(z_1 - w)\right) > 0$, so there is a geodesic segment in the direction of $-z_1-i$ from the top-left corner of $P$ to a point in the interior of $\al^+$ that is disjoint from the interiors of the saddle connections $\gam_0^\pm$. Both of these geodesic segments end at the same point on $\al^+$, and this point is a distance $\re(z_1)/(\im(z_1) + 1)$ from the left end of $\al^+$. Thus, there is a saddle connection $\gam^- \subset P \cup \ol{C}$ from the top-left corner of $P$ to the bottom-left corner of $C$ that crosses $\al^+$. Similarly, there is a saddle connection $\gam^+ \subset P \cup \ol{C}$ from the top-right corner of $C$ to the bottom-right corner of $P$ that crosses $\al^-$.

The saddle connections $\gam^\pm$ form the boundary of a cylinder $C^\pr \subset P \cup \ol{C}$, so they are homologous. In particular, the cylinder $C^\pr$ is one of the connected components of $X \sm (\gam^+ \cup \gam^-)$. The saddle connections $\gam^\pm$ have the same starting point and ending point $Z$ as $\al^\pm$. Since the counterclockwise angle around $Z$ from the end of $\al^-$ to the end of $\al^+$ is $2\pi$, the counterclockwise angle around $Z$ from the end of $\gam^-$ to the end of $\gam^+$ is also $2\pi$. Thus, $\gam^\pm$ is another splitting of $(X,\om)$. Let $\del$ be the geodesic segment from the bottom-left corner of $C$ to the top-right corner of $C$. Then $\del$ is a saddle connection contained in $C^\pr \cup Z(\om)$. All of the saddle connections $\al^\pm,\gam_0^\pm,\gam^\pm,\bet,\del$ have the same starting and ending point, and therefore represent elements of $H_1(X;\Z)$. The relations $[\gam^\pm] = -[\gam_0^\pm] - [\bet]$ and $[\del] = [\al^\pm] + [\bet]$ are clear.
\end{proof}

See Figure \ref{fig:sum2} for an illustration of Lemma \ref{lem:newsum}. \\

\begin{figure}
    \centering
    \includegraphics[width=0.9\textwidth]{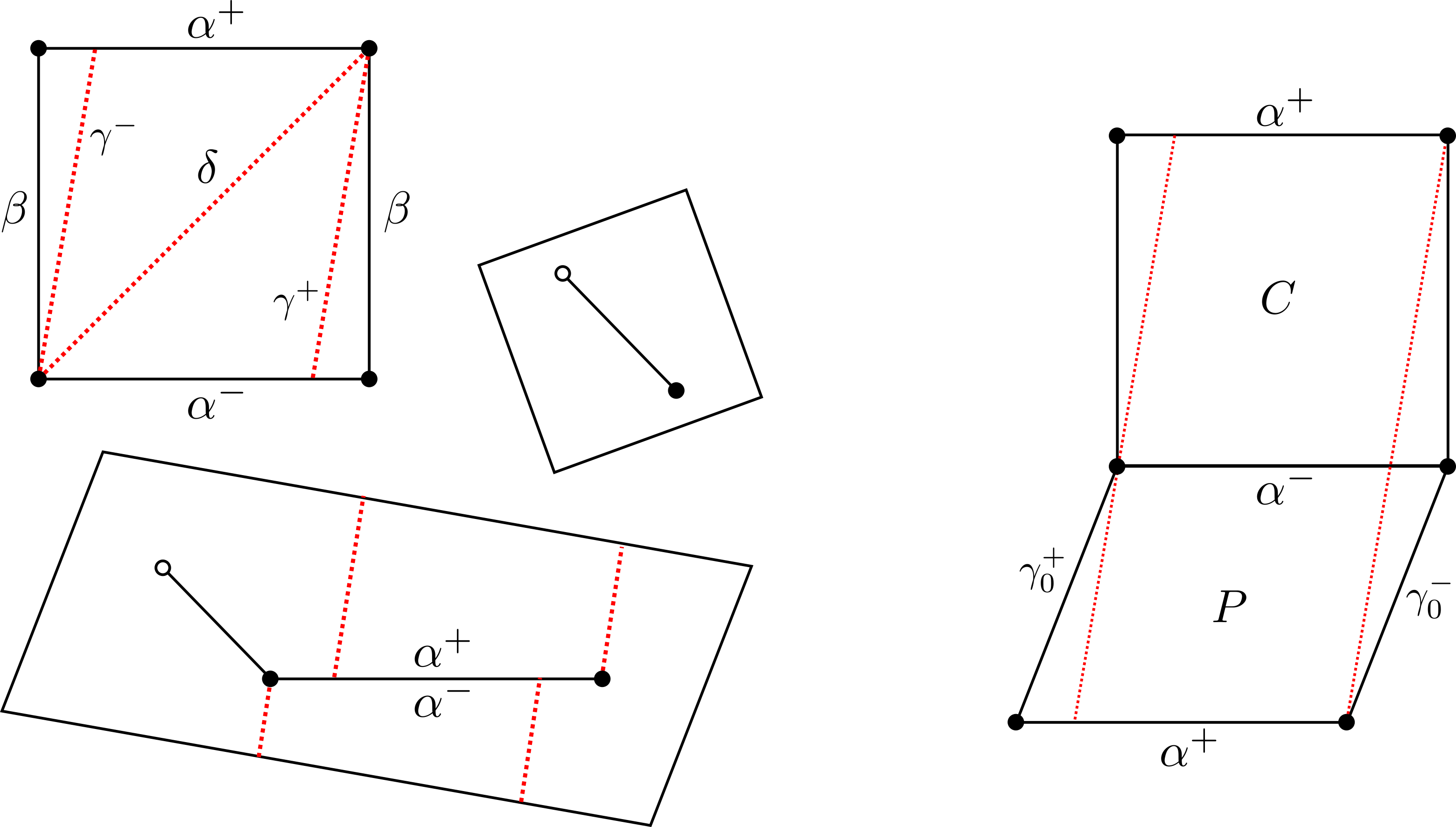}
    \caption{Left: a holomorphic $1$-form in $\Om\cM_3(3,1)$ with a splitting $\al^\pm$ and a second splitting $\gam^\pm$ as in Lemma \ref{lem:newsum}. Right: the parallelogram $P$ and the cylinder $C$, with the parallelogram sides $\al^\pm$ and $\gam_0^\pm$ labelled.}
    \label{fig:sum2}
\end{figure}

\paragraph{\bf Related splittings.} To any splitting $\al^\pm$ of a holomorphic $1$-form $(X,\om)$ with associated cylinder $C$, we can also associate a pair of absolute periods of $\om$ as follows. Choose a saddle connection $\bet \subset C \cup Z(\om)$. Let $z = \int_{\al^\pm} \om$ and $w = \int_\bet \om$. Reversing the orientation of $\bet$ if necessary, we may require that $\im(\ol{z} w) > 0$. We say that the pair $(z,w)$ are the {\em associated periods} of the splitting $\al^\pm$. Note that $\im(\ol{z} w)$ is the area of $C$ in the metric $|\om|$. Also, different choices of $\bet$ will yield absolute periods $nz+w$ in place of $w$ for all $n \in \Z$. Thus, while the period $z$ is uniquely determined by the splitting $\al^\pm$, the period $w$ is only determined up to addition by an integer multiple of $z$.

For $\cC$ a stratum component, define
\be
\cC_1(z,w) = \left\{(X,\om) \in \cC_1 : (X,\om) \text{ has a splitting with associated periods } (z,w) \right\} .
\ee
The following lemma provides a criterion for two subsets of the form $\cC_1(z,w)$ and $\cC_1(z^\pr,w^\pr)$ to intersect. In other words, we will construct holomorphic $1$-forms in $\cC_1$ with two splittings whose associated pairs of periods are $(z,w)$ and $(z^\pr,w^\pr)$, respectively.

\begin{lem} \label{lem:sum2}
Let $\cC$ be the nonhyperelliptic component of a stratum $\Om\cM_g(m_1,m_2)$ with $m_1 \geq 3$ odd. Fix $z,w,z_1 \in \C$ such that $0 < \im(\ol{z} w) < 1$ and such that
\begin{equation} \label{eq:gen}
0 < \im(\ol{z} z_1) < 1 - \im(\ol{z} w), \quad 0 < \im(\ol{z}_1 w) < \im(\ol{z} (z_1 + w)) .
\end{equation}
There exists $(X,\om) \in \cC_1$ with a pair of splittings $\al^\pm$ and $\gam^\pm$ with associated cylinders $C$ and $C^\pr$, such that $\al^\pm$ and $\gam^\pm$ all start and end at the same zero of order $m_1$, and there are saddle connections $\bet \subset C \cup Z(\om)$ and $\del \subset C^\pr \cup Z(\om)$, such that
\be
z = \int_{\al^\pm} \om, \quad w = \int_\bet \om, \quad -z_1-w = \int_{\gam^\pm} \om, \quad z+w = \int_{\del}  \om.
\ee
\end{lem}

\begin{proof}
Let $T_0$ be the flat torus $(\C/(\Z z + \Z w), dz)$. By assumption $\im(\ol{z}z_1) > 0$, and in particular $z \notin \R z_1$. If $w_1 \in \C$ is such that $z = t (mz_1 + nw_1)$ with $t \in \R$ and $m,n \in \Z$, then $t \neq 0$ and $n \neq 0$, therefore $w_1$ lies in $\R z + \Q z_1$, a countable union of parallel lines. The complement $\C \sm (\R z + \Q z_1)$ is dense in $\C$. Thus, we can choose $w_1 \in \C$ such that $z \notin \bigcup_{z_0 \in \Z z_1 + \Z w_1} \R z_0$ and such that
\begin{equation} \label{eq:area}
0 < \im(\ol{z} z_1) < \im(\ol{z}_1 w_1) < 1 - \im(\ol{z} w) .
\end{equation}
Let $T_1$ be the flat torus $(\C / (\Z z_1 + \Z w_1), dz)$. Let $T_2$ be a flat torus with area less than $1 - \im(\ol{z} w) - \im(\ol{z}_1 w_1)$. The segment $[0,z] \subset \C$ projects to a closed geodesic $\al_0 \subset T_0$, and projects to an embedded geodesic segment $\al \subset T_1$.

The segments $[0,z_1],[z,z+z_1] \subset \C$ project to a pair of closed geodesics $\gam_0^\pm \subset T_1$ passing through the endpoints of $\al$ and otherwise disjoint from $\al$. The inequalities $0 < \im(\ol{z} z_1) < \im(\ol{z}_1 w_1)$ in (\ref{eq:area}) imply that $\gam_0^\pm$ and the two sides of $\al$ bound an embedded open parallelogram $P \subset T_1$. For $j = 1,2$, choose short embedded geodesic segments $s_j \subset T_j$ with the same length and in the same direction, such that $s_1$ starts at the starting point of $\al$ and is otherwise disjoint from $\ol{P}$. Slit $T_j$ along $s_j$ and reglue opposite sides to get a holomorphic $1$-form $(X_0,\om_0) \in \Om\cM_2(1,1)$ given by a connected sum of two flat tori along a pair of homologous saddle connections $s^\pm$. On $(X_0,\om_0)$, the starting point of $\al$ is a zero of $\om_0$.

Let $\al_1,\dots,\al_{(m_1-3)/2}$ be a collection of short embedded geodesic segments in $T_2$, starting at the zero that is the starting point of $\al$ and otherwise disjoint from each other and from $s^+ \cup s^-$. Let $\al_1^\pr,\dots,\al_{(m_2-1)/2}^\pr$ be a collection of short embedded geodesic segments in $T_2$, starting at the other zero of $\om_0$ and otherwise disjoint from each other and from $s^+ \cup s^-$. Additionally, we require that $\al_j$ and $\al_k^\pr$ are disjoint for all $j,k$. Slit $(X_0,\om_0)$ along $\al$, slit $T_0$ along $\al_0$, and reglue opposite sides to get a holomorphic $1$-form in $\Om\cM_3(3,1)$ with a splitting $\al^\pm$ with associated cylinder $C$. Then, iterate this procedure using the segments $\al_1,\dots,\al_{(m_1-3)/2}$ and $\al_1^\pr,\dots,\al_{(m_2-1)/2}^\pr$ and using flat tori with appropriate areas to get a holomorphic $1$-form $(X,\om) \in \Om_1\cM_g(m_1,m_2)$. As discussed before Lemma \ref{lem:sum}, since $(X,\om)$ has a splitting it cannot lie in a hyperelliptic component, therefore $(X,\om) \in \cC_1$. On $(X,\om)$, we have $\int_{\al^\pm} \om = z$. Let $\bet \subset C \cup Z(\om)$ be a saddle connection such that $\int_\bet \om = w$. By construction, the saddle connections $\al^\pm$ have holonomy $z$, and the saddle connections $\gam_0^\pm$ have holonomy $z_1$. As part of the inequalities in (\ref{eq:gen}), we have $\im(\ol{z} z_1) > 0$ and $\im(\ol{z}_1 w) > 0$. Additionally, the inequality $\im(\ol{z}_1 w) < \im(\ol{z}(z_1 + w))$ is equivalent to $\im\left(\ol{(z+w)}(z_1 + w)\right) > 0$. Thus, the saddle connections $\al^\pm,\gam_0^\pm$ and the parallelogram $P$ on $(X,\om)$ satisfy the hypotheses of Lemma \ref{lem:newsum}. Letting $\gam^\pm$ be a splitting of $(X,\om)$ with associated cylinder $C^\pr \subset P \cup \ol{C}$ and $\del \subset C^\pr \cup Z(\om)$ a saddle connection as in Lemma \ref{lem:newsum}, we are done.
\end{proof}

Now define
\be
\cT_{(0,1)} = \left\{(z,w) \in \C^2 : 0 < \im(\ol{z} w) < 1\right\} .
\ee
By definition, for all $(z,w) \in \cT_{(0,1)}$ and all $n \in \Z$, we have
\be
\cC_1(z,w) = \cC_1(z,nz + w) .
\ee
Additionally, by Lemma \ref{lem:sum2}, for all $(z,w) \in \cT_{(0,1)}$ and all $z_1 \in \C$ satisfying the inequalities in (\ref{eq:gen}), the intersection
\be
\cC_1(z,w) \cap \cC_1(-z_1 - w, z + w)
\ee
is nonempty. With the above two properties of the sets $\cC_1(z,w)$ in mind, let $\sim$ be an equivalence relation on $\cT_{(0,1)}$ satisfying the following. We suppose that
\begin{equation} \label{eq:gen1}
(z,w) \sim (z,nz + w)
\end{equation}
for all $(z,w) \in \cT_{(0,1)}$ and all $n \in \Z$. We also suppose that
\begin{equation} \label{eq:gen2}
(z,w) \sim (-z_1-w,z+w)
\end{equation}
for all $(z,w) \in \cT_{(0,1)}$ and all $z_1 \in \C$ satisfying the inequalities in (\ref{eq:gen}).

Let $\cC$ be a stratum component as in Lemma \ref{lem:sum2}. Fix $(X,\om) \in \cC_1$ with a splitting $\al^\pm$ with associated cylinder $C$ and associated periods $(z,w) \in \cT_{(0,1)}$. By definition, $(X,\om) \in \cC_1(z,w)$. If we change the complement of $\ol{C}$ in $(X,\om)$, without changing $\ol{C}$ or the area of its complement, then we get another holomorphic $1$-form $(X_1,\om_1) \in \cC_1(z,w)$. Next, choose a new splitting $\al_1^\pm$ of $(X_1,\om_1)$, and repeat. By definition of $\sim$, for any $(Y,\eta) \in \cC_1$ with a splitting with associated pair $(z^\pr,w^\pr) \in \cT_{(0,1)}$ such that $(z,w) \sim (z^\pr,w^\pr)$, we can iterate this procedure finitely many times to go from $(X,\om)$ to $(Y,\eta)$. Our goal is to show that $(Y,\eta)$ can be any holomorphic $1$-form in $\cC_1$ with a splitting, and Lemma \ref{lem:sum2} reduces this to showing that any two elements of $\cT_{(0,1)}$ are equivalent with respect to $\sim$.

\begin{lem} \label{lem:reln}
For all $(z,w) \in \cT_{(0,1)}$ and $(z^\pr,w^\pr) \in \cT_{(0,1)}$, we have $(z,w) \sim (z^\pr,w^\pr)$.
\end{lem}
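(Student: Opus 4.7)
The plan is to show each $\sim$-equivalence class is open in $\cT_{(0,1)}$, and then use connectedness of $\cT_{(0,1)}$ to conclude there is exactly one such class. Connectedness follows by viewing $\cT_{(0,1)}$ as the preimage of the connected interval $(0,1)$ under the submersion $(z,w) \mapsto z \times w$: each fiber is a transitive orbit of the (connected) diagonal action of $\SL(2,\R)$ on $\C^2$ and is therefore connected, so the total space is connected.

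For the openness, fix $(z,w) \in \cT_{(0,1)}$. I would compose three applications of (\ref{eq:gen2}) with parameters $z^\pr, z^{\pr\pr}, z^{\pr\pr\pr} \in \C$; a direct expansion yields composite endpoint $(-z^{\pr\pr\pr} - z + z^\pr,\ -z^{\pr\pr} - w - z^\pr)$. At the base values $z^{\pr\pr} = -2w - z^\pr$, $z^{\pr\pr\pr} = z^\pr - 2z$ this endpoint returns to $(z,w)$ itself, while setting instead $z^{\pr\pr} = -2w - z^\pr - \Delta w$ and $z^{\pr\pr\pr} = z^\pr - 2z - \Delta z$ yields endpoint $(z + \Delta z,\ w + \Delta w)$ for arbitrary $(\Delta z, \Delta w) \in \C^2$. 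Hence, provided the three applications remain valid under small perturbations, a full neighborhood of $(z,w)$ lies in its $\sim$-class.

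Validity amounts to verifying the cross-product inequalities (\ref{eq:gen3}) at each application. A routine computation shows that the first application requires (\ref{eq:gen3}) for $z^\pr$, the second contributes exactly one new open inequality $z^\pr \times w > z \times w$, and the third is then automatic. The combined constraints on $z^\pr$, namely $0 < z \times z^\pr < 1 - z \times w$ and $z \times w < z^\pr \times w < z \times z^\pr + z \times w$, are open and always admit a solution whenever $0 < z \times w < 1$; a concrete witness is $z^\pr = (1+\eps)z + 2\eps w$ for sufficiently small $\eps > 0$. Since all the inequalities are open, they persist under small $(\Delta z, \Delta w)$, giving openness of the equivalence class. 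The main obstacle is this cross-product bookkeeping: one must carefully track how the conditions (\ref{eq:gen3}) for the second and third applications reduce, via the base expressions for $z^{\pr\pr}$ and $z^{\pr\pr\pr}$, to inequalities on $z^\pr$ alone. Once this reduction is established, openness combined with connectedness of $\cT_{(0,1)}$ yields the lemma.
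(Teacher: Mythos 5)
Your argument is correct, and I checked the reduction you describe as routine: writing $A = z \times w$, the six inequalities coming from the three applications of (\ref{eq:gen2}) at the base parameter values collapse to $0 < z \times z^\pr < 1 - A$ together with $A < z^\pr \times w < z \times z^\pr + A$ (the second application contributes exactly the new inequality $z^\pr \times w > A$, and the third is implied by the earlier ones), and your witness $z^\pr = (1+\eps)z + 2\eps w$ gives $z \times z^\pr = 2\eps A$ and $z^\pr \times w = (1+\eps)A$, so all constraints hold for small $\eps > 0$. Since the endpoint map $(z^{\pr\pr},z^{\pr\pr\pr}) \mapsto (-z^{\pr\pr\pr} - z + z^\pr,\, -z^{\pr\pr} - w - z^\pr)$ is an affine bijection of $\C^2$ and all the constraints are open, the class of $(z,w)$ indeed contains a neighborhood of $(z,w)$. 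Your route shares the paper's outer frame (open classes plus connectedness of $\cT_{(0,1)}$) but implements the openness step genuinely differently: the paper first normalizes to the single point $(1/2,i/2)$ using compatibility of the generators with $M \in \GL^+(2,\R)$, $0 < \det M < 1$, then composes two applications of (\ref{eq:gen2}) along an explicit rotation family $\tht \mapsto (\tfrac12 e^{i\tht}, \tfrac{i}{2} e^{i\tht})$, valid for $\tht \in (3\pi/4,7\pi/6)$, to place a neighborhood of $(-1/2,-i/2)$ in that class, and finishes with the symmetry $(z,w) \sim (z^\pr,w^\pr)$ iff $(-z,-w) \sim (-z^\pr,-w^\pr)$. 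Your three-move composite works directly at an arbitrary basepoint, needing neither the normalization nor the negation trick, which is more elementary and makes the openness transparent once the constraint reduction is done; what the paper's two-move version buys is downstream reuse, since the specific composite $(a,b) \sim_\Lam (-a_2 - a - b, -a_1 + a)$ with explicit parameter bounds is imported verbatim into the proof of Lemma \ref{lem:relnalg}, where your variant would force redoing the perpendicularity bookkeeping with a third parameter. Two small polish points: justify the inference from connected base and connected fibers to connectedness of the total space (easiest is to note that $(z,w) \mapsto z \times w$ exhibits $\cT_{(0,1)} \cong \SL(2,\R) \times (0,1)$, which is all the paper asserts as well), and record, as your inequalities already guarantee, that each application of (\ref{eq:gen2}) outputs a point of $\cT_{(0,1)}$, so the intermediate points of your chain are legitimate.
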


\begin{proof}
Observe that since $\cT_{(0,1)}$ is connected, it is enough to show that every equivalence class for $\sim$ is open. Fix $(z,w) \in \cT_{(0,1)}$. Since $\im(\ol{z} w) > 0$, there is $z_1 \in \C$ such that $\im(\ol{z} z_1) > 0$ and $\im(\ol{z}_1 w) > 0$. By multiplying $z_1$ by small positive real numbers, we see that $z_1$ can be arbitrarily small. For such $z_1$ sufficiently small, we have
\be
0 < \im(\ol{z} z_1) < 1 - \im(\ol{z} w), \quad 0 < \im(\ol{z}_1 w) < \im(\ol{z} (z_1+w)) .
\ee
Let $u_1 = -z_1-w$ and $v_1 = z+w$. Since $(z,w)$ and $z_1$ satisfy (\ref{eq:gen}), we have $(z,w) \sim (u_1,v_1)$. Similarly, since $\im(\ol{u_1} v_1) > 0$, there is $z_2 \in \C$ such that
\be
0 < \im(\ol{u}_1 z_2) < 1 - \im(\ol{u}_1 v_1), \quad 0 < \im(\ol{z}_2 v_1) < \im(\ol{u}_1 (z_2+v_1)) .
\ee
Let $u_2 = -z_2-v_1$ and $v_2 = u_1+v_1$. Since $(u_1,v_1)$ and $z_2$ satisfy (\ref{eq:gen}), we have $(u_1,v_1) \sim (u_2,v_2)$. The inequalities in (\ref{eq:gen}) are open conditions, so for $s,t \in \C$ sufficiently small, by replacing $(z,w)$ with $(z + s, w+ t)$, $z_1$ with $z_1 + s$, and $z_2$ with $z_2 - s - t$, we get
\begin{align*}
(z + s, w + t) &\sim (-(z_1 + s) - (w + t), (z + s) + (w + t)) = (u_1 - s - t, v_1 + s + t) \\
&\sim (-(z_2-s-t)-(v_1+s+t), (u_1-s-t) + (v_1 + s + t)) = (u_2,v_2) .
\end{align*}
We have shown that $(z,w) \sim (u_2,v_2) \sim (z+s,w+t)$ for all $s,t \in \C$ sufficiently small. Thus, the equivalence class of $(z,w)$ contains an open neighborhood of $(z,w)$.
\end{proof}

To conclude this section, we adapt our connected sum constructions to subspaces of strata defined by constraining the absolute periods to lie in certain closed subgroups. Let $\Lam$ be a proper closed subgroup of $\C$ that is not discrete and that contains a lattice in $\C$. Recall that $\Lam = M \cdot (\R + i\Z)$ for some $M \in \SL(2,\R)$. Let $\Lam_0 = M \cdot \R$ be the identity component of $\Lam$. For $z \in \Lam$, define
\begin{equation} \label{eq:index}
\cI_\Lam(z) = |\im(M^{-1} z)| \in \Z_{\geq 0} .
\end{equation}
If $z \notin \Lam_0$, then $\cI_\Lam(z)$ is the index $[\Lam : \Lam_0 + \Z z]$. If $z \in \Lam_0$, then $\cI_\Lam(z) = 0$. Recall that if $\cC$ is a stratum component, then $\cC^\Lam$ denotes the set of $(X,\om) \in \cC$ such that $\Per(\om) + \Lam_0 = \Lam$. Associated to a splitting of $(X,\om) \in \cC^\Lam_1$ is a pair of periods $(z,w) \in \cT_{(0,1)}$ and a holomorphic $1$-form $(X_1,\om_1)$ of genus $g-1$ satisfying $\Per(\om_1) + \Lam_0 = n\Lam$ for some $n \in \Z_{>0}$. Since
\be
\Lam = \Per(\om) + \Lam_0 = \Z z + \Z w + \Per(\om_1) + \Lam_0 = \Z z + \Z w + n\Lam ,
\ee
we have $\gcd(\cI_\Lam(z),\cI_\Lam(w),n) = 1$. With Lemma \ref{lem:LIconn} and Corollary \ref{cor:sumleaf} in mind, we will focus on splittings that are not parallel to $\Lam_0$. Define
\be
\cT^\Lam_{(0,1)} = \{(z,w,n) \in (\Lam \sm \Lam_0) \times \Lam \times \Z_{>0} : 0 < \im(\ol{z} w) < 1, \; \gcd(\cI_\Lam(z),\cI_\Lam(w),n) = 1\} .
\ee
Then any splitting of $(X,\om) \in \cC^\Lam_1$ that is not parallel to $\Lam_0$ determines an element of $\cT_{(0,1)}^\Lam$ as above. Let $\sim_\Lam$ be an equivalence relation on $\cT^\Lam_{(0,1)}$ satisfying the following. We suppose that
\begin{equation} \label{eq:gen1R+iZ}
(z,w,n) \sim_\Lam (z,kz+w,n)
\end{equation}
for all $(z,w,n) \in \cT_{(0,1)}^\Lam$ and all $k \in \Z$. We suppose that
\begin{equation} \label{eq:gen2R+iZ}
(z,w,n) \sim_\Lam (-z_1-w,z+w,\gcd(\cI_\Lam(z+w),n))
\end{equation}
for all $(z,w,n) \in \cT_{(0,1)}^\Lam$ and all $z_1 \in n\Lam \sm \Lam_0$ such that $\cI_\Lam(-z_1 - w) \neq 0$, and such that $z,w,z_1$ satisfy the inequalities in (\ref{eq:gen}). Lastly, we suppose that
\begin{equation} \label{eq:gen3R+iZ}
(z,w,n) \sim_\Lam (-z_1-w,z+w,\gcd(\cI_\Lam(z+w),n))
\end{equation}
for all $(z,w,n) \in \cT_{(0,1)}^\Lam$ with $w \in \Lam \sm \Lam_0$ and all $z_1 \in n\Lam_0 = \Lam_0$ such that $z,w,z_1$ satisfy the inequalities in (\ref{eq:gen}), and such that there is $w_1 \in n \Lam$ satisfying $\im(\ol{z}_1 w_1) > 0$ and
\begin{equation} \label{eq:gen4R+iZ}
0 < \im(\ol{z}z_1) < \im(\ol{z}_1 w_1) < 1 - \im(\ol{z}w) .
\end{equation}
The following lemma is an analogue of Lemma \ref{lem:sum2} for $\cC^\Lam$.

\begin{lem} \label{lem:sum2R+iZ}
Let $\cC$ be the nonhyperelliptic component of a stratum $\Om\cM_g(m_1,m_2)$ with $m_1 \geq 3$ odd. Let $\Lam = M \cdot (\R + i\Z)$ with $M \in \SL(2,\R)$, and let $\Lam_0 = M \cdot \R$. Fix $(z,w,n) \in \cT_{(0,1)}^\Lam$ and $z_1 \in \C$ satisfying (\ref{eq:gen}), and also satisfying (\ref{eq:gen4R+iZ}) if $z_1 \in \Lam_0$. There exists $(X,\om) \in \cC_1^\Lam$ with a pair of splittings $\al^\pm$ and $\gam^\pm$ with associated cylinders $C$ and $C^\pr$, such that $\al^\pm$ and $\gam^\pm$ all start and end at the same zero of order $m_1$, and there are saddle connections $\beta \subset C \cup Z(\om)$ and $\del \subset C^\pr \cup Z(\om)$, such that
\be
z = \int_{\al^\pm} \om, \quad w = \int_\bet \om, \quad -z_1 - w = \int_{\gam^\pm} \om, \quad z + w = \int_\del \om .
\ee
Moreover, the holomorphic $1$-forms $(X_1,\om_1),(X_2,\om_2) \in \Om\cM_{g-1}(m_1-2,m_2)$ obtained from $(X,\om)$ by slitting and regluing $\al^\pm,\gam^\pm$, respectively, satisfy $\Per(\om_1) + \Lam_0 = n\Lam$ and $\Per(\om_2) + \Lam_0 = n_1 \Lam$ where $n_1 = \gcd(\cI_\Lam(z+w),n)$.
\end{lem}

\begin{proof}
The proof is similar to that of Lemma \ref{lem:sum2}. We may assume $\Lam = \R + i\Z$, so $\Lam_0 = \R$. Let $T_0 = (\C / (\Z z + \Z w), dz)$. Since $z \notin \R$, $\R z + \Q z_1$ is a countable union of parallel lines that are not parallel to $\R$, so the intersection $(\R z + \Q z_1) \cap (\R + i\Z)$ is countable. First, suppose $z_1 \in (\R + in\Z) \sm \R$. Then for any connected component $C_a = \R + i a$ of $\R + i\Z$, $a \in \Z$, we have $\{\im(\ol{z}_1 w_0) : w_0 \in C_a\} = \R$. Thus, we can choose $w_1 \in (\R + in\Z) \sm (\R z + \Q z_1)$ satisfying the inequalities in (\ref{eq:area}), and we have $z \notin \bigcup_{z_0 \in \Z z_1 + \Z w_1} \R z_0$. Next, suppose $z_1 \in \R$. In this case, for each $a \in \Z$, the set $\{\im(\ol{z}_1 w_0) : w_0 \in C_a\}$ is a singleton, and (\ref{eq:gen4R+iZ}) ensures that we can choose $w_1 \in (\R + in\Z) \sm (\R z + \Q z_1)$ satisfying the inequalities in (\ref{eq:area}), and again we have $z \notin \bigcup_{z_0 \in \Z z_1 + \Z w_1} \R z_0$.

With $w_1$ chosen as in the previous paragraph, let $T_1 = (\C / (\Z z_1 + \Z w_1), dz)$. Let $T_2 = (\C / (\Z z_2 + \Z w_2), dz)$ be a flat torus with $z_2,w_2 \in \R + in\Z$ such that $T_2$ has area less than $1 - \im(\ol{z}w) - \im(\ol{z}_1 w_1)$, and such that $\gcd(\cI_\Lam(z_2),\cI_\Lam(w_2)) = n$. The rest of the construction is the same as in the proof of Lemma \ref{lem:sum2}, except as follows. Recall that $\al \subset T_1$ is an embedded geodesic segment, and that $\al$ is not horizontal since $z \notin \R$. For $\al_1,\dots,\al_{(m_1-3)/2}$, we choose short embedded horizontal segments in $T_2$, such that $\al_1$ starts at the zero that is the starting point of $\alpha$ and emanates rightward, and such that $\al_{j+1}$ starts at the ending point of $\al_j$ and emanates rightward for $j = 1,\dots,(m_1-5)/2$. For $\al_1^\pr,\dots,\al_{(m_1-1)/2}^\pr$, we choose short embedded horizontal segments in $T_2$, such that $\al_1^\pr$ starts at the other zero of $\om_0$ and emanates rightward, and such that $\al_{j+1}^\pr$ starts at the ending point of $\al_j^\pr$ and emanates rightward for $j = 1,\dots,(m_2-3)/2$. The associated flat tori $T_3,\dots,T_{g-1}$ all have the form $T_j = (\C/(\Z z_j + \Z w_j), dz)$ with $z_j \in \R$ and $w_j \in (\R + in\Z) \sm \R$. Let $(X,\om) \in \cC_1^\Lam$ be the resulting holomorphic $1$-form with splittings $\al^\pm$ and $\gam^\pm$ as in the proof of Lemma \ref{lem:sum2}. Let $(X_1,\om_1),(X_2,\om_2) \in \Om\cM_{g-1}(m_1-2,m_2)$ be the holomorphic $1$-forms obtained from $(X,\om)$ by slitting and regluing $\al^\pm,\gam^\pm$, respectively. Since $\gcd(\cI_\Lam(z_2),\cI_\Lam(w_2)) = n$ and $z_1,w_1,z_3,w_3,\dots,z_{g-1},w_{g-1} \in \R + in\Z$, we have $\Per(\om_1) + \R = \R + in\Z$. This means the tuple in $\cT_{(0,1)}^\Lam$ associated to $\al^\pm$ is $(z,w,n)$. The tuple associated to $\gam^\pm$ has the form $(-z_1-w,z+w,n_1)$ for some $n_1 \in \Z_{>0}$. We have
\be
\Per(\om_2) + \Lam_0 = \Z z_1 + \Z (z + w + w_1) + n \Lam = \Z (z + w) + n\Lam .
\ee
Thus, $n_1 = \gcd(\cI_\Lam(z+w),n)$ as desired.
\end{proof}

Lastly, we prove an analogue of Lemma \ref{lem:reln} in the setting of absolute periods constrained to lie in certain closed subgroups of $\C$.

\begin{lem} \label{lem:relnR+iZ}
Let $\Lam = M \cdot (\R + i\Z)$ with $M \in \SL(2,\R)$. For all $(z,w,n) \in \cT^\Lam_{(0,1)}$ and $(z^\pr,w^\pr,n^\pr) \in \cT^\Lam_{(0,1)}$, we have $(z,w,n) \sim_\Lam (z^\pr,w^\pr,n^\pr)$.
\end{lem}

\begin{proof}
We may assume $\Lam = \R + i\Z$. The connected components of $\cT^\Lam_{(0,1)}$ are given by
\begin{align*}
C_{a,b,c} &= \cT^\Lam_{(0,1)} \cap \left((\R + ia) \times (\R + ib) \times \{c\}\right) \\
&= \{(r_1 + ia, r_2 + ib, c) : r_1,r_2 \in \R, \; 0 < r_1 b - r_2 a < 1\}
\end{align*}
with $a \in \Z \sm \{0\}$, $b \in \Z$, $c \in \Z_{>0}$ satisfying $\gcd(a,b,c) = 1$. For any $(z,w,n) \in \cT_{(0,1)}^\Lam$, since $\im(z) \neq 0$, there is $k \in \Z$ such that $\im(z)$ and $\im(kz + w)$ have opposite signs. Following the proof of Lemma \ref{lem:reln}, there are arbitrarily small $z_1 \in \R$ such that $(z,kz+w,n)$ and $z_1$ satisfy the inequalities in (\ref{eq:gen}) and (\ref{eq:gen4R+iZ}). By definition of $\sim_\Lam$, we then have
\be
(z,w,n) \sim_\Lam (z, kz + w, n) \sim_\Lam (-z_1 - kz - w, (k + 1)z + w, n^\pr)
\ee
where $n^\pr = \gcd(\cI_\Lam((k+1)z+w),n)$. Similarly, letting $(z^\pr,w^\pr,n^\pr) = (-z_1 - kz - w, (k + 1)z + w, n^\pr)$, there is $\ell \in \Z$ such that $\im(z^\pr)$ and $\im(\ell z^\pr + w^\pr)$ have opposite signs, and then there are arbitrarily small $z_2 \in \R$ such that
\be
(z^\pr,w^\pr,n^\pr) \sim_\Lam (z^\pr, \ell z^\pr + w^\pr, n^\pr) \sim_\Lam (-z_2 - \ell z^\pr - w^\pr, (\ell + 1)z^\pr + w^\pr, n^{\pr\pr}) ,
\ee
where $n^{\pr\pr} = \gcd(\cI_\Lam((\ell+1)z^\pr + w^\pr),n^\pr)$. The inequalities in (\ref{eq:gen}) and (\ref{eq:gen4R+iZ}) are open conditions, and as in the proof of Lemma \ref{lem:reln}, we see that the equivalence class of $(z,w,n)$ with respect to $\sim_\Lam$ contains an open neighborhood of $(z,w,n)$. Then each equivalence class for $\sim_\Lam$ is open, and therefore a union of connected components of $\cT_{(0,1)}^\Lam$. Thus, it is enough to show that each equivalence class for $\sim_\Lam$ intersects every connected component of $\cT^\Lam_{(0,1)}$. We will carry this out in three steps as follows. Let $E(a,b,c)$ denote the equivalence class containing the connected component $C_{a,b,c}$. \\

\paragraph{\bf Step 1:} Fix $(z,w,n) \in \cT^\Lam_{(0,1)}$. We first show that the equivalence class of $(z,w,n)$ contains a connected component of the form $C_{a,b,1}$. Since this equivalence class contains the connected component of $(z,w,n)$ in $\cT^\Lam_{(0,1)}$, we may assume that $\re(z) = 0$. Since the action of $-I \in \SL(2,\R)$ on $\C$ respects $\sim_\Lam$, we may assume that $\im(z) > 0$. Additionally, by (\ref{eq:gen1R+iZ}), we may assume that $0 \leq \im(w) < \im(z)$.

Write $z = im_1$ and $w = t + in_1$ with $m_1,n_1 \in \Z$, $0 \leq n_1 < m_1$, and $0 < -t < 1/m_1$. Fix $z_1 = s + im_2 \in \R + in\Z$ and $k \in \Z$ with $m_2 \neq 0$ and $m_2 \neq -km_1 - n_1$. We have
\be
\im(\ol{z}z_1) = -sm_1, \quad 1 - \im(\ol{z}(kz+w)) = 1 + tm_1,
\ee
\be
\im(\ol{z}_1 (kz+w)) = s(km_1 + n_1) - tm_2, \quad \im(\ol{z}(z_1+kz+w)) = -(s+t)m_1 .
\ee
Then by definition of $\sim_\Lam$, if
\begin{equation} \label{eq:ineq1}
0 < -s m_1 < 1 + t m_1, \quad 0 < s (k m_1 + n_1) - t m_2 < -(s + t) m_1 ,
\end{equation}
then $(z,w,n) \sim_\Lam (-z_1-kz-w,(k+1)z+w,\gcd(\cI_\Lam((k+1)z+w),n))$. By dividing the first group of inequalities in (\ref{eq:ineq1}) by $m_1 > 0$ and the second group of inequalities by $-tm_1 > 0$, we see that (\ref{eq:ineq1}) is equivalent to
\begin{equation} \label{eq:ineq2}
0 < -s < \frac{1}{m_1} + t, \quad 0 < -\frac{s}{t}\left(k + \frac{n_1}{m_1}\right) + \frac{m_2}{m_1} < \frac{s}{t} + 1 .
\end{equation}
Note that $s/t + 1 > 1$ and $0 \leq n_1/m_1 < 1$.

Fix $m_2 \in n\Z_{< 0}$, and let $k \in \Z_{<0}$ be large. Here, $k$ may depend on $m_1,n_1,m_2,t$. As $-s$ ranges from $0$ to $1/m_1 + t$, the middle expression in the second group of inequalities in (\ref{eq:ineq2}) ranges from $m_2/m_1 < 0$ to $(1/tm_1 + 1)(k + n_1/m_1) + m_2/m_1 > 0$, where positivity is ensured since $1/tm_1 + 1 < 0$ and since $k + n_1/m_1 < 0$ is large. Thus, there is $s \in (-1/m_1-t,0)$ satisfying the inequalities in (\ref{eq:ineq2}), and then $z_1 = s + im_2$ satisfies
\be
(z,w,n) \sim_\Lam (-z_1-kz-w, (k+1)z+w, n^\pr) ,
\ee
where $n^\pr = \gcd(\cI_\Lam((k+1)z+w),n)$. Similarly, there is $s^\pr \in (-1/m_1-t,0)$ such that $z_1^\pr = s^\pr + im_2$ satisfies
\be
(z,w,n) \sim_\Lam (-z_1^\pr-(k+1)z-w, (k+2)z+w, n^{\pr\pr}) ,
\ee
where $n^{\pr\pr} = \gcd(\cI_\Lam((k+2)z+w),n)$. Both $n^\pr$ and $n^{\pr\pr}$ divide $n$. If $n^\pr = n$ and $n^{\pr\pr} = n$, then $(k+1)z + w \in n\Lam$ and $(k+2)z + w \in n\Lam$, which implies $z,w \in n\Lam$, a contradiction since $\gcd(\cI_\Lam(z),\cI_\Lam(w),n) = 1$. Therefore, $n^\pr < n$ or $n^{\pr\pr} < n$, and by iterating this procedure, we see that $E(m_1,n_1,n)$ contains a connected component of the form $C_{a,b,1}$. \\

\paragraph{\bf Step 2:} Fix $(z,w,1) \in \cT_{(0,1)}^\Lam$, and let $n_2 = \gcd(\cI_\Lam(z),\cI_\Lam(w)) > 0$. We next show that the equivalence class of $(z,w,1)$ contains $C_{n_2,0,1}$ if $\im(z) > 0$, and contains $C_{-n_2,0,1}$ if $\im(z) < 0$. As in Step 1, we may assume $\re(z) = 0$ and $\im(z) > 0$, so that $z = im_1$ and $w = t + in_1$ with $m_1,n_1 \in \Z$, $0 \leq n_1 < m_1$, and $0 < -t < 1/m_1$. If $n_1 = 0$, we are done, so assume $n_1 > 0$ as well.

Let $m_2 = 0$ and $k = -1$. Since $0 < n_1/m_1 < 1$, we have $k + n_1/m_1 < 0$. As $-s$ ranges from $0$ to $1/m_1 + t$, the middle expression in the second group of inequalities in (\ref{eq:ineq2}) ranges from $0$ to $(1/tm_1 + 1)(k + n_1/m_1) > 0$. Letting $z_1 = s < 0$ be sufficiently small, we see that $s$ satisfies the inequalities in (\ref{eq:ineq1}), and $(z,w,1)$ and $z_1$ satisfy (\ref{eq:gen4R+iZ}), therefore
\be
(z,w,1) \sim_\Lam (-z_1-kz-w,(k+1)z+w,1) = (-s+z-w, w, 1) .
\ee
This shows that $E(m_1,n_1,1) = E(m_1-n_1,n_1,1)$. By (\ref{eq:gen1R+iZ}), we also have $E(m_1,n_1,1) = E(m_1,d m_1 + n_1,1)$ for all $d \in \Z$. Thus, by iterating this procedure and running the Euclidean algorithm on the pair of integers $(m_1,n_1)$, we see that $E(m_1,n_1,1) = E(n_2,0,1)$. \\

\paragraph{\bf Step 3:} We conclude by showing that all of the connected components $C_{n_2,0,1}$, $n_2 \in \Z \sm \{0\}$, lie in the same equivalence class. By Step 2, we have $E(n_2,0,1) = E(n_2,n_2,1)$. Suppose $n_2 > 1$. Let $z = in_2$, and let $w = t + in_2$ with $0 < -t < 1/n_2$. Fix $m_2 \in \Z$ with $0 < m_2 < n_2$, and let $k = 0$. As $-s$ ranges from $0$ to $1/n_2 + t$, the middle expression in the second group of inequalities in (\ref{eq:ineq2}) ranges from $m_2/n_2 > 0$ to $1/tn_2 + 1 + m_2/n_2$ (in this case, $m_1 = n_1 = n_2$). Since $0 < m_2/n_2 < 1$, for sufficiently small $s < 0$, letting $z_1 = s + i m_2$, we have
\be
(z,w,1) \sim_\Lam (-z_1-w,z+w,1) = (-(s+t)-(n_2+m_2)i, t + 2 n_2 i, 1) .
\ee
Then $E(n_2,0,1) = E(-n_2-m_2,2n_2,1)$ for all $0 < m_2 < n_2$. Taking $m_2 = n_2 - 1$, by Step 2 we have $E(n_2,0,1) = E(-2n_2 + 1, 2n_2, 1) = E(-1,0,1)$. If $n_2 > 2$, taking $m_2 = n_2 - 2$, by Step 2 we also have $E(n_2,0,1) = E(-2n_2 + 2, 2n_2, 1) = E(-2,0,1)$. Similarly, for all $n_2 \in \Z$ with $n_2 < -1$, we have $E(n_2,0,1) = E(1,0,1)$, and if $n_2 < -2$, we also have $E(n_2,0,1) = E(2,0,1)$. We have shown that for all integers $n_2 > 2$,
\be
E(n_2,0,1) = E(-1,0,1), \quad E(n_2,0,1) = E(-2,0,1) = E(1,0,1) ,
\ee
and similarly, for all integers $n_2 < - 2$,
\be
E(n_2,0,1) = E(1,0,1), \quad E(n_2,0,1) = E(2,0,1) = E(-1,0,1) .
\ee
Thus, for all $n_2 \in \Z \sm \{0\}$, we have $E(n_2,0,1) = E(1,0,1)$ and we are done.
\end{proof}

%%%%%%%%%%%%%%%%%%%%%%%%%%%%%%%%%%%%%%%%%%%%%%%%%%
%%%%%%%%%%%%%%%%%%%%%%%%%%%%%%%%%%%%%%%%%%%%%%%%%%
%%%%%%%%%%%%%%%%%%%%%%%%%%%%%%%%%%%%%%%%%%%%%%%%%%

\section{Periods under connected sums} \label{sec:induct}

This brief section contains additional preparatory lemmas for our proofs of Theorems \ref{thm:conn} and \ref{thm:connR+iZ}, on the behavior of the group of absolute periods under connected sums. Roughly speaking, the absolute periods of the holomorphic $1$-forms considered in Theorem \ref{thm:conn} do not satisfy any atypical $\Q$-linear relations and do not have any atypical pairs of parallel absolute periods. We first show that removing a torus from a connected sum presentation preserves these properties, and we use these properties to give an explicit criterion for a leaf of $\cA(\kap)$ to contain holomorphic $1$-forms with a dense $\GL^+(2,\R)$-orbit.

\begin{lem} \label{lem:preserved}
Suppose $(X^\pr,\om^\pr) \in \Om\cM_{g+1}(\kap^\pr)$ arises from $(X,\om) \in \Om\cM_g(\kap)$ by a connected sum with a torus, and let $\gam$ be the associated geodesic segment in $(X,\om)$. If $\Per(\om^\pr)$ satisfies
\be
\Per(\om^\pr) \cong \Z^{2(g+1)}, \quad \Per(\om^\pr) \cap \R z \subset \Q z \quad \text{for all } z \in \Per(\om^\pr) ,
\ee
then $\Per(\om)$ satisfies
\be
\Per(\om) \cong \Z^{2g}, \quad \Per(\om) \cap \R z \subset \Q z \quad \text{for all } z \in \Per(\om) .
\ee
Moreover, $\int_\gam \om \notin \bigcup_{z \in \Per(\om)} \R z$.
\end{lem}

\begin{proof}
The inclusion $X \sm \gam \hra X^\pr$ induces an injection $f : H_1(X;\Z) \hra H_1(X^\pr;\Z)$ such that $\int_c \om = \int_{f(c)} \om^\pr$ for all $c \in H_1(X;\Z)$. Since $\Per(\om^\pr) \cong \Z^{2(g+1)}$, the homomorphism $H_1(X^\pr;\Z) \ra \C$, $c^\pr \mapsto \int_{c^\pr} \om^\pr$, is injective. This implies the homomorphism $H_1(X;\Z) \ra \C$, $d \mapsto \int_d \om$ is also injective, and thus $\Per(\om) \cong \Z^{2g}$. Next, fix $z \in \Per(\om)$. Since $\int_c \om = \int_{f(c)} \om^\pr$ for all $c \in H_1(X;\Z)$, we have $\Per(\om) \subset \Per(\om^\pr)$. Then since $\Per(\om^\pr) \cap \R z \subset \Q z$, we also have $\Per(\om) \cap \R z \subset \Q z$.

Lastly, let $\gam^\pm$ be the given splitting of $(X^\pr,\om^\pr)$. In other words, $(X,\om)$ is the holomorphic $1$-form of genus $g$ obtained from $(X^\pr,\om^\pr)$ by slitting and regluing $\gam^\pm$ and taking the connected component of genus $g$. Let $c^\pr = [\gam^\pm] \in H_1(X^\pr;\Z)$, and let $C$ be the associated cylinder on $(X^\pr,\om^\pr)$. Then $c^\pr$ is represented by a closed geodesic $\al$ contained in $C$. Let $\bet \subset C \cup Z(\om^\pr)$ be a saddle connection crossing $C$. Then $\bet$ is a closed loop intersecting $\al$ exactly once. On the other hand, the image $f(H_1(X;\Z))$ is generated by homology classes represented by simple closed curves in $X^\pr \sm \ol{C}$, which are in particular disjoint from $\al \cup \bet$. Thus, $c^\pr \notin f(H_1(X;\Z))$. Then since $\Per(\om^\pr) \cong \Z^{2(g+1)}$ and $\Per(\om^\pr) \cap \R z \subset \Q z$ for all $z \in \Per(\om^\pr)$, we have $\int_{c^\pr} \om^\pr \notin \R \int_{f(c)} \om$ for all $c \in H_1(X;\Z)$. This means $\int_\gam \om \notin \bigcup_{z \in \Per(\om)} \R z$.
\end{proof}

By \cite{EMM:closures} and \cite{Wri:field}, a $\GL^+(2,\R)$-orbit closure in a stratum is defined in local period coordinates by homogeneous $\R$-linear equations with coefficients in a number field. The smallest such number field is the {\em affine field of definition} of the orbit closure. By \cite{Wri:cylinder}, when the affine field of definition is not $\Q$, every holomorphic $1$-form in the orbit closure has a pair of parallel cylinders whose circumferences have an irrational ratio. These results yield a simple criterion for a leaf of $\cA(\kap)$ to contain holomorphic $1$-forms with a dense $\GL^+(2,\R)$-orbit. Recall that for $(X,\om) \in \Om\cM_g(\kap)$, the leaf of $\cA(\kap)$ through $(X,\om)$ is denoted $L(\om)$.

\begin{lem} \label{lem:generic}
Fix $(X_0,\om_0) \in \Om\cM_g(\kap)$ such that $\Per(\om_0) \cong \Z^{2g}$ and $\Per(\om_0) \cap \R z \subset \Q z$ for all $z \in \Per(\om_0)$. For a dense subset of $(X,\om) \in L(\om_0)$, the $\GL^+(2,\R)$-orbit of $(X,\om)$ is dense in its connected component in $\Om\cM_g(\kap)$.
\end{lem}

\begin{proof}
Fix $(X,\om) \in L(\om_0)$. Let $\{a_j,b_j\}_{j=1}^g$ be a basis for $H_1(X;\Z)$, and extend to a basis $\{a_j,b_j\}_{j=1}^g \cup \{c_j\}_{j=1}^{n-1}$ for $H_1(X,Z(\om);\Z)$. Suppose the $\GL^+(2,\R)$-orbit of $(X,\om)$ is not dense in its stratum component. Then the period coordinates $\int_{a_1} \om, \dots, \int_{c_{n-1}} \om$ satisfy a homogeneous linear equation with coefficients in a number field $K \subset \R$. Since there are only countably many possible equations, after moving a small distance in $L(\om_0)$ we can ensure that the coefficients of $\int_{c_j} \om$ in such an equation must all be $0$. This means
\be
\sum_{j=1}^g \left(s_j \int_{a_j} \om + t_j \int_{b_j} \om\right) = 0
\ee
for some $s_j,t_j \in K$. Since $\Per(\om) \cong \Z^{2g}$, at least one of the coefficients $s_j,t_j$ is not in $\Q$. Therefore, the affine field of definition of the $\GL^+(2,\R)$-orbit closure of $(X,\om)$ is not $\Q$. Then $(X,\om)$ has parallel cylinders $C_1,C_2$, and closed geodesics $\al_j \subset C_j$, such that the absolute periods $\int_{\al_1} \om$ and $\int_{\al_2} \om$ satisfy $\int_{\al_1} \om / \hspace{-3pt} \int_{\al_2} \om \in \R \sm \Q$. But then
\be
\Z \int_{\al_1} \om + \Z \int_{\al_2} \om \subset \Per(\om) \cap \R \int_{\al_1} \om ,
\ee
a contradiction since $\Per(\om) = \Per(\om_0)$.
\end{proof}

Next, we prove versions of Lemmas \ref{lem:preserved} and \ref{lem:generic} for the holomorphic $1$-forms considered in Theorem \ref{thm:connR+iZ}. The absolute periods of these holomorphic $1$-forms do not satisfy any atypical $\Q$-linear relations, and they are not dense in $\C$, which forces them to be dense in a closed subgroup of the form $M \cdot (\R + i\Z)$ for some $M \in \SL(2,\R)$.

\begin{lem} \label{lem:preservedR+iZ}
Suppose $(X^\pr,\om^\pr) \in \Om\cM_{g+1}(\kap^\pr)$ arises from $(X,\om) \in \Om\cM_g(\kap)$ by a connected sum with a torus, and let $\gam$ be the associated geodesic segment in $(X,\om)$. If $\Per(\om^\pr)$ satisfies
\be
\Per(\om^\pr) \cong \Z^{2(g+1)}, \text{ and } \Per(\om^\pr) \text{ is not dense in } \C ,
\ee
then $\Per(\om)$ satisfies
\be
\Per(\om) \cong \Z^{2g}, \text{ and } \Per(\om) \text{ is not dense in } \C .
\ee
Moreover, letting $M \in \SL(2,\R)$ be such that $\Per(\om)$ is dense in $M \cdot (\R + i\Z)$, if $\int_\gam \om \notin M \cdot \R$, then $\int_\gam \om \notin \bigcup_{z \in \Per(\om)} \R z$.
\end{lem}

\begin{proof}
Similar to the proof of Lemma \ref{lem:preserved}.
\end{proof}

\begin{lem} \label{lem:genericR+iZ}
Fix $(X_0,\om_0) \in \Om\cM_g(\kap)$ such that $\Per(\om_0) \cong \Z^{2g}$ is not dense in $\C$. For a dense subset of $(X,\om) \in L(\om_0)$, the $\GL^+(2,\R)$-orbit of $(X,\om)$ is dense in its stratum component.
\end{lem}

\begin{proof}
By applying an element of $\SL(2,\R)$, we may assume that $\Per(\om_0)$ is dense in $\R + i\Z$. As in the proof of Lemma \ref{lem:generic}, for a dense subset of $(X,\om) \in L(\om_0)$, if the $\GL^+(2,\R)$-orbit closure of $(X,\om)$ is not dense in its stratum component, then the affine field of definition of the orbit closure is not $\Q$. By Theorem 2 in \cite{Mas:geodesic}, the set of directions of cylinders on $(X,\om)$ is dense in $S^1$, so there are non-horizontal parallel cylinders $C_1,C_2$ on $(X,\om)$ and closed geodesics $\al_j \subset C_j$ satisfying $\int_{\al_1} \om / \hspace{-3pt} \int_{\al_2} \om \in \R \sm \Q$. However, since $\Per(\om) = \Per(\om_0) \subset \R + i\Z$ and $\Per(\om) \cong \Z^{2g}$, we have $\Per(\om) \cap \R z \subset \Q z$ for all $z \in \Per(\om) \sm \R$, a contradiction.
\end{proof}

%%%%%%%%%%%%%%%%%%%%%%%%%%%%%%%%%%%%%%%%%%%%%%%%%%
%%%%%%%%%%%%%%%%%%%%%%%%%%%%%%%%%%%%%%%%%%%%%%%%%%
%%%%%%%%%%%%%%%%%%%%%%%%%%%%%%%%%%%%%%%%%%%%%%%%%%

\section{Connectivity of spaces of isoperiodic forms} \label{sec:conn}

In this section, we prove that spaces of isoperiodic forms in most stratum components are typically connected, and we describe a monodromy obstruction to the connectivity of these spaces in the remaining stratum components. We also deduce the ergodicity of the absolute period foliation and provide explicit full measure sets of dense leaves in most strata. \\

\paragraph{\bf Spaces of isoperiodic forms.} Fix $g \geq 2$. Recall that $S_g$ denotes the connected closed oriented surface of genus $g$, and the cohomology group $H^1(S_g;\C)$ is identified with the group of homomorphisms $H_1(S_g;\Z) \ra \C$. Let $\langle \al, \bet \rangle = \frac{i}{2} \int_{S_g} \al \wedge \ol{\bet}$ be the intersection form on $H^1(S_g;\C)$. For $\phi \in H^1(S_g;\C)$, define $\Per(\phi)$ to be the image of the associated homomorphism $H_1(S_g;\Z) \ra \C$. The self-intersection $\lr{\phi,\phi}$ is given by
\be
\lr{\phi,\phi} = \sum_{j=1}^g \im\left(\ol{\phi(a_j)}\phi(b_j)\right)
\ee
where $\{a_j,b_j\}_{j=1}^g$ is a symplectic basis for $H_1(S_g;\Z)$ with respect to the algebraic intersection form on $H_1(S_g;\Z)$. One can verify that this formula is independent of the choice of symplectic basis, for instance using one of the generating sets for the integral symplectic group $\Sp(2g,\Z)$ found in Section 6.1 of \cite{FM:primer}. We say that $\phi$ is {\em positive} if $\langle \phi, \phi \rangle > 0$. When $\phi$ is positive, we say that $\phi$ is {\em elliptic of degree $d > 0$} if $\Per(\phi)$ is a lattice in $\C$ and the naturally associated homotopy class of maps $S_g \ra \C/\Per(\phi)$ has degree $d$.

Let ${\rm Homeo}^+(S_g,X)$ be the set of orientation-preserving homeomorphisms $S_g \ra X$. The {\em space of isoperiodic forms} representing $\phi$ is defined by
\be
\cM(\phi) = \left\{(X,\om) \in \Om\cM_g : f^\ast([\om]) = \phi \text{ for some } f \in {\rm Homeo}^+(S_g,X) \right\} .
\ee
Here, $[\om] \in H^1(X;\C)$ denotes the cohomology class represented by $\om$. Any two holomorphic $1$-forms $(X,\om),(Y,\eta) \in \cM(\phi)$ are {\em isoperiodic}, in the sense that there is a symplectic isomorphism $m : H_1(X;\Z) \ra H_1(Y;\Z)$ such that $\int_c \omega = \int_{m(c)} \eta$ for all $c \in H_1(X;\Z)$. The area of any $(X,\om) \in \cM(\phi)$ is given by $\Area(X,\om) = \lr{\phi,\phi}$. For $\cC$ a stratum component, we define
\be
\cC(\phi) = \cC \cap \cM(\phi) .
\ee
The symplectic automorphism group $\Aut(H_1(S_g;\Z)) \cong \Sp(2g,\Z)$ acts on the group of homomorphisms $H_1(S_g;\Z) \ra \C$, and $\cC(\phi)$ only depends on the orbit of $\phi$ under this action.

Haupt's theorem \cite{Hau:periods} says that $\cM(\phi)$ is nonempty if and only if $\phi$ is positive and not elliptic of degree $1$. Haupt's theorem was rediscovered in \cite{Kap:periods} using tools from homogeneous dynamics, and another proof is given in \cite{CDF:transfer}. A generalization of Haupt's theorem to stratum components was proven in \cite{BJJP:Haupt}, and an independent proof for strata (without considering their connected components) is given in \cite{Fil:Haupt}. In particular, if $\phi$ is positive and $\Per(\phi)$ is not a lattice in $\C$, then $\cC(\phi)$ is nonempty for all stratum components $\cC$.

The proof of Haupt's theorem in \cite{Kap:periods} relies on classifying the $\Sp(2g,\Z)$-orbit closures in the set of positive cohomology classes in $H^1(S_g;\C)$. Roughly speaking, when $g \geq 3$ these orbit closures are determined by the closure of the associated group of absolute periods. For $a > 0$, let $E_a$ be the set of cohomology classes $\phi \in H^1(S_g;\C)$ with $\lr{\phi,\phi} = a$. For $\Lam \subset \C$ a closed subgroup of the form $\Lam = M \cdot (\R + i\Z)$ with $M \in \SL(2,\R)$ and identity component $\Lam_0 = M \cdot \R$, let $E_a^\Lam$ be the set of $\phi \in E_a$ such that $\Per(\phi) + \Lam_0 = \Lam$.

\begin{lem} \label{lem:Kapovich} (\cite{Kap:periods}, see Proposition 3.10 in \cite{CDF:transfer}).
Suppose $g \geq 3$. Fix $\phi \in H^1(S_g;\C)$ with $\lr{\phi,\phi} = a > 0$, and let $\Lam$ be the closure of $\Per(\phi)$ in $\C$.
\begin{enumerate}
    \item If $\Lam = \C$, then the closure of $\Sp(2g,\Z) \cdot \phi$ is $E_a$.
    \item If $\Lam = M \cdot (\R + i\Z)$ with $M \in \SL(2,\R)$, then the closure of $\Sp(2g,\Z) \cdot \phi$ is $E_a^\Lam$.
    \item If $\Lam = \Per(\phi)$, then $\Sp(2g,\Z) \cdot \phi$ is closed.
\end{enumerate}
If $g = 2$, there is one additional case when $\Lam = \C$ and $\phi$ arises from an eigenform for real multiplication. In that case, the closure of $\Sp(2g,\Z) \cdot \phi$ is given by $\SL(2,\R) \cdot \left(\Sp(2g,\Z) \cdot \phi\right)$.
\end{lem}

For more details about eigenforms in genus $2$, we refer to Section 5 in \cite{McM:SL2R}. For our purposes, we will only need to know that when $g = 2$, $\Lam = \C$, and $\phi$ arises from an eigenform for real multiplication, we have $\Per(\phi) \cap \R z \cong \Z^2$ for all nonzero $z \in \Per(\phi)$. In particular, there are atypical pairs of parallel absolute periods in this case. \\

\paragraph{\bf Setup for proving Theorem \ref{thm:conn}.} Let $\cC$ be a stratum component and let $\phi \in H^1(S_g;\C)$ be a positive cohomology class, such that $\cC$ and $\phi$ satisfy the hypotheses of Theorem \ref{thm:conn}. We now begin our proof that $\cC(\phi)$ is connected. It will be easy to reduce to the case of strata with two zeros by splitting zeros, so we will focus on this case here.

A connected component of $\cC(\phi)$ is a leaf of the absolute period foliation of $\cC$. Lemma \ref{lem:sum2} constructs holomorphic $1$-forms in $\cC$ with a splitting. Any splitting persists on an open neighborhood in a stratum, and splittings are preserved by the action of $\GL^+(2,\R)$, so the set of holomorphic $1$-forms in $\cC$ with a splitting is nonempty, open, and $\GL^+(2,\R)$-invariant. Lemma \ref{lem:generic} then ensures that every connected component of $\cC(\phi)$ contains a holomorphic $1$-form $(X,\om)$ with a splitting $\al^\pm$. Let $C$ be the associated cylinder of $\al^\pm$, and let $(z,w)$ be the associated periods of $\al^\pm$. Since $\Per(\phi) \cong \Z^{2g}$, there is a unique pair of homology classes $a,b \in H_1(X;\Z)$ such that $\int_a \om = z$ and $\int_b \om = z$. These homology classes have algebraic intersection $a \cdot b = 1$, and their periods satisfy an area constraint $0 < \im(\ol{z}w) < \lr{\phi,\phi}$.

Slitting and regluing $\al^\pm$ yields a flat torus and a holomorphic $1$-form $(X^\pr,\om^\pr)$ of genus $g - 1$ with a distinguished geodesic segment $s$ coming from $\al^\pm$. The homology group $H_1(X^\pr;\Z)$ is identified with the symplectic orthogonal $\{a,b\}^\perp \subset H_1(X;\Z)$ in a way that preserves absolute periods. Letting $\cC^\pr$ be the stratum component containing $(X^\pr,\om^\pr)$, there is a cohomology class $\phi^\pr \in H^1(S_{g-1};\C)$ such that $(X^\pr,\om^\pr) \in \cC^\pr(\phi^\pr)$, and $\phi^\pr$ can be thought of as the restriction of $\phi$ to the homology of a subsurface of genus $g - 1$.

The results in Section \ref{sec:leaves}, along with an inductive hypothesis, will imply that the component of $\cC(\phi)$ containing $(X,\om)$ contains all holomorphic $1$-forms $(Y,\eta)$ with a splitting whose associated periods are $(z,w)$. Thus, we have a well-defined surjective function from certain pairs of periods to connected components of $\cC(\phi)$. We need to show that this function is constant. The results in Section \ref{sec:pair} provide a criterion for when two pairs of periods determine the same component of $\cC(\phi)$. This criterion will reduce our connectivity problem to a delicate algebraic problem, which we will then solve. \\

\paragraph{\bf Spaces of isoperiodic forms with a fixed splitting.} We now formalize the previous discussion. Fix $g \geq 3$. Let $\phi \in H^1(S_g;\C)$ be a positive cohomology class, and let $\cC$ be a connected component of a stratum $\Om\cM_g(m_1,m_2)$ with $m_1 \geq m_2$. Throughout this subsection, we make the following additional assumptions.
\begin{itemize}
    \item We assume that $\cC$ is a nonhyperelliptic component and that $m_1,m_2$ are odd.
    \item We assume that $\Per(\phi) \cong \Z^{2g}$ and that $\Per(\phi) \cap \R z \subset \Q z$ for all $z \in \Per(\phi)$.
\end{itemize}
Fix $z,w \in \Per(\phi)$, and let $a,b \in H_1(S_g;\Z)$ be the unique homology classes such that $\phi(a) = z$ and $\phi(b) = w$. Suppose that $a \cdot b = 1$ and that $0 < \im(\ol{z} w) < \lr{\phi,\phi}$. If $m_2 \geq 3$, let $m_\cC = m_2$, and otherwise, let $m_\cC = m_1$. Then we define
\be
\cC(\phi,z,w) = \left\{ (X,\om) \in \cC(\phi) : \begin{matrix} (X,\om) \text{ has a splitting at a zero of $\om$ of order $m_\cC$} \\ \text{with associated periods } (z,w) \end{matrix} \right\} .
\ee
If $m_\cC = m_2$, let $\cC^\pr = \Om\cM_{g-1}(m_1,m_2 - 2)$, and if $m_\cC = m_1$, let $\cC^\pr = \Om\cM_{g-1}(m_1-2,m_2)$. Note that $\cC^\pr$ is connected by Corollary \ref{cor:KZconn}. Choose a symplectic isomorphism $f_1 : H_1(S_{g-1};\Z) \ra \{a,b\}^\perp$. This choice determines a homomorphism $H_1(S_{g-1};\Z) \ra \C$, $c \mapsto \phi(f_1(c))$, and thus a cohomology class $\phi^\pr \in H^1(S_{g-1};\C)$. Different choices yield cohomology classes in the same orbit under $\Aut(H_1(S_{g-1};\Z))$, so the space $\cC^\pr(\phi^\pr)$ is independent of the choice of $f_1$.

\begin{lem} \label{lem:Czwconn}
Suppose that $\cC^\pr(\phi^\pr)$ is connected. Then $\cC(\phi,z,w)$ is connected.
\end{lem}

\begin{proof}
Fix $(X,\om) \in \cC(\phi,z,w)$. Since $(X,\om) \in \cC(\phi)$, there is a symplectic isomorphism $f : H_1(S_g;\Z) \ra H_1(X;\Z)$ such that $\phi(c) = \int_{f(c)} \om$ for all $c \in H_1(S_g;\Z)$. Let $\al^\pm$ be a splitting of $(X,\om)$ at a zero of $\om$ of order $m_\cC$ with associated periods $(z,w)$, and let $a_1,b_1 \in H_1(X;\Z)$ be the unique homology classes such that $\int_{a_1} \om = z$ and $\int_{b_1} \om = w$. Let $(X^\pr,\om^\pr) \in \cC^\pr$ be the holomorphic $1$-form in genus $g-1$ obtained by slitting and regluing $\al^\pm$, so there is a symplectic isomorphism $f_2 : H_1(X^\pr;\Z) \ra \{a_1,b_1\}^\perp$ such that $\int_c \om^\pr = \int_{f_2(c)} \om$ for all $c \in H_1(X^\pr;\Z)$. Since $\phi(a) = \int_{a_1} \om$ and $\phi(b) = \int_{b_1} \om$, we must have $f(a) = a_1$ and $f(b) = b_1$, so $f$ restricts to an isomorphism $\{a,b\}^\perp \cong \{a_1,b_1\}^\perp$. Then the composition $f_2^{-1} \circ f \circ f_1 : H_1(S_{g-1};\Z) \ra H_1(X^\pr;\Z)$ satisfies $\phi^\pr(c) = \int_{f_2^{-1}(f(f_1(c)))} \om^\pr$ for all $c \in H_1(S_{g-1};\Z)$, which means $(X^\pr,\om^\pr) \in \cC^\pr(\phi^\pr)$.

The splitting $\al^\pm$ determines a geodesic segment $s_1$ on $(X^\pr,\om^\pr)$ emanating from a zero of $\om^\pr$ of order $m_\cC - 2$. By Lemma \ref{lem:preserved}, the segment $s_1$ is not parallel to any absolute period of $(X^\pr,\om^\pr)$. Let $I = \{tz : 0 \leq t \leq 1\}$. Then any saddle connection $\gam^\pr$ on $(X^\pr,\om^\pr)$ with holonomy in $I$ must have distinct endpoints, and the interior of $\gam^\pr$ must be disjoint from $s_1$, so there is a saddle connection $\gam$ on $(X,\om)$ with the same holonomy as $\gam^\pr$. Thus, after replacing $(X,\om)$ with a nearby holomorphic $1$-form in the leaf $L(\om)$, we may assume that $\Gam(\om^\pr)$ is disjoint from $I$.

Now fix another holomorphic $1$-form $(Y,\eta) \in \cC(\phi,z,w)$. Let $\bet^\pm$ be a splitting of $(Y,\eta)$ at a zero of order $m_\cC$ with associated periods $(z,w)$, and let $(Y^\pr,\eta^\pr) \in \cC^\pr$ be the holomorphic $1$-form obtained by slitting and regluing $\bet^\pm$. As above, we have $(Y^\pr,\eta^\pr) \in \cC^\pr(\phi^\pr)$, and after replacing $(Y,\eta)$ with a nearby holomorphic $1$-form in $L(\eta)$, we may assume that $\Gam(\eta^\pr)$ is disjoint from $I$.

By assumption, $\cC^\pr(\phi^\pr)$ is connected, and is therefore a leaf of the absolute period foliation of $\cC^\pr$. We denote this leaf by $L$. Since $\Gam(\om^\pr)$ and $\Gam(\eta^\pr)$ are disjoint from $I$, we have $(X^\pr,\om^\pr), (Y^\pr,\eta^\pr) \in L \sm L(I)$, and Lemma \ref{lem:LIconn} tells us that there is a path $\varphi : [0,1] \ra L \sm L(I)$ from $(X^\pr,\om^\pr)$ to $(Y^\pr,\eta^\pr)$. Let $Z_1$ be the zero of $\om^\pr$ disjoint from $s_1$, and let $Z_2$ be the other zero of $\om^\pr$. As we travel along $\varphi([0,1])$, the zero $Z_1$ moves relative to $Z_2$ while remaining disjoint from $s_1$. The path $\varphi$ then determines a path in $L(\om)$ along which the splitting $\al^\pm$ persists, from $(X,\om)$ to a holomorphic $1$-form that arises from $(Y^\pr,\eta^\pr)$ via a connected sum with a torus where the splitting has associated periods $(z,w)$. Unfortunately, this is not enough to ensure that we have a path in $L(\om)$ from $(X,\om)$ to $(Y,\eta)$, since our connected sum construction also depends on a choice of prong-marking. However, Lemma \ref{lem:coverleaf} ensures that the connected sums from all possible prong-markings of $(Y^\pr,\eta^\pr)$ lie on the same leaf.

To see this, let $\Om\cM_{g-1}(\kap^\pr)$ be the stratum containing $\cC^\pr$, and consider the stratum cover $p : \wt{\Om}\cM_{g-1}(\kap^\pr; m_\cC - 2) \ra \Om\cM_{g-1}(\kap^\pr)$ by prong-marked holomorphic $1$-forms from (\ref{eq:prong}). By Lemmas \ref{lem:preserved} and \ref{lem:generic}, the leaf $L$ contains a holomorphic $1$-form whose $\GL^+(2,\R)$-orbit is dense in $\cC^\pr$. Then by Lemma \ref{lem:coverleaf}, the preimage $\wt{L} = p^{-1}(L)$ is a leaf of $\cA(\kap^\pr;m_\cC - 2)$, and in particular is path-connected. The preimages $p^{-1}(X^\pr,\om^\pr)$ and $p^{-1}(Y^\pr,\eta^\pr)$ are contained in $\wt{L} \sm \wt{L}(I)$, since saddle connection holonomies do not depend on the choice of prong-marking, and $\wt{L} \sm \wt{L}(I)$ is also path-connected as indicated below Lemma \ref{lem:LIconn}. Thus, there is a path in $\wt{L} \sm \wt{L}(I)$ from any element of $p^{-1}(X^\pr,\om^\pr)$ to any element of $p^{-1}(Y^\pr,\eta^\pr)$.

Now let $\kap = (m_1,m_2)$, and consider the connected sum map $\Psi : \cT(\kap^\pr; m_\cC - 2) \ra \Om\cM_g(\kap)$. There is $(X^\pr,\wt{\om^\pr}) \in p^{-1}(X^\pr,\om^\pr)$ and $(X^\pr,\wt{\om^\pr},T_1) \in \Psi^{-1}(X,\om)$, where $T_1 = (\gam_1,w)$ and $\gam_1 \in \wt{\C}^\ast_{m_\cC-1}$ is a segment with holonomy $z$. Similarly, there is $(Y^\pr,\wt{\eta^\pr}) \in p^{-1}(Y^\pr,\eta^\pr)$ and $(Y^\pr,\wt{\eta^\pr},T_2) \in \Psi^{-1}(Y,\eta)$, where $T_2 = (\gam_2,w)$ and $\gam_2 \in \wt{\C}^\ast_{m_\cC-1}$ is a segment with holonomy $z$. By Corollary \ref{cor:sumleaf}, the leaf of $\cF_\cT$ through $(X^\pr,\wt{\om^\pr},T_1)$ is given by $(\wt{L} \sm \wt{L}(I)) \times \{T_1\}$, and the leaf of $\cF_\cT$ through $(Y^\pr,\wt{\eta^\pr},T_2)$ is given by $(\wt{L} \sm \wt{L}(I)) \times \{T_2\}$. The path $\varphi : [0,1] \ra L \sm L(I)$ lifts to a path in $(\wt{L} \sm \wt{L}(I)) \times \{T_1\}$ from $(X^\pr,\wt{\om^\pr},T_1)$ to $(Y^\pr,\wt{\eta^\pr},T_1)$. Now, by the previous paragraph, we can replace $(Y^\pr,\wt{\eta}^\pr)$ with any other element of $p^{-1}(Y^\pr,\eta^\pr)$. This amounts to changing the choice of prong $\tht(\wt{\eta^\pr})$, and the angle between different prongs is an integer multiple of $2\pi$. Also, the angle between $\gam_1$ and $\gam_2$ is an integer multiple of $2\pi$. If we rotate the chosen prong on $(Y^\pr,\wt{\eta^\pr})$ by $2\pi$ clockwise, and simultaneously rotate the segment $\gam_1$ by $2\pi$ counterclockwise, then the result of the connected sum construction is unchanged since the same segment is being slit on the underlying holomorphic $1$-form $(Y^\pr,\eta^\pr)$. Thus, after replacing $(Y^\pr,\wt{\eta}^\pr)$ with another element of $p^{-1}(Y^\pr,\eta^\pr)$, we have $\Psi(Y^\pr,\wt{\eta^\pr},T_1) = (Y,\eta)$. Then by applying $\Psi$ to a path in $(\wt{L} \sm \wt{L}(I)) \times \{T_1\}$ from $(X^\pr,\wt{\om^\pr},T_1)$ to $(Y^\pr,\wt{\eta^\pr},T_1)$, we obtain a path in $\cC(\phi,z,w)$ from $(X,\om)$ to $(Y,\eta)$. Thus, $\cC(\phi,z,w)$ is connected.
\end{proof}

\begin{lem} \label{lem:Czwunion}
Suppose that Theorem \ref{thm:conn} is true for $\cC^\pr$. Then every component of $\cC(\phi)$ contains $\cC(\phi,z^\pr,w^\pr)$ for some $z^\pr,w^\pr \in \C$.
\end{lem}

\begin{proof}
Fix $(X,\om) \in \cC(\phi)$. The connected component of $\cC(\phi)$ containing $(X,\om)$ is a leaf $L(\om)$ of the absolute period foliation of $\cC$. By Lemma \ref{lem:generic}, after replacing $(X,\om)$ with a nearby holomorphic $1$-form on $L(\om)$, we may assume that the $\GL^+(2,\R)$-orbit of $(X,\om)$ is dense in $\cC$. By Lemma \ref{lem:sum2}, the set of holomorphic $1$-forms in $\cC$ with a splitting is nonempty, so since this set is open and $\GL^+(2,\R)$-invariant, $(X,\om)$ has a splitting $\al^\pm$. Let $(z^\pr,w^\pr)$ be the associated periods of $\al^\pm$, and let $(X^\pr,\om^\pr) \in \cC^\pr$ be the holomorphic $1$-form in genus $g - 1$ obtained from $(X,\om)$ by slitting and regluing $\al^\pm$. By Lemma \ref{lem:preserved}, we have $(X^\pr,\om^\pr) \in \cC^\pr(\phi^\pr)$ for some positive $\phi^\pr \in H^1(S_{g-1};\C)$ satisfying the hypotheses of Theorem \ref{thm:conn}. Then by Lemma \ref{lem:Czwconn} and our inductive hypothesis, $\cC(\phi,z^\pr,w^\pr)$ is connected, so $\cC(\phi,z^\pr,w^\pr)$ is contained in $L(\om)$.
\end{proof}

\begin{lem} \label{lem:Czwequal1}
For all $n \in \Z$, we have $\cC(\phi,z,w) = \cC(\phi,z, nz + w)$.
\end{lem}

\begin{proof}
This is immediate from the definitions. A splitting with associated periods $(z,w)$ and a splitting with associated periods $(z,nz + w)$ are the same thing, since the period $z$ is uniquely determined by the splitting, while the period $w$ depends on the choice of saddle connection crossing the associated cylinder. Different choices of this saddle connection yield $nz + w$ in place of $w$ for any $n \in \Z$.
\end{proof}

\begin{lem} \label{lem:Czwequal2}
Let $a_1 \in \{a,b\}^\perp$ be a primitive homology class, and let $z_1 = \phi(a_1)$. If $z,w,z_1$ satisfy the inequalities in (\ref{eq:gen}), then $\cC(\phi,z,w) \cap \cC(\phi,-z_1-w,z+w)$ is nonempty.
\end{lem}

\begin{proof}
We need to show there exists $(X,\om) \in \cC(\phi)$ with two splittings whose associated periods are $(z,w)$ and $(-z_1-w,z+w)$, respectively. The construction in the proof of Lemma \ref{lem:sum2} provides such a holomorphic $1$-form, as follows.

Recall that in the construction in the proof of Lemma \ref{lem:sum2}, we first glue a pair of flat tori $T_1 = (\C/(\Z z_1 + \Z w_1), dz)$ and $T_2 = (\C/(\Z z_2 + \Z w_2), dz)$ along a pair of homologous saddle connections $s^\pm$ with holonomy $u$. We then iteratively form connected sums, using the flat torus $T_0 = (\C / (\Z z + \Z w), dz)$ and a segment on $T_1$ with holonomy $z$, and then using flat tori $T_j = (\C / (\Z z_j + \Z w_j), dz)$ and short segments on $T_2$ with holonomy $z_j$ for $3 \leq j \leq g - 1$. The resulting holomorphic $1$-form $(X,\om)$ has a splitting on $T_1$ with associated periods $(z,w)$, and has $g - 3$ splittings on $T_2$ with associated periods $(z_3,w_3),\dots,(z_{g-1},w_{g-1})$. Additionally, $(X,\om)$ has a splitting with associated periods $(-z_1-w,z+w)$. The parameters $z,w,z_1,w_1,\dots,z_{g-1},w_{g-1},u$ are local period coordinates, and the parameters $z,w,z_1,w_1,\dots,z_{g-1},w_{g-1}$ are the periods of a symplectic basis for $H_1(X;\Z)$.

In the setting of Lemma \ref{lem:Czwequal2}, the periods $z,w,z_1$ are given to us, and they satisfy the inequalities in (\ref{eq:gen}). In particular, the flat torus $T_0$ is determined. The homology classes $a,b,a_1$ determined by $z = \phi(a)$, $w = \phi(b)$, and $z_1 = \phi(a_1)$ satisfy $a \cdot b = 1$ and $a_1 \in \{a,b\}^\perp$. For the flat torus $T_1$, we need to choose $w_1 \in \Per(\phi)$ such that the homology class $b_1$ determined by $w_1 = \phi(b_1)$ satisfies $b_1 \in \{a,b\}^\perp$, $a_1 \cdot b_1 = 1$, and $0 < \im(\ol{z}z_1) < \im(\ol{z}_1 w_1) < 1 - \im(\ol{z} w)$. Fix $b_1^\pr \in \{a,b\}^\perp$ such that $a_1 \cdot b_1^\pr = 1$. To find the desired $w_1$, it is enough to show the set
\be
V = \left\{ \im(\ol{z}_1 w_1^\pr) : w_1^\pr \in \phi(b_1^\pr) + \phi(\{a,b,a_1,b_1^\pr\}^\perp) \right\}
\ee
is dense in $\R$. Since $V$ is a translate of the abelian group
\be
V_1 = \left\{\im(\ol{z}_1 v) : v \in \phi(\{a,b,a_1,b_1^\pr\}^\perp)\right\} ,
\ee
this is equivalent to showing that $V_1$ is dense in $\R$. If $V_1$ were not dense in $\R$, we would have $V_1 \cong \Z$. Since $\{a,b,a_1,b_1^\pr\}^\perp$ has rank $2g - 4 \geq 2$ and $\Per(\phi) \cong \Z^{2g}$, there would be a nontrivial $\Z$-linear relation $n_1 \im(\ol{z}_1 \phi(c_1)) + n_2 \im(\ol{z}_1 \phi(c_2)) = 0$ with $c_1,c_2 \in \{a,b,a_1,b_1^\pr\}^\perp$ linearly independent. This means $n_1 \phi(c_1) + n_2 \phi(c_2) \in \R z_1$. However, if $n_1 \phi(c_1) + n_2 \phi(c_2) = 0$, then $\Per(\phi)$ would have rank less than $2g$, and if $n_1 \phi(c_1) + n_2 \phi(c_2) \neq 0$, then $\Per(\phi) \cap \R z_1$ would have rank at least $2$, contradicting the assumptions on $\Per(\phi)$. Thus, $V_1$ is dense in $\R$ and the desired $w_1 \in \Per(\phi)$ exists.

Suppose $g = 3$. Once $z,w,z_1,w_1$ are fixed, we can choose $z_2,w_2 \in \Per(\phi)$ to be the periods of any pair of homology classes $a_2,b_2$ such that $\Z a_2 + \Z b_2 = \{a,b,a_1,b_1\}^\perp$ and $a_2 \cdot b_2 = 1$. We have $\im(\ol{z}_2 w_2) = \lr{\phi,\phi} - \im(\ol{z} w) - \im(\ol{z}_1 w_1) > 0$, so we are done in this case.

Suppose $g \geq 4$. Once $z,w,z_1,w_1$ are fixed, we can choose $z_2,w_2,\dots,z_{g-1},w_{g-1} \in \Per(\phi)$ to be the periods of any symplectic basis $a_2,b_2,\dots,a_{g-1},b_{g-1}$ of $\{a,b,a_1,b_1\}^\perp$ such that $\im(\ol{z_j} w_j) > 0$ for $2 \leq j \leq g - 1$, such that $z_3,\dots,z_{g-1}$ are small (depending on $z_2,w_2$), and such that $z_3,\dots,z_{g-1}$ are pairwise non-parallel. The conditions on the complex numbers $z_2,w_2,\dots,z_{g-1},z_{g-1}$ are open conditions. If $m_{g-2} : H_1(S_{g-2};\Z) \ra \{a,b,a_1,b_1\}^\perp$ is a symplectic isomorphism and $\phi_{g-2} \in H^1(S_{g-1};\C)$ satisfies $\phi_{g-2}(c) = \phi(m_{g-2}(c))$ for all $c \in H_1(S_{g-2};\Z)$, then $\phi_{g-2}$ is positive and satisfies $\Per(\phi_{g-2}) \cong \Z^{2(g-2)}$ and $\Per(\phi_{g-2}) \cap \R z_0 \subset \Q z_0$ for all $z_0 \in \Per(\phi_{g-2})$ by Lemma \ref{lem:preserved}. In particular, $\Per(\phi_{g-2})$ is dense in $\C$. Thus, Kapovich's classification of $\Sp(2(g-2),\Z)$-orbit closures of positive cohomology classes from Lemma \ref{lem:Kapovich} ensures that such a choice of periods $z_2,w_2,\dots,z_{g-1},w_{g-1}$ is possible.
\end{proof}

\paragraph{\bf Polarized modules.} Next, we explain why Lemmas \ref{lem:Czwconn}-\ref{lem:Czwequal2} reduce the connectivity of $\cC(\phi)$ to an algebraic problem, and we solve this algebraic problem.

Let $\Lam \subset \C$ be a subgroup isomorphic to $\Z^{2g}$. A unimodular symplectic form on $\Lam$ is a bilinear map $\Lam \times \Lam \ra \Z$, $(a,b) \mapsto a \cdot b$, such that there is a symplectic basis $\{a_j,b_j\}_{j=1}^g$ for $\Lam$, meaning $a_j \cdot b_k = - b_k \cdot a_j = \del_{jk}$ and $a_j \cdot a_k = b_j \cdot b_k = 0$. Following \cite{McM:isoperiodic}, a {\em polarized module} is a subgroup $\Lam \subset \C$ isomorphic to $\Z^{2g}$ equipped with a unimodular symplectic form $\Lam \times \Lam \ra \Z$, $(a,b) \mapsto a \cdot b$, such that $\sum_{j=1}^g \im(\ol{a_j} b_j) > 0$, where $\{a_j,b_j\}_{j=1}^g$ is any symplectic basis for $\Lam$. If a cohomology class $\phi \in H^1(S_g;\C)$ is positive and $\Per(\phi) \cong \Z^{2g}$, then $\Per(\phi)$ is a polarized module, where the symplectic form is inherited from the algebraic intersection form on $H_1(S_g;\Z)$. Any holomorphic $1$-form $(X,\om) \in \cM(\phi)$ has area given by $\Area(X,\om) = \sum_{j=1}^g \im(\ol{a_j} b_j)$. An element $a \in \Lam$ is {\em primitive} if there does not exist $n \in \Z \sm \{\pm 1\}$ and $a^\pr \in \Lam$ such that $a = na^\pr$. Equivalently, there exists $b \in \Lam$ such that $a \cdot b = 1$. A submodule $V \subset \Lam$ is {\em primitive} if there does not exist $v \in \Lam \sm V$ and $n \in \Z \sm \{0\}$ such that $nv \in V$.

Suppose that for any symplectic basis $\{a_j,b_j\}_{j=1}^g$ of $\Lam$, we have $\sum_{j=1}^g \im(\ol{a_j} b_j) = 1$. Additionally, suppose that $\Lam \cap \R z \subset \Q z$ for all $z \in \Lam$. Define
\be
\Lam_{(0,1)} = \left\{(a,b) \in \Lam \times \Lam : a \cdot b = 1, \; 0 < \im(\ol{a} b) < 1 \right\} .
\ee
Let $\phi \in H^1(S_g;\C)$ be a positive cohomology class such that $\Per(\phi) = \Lam$ as a polarized module. Then by Lemma \ref{lem:Czwconn}, each pair $(z,w) \in \Lam_{(0,1)}$ determines a unique connected component of $\cC(\phi)$, namely, the component containing $\cC(\phi,z,w)$. By Lemma \ref{lem:Czwunion}, every component of $\cC(\phi)$ contains $\cC(\phi,z,w)$ for some $(z,w) \in \Lam_{(0,1)}$. Let $\sim_\Lam$ be an equivalence relation on $\Lam_{(0,1)}$ satisfying the following. We suppose that
\begin{equation} \label{eq:gen1alg}
(a,b) \sim_\Lam (a, na+b)
\end{equation}
for all $(a,b) \in \Lam_{(0,1)}$ and all $n \in \Z$. We also suppose that
\begin{equation} \label{eq:gen2alg}
(a,b) \sim_\Lam (-c-b,a+b)
\end{equation}
for all $(a,b) \in \Lam_{(0,1)}$ and all primitive $c \in \{a,b\}^\perp$ such that
\begin{equation} \label{eq:gen3alg}
0 < \im(\ol{a} c) < 1 - \im(\ol{a} b), \quad 0 < \im(\ol{c} b) < \im(\ol{a} (b + c)) .
\end{equation}
The above equivalences for $\sim_\Lam$ are variants of the equivalences for $\sim$ in (\ref{eq:gen1}) and (\ref{eq:gen2}) that take into account the homology classes involved in the constructions in Lemmas \ref{lem:newsum} and \ref{lem:sum2}. In particular, the requirement that $c$ is primitive is imposed because a closed geodesic in a cylinder is a simple closed curve and thus represents a primitive homology class. By Lemmas \ref{lem:Czwequal1} and \ref{lem:Czwequal2}, any two elements of $\Lam_{(0,1)}$ that are equivalent with respect to $\sim_\Lam$ determine the same component of $\cC(\phi)$. We summarize this discussion with the following lemma.

\begin{lem} \label{lem:connalg}
If $(a,b) \sim_\Lam (a^\pr,b^\pr)$ for all $(a,b),(a^\pr,b^\pr) \in \Lam_{(0,1)}$, then $\cC(\phi)$ is connected.
\end{lem}

In order to show that any two elements of $\Lam_{(0,1)}$ are equivalent with respect to $\sim_\Lam$, it will be useful to understand which submodules of $\Lam$ are dense in $\C$, and to know that the primitive elements of $\Lam$ are dense in $\C$ in a strong sense.

\begin{lem} \label{lem:rank3dense}
Fix $g \geq 3$. Let $\Lam \subset \C$ be a polarized module of rank $2g$ such that $\Lam \cap \R z \subset \Q z$ for all $z \in \Lam$. Every submodule of $\Lam$ of rank at least $3$ is dense in $\C$.
\end{lem}

\begin{proof}
It is enough to consider submodules of rank $3$. Let $V \subset \Lam$ be a submodule of rank $3$, and write $V = V_0 + \Z z_0$ where $V_0$ has rank $2$. Since $V \cap \R z \subset \Q z$ for all $z \in V$, the submodule $V_0$ is a lattice in $\C$, and $z_0 \notin \bigcup_{v \in V_0} \R v$. If $V$ is not dense in $\C$, then its closure is given by $M \cdot (\R + i\Z)$ with $M \in \SL(2,\R)$. Let $n \in \Z$ be such that $z_0 \in M \cdot (\R + in)$. Since $V_0 \subset M \cdot (\R + i\Z)$ is a lattice, there is a nonzero $v \in V_0 \cap (M \cdot \R)$, and there is a nonzero $m \in \Z$ such that there exists $v_0 \in V_0 \cap (M \cdot (\R + im))$. Then $mz_0 - nv_0 \in \R v$, but since $V_0 \cap \Z z_0 = \{0\}$, we have $mz_0 - nv_0 \notin \Q v$, a contradiction. Thus, $V$ is dense in $\C$.
\end{proof}

\begin{lem} \label{lem:primdense}
Let $\Lam \subset \C$ be a polarized module of rank $2g$, and let $P \subset \Lam$ be the subset of primitive elements. Let $V \subset \Lam$ be a primitive submodule, and fix $a \in V$. If $V$ is not a discrete subset of $\C$, then $P \cap (P - a) \cap V$ is dense in the closure of $V$ in $\C$.
\end{lem}

\begin{proof}
Since $V$ is not a discrete subset of $\C$, we must have $g \geq 2$. Since $V$ is a primitive submodule of $\Lam$, an element $v \in V$ is a primitive element of $\Lam$ if and only if there does not exist $m \in \Z \sm \{\pm 1\}$ and $v^\pr \in V$ such that $v = mv^\pr$. Since $V$ is not a discrete subset of $\C$, either $V$ is dense in $\C$ or the closure of $V$ has the form $M \cdot \R$ or $M \cdot (\R + i\Z)$ with $M \in \SL(2,\R)$. We will consider cases according to the size of the closure of $V$. We may assume that $a \neq 0$, since that case implies the case where $a = 0$. \\

\paragraph{\bf Case 1:} Suppose the closure of $V$ has the form $M \cdot \R$ with $M \in \SL(2,\R)$. We may assume that $V$ is dense in $\R$. Since $V$ has rank at least $2$, there is $z \in V$ such that $a / z \notin \Q$. Then
\be
W = \left\{v \in V : nv \in \Z a + \Z z \text{ for some nonzero } n \in \Z\right\}
\ee
is a primitive submodule of rank $2$ that contains $a$ and is dense in $\R$. Write $a = kz_1$ with $k \in \Z$ and $z_1 \in W$ primitive, and extend to a basis $z_1,z_2$ for $W$. By rescaling by a nonzero real number, we may assume that $z_1 = 1$. By replacing $z_2$ with an element of $z_2 + \Z$, we may assume that $0 < z_2 < 1$.

Fix $m \in \Z$. For each prime number $p$, let $n_p$ be the unique integer such that $m < -n_p + pz_2 < m+1$. Since $0 < z_2 < 1$, for all but finitely many primes $p$, we have $0 < n_p < p - k$ which implies $\gcd(n_p, p) = 1$ and $\gcd(n_p + k, p) = 1$. Thus, $-n_p + pz_2 \in W$ and $(-n_p + pz_2) + a \in W$ are primitive for all but finitely many primes $p$. Denote by
\be
p_1 = 2, p_2 = 3, p_3 = 5, \dots
\ee
the sequence of prime numbers. A theorem of Vinogradov tells us that for any $\al \in \R \sm \Q$, the sequence $\{p_j \al\}_{j=1}^\infty$ equidistributes in $\R / \Z$ with respect to the Lebesgue measure \cite{Vin:prime}. In particular, these sequences are dense in $\R / \Z$, and therefore
\be
\{-n_p + p z_2 : p \text{ is prime and sufficiently large}\} \subset P \cap (P - a) \cap W
\ee
is a dense subset of the interval $(m,m+1)$. Since $m$ was an arbitrary integer, we conclude that $P \cap (P - a) \cap V$ is dense in $\R$. \\

\paragraph{\bf Case 2:} Suppose the closure of $V$ has the form $M \cdot (\R + i\Z)$ with $M \in \SL(2,\R)$. We may assume that $V$ is dense in $\R + i\Z$. In this case, $V$ has rank at least $3$ and $V \cap \R$ has rank at least $2$. Then since $V \cap (\R + i)$ is dense in $\R + i$, there is $z \in V \cap (\R + i)$ such that $a \notin \Z z$. Choose $\ell \in \Z$ such that $a - \ell z \in \R$, and write $a - \ell z = k_1 w$ with $k_1 \in \Z$ and $w \in V \cap \R$ primitive. Since $V \cap \R$ has rank at least $2$, there is $u \in V \cap \R$ such that $u/w \notin \Q$. Then
\be
W = \left\{v \in V : nv \in \Z z + \Z w + \Z u \text{ for some nonzero } n \in \Z \right\}
\ee
is a primitive submodule of rank $3$ that contains $a$ and is dense in $\R + i\Z$. Now write $a - \ell z = kz_1$ with $k \in \Z$ and $z_1 \in W$ primitive, and extend to a basis $z_1,z_2$ for $W \cap \R$. We can extend $z_1,z_2$ to a basis $z_1,z_2,z_3$ for $W$ by choosing $z_3 \in W \cap (\R + i)$, so let $z_3 = z$. By rescaling real parts by a nonzero real number, we may assume that $z_1 = 1$, while preserving the density of $W$ in $\R + i\Z$. By replacing $z_2$ with an element of $z_2 + \Z$, we may assume that $0 < z_2 < 1$.

Fix $m, n \in \Z$. For each prime number $p$, let $n_p$ be the unique integer such that $m < -n_p + pz_2 < m + 1$. Then $-n_p + pz_2 + n z_3$ lies in the translated interval $(0,1) + m + nz_3$, and since $z_3 \in \R + i$, the union of these translated intervals (over all $m,n \in \Z$) is dense in $\R + i\Z$. Recall that $a = k z_1 + \ell z_3$. Since $0 < z_2 < 1$, for all but finitely many primes $p$, we have $0 < n_p < p - k$ which implies $\gcd(n_p,p,n) = 1$ and $\gcd(n_p + k, p, n + \ell) = 1$. Thus, $-n_p + pz_2 + nz_3 \in W$ and $(-n_p + pz_2 + nz_3) + a \in W$ are primitive for all but finitely many primes $p$. As in Case 1, since $z_2 \notin \Q$, Vinogradov's theorem implies $\{p_j z_2\}_{j=1}^\infty$ is dense in $\R / \Z$, and therefore
\be
\{-n_p + pz_2 + nz_3 : p \text{ is prime and sufficiently large}\} \subset P \cap (P - a) \cap W
\ee
is a dense subset of $(0,1) + m + nz_3$. Thus, since $m$ and $n$ were arbitrary integers, $P \cap (P - a) \cap V$ is dense in $\R + i\Z$. \\

\paragraph{\bf Case 3:} Lastly, suppose $V$ is dense in $\C$. In this case, $V$ has rank at least $3$. The set of $z \in V$ such that $z \notin \R a$ is dense in $\C$. Fix $z \in V$ such that $z \notin \R a$, and fix $w \in V$ such that $\Z a + \Z z + \Z w$ has rank $3$. Then
\be
W = \{v \in V : nv \in \Z a + \Z z + \Z w \text{ for some nonzero } n \in \Z \}
\ee
is a primitive submodule of rank $3$ that contains $a$. Since $z$ is an arbitrary element of $V \sm \R a$, the union of the rank $3$ primitive submodules containing $a$ is dense in $\C$, so it is enough to show that $P \cap (P - a) \cap W$ is dense in the closure of $W$ in $\C$. If $W$ is not dense in $\C$, then since $W$ contains a lattice and is not discrete, $W$ is dense in $M \cdot (\R + i\Z)$ for some $M \in \SL(2,\R)$. Case 2 then tells us that $P \cap (P - a) \cap W$ is dense in $M \cdot (\R + i\Z)$. Thus, we may assume that $W$ is dense in $\C$. Write $a = kz_1$ with $k \in \Z$ and $z_1 \in W$ primitive, and extend to a basis $z_1,z_2,z_3$ for $W$. Since $W$ is dense in $\C$, it must be that $z_2 \notin \R z_1$. Then $W_0 = \Z z_1 + \Z z_2$ is a lattice in $\C$. By applying an element of $\GL(2,\R)$, we may assume that $z_1 = 1$ and $z_2 = i$. Then, by replacing $z_3$ with an element of $z_3 + \Z + \Z i$, we may assume that $z_3$ lies in the open unit square $(0,1) + (0,1)i$. Write $z_3 = x +iy$ with $0 < x,y < 1$.

Fix $m,n \in \Z$. For each prime number $p$, let $m_p,n_p$ be the unique integers such that $-m_p - n_p i + p z_3$ lies in the square $(0,1) + (0,1)i + m + ni$. The union of these squares is dense in $\C$. Since $0 < x,y < 1$, for all but finitely many primes $p$, we have $0 < m_p,n_p < p - k$ which implies $\gcd(m_p,n_p,p) = 1$ and $\gcd(m_p + k,n_p,p) = 1$. Thus, $-m_p - n_p i + pz_3 \in W$ and $(-m_p - n_p i + pz_3) + a \in W$ are primitive for all but finitely many primes $p$. Since $W$ is dense in $\C$, the projection of $\Z z_3$ to the torus $\C / (\Z + \Z i)$ is dense, which is equivalent to $1,x,y$ being $\Q$-linearly independent. The set
\be
\{-m_p - n_p i + p z_3 : p \text{ is prime and sufficiently large}\} \subset P \cap (P - a) \cap W
\ee
is dense in $(0,1) + (0,1)i + m + ni$ if and only if $\{p_j z_3\}_{j=1}^\infty$ projects to a dense subset of $\C / (\Z + \Z i)$.

As in Case 1, the sequence $\{p_j z_3\}_{j=1}^\infty$ equidistributes in $\C / (\Z + \Z i)$ with respect to the Lebesgue measure. To see this, we identify $\C / (\Z + \Z i)$ with $\R^2 / \Z^2$ and recall Weyl's equidistribution criterion, which in our case says that a sequence $\{(s_j,t_j)\}_{j=1}^\infty$ in $\R^2 / \Z^2$ equidistributes if and only if for all nonzero $(k_1,k_2) \in \Z^2$, we have $\frac{1}{N}\sum_{j=1}^N \exp(2\pi i (k_1 s_j + k_2 t_j)) \ra 0$ as $N \ra \infty$. Equivalently, the sequences $\{k_1 s_j + k_2 t_j\}_{j=1}^\infty$ equidistribute in $\R/\Z$. Since $1,x,y$ are $\Q$-linearly independent, Vinogradov's equidistribution theorem implies that for all nonzero $(k_1,k_2) \in \Z^2$, the sequence $\{p_j (k_1 x + k_2 y)\}_{j=1}^\infty$ equidistributes in $\R / \Z$. Thus, $\{p_j z_3\}_{j=1}^\infty$ equidistributes in $\C / (\Z + \Z i)$. Since $m$ and $n$ were arbitrary integers, we are done.
\end{proof}

\begin{lem} \label{lem:relndense}
Fix $g \geq 3$. Let $\Lam \subset \C$ be a polarized module of rank $2g$ such that $\Lam \cap \R z \subset \Q z$ for all $z \in \Lam$. Then every equivalence class for $\sim_\Lam$ is dense in $\cT_{(0,1)}$.
\end{lem}

\begin{proof}
Fix $(z,w) \in \Lam_{(0,1)}$. Recall the equivalence relation $\sim$ on $\cT_{(0,1)}$ defined in Section 4. By Lemma \ref{lem:reln}, for all $(z^\pr,w^\pr) \in \cT_{(0,1)}$, we have $(z,w) \sim (z^\pr,w^\pr)$. This means there is a sequence of pairs
\be
(z,w) = (z_1,w_1), (z_2,w_2), \dots, (z_N, w_N) = (z^\pr,w^\pr) \in \cT_{(0,1)}
\ee
such that for $1 \leq j \leq N-1$, either $(z_{j+1},w_{j+1}) = (z_j, n_j z_j + w_j)$ for some $n_j \in \Z$, or $(z_{j+1},w_{j+1}) = (-z_{1,j} - w_j, z_j + w_j)$ for some $z_{1,j} \in \C$ such that $z_j,w_j,z_{1,j}$ satisfy the inequalities in (\ref{eq:gen}).

If $(z_2,w_2) = (z_1, n_1 z_1 + w_1)$ for some $n_1 \in \Z$, then $(z_2,w_2) \in \Lam_{(0,1)}$ as well. Otherwise, $(z_2,w_2) = (-z_{1,1} - w_1, z_1 + w_1)$ for some $z_{1,1} \in \C$ such that $z_1,w_1,z_{1,1}$ satisfy the inequalities in (\ref{eq:gen}). We have $z_1 + w_1 \in \Lam$, and since the inequalities in (\ref{eq:gen3alg}) are open conditions, we can replace $z_{1,1}$ with any sufficiently close primitive element of $\{z_1,w_1\}^\perp$. Lemmas \ref{lem:rank3dense} and \ref{lem:primdense} ensure that primitive elements in $\{z_1,w_1\}^\perp$ are dense in $\C$, so there exist primitive elements $z_{1,1}^\pr \in \{z_1,w_1\}^\perp$ arbitrarily close to $z_{1,1}$. Letting $(z_2^\pr,w_2^\pr) = (-z_{1,1}^\pr - w_1, z_1 + w_1)$, we have $(z_1,w_1) \sim_\Lam (z_2^\pr,w_2^\pr)$ and $(z_2^\pr,w_2^\pr)$ is close to $(z_2,w_2)$.

We can iterate this argument with $(z_j,w_j)$ for $3 \leq j \leq N$ to obtain a nearby element $(z_j^\pr,w_j^\pr) \in \Lam_{(0,1)}$ such that $(z_{j-1}^\pr,w_{j-1}^\pr) \sim_\Lam (z_j^\pr,w_j^\pr)$. Thus, the equivalence class of $(z,w)$ for $\sim_\Lam$ is dense in $\cT_{(0,1)}$.
\end{proof}

With our various density properties established, we are now ready to prove that $\Lam_{(0,1)}$ consists of a single equivalence class with respect to $\sim_\Lam$.

\begin{lem} \label{lem:relnalg}
Fix $g \geq 3$. Let $\Lam \subset \C$ be a polarized module of rank $2g$ such that $\Lam \cap \R z \subset \Q z$ for all $z \in \Lam$. Then for all $(a,b), (c,d) \in \Lam_{(0,1)}$, we have $(a,b) \sim_\Lam (c,d)$.
\end{lem}

\begin{proof}
By Lemma \ref{lem:rank3dense}, every submodule of $\Lam$ of rank at least $3$ is dense in $\C$. By Lemma \ref{lem:relndense}, every equivalence class for $\sim_\Lam$ is dense in $\cT_{(0,1)}$, so it is enough to show that $(a,b) \sim_\Lam (c,d)$ for all $(a,b) \in \Lam_{(0,1)}$ and $(c,d) \in \Lam_{(0,1)}$ sufficiently close to $\left(1/2,i/2\right)$. Fix $\eps > 0$ small, and fix $(a,b) \in \Lam_{(0,1)}$ such that $|a - 1/2| < \eps$ and $|b - i/2| < \eps$.

Applying the relations in (\ref{eq:gen2alg}) twice, we see that for all primitive $a_1 \in \{a,b\}^\perp$ such that $|a_1 - (a+b)| < 4\eps$ and all primitive $a_2 \in \{-a_1 - b, a + b\}^\perp$ such that $|a_2 + b| < 4\eps$,
\be
(a,b) \sim_\Lam (-a_1 - b, a + b) \sim_\Lam (-a_2 - a - b, -a_1 + a) .
\ee
We will bootstrap from this observation in several steps. \\

\paragraph{\bf Step 1a.} Fix $b_1 \in b^\perp$ such that $a \cdot b_1 = 1$ and $|b_1 - i/2| < \eps$. We will show that $(a,b) \sim_\Lam (a,b_1)$. The submodule $\{a,b,b_1\}^\perp$ is primitive and has rank at least $2g-3 \geq 3$. Then since $|b - b_1| < 2\eps$, by Lemmas \ref{lem:rank3dense} and \ref{lem:primdense} there exists a primitive $a_1 \in \{a,b,b_1\}^\perp$ such that $|a_1 - (a+b)| < 2\eps$ and $|a_1 - (a+b_1)| < 2\eps$. The relation
\begin{equation} \label{eq:relnalg1}
(-a_1 - b) + (a + b) = (-a_1 - b_1) + (a + b_1)
\end{equation}
implies that the primitive submodule $V_1 = \{-a_1 - b, a + b, -a_1 - b_1, a + b_1\}^\perp$ has rank at least $2g - 3 \geq 3$. Since $b_1 - b \in V_1$, by Lemmas \ref{lem:rank3dense} and \ref{lem:primdense} there exists $a_2 \in V_1$ such that $|a_2 + b| < 2\eps$ and such that both $a_2$ and $a_2^\pr = a_2 + b - b_1$ are primitive. We have $a_2^\pr \in V_1$ and $|a_2^\pr + b| < 4\eps$. Thus, since $V_1 = \{-a_1-b,a+b\}^\perp \cap \{-a_1-b_1,a+b_1\}^\perp$, we have
\be
(a,b) \sim_\Lam (-a_2 - a - b, -a_1 + a) = (-a_2^\pr - a - b_1, -a_1 + a) \sim_\Lam (a,b_1) .
\ee

\paragraph{\bf Step 1b.} Fix $b_3 \in \Lam$ such that $a \cdot b_3 = 1$ and $|b_3 - i/2| < \eps$. We will find $b_1,b_2 \in \Lam$ such that $b_1 \in b + \{a,b\}^\perp$, $b_2 \in b_1 + \{a,b_1\}^\perp$, $b_3 \in b_2 + \{a,b_2\}^\perp$, and such that $|b_1 - i/2| < \eps$, $|b_2 - i/2| < \eps$. Step 1a then implies $(a,b) \sim_\Lam (a,b_1) \sim_\Lam (a,b_2) \sim_\Lam (a,b_3)$.

Write $b_3 = k a + b + c_3$ with $k \in \Z$ and $c_3 \in \{a,b\}^\perp$. We may assume $k \neq 0$, since otherwise $b_3 \in b + \{a,b\}^\perp$ and then $(a,b) \sim_\Lam (a,b_3)$ by Step 1a. Since $\{a,b,c_3\}^\perp$ is a primitive submodule of rank at least $2g - 3 \geq 3$, by Lemmas \ref{lem:rank3dense} and \ref{lem:primdense} there is a primitive $c_2 \in \{a,b,c_3\}^\perp$ such that $|ka + b + c_2 - i/2| < \eps$. Letting $b_2 = ka + b + c_2$, we have $|b_2 - i/2| < \eps$, $a \cdot b_2 = 1$, and $b_3 \cdot b_2 = c_3 \cdot c_2 = 0$, so $b_3 \in b_2 + \{a,b_2\}^\perp$. Since $c_2 \in \{a,b\}^\perp$ is primitive, there is $d_2 \in \{a,b\}^\perp$ such that $c_2 \cdot d_2 = 1$. Since $d_2 + \{a,b,c_2\}^\perp$ is a translate of a submodule of rank at least $2g - 3 \geq 3$, by Lemma \ref{lem:rank3dense} there is $e \in \{a,b,c_2\}^\perp$ such that $|b - kd_2 + e - i/2| < \eps$. Let $c_1 = -k d_2 + e$, and let $b_1 = b + c_1$, so $|b_1 - i/2| < \eps$. We have $a \cdot b_1 = 1$ and $c_1 \in \{a,b\}^\perp$, so $b_1 \in b + \{a,b\}^\perp$. Lastly, since $b_2 \cdot b_1 = k + c_2 \cdot c_1 = 0$, we have $b_2 \in b_1 + \{a,b_1\}^\perp$. Thus, by applying Step 1a three times, we get $(a,b) \sim_\Lam (a,b_3)$. \\

\paragraph{\bf Step 2a.} Fix $a_1 \in a^\perp$ such that $a_1 \cdot b = 1$ and $|a_1 - 1/2| < \eps$. An argument similar to Step 1a will show that $(a,b) \sim_\Lam (a_1,b)$. Since the primitive submodule $\{a,b,a_1\}^\perp$ has rank at least $2g - 3 \geq 3$, and since $a_1 - a \in \{a,b,a_1\}^\perp$, by Lemmas \ref{lem:rank3dense} and \ref{lem:primdense} there exists $a_1^\pr \in \{a,b,a_1\}^\perp$ such that $|a_1^\pr - (a+b)| < 2\eps$ and such that both $a_1^\pr$ and $a_1^{\pr\pr} = a_1^\pr + a_1 - a$ are primitive. We have $a_1^{\pr\pr} \in \{a,b,a_1\}^\perp$ and $|a_1^{\pr\pr} - (a+b)| < 4\eps$. The relations
\begin{equation} \label{eq:relnalg2}
(-a_1^\pr-b) + (a+b) = -a_1^\pr + a = -a_1^{\pr\pr} + a_1 = (-a_1^{\pr\pr}-b) + (a_1+b)
\end{equation}
imply the primitive submodule $V_2 = \{-a_1^\pr-b,a+b,-a_1^{\pr\pr}-b,a_1+b\}^\perp$ has rank at least $2g-3 \geq 3$. Since $a_1 - a \in V_2$, by Lemmas \ref{lem:rank3dense} and \ref{lem:primdense} there exists $a_2 \in V_2$ such that $|a_2 + b| < 2\eps$ and such that both $a_2$ and $a_2^\pr = a_2 + a - a_1$ are primitive. We have $a_2^\pr \in V_2$ and $|a_2^\pr + b| < 4\eps$. Thus, since $V_2 = \{-a_1^\pr-b,a+b\}^\perp \cap \{-a_1^{\pr\pr}-b,a_1+b\}^\perp$, we have
\be
(a,b) \sim_\Lam (-a_2 - a - b, -a_1^\pr + a) = (-a_2^\pr - a_1 - b, -a_1^{\pr\pr} + a_1) \sim_\Lam (a_1,b) .
\ee

\paragraph{\bf Step 2b.} Fix $a_3 \in \Lam$ such that $a_3 \cdot b = 1$ and $|a_3 - 1/2| < \eps$. This step is the same as Step 1b with the roles of $a$ and $b$ exchanged. Write $a_3 = a + kb + c_3$ with $k \in \Z$ and $c_3 \in \{a,b\}^\perp$. We assume $k \neq 0$, otherwise we are done by Step 2a. Using Lemmas \ref{lem:rank3dense} and \ref{lem:primdense}, choose a primitive $c_2 \in \{a,b,c_3\}^\perp$ such that $a_2 = a + kb + c_2$ satisfies $|a_2 - 1/2| < \eps$. Choose $d_2 \in \{a,b\}^\perp$ such that $c_2 \cdot d_2 = 1$, and use Lemma \ref{lem:rank3dense} to choose $e \in \{a,b,c_2\}^\perp$ such that $a_1 = a + k(d_2 + e)$ satisfies $|a_1 - 1/2| < \eps$. Then $a_1 \in a + \{a,b\}^\perp$, $a_2 \in a_1 + \{a_1,b\}^\perp$, and $a_3 \in a_2 + \{a_2,b\}^\perp$, and thus by Step 2a, $(a,b) \sim_\Lam (a_3,b)$. \\

\paragraph{\bf Step 3.} In the previous steps, we showed that $(a,b) \sim_\Lam (a,b^\pr)$ for all $b^\pr \in b + a^\perp$ with $|b^\pr - i/2| < \eps$, and that $(a,b) \sim_\Lam (a^\pr,b)$ for all $a^\pr \in a + b^\perp$ with $|a^\pr - 1/2| < \eps$.

To conclude, fix $(c,d) \in \Lam_{(0,1)}$ such that $|c - 1/2| < \eps$ and $|d - i/2| < \eps$. Choose $b_1 \in \Lam$ such that $a \cdot b_1 = 1$. Since $a \cdot b_1 = 1$, there are $m,n \in \Z$ such that $d - m a - n b_1 \in \{a,b_1\}^\perp$. Write $d - m a - n b_1 = p a_2$ with $p \in \Z$ and $a_2 \in \{a,b_1\}^\perp$ primitive, and choose $b_2 \in \{a,b_1\}^\perp$ such that $a_2 \cdot b_2 = 1$. Since $d$ is primitive, $\gcd(m,n,p) = 1$, so there are integers $k_1,k_2,k_3$ such that $-k_1 m + k_2 n - k_3 p = n + 1$. Letting $b^\pr = b_1 + b_2$, and letting $c^\pr = (k_2 - 1) a + k_1 b_1 - (k_2 - 2) a_2 + k_3 b_2$, we have $a \cdot b^\pr = c^\pr \cdot b^\pr = c^\pr \cdot d = 1$. Lastly, by Lemma \ref{lem:rank3dense} there is $b^{\pr\pr} \in b^\pr + \{a,c^\pr\}^\perp$ such that $|b^{\pr\pr} - i/2| < \eps$. Similarly, there is $c^{\pr\pr} \in c^\pr + \{b^{\pr\pr},d\}^\perp$ such that $|c^{\pr\pr} - 1/2| < \eps$. We then have $a \cdot b^{\pr\pr} = c^{\pr\pr} \cdot b^{\pr\pr} = c^{\pr\pr} \cdot d = 1$, so $(a,b^{\pr\pr}), (c^{\pr\pr},b^{\pr\pr}),(c^{\pr\pr},d) \in \Lam_{(0,1)}$, and thus
\be
(a,b) \sim_\Lam (a,b^{\pr\pr}) \sim_\Lam (c^{\pr\pr},b^{\pr\pr}) \sim_\Lam (c^{\pr\pr},d) \sim_\Lam (c,d) .
\ee
\end{proof}

We have now established Theorem \ref{thm:conn} in the case of strata with two zeros.

\begin{thm} \label{thm:connsum}
Fix $g \geq 3$. Let $\cC$ be the nonhyperelliptic component of a stratum $\Om\cM_g(m_1,m_2)$ with $m_1,m_2$ odd. If $\phi \in H^1(S_g;\C)$ is a positive cohomology class such that $\Per(\phi) \cong \Z^{2g}$ and $\Per(\phi) \cap \R z \subset \Q z$ for all $z \in \Per(\phi)$, then $\cC(\phi)$ is connected.
\end{thm}

\begin{proof}
By scaling by a positive real number, we may assume that $\lr{\phi,\phi} = 1$. Let $\Lam$ be the polarized module $\Per(\phi)$. By Lemma \ref{lem:relnalg}, any two elements of $\Lam_{(0,1)}$ are equivalent with respect to $\sim_\Lam$. Therefore, $\cC(\phi)$ is connected by Lemma \ref{lem:connalg}.
\end{proof}

\begin{thm} \label{thm:connsplit}
Fix $g \geq 3$. Let $\cC$ be a connected stratum $\Om\cM_g(\kap)$ with $|\kap| > 1$, and suppose that $m \geq 2$ for some $m \in \kap$. Fix $1 \leq j < m$, let $\kap^\pr = (\kap \sm (m)) \cup (m-j,j)$, and let $\cC^\pr = \Om\cM_g(\kap^\pr)$. Let $\phi \in H^1(S_g;\C)$ be a positive cohomology class such that $\Per(\phi) \cong \Z^{2g}$ and $\Per(\phi) \cap \R z \subset \Q z$ for all $z \in \Per(\phi)$. If $\cC(\phi)$ is connected, then $\cC^\pr(\phi)$ is connected.
\end{thm}

\begin{proof}
By Corollary \ref{cor:KZconn}, since $\cC$ is connected, $\cC^\pr$ is connected. Fix $(X_1,\om_1), (X_2,\om_2) \in \cC^\pr(\phi)$. By Lemma \ref{lem:generic}, after replacing $(X_j,\om_j)$ with a nearby holomorphic $1$-form in $L(\om_j)$, we may assume that the $\GL^+(2,\R)$-orbit of $(X_j,\om_j)$ is dense in $\cC^\pr$. The image of the zero splitting map $\Phi : \cS(\kap;m) \ra \cC^\pr$ is nonempty, open, and $\GL^+(2,\R)$-invariant, and therefore dense. Since splitting zeros does not change the absolute periods, we can write $(X_j,\om_j) = \Phi(X_j^\pr,\wt{\om}_j^\pr,\gam_j)$ with $(X_j^\pr,\om_j^\pr) \in \cC(\phi)$. By assumption, $\cC(\phi)$ is connected, so $(X_1^\pr,\om_1^\pr)$ and $(X_2^\pr,\om_2^\pr)$ lie on the same leaf of $\cA(\kap)$. By Lemma \ref{lem:generic}, the leaves $L(\om_j^\pr)$ contain holomorphic $1$-forms whose $\GL^+(2,\R)$-orbits are dense in $\cC$, so by Lemma \ref{lem:coverleaf}, $(X_1^\pr,\wt{\om}_1^\pr)$ and $(X_2^\pr,\wt{\om}_2^\pr)$ lie on the same leaf of $\cA(\kap;m)$. Then by Lemma \ref{lem:splitleaf}, $(X_1^\pr,\wt{\om}_1^\pr,\gam_1)$ and $(X_2^\pr,\wt{\om}_2^\pr,\gam_2)$ lie on the same leaf of $\cF_\cS$. Since $\Phi$ maps leaves of $\cF_\cS$ into leaves of $\cA(\kap^\pr)$, we conclude that $(X_1,\om_1)$ and $(X_2,\om_2)$ lie on the same leaf of $\cA(\kap^\pr)$. Thus, $\cC^\pr(\phi)$ is connected.
\end{proof}

We now complete the proof of Theorem \ref{thm:conn}.

\begin{proof} (of Theorem \ref{thm:conn}) Induct on $|\kap|$, using Theorem \ref{thm:connsum} for the base case $|\kap| = 2$, and using Lemma \ref{lem:split} and Theorem \ref{thm:connsplit} for the inductive step.
\end{proof}

\paragraph{\bf Absolute periods in $\R + i\Z$.} We now prove Theorem \ref{thm:connR+iZ}, following a similar approach to Theorem \ref{thm:conn}. Fix $g \geq 3$, let $\phi \in H^1(S_g;\C)$ be a positive cohomology class with $\Per(\phi) \cong \Z^{2g}$ not dense in $\C$, and let $\cC$ be a nonhyperelliptic component of a stratum $\Om\cM_g(m_1,m_2)$ with $m_1 \geq m_2$ odd. The closure of $\Per(\phi)$ in $\C$ has the form $M \cdot (\R + i\Z)$ with $M \in \SL(2,\R)$. Fix $z,w \in \Per(\phi)$ such that $z \notin M \cdot \R$. Let $a,b \in H_1(S_g;\Z)$ be the unique homology classes such that $z = \phi(a)$ and $w = \phi(b)$. Suppose that $a \cdot b = 1$ and that $0 < \im(\ol{z} w) < \lr{\phi,\phi}$. If $m_2 \geq 3$, let $m_\cC = m_2$, and otherwise, let $m_\cC = m_1$. Define
\be
\cC(\phi,z,w) = \left\{ (X,\om) \in \cC(\phi) : \begin{matrix} (X,\om) \text{ has a splitting at a zero of $\om$ of order $m_\cC$} \\ \text{with associated periods } (z,w) \end{matrix} \right\} .
\ee
If $m_\cC = m_2$, let $\cC^\pr = \Om\cM_{g-1}(m_1,m_2 - 2)$, and if $m_\cC = m_1$, let $\cC^\pr = \Om\cM_{g-1}(m_1-2,m_2)$, so $\cC^\pr$ is connected by Corollary \ref{cor:KZconn}. Let $\phi^\pr \in H^1(S_{g-1};\C)$ be a positive cohomology class such that $\Per(\phi^\pr) = \phi(\{a,b\}^\perp)$ as a polarized module. Since $g \geq 3$ and $\phi^\pr$ is positive, $\Per(\phi^\pr) \cong \Z^{2(g-1)}$ is dense in $M \cdot (\R + in\Z)$ for some $n \in \Z_{>0}$.

\begin{lem} \label{lem:CzwconnR+iZ}
Suppose that $\cC^\pr(\phi^\pr)$ is connected. Then $\cC(\phi,z,w)$ is connected.
\end{lem}

\begin{proof}
The proof is the same as that of Lemma \ref{lem:Czwconn}, with Lemmas \ref{lem:preservedR+iZ} and \ref{lem:genericR+iZ} in place of Lemmas \ref{lem:preserved} and \ref{lem:generic}, respectively.
\end{proof}

\begin{lem} \label{lem:CzwunionR+iZ}
Suppose that Theorem \ref{thm:connR+iZ} is true for $\cC^\pr$. Then every component of $\cC(\phi)$ contains $\cC(\phi,z^\pr,w^\pr)$ for some $z^\pr,w^\pr \in \C$.
\end{lem}

\begin{proof}
The proof is the same as that of Lemma \ref{lem:Czwunion}, with Lemmas \ref{lem:preservedR+iZ}, \ref{lem:genericR+iZ}, and \ref{lem:CzwconnR+iZ} in place of Lemmas \ref{lem:preserved}, \ref{lem:generic}, and \ref{lem:Czwconn}, respectively.
\end{proof}

\begin{lem} \label{lem:Czwequal1R+iZ}
For all $n \in \Z$, we have $\cC(\phi,z,w) = \cC(\phi,z,nz + w)$.
\end{lem}

\begin{proof}
This is immediate from the definitions, as in Lemma \ref{lem:Czwequal1}.
\end{proof}

\begin{lem} \label{lem:Czwequal2R+iZ}
Let $a_1 \in \{a,b\}^\perp$ be a primitive homology class, and let $z_1 = \phi(a_1)$. Suppose that $z,w,z_1$ satisfy the inequalities in (\ref{eq:gen}). If $z_1 \in M \cdot \R$, suppose additionally that there is $b_1 \in \{a,b\}^\perp$ such that $a_1 \cdot b_1 = 1$ and such that $w_1 = \phi(b_1)$ satisfies $0 < \im(\ol{z}z_1) < \im(\ol{z}_1 w_1) < 1 - \im(\ol{z}w)$. Then $\cC(\phi,z,w) \cap \cC(\phi,-z_1-w,z+w)$ is nonempty.
\end{lem}

\begin{proof}
We may assume $\Per(\phi)$ is dense in $\R + i\Z$. The construction in the proof of Lemma \ref{lem:sum2R+iZ} provides holomorphic $1$-forms $(X,\om) \in \cC(\phi)$ with two splittings whose associated periods are $(z,w)$ and $(-z_1-w,z+w)$, respectively, as follows.

Recall that we first glue a pair of flat tori $T_1 = (\C / (\Z z_1 + \Z w_1), dz)$ and $T_2 = (\C / (\Z z_2 + \Z w_2), dz)$ along a pair of homologous saddle connections, and then iteratively form connected sums, first using the flat torus $T_0 = (\C / (\Z z + \Z w), dz)$ and a segment in $T_1$ with holonomy $z$, and then using flat tori $T_j = (\C / (\Z z_j + \Z w_j), dz)$ and short horizontal segments on $T_2$ with holonomy $z_j$ for $3 \leq j \leq g - 1$.

The periods $z,w,z_1$ are given to us. We need to choose $w_1 \in \Per(\phi)$ such that the homology class $b_1$ determined by $w_1 = \phi(b_1)$ satisfies $b_1 \in \{a,b\}^\perp$, $a_1 \cdot b_1 = 1$, and $0 < \im(\ol{z}z_1) < \im(\ol{z}_1 w_1) < 1 - \im(\ol{z} w)$. If $z_1 \in \R$, then the desired $w_1$ exists by assumption, so suppose $z_1 \notin \R$. Fix $b_1^\pr \in \{a,b\}^\perp$ such that $a_1 \cdot b_1^\pr = 1$. Since $z_1 \notin \R$, we have $\Per(\phi) \cap \R z_1 \subset \Q z_1$. Then as in the proof of Lemma \ref{lem:Czwequal2}, the set
\be
V = \{\im(\ol{z}_1 w_1^\pr) : w_1^\pr \in \phi(b_1^\pr) + \phi(\{a,b,a_1,b_1^\pr\}^\perp) \}
\ee
is dense in $\R$. Thus, the desired $w_1 \in \Per(\phi)$ exists.

If $g = 3$, then once $z,w,z_1,w_1$ are fixed, we can choose $z_2,w_2 \in \Per(\phi)$ to be the periods of any pair of homology classes such that $\Z a_2 + \Z b_2 = \{a,b,a_1,b_1\}^\perp$ and $a_2 \cdot b_2 = 1$. If $g \geq 4$, then we can choose $z_2,w_2,\dots,z_{g-1},w_{g-1} \in \Per(\phi)$ to be the periods of any symplectic basis of $\{a,b,a_1,b_1\}^\perp$ such that $\im(\ol{z}_j w_j) > 0$ for $2 \leq j \leq g - 1$ and such that $z_3,\dots,z_{g-1} \in \R_{>0}$ are short (depending on $z_2,w_2$). Since the conditions on the real parts of the periods $z_2,w_2,\dots,z_{g-1},w_{g-1}$ are open conditions, as in the proof of Lemma \ref{lem:Czwequal2}, the $\R + i\Z$ case of Kapovich's classification of $\Sp(2(g-2),\Z)$-orbit closures from Lemma \ref{lem:Kapovich} ensures that such a choice of $z_2,w_2,\dots,z_{g-1},w_{g-1}$ is possible.
\end{proof}

Suppose $\Lam$ is a polarized module of rank $2g$ that is not dense in $\C$, such that for any symplectic basis $\{a_j,b_j\}_{j=1}^g$ of $\Lam$, we have $\sum_{j=1}^g \im(\ol{a}_j b_j) = 1$. The closure of $\Lam$ in $\C$ is given by $M \cdot (\R + i\Z)$ for some $M \in \SL(2,\R)$. Let $\Lam_0 = \Lam \cap (M \cdot \R)$. Define
\be
\Lam_{(0,1)} = \left\{(a,b) \in (\Lam \sm \Lam_0) \times \Lam : a \cdot b = 1, 0 < \im(\ol{a}b) < 1 \right\} .
\ee
Let $\sim_\Lam$ be an equivalence relation on $\Lam_{(0,1)}$ satisfying the following. We suppose that
\begin{equation} \label{eq:gen1algR+iZ}
(a,b) \sim_\Lam (a,na + b)
\end{equation}
for all $(a,b) \in \Lam_{(0,1)}$ and all $n \in \Z$. We suppose that
\begin{equation} \label{eq:gen2algR+iZ}
(a,b) \sim_\Lam (-c-b,a+b)
\end{equation}
for all $(a,b) \in \Lam_{(0,1)}$ and all primitive $c \in \{a,b\}^\perp \sm \Lam_0$ satisfying the inequalities in (\ref{eq:gen3alg}) and such that $-c-b \notin \Lam_0$. Lastly, we suppose that
\begin{equation} \label{eq:gen3algR+iZ}
(a,b) \sim_\Lam (-c-b,a+b)
\end{equation}
for all $(a,b) \in \Lam_{(0,1)}$ with $b \notin \Lam_0$ and all primitive $c \in \Lam_0 \cap \{a,b\}^\perp$ such that $a,b,c$ satisfy the inequalities in (\ref{eq:gen3alg}) and such that there is $d \in \{a,b\}^\perp$ satisfying $c \cdot d = 1$ and
\begin{equation} \label{eq:gen4algR+iZ}
0 < \im(\ol{a}c) < \im(\ol{c}d) < 1 - \im(\ol{a}b) .
\end{equation}
Let $\phi \in H^1(S_g;\C)$ be a positive cohomology class such that $\Per(\phi) = \Lam$ as a polarized module. As in the previous subsection, Lemmas \ref{lem:CzwconnR+iZ}-\ref{lem:Czwequal2R+iZ} reduce the connectivity of $\cC(\phi)$ to the following algebraic problem.

\begin{lem} \label{lem:connalgR+iZ}
If $(a,b) \sim_\Lam (a^\pr,b^\pr)$ for all $(a,b),(a^\pr,b^\pr) \in \Lam_{(0,1)}$, then $\cC(\phi)$ is connected.
\end{lem}

We will need the following analogous density results to Lemmas \ref{lem:rank3dense} and \ref{lem:relndense}. If $\Lam$ is dense in $M \cdot (\R + i\Z)$ with $M \in \SL(2,\R)$, then we have an inclusion
\be
J_\Lam : \Lam_{(0,1)} \ra \cT_{(0,1)}^{M \cdot (\R + i\Z)}, \quad J_\Lam(z,w) = (z,w,n) ,
\ee
where $n \in \Z_{>0}$ is the unique integer such that $\{z,w\}^\perp$ is dense in $M \cdot (\R + in\Z)$.

\begin{lem} \label{lem:rank3denseR+iZ}
Fix $g \geq 3$. Let $\Lam \subset \C$ be a polarized module of rank $2g$ such that $\Lam$ is not dense in $\C$, so $\Lam$ is dense in $M \cdot (\R + i\Z)$ for some $M \in \SL(2,\R)$. If $V \subset \Lam$ is a submodule of rank at least $3$, then $V$ is dense in $M \cdot (\R + in\Z)$ for some $n \in \Z_{\geq 0}$.
\end{lem}

\begin{proof}
It is enough to show that $V \cap (M \cdot \R)$ is dense in $M \cdot \R$. Since $V$ has rank at least $3$, $V \cap (M \cdot \R)$ has rank at least $2$, and therefore $V \cap (M \cdot \R)$ is dense in $M \cdot \R$.
\end{proof}

\begin{lem} \label{lem:relndenseR+iZ}
Fix $g \geq 3$. Let $\Lam \subset \C$ be a polarized module of rank $2g$ such that $\Lam$ is not dense in $\C$, so $\Lam$ is dense in $M \cdot (\R + i\Z)$ for some $M \in \SL(2,\R)$. The image of any equivalence class for $\sim_\Lam$ under $J_\Lam$ is dense in $\cT_{(0,1)}^{M \cdot (\R + i\Z)}$.
\end{lem}

\begin{proof}
The proof is similar to that of Lemma \ref{lem:relndense}. We may assume that $\Lam$ is dense in $\R + i\Z$. Fix $(z,w) \in \Lam_{(0,1)}$, so $J_\Lam(z,w) = (z,w,n) \in \cT_{(0,1)}^{\R + i\Z}$ for some $n \in \Z_{>0}$. For all $k \in \Z$, we have $(z,kz+w) \in \Lam_{(0,1)}$, and since $\{z,kz+w\}^\perp = \{z,w\}^\perp$, we have $J_\Lam(z,kz+w) = (z,kz+w,n)$. Fix $z_1 \in (\R + in\Z) \sm \R$ such that $\im(-z_1-w) \neq 0$ and such that $z,w,z_1$ satisfy the inequalities in (\ref{eq:gen}). Since $\{z,w\}^\perp$ is dense in $\R + in\Z$, by Lemma \ref{lem:primdense} there is a primitive $z_1^\pr \in \{z,w\}^\perp$ close to $z_1$. Then $z,w,z_1^\pr$ satisfy the inequalities in (\ref{eq:gen3alg}), and $-z_1^\pr - w \notin \Lam_0$, so we have $(z,w) \sim_\Lam (-z_1^\pr-w,z+w)$. Lastly, suppose $w \notin \Lam_0$, and fix $z_1 \in \R$ such that $z,w,z_1$ satisfy the inequalities in (\ref{eq:gen}) and such that there is $w_1 \in \R + in\Z$ satisfying the inequalities in (\ref{eq:gen4R+iZ}). By Lemma \ref{lem:primdense}, there is a primitive $w_1^\pr \in \{z,w\}^\perp$ close to $w_1$. Then there is $z_0^\pr \in \{z,w\}^\perp$ such that $z_0^\pr \cdot w_1^\pr = 1$. Moreover, $z_0^\pr + \{z,w,w_1^\pr\}^\perp$ is a translate of a submodule of rank $2g - 3 \geq 3$, so it intersects $\R$ in a dense subset of $\R$. Thus, there is $z_1^\pr \in (z_0^\pr + \{z,w,w_1^\pr\}^\perp) \cap \R$ close to $z_1$. Then $z_1^\pr \cdot w_1^\pr = 1$ and since the inequalities in (\ref{eq:gen4R+iZ}) are open conditions, $z_1^\pr$ and $w_1^\pr$ satisfy the inequalities in (\ref{eq:gen4algR+iZ}). Thus, $(z,w) \sim_\Lam (-z_1^\pr - w, z + w)$. Following the proof of Lemma \ref{lem:relndense} with Lemma \ref{lem:relnR+iZ} in place of Lemma \ref{lem:reln}, we conclude that the image of the equivalence class of $(z,w)$ for $\sim_\Lam$ under $J_\Lam$ is dense in $\cT_{(0,1)}^{\R + i\Z}$.
\end{proof}

\begin{lem} \label{lem:relnR+iZalg}
Fix $g \geq 3$, and let $\Lam \subset \C$ be a polarized module of rank $2g$ such that $\Lam$ is not dense in $\C$. For all $(a,b) \in \Lam_{(0,1)}$ and $(c,d) \in \Lam_{(0,1)}$, we have $(a,b) \sim_\Lam (c,d)$.
\end{lem}

\begin{proof}
We may assume that $\Lam$ is dense in $\R + i\Z$. By Lemma \ref{lem:rank3denseR+iZ}, any submodule of $\Lam$ of rank at least $3$ is dense in $\R + in\Z$ for some $n \in \Z_{\geq 0}$. By Lemma \ref{lem:relndenseR+iZ}, the image of any equivalence class for $\sim_\Lam$ under $J_\Lam$ is dense in $\cT_{(0,1)}^{\R + i\Z}$. Fix $\eps_1 > 0$ small, and fix $0 < \eps < \eps_1 / 100$. It is enough to show that $(a,b) \sim_\Lam (c,d)$ for all $(a,b) \in \Lam_{(0,1)}$ and $(c,d) \in \Lam_{(0,1)}$ sufficiently close to $(i,-\eps_1)$ such that $\{a,b\}^\perp$ and $\{c,d\}^\perp$ are dense in $\R + i\Z$. We will do this using a similar strategy to the proof of Lemma \ref{lem:relnalg}. Fix $(a,b) \in \Lam_{(0,1)}$ such that $|a - i| < \eps$ and $|b + \eps_1| < \eps$, and such that $\{a,b\}^\perp$ is dense in $\R + i\Z$.

Fix $\eps_2,\eps_3 > 0$ such that $\eps_1 + 10\eps < \eps_2 < 2\eps_1 - 10\eps$ and $\eps_1 < \eps_3 < 2\eps_1$. Additionally, fix $b_1 \in b^\perp$ such that $a\cdot b_1 = 1$ and $|b_1 + (\eps_1 - i)| < \eps$. Applying the relations in (\ref{eq:gen2algR+iZ}) twice, we see that for all primitive $a_1 \in \{a,b\}^\perp$ such that $|a_1 + (\eps_2 - 2i)| < 4\eps$, and for all primitive $a_2 \in \{-a_1-b,a+b\}^\perp$ such that $|a_2 - (\eps_3 + 2i)| < 4\eps$,
\be
(a,b) \sim_\Lam (-a_1-b,a+b) \sim_\Lam (-a_2-a-b,-a_1+a) .
\ee
Similarly, for all primitive $a_1 \in \{a,b_1\}^\perp$ such that $|a_1 + (\eps_2 - 2i)| < 4\eps$, and for all primitive $a_2^\pr \in \{-a_1-b_1,a+b_1\}^\perp$ such that $|a_2^\pr - (\eps_3 + i)| < 4\eps$,
\be
(a,b_1) \sim_\Lam (-a_1-b_1,a+b_1) \sim_\Lam (-a_2^\pr-a-b_1,-a_1+a) .
\ee

\paragraph{\bf Step 1a.} Recall that $b_1 \in b^\perp$ is such that $a \cdot b_1 = 1$ and $|b_1 + (\eps_1 - i)| < \eps$. We will show that $(a,b) \sim_\Lam (a,b_1)$. Since $\{a,b,b_1\}^\perp \cap \R$ has rank at least $2g - 4 \geq 2$, it is dense in $\R$. Then since $b_1 - b \in \{a,b,b_1\}^\perp \cap (\R + i)$, the primitive submodule $\{a,b,b_1\}^\perp$ is dense in $\R + i \Z$. Then by Lemma \ref{lem:primdense}, there is a primitive $a_1 \in \{a,b,b_1\}^\perp$ such that $|a_1 + (\eps_2 - 2i)| < \eps$. The primitive submodule $V_1 = \{-a_1-b,a+b,-a_1-b_1,a+b_1\}^\perp$ has rank at least $2g - 3 \geq 3$ by the relation in (\ref{eq:relnalg1}), and since $b_1 - b \in V_1 \cap (\R + i)$, we see that $V_1$ is dense in $\R + i\Z$. Then by Lemma \ref{lem:primdense}, there is $a_2 \in V_1$ such that $|a_2 - (\eps_3 + 2i)| < \eps$ and such that both $a_2$ and $a_2^\pr = a_2 + b - b_1$ are primitive. Note that $a_2^\pr \in V_1$ and that $|a_2^\pr - (\eps_3 + i)| < 3\eps$. Since $V_1 = \{-a_1-b,a+b\}^\perp \cap \{-a_1-b_1,a+b_1\}^\perp$, we have
\be
(a,b) \sim_\Lam (-a_2-a-b,-a_1+a) = (-a_2^\pr-a-b_1,-a_1+a) \sim_\Lam (a,b_1) .
\ee

\paragraph{\bf Step 1b.} Fix $b_4 \in \Lam$ such that $a \cdot b_4 = 1$, $|b_4 + \eps_1| < \eps$, and $\{a,b_4\}^\perp$ is dense in $\R + i\Z$. We will find $b_1,b_2,b_3 \in \Lam$ such that $b_1 \in b + \{a,b\}^\perp$, $b_2 \in b_1 + \{a,b_1\}^\perp$, $b_3 \in b_2 + \{a,b_2\}^\perp$, $b_4 \in b_3 + \{a,b_3\}^\perp$, and such that $|b_1 + (\eps_1 - i)| < \eps$, $|b_2 + \eps_1| < \eps$, $|b_3 + (\eps_1 - i)| < \eps$. Step 1a then implies $(a,b) \sim_\Lam (a,b_1) \sim_\Lam (a,b_2) \sim_\Lam (a,b_3) \sim_\Lam (a,b_4)$.

First, fix $b_2 \in \Lam$ such that $a \cdot b_2 = 1$, $|b_2 + \eps_1| < \eps$, and $\{a,b_2\}^\perp$ is dense in $\R + i\Z$. Additionally, suppose that $b_2 = k_2 a + b + c_2$ with $k_2 \in \Z$, $c_2 \in \{a,b\}^\perp$ primitive, and $\{a,b,c_2\}^\perp$ dense in $\R + i\Z$. Since $c_2 \in \{a,b\}^\perp$ is primitive, there is $d_2 \in \{a,b\}^\perp$ such that $c_2 \cdot d_2 = 1$. Since $\{a,b,c_2\}^\perp$ is dense in $\R + i\Z$, there is $e \in \{a,b,c_2\}^\perp$ such that $|b - k_2 d_2 + e + (\eps_1 - i)| < \eps$. Let $c_1 = -k_2 d_2 + e \in \{a,b\}^\perp$, and let $b_1 = b + c_1$. We have $|b_1 + (\eps_1 - i)| < \eps$ and $b_1 \in b + \{a,b\}^\perp$. Since $b_2 \cdot b_1 = k_2 + c_2 \cdot c_1 = 0$, we have $b_2 \in b_1 + \{a,b_1\}^\perp$. Thus, applying Step 1a twice gives us $(a,b) \sim_\Lam (a,b_1) \sim_\Lam (a,b_2)$.

Now write $b_4 = k_4 a + b + c_4$ with $k_4 \in \Z$ and $c_4 \in \{a,b\}^\perp$. By the previous paragraph, it is enough to find $k_2 \in \Z$ and $c_2 \in \{a,b\}^\perp$ primitive, such that $b_2 = k_2 a + b + c_2$ satisfies $|b_2 + \eps_1| < \eps$ and $\{a,b,c_2\}^\perp$ is dense in $\R + i\Z$, and such that $b_4 = k_4^\pr a + b_2 + c_4^\pr$ with $k_4^\pr \in \Z$ and $c_4^\pr \in \{a,b_2\}^\perp$ primitive and $\{a,b_2,c_4^\pr\}^\perp$ dense in $\R + i\Z$. Note that $k_4^\pr = b_4 \cdot b_2 = k_4 - k_2 + (c_4 \cdot c_2)$, from which it follows that
\be
c_4^\pr = c_4 - c_2 - (c_4 \cdot c_2) a.
\ee
Since $c_2 \in \{a,b\}^\perp$ is primitive, $\{a,b,c_2\}^\perp$ is dense in $\R + i\Z$ if and only if there is $d_2 \in \{a,b\}^\perp \cap \R$ such that $c_2 \cdot d_2 = 1$. We have $\{a,b_2,c_4^\pr\}^\perp = \{a,b+c_4,c_4^\pr\}^\perp$, so since $c_4^\pr \in \{a,b+c_4\}^\perp$ is primitive, $\{a,b+c_4,c_4^\pr\}^\perp$ is dense in $\R + i\Z$ if and only if there is $d_4^\pr \in \{a,b+c_4\}^\perp \cap \R$ such that $c_4^\pr \cdot d_4^\pr = 1$.

By Lemma \ref{lem:Kapovich}, since $\{a,b\}^\perp$ is dense in $\R + i\Z$, there is a symplectic basis $x_2,y_2,\dots,x_g,y_g$ for $\{a,b\}^\perp$ such that $x_2 \in \R + i$, $y_2 \in \R$, and $x_j,y_j \in \R$ for $3 \leq j \leq g$. Since $\Lam \cong \Z^{2g}$, these basis elements are all nonzero. We have
\be
\{a,b\}^\perp \cap \R = \Z y_2 + \Z x_3 + \Z y_3 + \cdots + \Z x_g + \Z y_g .
\ee
Write $c_4 = \sum_{j=2}^g (m_j x_j + n_j y_j)$ with $m_j,n_j \in \Z$. Note that $m_2 = \im(c_4) = -k_4$. We have
\be
\{a,b+c_4\}^\perp = \Z (-n_2 a + x_2) + \Z (m_2 a + y_2) + \cdots + \Z (-n_g a + x_g) + \Z (m_g a + y_g) ,
\ee
so an element $\sum_{j=2}^g \left(s_j(-n_j a + x_j) + t_j(m_j a + y_j)\right) \in \{a,b + c_4\}^\perp$ lies in $\R$ if and only if
\be
s_2(-n_2 + 1) + t_2 m_2 + \sum_{j=3}^g (-s_j n_j + t_j m_j) = 0 .
\ee
There is a symplectic isomorphism $f : \{a,b+c_4\}^\perp \ra \{a,b\}^\perp$ given by $f(-n_j a + x_j) = x_j$ and $f(m_j a + y_j) = y_j$ for $2 \leq j \leq g$. For $d_4^\pr \in \{a,b+c_4\}^\perp$, we have
\be
c_4^\pr \cdot d_4^\pr = (c_4 - c_2) \cdot f(d_4^\pr) .
\ee
Suppose that at least one of $m_3,n_3,\dots,m_g,n_g$ is nonzero. After swapping $(x_3,y_3)$ with $(x_j,y_j)$ for some $3 \leq j \leq g$, and possibly replacing $(x_3,y_3)$ with $(-y_3,x_3)$, we may assume $n_3 \neq 0$. Let $s_2 = -n_3 / \gcd(n_2 - 1,n_3)$ and $s_3 = (n_2 - 1) / \gcd(n_2 - 1,n_3)$, and define
\be
d_4^\pr = s_2 (-n_2 a + x_2) + s_3 (-n_3 a + x_3) \in \{a,b+c_4\}^\perp \cap \R.
\ee
Note that if $n_2 - 1 = 0$, then $\gcd(n_2 - 1,n_3) = |n_3| \neq 0$. If all of $m_3, n_3, \dots, m_g, n_g$ are zero, then $\{a,b+c_4\}^\perp \cap \R$ contains $x_3,y_3,\dots,x_g,y_g$. In this case, let $s_2 = 0$ and $s_3 = 1$, so $d_4^\pr = x_3 \in \{a,b + c_4\}^\perp \cap \R$. Note that $d_4^\pr$ is primitive since $\gcd(s_2,s_3) = 1$. To ensure that $\{a,b_2,c_4^\pr\}^\perp$ is dense in $\R + i\Z$, we will require that
\be
c_4^\pr \cdot d_4^\pr = (c_4 - c_2) \cdot (s_2 x_2 + s_3 x_3) = 1 .
\ee
Next, suppose $c_2 = \sum_{j=2}^g (p_j x_j + q_j y_j)$ with $p_j,q_j \in \Z$. Since $p_2 = \im(c_2) = -k_2$, there is $d_2 \in \{a,b\}^\perp \cap \R$ such that $c_2 \cdot d_2 = 1$ if and only if $\gcd(k_2,p_3,q_3,\dots,p_g,q_g) = 1$. Note that this implies $c_2$ is primitive. The equation $c_4^\pr \cdot d_4^\pr = 1$ reduces to
\be
((n_2 y_2 + n_3 y_3) - (q_2 y_2 + q_3 y_3)) \cdot (s_2 x_2 + s_3 x_3) = (q_2 - n_2)s_2 + (q_3 - n_3)s_3 = 1 .
\ee
Equivalently, $q_2 s_2 + q_3 s_3 = 1 + n_2 s_2 + n_3 s_3$. Since $\gcd(s_2,s_3) = 1$, there is a solution to this equation in $q_2,q_3 \in \Z$. Let $Q_2,Q_3 \in \Z$ be a solution. Then $Q_2 + r s_3$, $Q_3 - r s_2$ is also a solution for all $r \in \Z$.

We now produce the desired $c_2 \in \{a,b\}^\perp$. Fix $k_2 \in \Z$. Let $p_2 = -k_2$. Choose $p_3 = n k_2 + 1$ with $n \in \Z$. Choose $q_2 = Q_2 + r s_3$ and $q_3 = Q_3 - r s_2$ with $r \in \Z$. Now let
\be
c_2 = p_2 x_2 + q_2 y_2 + p_3 x_3 + q_3 y_3.
\ee
We have $\im(c_2) = p_2 = -k_2$. Since $\gcd(k_2,p_3) = 1$, there is $d_2 \in \{a,b\}^\perp \cap \R$ such that $c_2 \cdot d_2 = 1$, so $c_2$ is primitive and $\{a,b,c_2\}^\perp$ is dense in $\R + i\Z$. Since $(q_2 - n_2)s_2 + (q_3 - n_3)s_3 = 1$, we have $c_4^\pr \cdot d_4^\pr = 1$, which implies $c_4^\pr$ is primitive and $\{a,b_2,c_4^\pr\}^\perp$ is dense in $\R + i\Z$. Moreover, these properties are all preserved under translating $p_3$ by an integer multiple of $k_2$ and translating $(q_2,q_3)$ by an integer multiple of $(s_3, -s_2)$, which amounts to translating $c_2$ by an element of $\Z k_2 x_3 + \Z (s_3 y_2 - s_2 y_3)$. Since $\Lam \cong \Z^{2g}$, and since at least one of $s_2,s_3$ is nonzero, we have $(s_3 y_2 - s_2 y_3) / k_2 x_3 \notin \Q$, so $\Z k_2 x_3 + \Z (s_3 y_2 - s_2 y_3)$ is dense in $\R$. Thus, by translating $c_2$ by an element of $\Z k_2 x_3 + \Z (s_3 y_2 - s_2 y_3)$, we can ensure that $b_2 = k_2 a + b + c_2$ satisfies $|b_2 + \eps_1| < \eps$. This concludes Step 1.

Before we begin Step 2, we make the following observation similar to the setup before Step 1. Fix $a_1 \in a^\perp$ such that $a_1 \cdot b = 1$ and $|a_1  - 2i| < \eps$. Applying the relations in (\ref{eq:gen2algR+iZ}) twice, we see that for all primitive $a_1^\pr \in \{a,b\}^\perp$ such that $|a_1^\pr + (\eps_2 - i)| < 4\eps$, and for all primitive $a_2 \in \{-a_1^\pr - b, a+ b\}^\perp$ such that $|a_2 - (\eps_3 + 2i)| < 4\eps$,
\be
(a,b) \sim_\Lam (-a_1^\pr - b, a + b) \sim_\Lam (-a_2 - a - b, -a_1^\pr + a) .
\ee
Similarly, for all primitive $a_1^{\pr\pr} \in \{a_1,b\}^\perp$ such that $|a_1^{\pr\pr} + (\eps_2 - 2i)| < 4\eps$, and for all primitive $a_2^\pr \in \{-a_1^{\pr\pr} - b, a_1 + b\}^\perp$ such that $|a_2^\pr - (\eps_3 + i)| < 4\eps$,
\be
(a_1,b) \sim_\Lam (-a_1^{\pr\pr} - b, a_1 + b) \sim_\Lam (-a_2^\pr - a_1 - b, -a_1^{\pr\pr} + a).
\ee

\paragraph{\bf Step 2a.} Recall that $a_1 \in a^\perp$ is such that $a_1 \cdot b = 1$ and $|a_1 - 2i| < \eps$. An argument similar to Step 1a will show that $(a_1,b) \sim_\Lam (a,b)$. Since $\{a,b,a_1\}^\perp \cap \R$ has rank at least $2g - 4 \geq 2$, it is dense in $\R$. Then since $a_1 - a \in \{a,b,a_1\}^\perp \cap (\R + i)$, the primitive submodule $\{a,b,a_1\}^\perp$ is dense in $\R + i\Z$. Then by Lemma \ref{lem:primdense}, there is $a_1^\pr \in \{a,b,a_1\}^\perp$ such that $|a_1^\pr + (\eps_2 - i)| < \eps$ and such that both $a_1^\pr$ and $a_1^{\pr\pr} = a_1^\pr + a_1 - a$ are primitive. We have $a_1^{\pr\pr} \in \{a,b,a_1\}^\perp$ and $|a_1^{\pr\pr} + (\eps_2 - 2i)| < 3\eps$. The primitive submodule $V_2 = \{-a_1^\pr - b, a + b, -a_1^{\pr\pr} - b, a_1 + b\}^\perp$ has rank at least $2g - 3 \geq 3$ by the relations in (\ref{eq:relnalg2}), and since $a_1 - a \in V_2 \cap (\R + i)$, we see that $V_2$ is dense in $\R + i\Z$. Then by Lemma \ref{lem:primdense}, there is $a_2 \in V_2$ such that $|a_2 - (\eps_3 + 2i)| < \eps$ and such that both $a_2$ and $a_2^\pr = a_2 + a - a_1$ are primitive. We have $a_2^\pr \in V_2$ and $|a_2^\pr - (\eps_3 + i)| < 3\eps$. Since $V_2 = \{-a_1^\pr - b, a + b\}^\perp \cap \{-a_1^{\pr\pr} - b, a_1 + b\}^\perp$, we have
\be
(a,b) \sim_\Lam (-a_2 - a - b, -a_1^\pr + a) = (-a_2^\pr - a_1 - b, -a_1^{\pr\pr} + a_1) \sim_\Lam (a_1,b) .
\ee

\paragraph{\bf Step 2b.} Fix $a_4 \in \Lam$ such that $a_4 \cdot b = 1$, $|a_4 - i| < \eps$, and $\{a_4,b\}^\perp$ is dense in $\R + i\Z$. As in Step 1b, we will find $a_1,a_2,a_3 \in \Lam$ such that $a_1 \in a + \{a,b\}^\perp$, $a_2 \in a_1 + \{a_1,b\}^\perp$, $a_3 \in a_2 + \{a_2,b\}^\perp$, $a_4 \in a_3 + \{a_3,b\}^\perp$, and such that $|a_1 - 2i| < \eps$, $|a_2 - i| < \eps$, $|a_3 - 2i| < \eps$. Step 2a then implies $(a,b) \sim_\Lam (a_1,b) \sim_\Lam (a_2,b) \sim_\Lam (a_3,b) \sim_\Lam (a_4,b)$.

First, fix $a_2 \in \Lam$ such that $a_2 \cdot b = 1$, $|a_2 - i| < \eps$, and $\{a_2,b\}^\perp$ is dense in $\R + i\Z$. Additionally, suppose that $a_2 = a + k_2 b + c_2$ with $k_2 \in \Z$, $c_2 \in \{a,b\}^\perp$ primitive, and $\{a,b,c_2\}^\perp$ dense in $\R + i\Z$. As in Step 1b, there is $d_2 \in \{a,b\}^\perp$ such that $c_2 \cdot d_2 = 1$ and $e \in \{a,b,c_2\}^\perp$ such that $a_1 = a + k_2 d_2 + e$ satisfies $|a_1 - 2i| < \eps$. We have $a_1 \in a + \{a,b\}^\perp$, $a_2 \in a_1 + \{a_1,b\}^\perp$, so by Step 1a, $(a,b) \sim_\Lam (a_1,b) \sim_\Lam (a_2,b)$.

Now write $a_4 = a + k_4 b + c_4$ with $k_4 \in \Z$ and $c_4 \in \{a,b\}^\perp$. It is enough to find $k_2 \in \Z$ and $c_2 \in \{a,b\}^\perp$ primitive, such that $a_2 = a + k_2 b + c_2$ satisfies $|a_2 - 2i| < \eps$ and $\{a,b,c_2\}^\perp$ is dense in $\R + i\Z$, and such that $a_4 = a_2 + k_4^\pr b + c_4^\pr$ with $k_4^\pr \in \Z$ and $c_4^\pr \in \{a_2,b\}^\perp$ primitive such that $\{a,b_2,c_4^\pr\}^\perp$ is dense in $\R + i\Z$. The same argument as in Step 1b, with the roles of $a$ and $b$ exchanged, produces the desired $k_2$ and $c_2$. This concludes Step 2b. \\

\paragraph{\bf Step 3.} Exactly the same as Step 3 in the proof of Lemma \ref{lem:relnalg}.
\end{proof}

\begin{proof} (of Theorem \ref{thm:connR+iZ})
The proof exactly follows that of Theorems \ref{thm:connsum} and \ref{thm:connsplit}.
\end{proof}

\paragraph{\bf The transfer principle and proving Theorems \ref{thm:ergdense} and \ref{thm:ergdenseR+iZ}.} Theorem \ref{thm:conn} can be used to prove Theorem \ref{thm:ergdense}, using the transfer principle from \cite{CDF:transfer} and applications of general results in homogeneous dynamics to the action of $\Sp(2g,\Z)$ on $\Sp(2g,\R)/\Sp(2g-2,\R)$ from \cite{Kap:periods}, which we briefly explain. For $\phi \in H^1(S_g;\C)$, let $V(\phi) \subset H^1(S_g;\R)$ be the real vector subspace spanned by the real and imaginary parts $\re(\phi)$ and $\im(\phi)$. The symplectic automorphism group $\Aut(H^1(S_g;\R)) \cong \Sp(2g;\R)$ acts transitively on the set of $\phi \in H^1(S_g;\C)$ such that $\langle \phi, \phi \rangle = 1$ by acting on $\re(\phi)$ and $\im(\phi)$ simultaneously, and the stabilizer of $\phi$ is $\Aut(V(\phi)^\perp) \cong \Sp(2g-2;\R)$. Let $\cC \subset \Om\cM_g(\kap)$ be a stratum component satisfying the hypotheses of Theorem \ref{thm:conn}, and let $\Pi : \wt{\cC} \ra \cC$ be the Torelli cover of this stratum component. Recall that points in $\wt{\cC} \subset \Om\cS_g(\kap)$ are holomorphic $1$-forms $(X,\om) \in \cC$ equipped with a symplectic isomorphism $H^1(S_g;\C) \cong H^1(X;\C)$ identifying $H^1(S_g;\Z)$ and $H^1(X;\Z)$. Consider the restriction of the period map $\Per_g : \wt{\cC} \ra H^1(S_g;\C)$. Since $\Per_g$ is a holomorphic submersion on $\wt{\cC}$, the image $\Per_g(\wt{\cC})$ is open in $H^1(S_g;\C)$. Moreover, the image of $\Per_g$ is invariant under the action of $\Aut(H^1(S_g;\Z)) \cong \Sp(2g,\Z)$. The set
\be
\mathcal{H}_g = \left\{\phi \in H^1(S_g;\C) : \langle \phi, \phi \rangle = 1, \; \Per(\phi) \cong \Z^{2g}, \; \Per(\phi) \cap \R z \subset \Q z \text{ for all } z \in \Per(\phi) \right\}
\ee
is then $\Aut(H^1(S_g;\Z))$-invariant, and is contained in the image of $\Per_g$ by Lemma \ref{lem:Kapovich}. Thus, we can identify $\mathcal{H}_g$ with an $\Sp(2g;\Z)$-invariant full measure subset of $\Sp(2g,\R)/\Sp(2g-2,\R)$. The set
\be
\mathcal{G}_\cC = \left\{(X,\om) \in \cC_1 : \Per(\om) \cong \Z^{2g}, \; \Per(\om) \cap \R z \subset \Q z \text{ for all } z \in \Per(\om) \right\}
\ee
is saturated for the absolute period foliation of $\cC_1$ and is a full measure subset of $\cC_1$.

Now, Theorem \ref{thm:conn} tells us that for all $\phi \in \mathcal{H}_g$, the associated space of isoperiodic forms $\cC(\phi)$ is connected. Since $\Per(\phi) \cong \Z^{2g}$, this is equivalent to the fiber $\Per_g^{-1}(\phi)$ being connected. Following \cite{CDF:transfer}, since the fibers $\Per_g^{-1}(\phi)$ are connected for all $\phi \in \mathcal{H}_g$, we get a bijection $A \mapsto \Per_g(\Pi^{-1}(A))$ between subsets of $\mathcal{G}_\cC$ that are saturated for the absolute period foliation of $\cC_1$ and subsets of $\mathcal{H}_g$ that are invariant under the action of $\Aut(H^1(S_g;\Z))$. Under this bijection, positive measure subsets correspond to positive measure subsets, and dense subsets correspond to dense subsets. It follows from Moore's ergodicity theorem \cite{Zim:book} that the action of $\Sp(2g,\Z)$ on $\Sp(2g,\R)/\Sp(2g-2,\R)$ is ergodic, and thus the absolute period foliation of $\cC_1$ is ergodic. From the classification of $\Sp(2g,\Z)$-orbit closures in $\Sp(2g,\R)/\Sp(2g-2,\R)$ in Lemma \ref{lem:Kapovich}, which is an application of Ratner's orbit closure theorem \cite{Rat:ICM}, we deduce that leaves of the absolute period foliation in $\mathcal{G}_\mathcal{C}$ are dense in $\mathcal{C}_1$. This establishes Theorem \ref{thm:ergdense}. Similarly, Theorem \ref{thm:connR+iZ} implies Theorem \ref{thm:ergdenseR+iZ}. \\

\begin{rmk} \label{rem:genus4}
If we already knew the genus $3$ case of Theorems \ref{thm:conn} and \ref{thm:connR+iZ}, the equivalence relations in Lemmas \ref{lem:connalg} and \ref{lem:connalgR+iZ} would have simpler definitions, and their proofs would be much easier. This is because in genus at least $4$, the holomorphic $1$-forms in question admit a pair of splittings whose associated cylinders are disjoint. See Figure \ref{fig:sumdisjoint} for an example in $\Om\cM_4(5,1)$. For the inductive steps in genus at least $4$, we would know that $\sim_\Lam$ satisfies $(z_1,w_1) \sim_\Lam (z_2,w_2)$ whenever $z_2,w_2 \in \{z_1,w_1\}^\perp$ and $\im(\ol{z}_1 w_1) + \im(\ol{z}_2 w_2) < 1$. Using Lemma \ref{lem:Kapovich}, one can then show that for any $(z_1,w_1),(z_2,w_2) \in \Lam_{(0,1)}$, there is $(z_3,w_3) \in \Lam_{(0,1)}$ with $z_3,w_3 \in \{z_1,w_1,z_2,w_2\}^\perp$ and $\im(\ol{z}_3 w_3) > 0$ arbitrarily small, and thus $\Lam_{(0,1)}$ consists of a single equivalence class for $\sim_\Lam$. This approach crucially relies on being in genus at least $4$. However, our proofs are inductive and crucially rely on the genus $3$ case, which is the hardest case.
\end{rmk}

\begin{figure}
    \centering
    \includegraphics[width=0.8\textwidth]{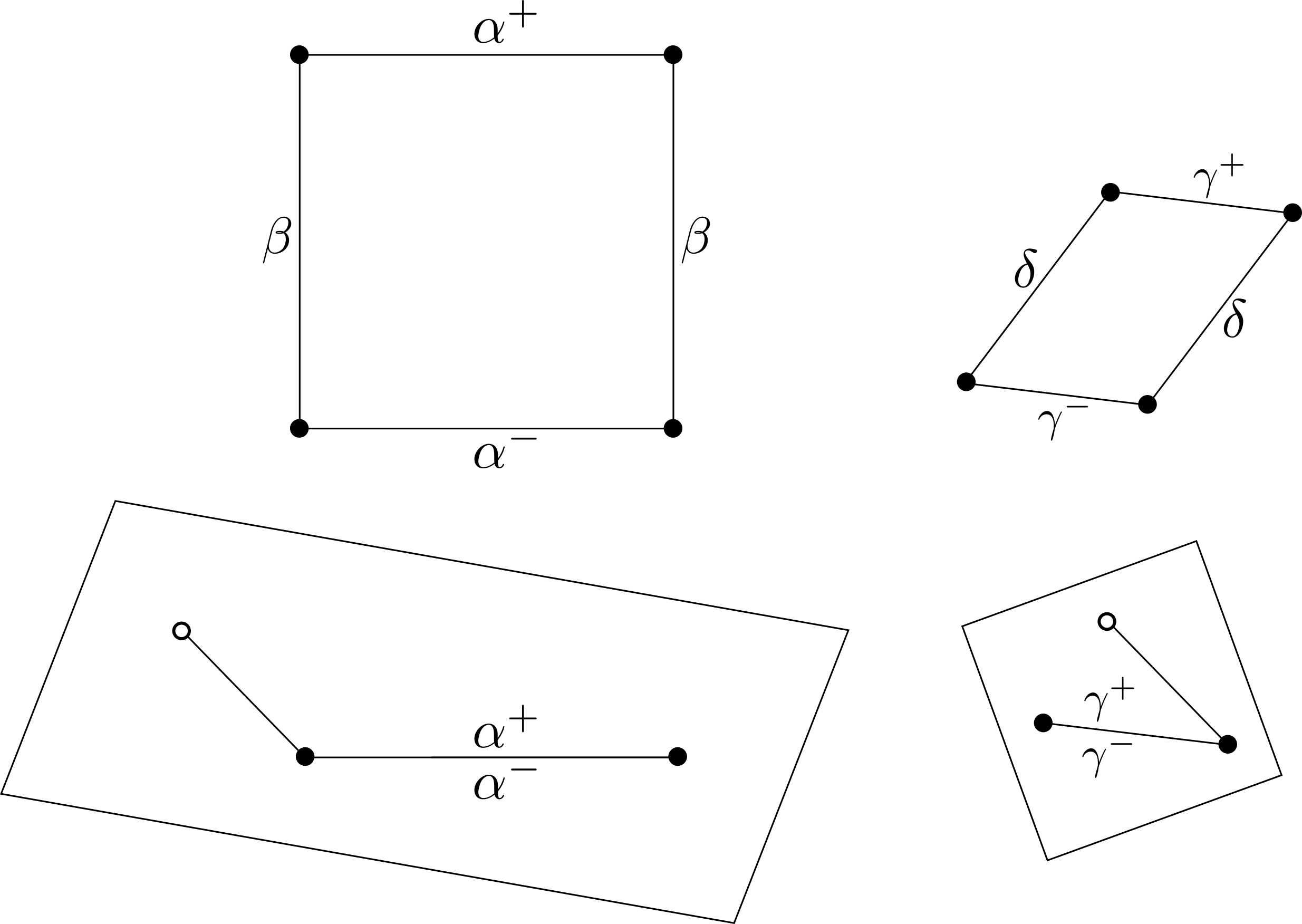}
    \caption{A holomorphic $1$-form in $\Om\cM_4(5,1)$ with a pair of splittings $\al^\pm$ and $\gam^\pm$ whose associated cylinders are disjoint.}
    \label{fig:sumdisjoint}
\end{figure}

\begin{rmk} \label{rmk:noEMM}
Our proof of the ergodicity part of Theorem \ref{thm:ergdense} does not crucially rely on the rigidity theorems of Eskin-Mirzakhani-Mohammadi \cite{EMM:closures}, and only relies on Moore's ergodicity theorem and the ergodicity of the $\GL^+(2,\R)$-action on stratum components. We can replace each stratum component $\cC$ appearing in Theorem \ref{thm:ergdense} with a certain nonempty open subset $\cU \subset \cC$ that is $\GL^+(2,\R)$-invariant and saturated for the absolute period foliation of $\cC$. When $\cC$ is a stratum component with two zeros, $\cU$ consists of leaves of the absolute period foliation containing a holomorphic $1$-form admitting a presentation as an iterated connected sum, for instance as in the proof of Lemma \ref{lem:sum2}. When $\cC$ is a stratum component with more than two zeros, $\cU$ is similarly defined using iterated connected sums following by iterated zero splittings. With this modification, the proof of Theorem \ref{thm:conn} shows that the intersection $\cC(\phi) \cap \cU$ is connected for all cohomology classes $\phi$ as in Theorem \ref{thm:ergdense}. Since $\cC_1 \cap \cU$ is a full measure subset of $\cC_1$, this suffices to establish the ergodicity of the absolute period foliation of $\cC_1$ via the transfer principle from \cite{CDF:transfer} as above.
\end{rmk}

We conclude this subsection with a question that proposes a possible classification of closures of leaves of $\cA(\kap)$ in $\Om\cM_g(\kap)$. This question asks whether all of the possible constraints on closures of leaves of $\cA(\kap)$ come from closed subgroups of $\C$ containing the absolute periods, closed $\SL(2,\R)$-invariant subsets of $\Om\cM_g(\kap)$ that are saturated for $\cA(\kap)$, and loci of branched covers.

\begin{ques} \label{ques:clos2}
Let $\Om\cM_g(\kap)$ be a stratum with $|\kap| > 1$, and fix $(X,\om) \in \Om\cM_g(\kap)$. Let $L$ be the leaf of $\cA(\kap)$ through $(X,\om)$, and let $\ol{L}$ be the closure of $L$ in $\Om\cM_g(\kap)$. Let $\Lam$ be the closure of $\Per(\om)$ in $\C$, and let $\Lam_0$ be the identity component of $\Lam$. Lastly, let $\cM$ be the closure of $\SL(2,\R) \cdot L$ in $\Om\cM_g(\kap)$. Is one of the following true?
\begin{enumerate}
\item $\Lam = \C$ and $\ol{L} = \cM$.
\item $\Lam = M \cdot (\R + i\Z)$ with $M \in \SL(2,\R)$ and $\ol{L}$ is a connected component of the set of holomorphic $1$-forms $(X^\pr,\om^\pr) \in \cM$ such that $\Per(\om^\pr) + \Lam_0 = \Lam$.
\item $\ol{L} = L$ and $L$ consists of branched covers of holomorphic $1$-forms of lower genus.
\end{enumerate}
\end{ques}
Note that the statement in (3) applies to some holomorphic $1$-forms whose absolute periods are dense in $\C$ or $\R + i\Z$, since the branched cover may be of a holomorphic $1$-form of genus greater than $1$. A positive answer to Question \ref{ques:clos2} would imply that closures of leaves of the absolute period foliation enjoy rigidity properties similar to those of $\GL^+(2,\R)$-orbit closures. Specifically, for any leaf $L$ of $\cA(\kap)$, either $\R_{>0} \cdot \ol{L}$ is $\GL^+(2,\R)$-invariant and therefore locally defined by homogeneous $\R$-linear equations, or $\ol{L}$ is locally defined by inhomogeneous $\R$-linear equations in the real and imaginary parts of local period coordinates. \\

%%%%%%%%%%%%%%%%%%%%%%%%%%%%%%%%%%%%%%%%%%%%%%%%%%
%%%%%%%%%%%%%%%%%%%%%%%%%%%%%%%%%%%%%%%%%%%%%%%%%%
%%%%%%%%%%%%%%%%%%%%%%%%%%%%%%%%%%%%%%%%%%%%%%%%%%

\paragraph{\bf Monodromy.} Let $\cC$ be a stratum component, and let $\pi_1(\cC)$ be its orbifold fundamental group. The projection $\cC \ra \cM_g$ induces a homomorphism $\pi_1(\cC) \ra \Mod_g$. Choose a symplectic basis $\{a_j,b_j\}_{j=1}^g$ for $H_1(S_g;\Z)$. The choice of symplectic basis gives us an action of $\Sp(2g,\Z)$ on $H_1(S_g;\Z)$, and an action on $H^1(S_g;\C)$ by acting on homomorphisms $H_1(S_g;\Z) \ra \C$ by precomposition. The action of $\Mod_g$ on $H_1(S_g;\Z)$ then induces a homomorphism
\be
\rho_\cC : \pi_1(\cC) \ra \Sp(2g,\Z)
\ee
called the {\em monodromy representation} of $\pi_1(\cC)$ on absolute homology. We now describe the implications of the connectivity of spaces of isoperiodic forms in $\cC$ for the image of $\rho_\cC$, and we prove Theorem \ref{thm:mono}.

Fix $(X,\om) \in \cC$ without automorphisms, choose a symplectic isomorphism $m : H_1(S_g;\Z) \ra H_1(X;\Z)$, and let $\phi \in H^1(S_g;\C)$ be the cohomology class satisfying $\phi(c) = \int_{m(c)} \om$ for all $c \in H_1(S_g;\Z)$, so $(X,\om) \in \cC(\phi)$. The symplectic basis $\{m(a_j),m(b_j)\}_{j=1}^g$ for $H_1(X;\Z)$ determines an action of $\Sp(2g,\Z)$ on $H_1(X;\Z)$. Let
\be
P(X,\om) = \left(\int_{m(a_1)}\om,\int_{m(b_1)}\om,\dots,\int_{m(a_g)}\om,\int_{m(b_g)}\om\right) \in \C^{2g}
\ee
be the vector of absolute periods with respect to this symplectic basis of $H_1(X;\Z)$.  Choose a path $\gam_1 : [0,1] \ra \cC$ from $(X,\om)$ to some $(X^\pr,\om^\pr)$. Parallel transport along $\gam_1$ determines an identification $H_1(X;\Z) \cong H_1(X^\pr;\Z)$, so we can consider the vector of absolute periods $P(X^\pr,\om^\pr)$ with respect to the corresponding symplectic basis of $H_1(X^\pr;\Z)$. Fix $A \in \Sp(2g,\Z)$, and suppose that
\begin{equation} \label{eq:spper}
A \cdot P(X,\om) = P(X^\pr,\om^\pr) .
\end{equation}
The action of $A$ on $H_1(X;\Z)$ satisfies
\be
\int_{A \cdot c} \om = \int_c \om^\pr
\ee
for all $c \in H_1(X;\Z)$. Since the homomorphism $\Mod_g \ra \Aut(H_1(S_g;\Z))$ is surjective, we then have $(X^\pr,\om^\pr) \in \cC(\phi)$. Suppose that $\cC(\phi)$ is connected. Then $(X,\om)$ and $(X^\pr,\om^\pr)$ lie on the same leaf of the absolute period foliation of $\cC$, so there is another path $\gam_2 : [0,1] \ra \cC$ from $(X^\pr,\om^\pr)$ to $(X,\om)$ along which the absolute periods are constant. Concatenating $\gam_1$ and $\gam_2$ gives a loop $\gam$ in $\cC$ based at $(X,\om)$. Letting $B = \rho_\cC(\gam)$, the action of $B$ on $H_1(X;\Z)$ also satisfies
\be
\int_{B \cdot c} \om = \int_c \om^\pr
\ee
for all $c \in H_1(X;\Z)$, so $B^{-1} A$ stabilizes $\phi$. We also have a monodromy homomorphism
\be
\rho_{\cC(\phi)} : \pi_1(\cC(\phi)) \ra \Sp(2g,\Z) .
\ee
In the typical case where $\Per(\phi) \cong \Z^{2g}$, the matrix $B^{-1} A$ is the identity matrix, and thus $B$ lies in the image of $\rho_\cC$. In the special case where $\Per(\phi)$ has rank less than $2g$, if we additionally assume that the image of $\rho_{\cC(\phi)}$ contains the stabilizer of $\phi$ in $\Sp(2g,\Z)$, then $B$ lies in the image of $\rho_\cC$.

Now consider all of the paths $\gam_1 : [0,1] \ra \cC$ such that $\gam_1(0) = (X,\om)$ and such that $\gam_1(1) = (X^\pr,\om^\pr)$ lies in $\cC(\phi)$. Associated to each such path is a matrix $A \in \Sp(2g,\Z)$ such that $(X,\om)$, $(X^\pr,\om^\pr)$, and $A$ satisfy (\ref{eq:spper}). Let $S_\om$ denote this set of matrices, and let $G_\om$ be the subgroup of $\Sp(2g,\Z)$ generated by $S_\om$. Note that $S_\om$ and $G_\om$ implicitly depend on the symplectic basis $\{m(a_j),m(b_j)\}_{j=1}^g$. Our goal is to show that the connectivity of $\cC(\phi)$, together with the above assumption on $\rho_{\cC(\phi)}$, imply that $\rho_\cC$ is surjective. Thus, our task is to find a collection of paths satisfying (\ref{eq:spper}) for which the associated matrices generate $\Sp(2g,\Z)$, which we restate in the following lemma.

\begin{lem} \label{lem:pathgen}
Let $\cC$ be a component of a stratum $\Om\cM_g(\kap)$ with $|\kap| > 1$, and let $\phi \in H^1(S_g;\C)$ be positive. Suppose that $\cC(\phi)$ is connected. If $\Per(\phi)$ has rank less than $2g$, suppose also that the image of $\rho_{\cC(\phi)}$ contains the stabilizer of $\phi$ in $\Sp(2g,\Z)$. If there is $(X,\om) \in \cC(\phi)$ such that $G_\om = \Sp(2g,\Z)$, then $\rho_\cC$ is surjective.
\end{lem}

First, we use Kapovich's classification \cite{Kap:periods} of $\Sp(2g,\Z)$-orbit closures in the space of positive cohomology classes in $H^1(S_g;\C)$, restated in Lemma \ref{lem:Kapovich}, to restrict our attention to a small open subset of a stratum component.

\begin{lem} \label{lem:spgenopen}
Fix $g \geq 3$, and let $\cC$ be a component of a stratum $\Om\cM_g(\kap)$. Suppose there is a nonempty open subset $\cU \subset \cC$ such that for all $(X,\om) \in \cU$, we have $G_\om = \Sp(2g,\Z)$. Then for all positive $\phi \in H^1(S_g;\C)$ such that $\Per(\phi)$ is not discrete, there is $(Y,\eta) \in \cC(\phi)$ such that $G_\eta = \Sp(2g,\Z)$.
\end{lem}

\begin{proof}
The $\R$-linear action of $\GL^+(2,\R)$ on $\C$ induces an action on $H^1(S_g;\C)$ by acting on homomorphisms $H_1(S_g;\Z) \ra \C$ by postcomposition. Fix a positive $\phi \in H^1(S_g;\C)$ such that $\Per(\phi)$ is not discrete. By Lemma \ref{lem:Kapovich}, $\GL^+(2,\R) \cdot (\Sp(2g,\Z) \cdot \phi)$ is dense in the space of positive cohomology classes in $H^1(S_g;\C)$. This implies $\GL^+(2,\R) \cdot \cC(\phi)$ is dense in $\cC$. The $\GL^+(2,\R)$-action on $\cC$ respects (\ref{eq:spper}), in the sense that if $\gam_1 : [0,1] \ra \cC$ is a path from $(X,\om)$ to $(X^\pr,\om^\pr)$ and $A$ is a matrix in $\Sp(2g,\Z)$ such that $\gam_1$ and $A$ satisfy (\ref{eq:spper}), then for all $M \in \GL^+(2,\R)$, the path $M \gam_1$ and the matrix $A$ also satisfy (\ref{eq:spper}). Thus, the set of $(X,\om) \in \cC$ such that $G_\om  = \Sp(2g,\Z)$ is $\GL^+(2,\R)$-invariant. By assumption, this set also contains a nonempty open subset of $\cC$, so it intersects $\GL^+(2,\R) \cdot \cC(\phi)$.
\end{proof}

Next, we recall a convenient generating set for $\Sp(2g,\Z)$ from Section 6.1 in \cite{FM:primer}, which we describe in terms of the chosen symplectic basis $\{a_j,b_j\}_{j=1}^g$ for $H_1(S_g;\Z)$. The {\em shears} $U_1,U_2$ are given by
\be
U_1(b_1) = a_1 + b_1, \quad U_2(a_1) = a_1 + b_1.
\ee
The {\em factor mix} $M$ is given by
\be
M(b_1) = b_1 + a_2, \quad M(b_2) = b_2 + a_1 .
\ee
The {\em factor swaps} $W_j$, $1 \leq j \leq g - 1$, are given by
\be
W_j(a_j) = a_{j+1}, \quad W_j(b_j) = b_{j+1}, \quad W_j(a_{j+1}) = a_j, \quad W_j(b_{j+1}) = b_j .
\ee
In each case, elements of $\{a_j,b_j\}_{j=1}^g$ not mentioned are fixed. The group $\Sp(2g,\Z)$ is generated by the $2$ shears, the factor mix, and the $g-1$ factor swaps.

We now begin our construction of holomorphic $1$-forms $(X,\om) \in \cC(\phi)$ with $G_\om = \Sp(2g,\Z)$, for all components $\cC$ of strata $\Om\cM_g(\kap)$ with $|\kap| > 1$, and all positive $\phi \in H^1(S_g;\C)$ with $\Per(\phi)$ not discrete. As usual, we reduce to the case of strata with two zeros, by splitting zeros.

\begin{lem} \label{lem:spgensplit}
Fix $(X,\om) \in \cC$, and suppose that $(Y,\eta) \in \cC^\pr$ arises from $(X,\om)$ by splitting a zero. Then $S_\om \subset S_\eta$, and therefore $G_\om \subset G_\eta$.
\end{lem}

\begin{proof}
Suppose $A \in S_\om$. This means there is a path $\gam_1 : [0,1] \ra \cC$ from $(X,\om)$ to $(X^\pr,\om^\pr)$ and a matrix $A \in \Sp(2g,\Z)$ satisfying (\ref{eq:spper}). Splitting a zero on $(X,\om)$ to obtain $(Y,\eta)$ results in a saddle connection $s$ on $(Y,\eta)$ with distinct endpoints. Let $z = \int_s \eta$, and let $Z_1$ and $Z_2$ be the starting and ending points of $s$, respectively. Let $Z_3,\dots,Z_{n+1}$ be the other zeros of $\eta$, and for $3 \leq j \leq n+1$, let $c_j \in H_1(Y,Z(\eta);\Z)$ be represented by a path from $Z_1$ to $Z_j$.

Fix $\eps > 0$ small, and let $\gam _0: [0,1] \ra L(\eta)$ be the path starting at $(Y,\eta)$ such that on $\gam_0(t) = (Y_t,\eta_t)$, we have
\be
\int_s \eta_t = \left(1 - t\left(1 - \frac{\eps}{|z|}\right)\right) z, \quad \int_{c_j} \eta_t = \int_{c_j} \eta \text{ for } 3 \leq j \leq n + 1.
\ee
Along $\gam_0$, the zero $Z_2$ moves toward $Z_1$. The holomorphic $1$-form $(Y_1,\eta_1)$ arises from $(X,\om)$ by splitting the same zero to get a saddle connection whose holonomy has absolute value $\eps$.

By choosing a lift $\wt{\gam}_1$ of $\gam_1$ to a stratum cover by prong-marked differentials, making a continuous choice along $\wt{\gam}_1$ of short segments with holonomy $\eps z / |z|$ emanating from the distinguished zero, and applying a zero splitting map, we obtain a path $\gam_1^\pr : [0,1] \ra \cC^\pr$ starting at $(Y_1,\eta_1)$. Since splitting zeros does not change the absolute periods, the path $\gam_1^\pr$ and the matrix $A$ also satisfy (\ref{eq:spper}). Concatenating $\gam_0$ and $\gam_1^\pr$ then gives us $A \in S_\eta$.
\end{proof}

For the case of nonhyperelliptic components of strata with two zeros, we will use connected sums with a torus to reduce to a small number of base cases.

\begin{lem} \label{lem:spgensum}
Let $\cC$ be a nonhyperelliptic component of a stratum $\Om\cM_g(m_1,m_2)$ with $g \geq 4$. Suppose that $(X,\om) \in \cC$ has a pair of splittings $\al_j^\pm$, $j = 1,2$, such that the associated cylinders $C_j$ are disjoint. Let $(z_j,w_j)$ be the associated periods of $\al_j^\pm$. Let $(X_j,\om_j)$ be the holomorphic $1$-form in genus $g - 1$ obtained from $(X,\om)$ by slitting and regluing $\al_j^\pm$, and let $\cC_j$ be the stratum component containing $(X_j,\om_j)$. Suppose that $S_{\om_j}$ contains paths $\gam_{j,k} : [0,1] \ra \cC_j$, $1 \leq k \leq n_j$, from $(X_j,\om_j)$ to $(X_{j,k},\om_{j,k})$, with the following properties.
\begin{itemize}
    \item The associated matrices $A_{j,k}$, $1 \leq k \leq n_j$, generate $\Sp(2g - 2, \Z)$.
    \item The endpoints $(X_{j,k},\om_{j,k})$ of $\gam_{j,k}$, $1 \leq k \leq n_j$, do not have any saddle connections whose holonomies lie in $I_j = \{t z_j : 0 \leq t \leq 1\}$.
\end{itemize}
Then $G_\om = \Sp(2g,\Z)$.
\end{lem}

\begin{proof}
For $j = 1,2$, let $\al_j \subset C_j$ be a closed geodesic, and let $\bet_j \subset C_j \cup Z(\om)$ be a saddle connection crossing $C_j$, oriented so that $\al_j$ and $\bet_j$ intersect exactly once positively. Let $a_j = [\al_j] \in H_1(X;\Z)$, and let $b_j = [\bet_j] \in H_1(X;\Z)$. Then any homology class in $\{a_1,b_1,a_2,b_2\}^\perp$ is represented by a union of closed loops contained in $X \sm (\ol{C}_1 \cup \ol{C}_2)$.

Fix $\eps > 0$ small. For $j = 1,2$, let $\gam_j : [0,1] \ra \cC$ be the path starting at $(X,\om)$ such that on $\gam_j(t) = (X_t,\om_t)$, we have
\be
\int_{a_j} \om_t = \left(1 - t\left(1 - \frac{\eps}{|z_j|}\right)\right) z_j ,
\ee
while $\int_{b_j} \om_t$ remains constant and the relative period of any path whose interior lies in $X \sm \ol{C}_j$ remains constant. Along $\gam_j$, the splitting $\al_j^\pm$ shrinks until its period has absolute value $\eps$, while $X \sm \ol{C}_j$ remains unchanged. Let $(X_j^\pr,\om_j^\pr) = \gam_j(1)$.

Next, fix $1 \leq k \leq n_j$. By choosing a lift $\wt{\gam}_{j,k}$ of $\gam_{j,k}$ to a stratum cover by prong-marked differentials, making a continuous choice along $\wt{\gam}_{j,k}$ of short segments with holonomy $\eps z_j / |z_j|$ emanating from the distinguished zero, and applying a connected sum map, we obtain a path $\gam_{j,k}^\pr : [0,1] \ra \cC$ starting at $(X_j^\pr,\om_j^\pr)$. Let $(X_{j,k}^\pr,\om_{j,k}^\pr) = \gam_{j,k}^\pr(1)$. For $0 \leq t \leq 1$, define $z_j(t) = (\eps / |z_j| + t(1 - \eps/|z_j|)) z_j$, and let $T_j(t)$ be the flat torus $(\C / (\Z z_j(t) + \Z w_j), dz)$. Then $(X_{j,k}^\pr,\om_{j,k}^\pr)$ arises from $(X_{j,k},\om_{j,k})$ by a connected sum with the torus $T_j(0)$. Moreover, since $(X_{j,k},\om_{j,k})$ does not have any saddle connections whose holonomies lie in $I_j$ by assumption, we can similarly form connected sums with $(X_{j,k},\om_{j,k})$ and the torus $T_j(t)$ for all $0 \leq t \leq 1$. In this way, we obtain a path $\gam_{j,k}^{\pr\pr} : [0,1] \ra \cC$ starting at $(X_{j,k}^\pr,\om_{j,k}^\pr)$. Along $\gam_{j,k}^{\pr\pr}$, the splitting $\al_j^\pm$ on $(X_{j,k}^\pr,\om_{j,k}^\pr)$ grows until its period is $z_j$, while $X_{j,k}^\pr \sm \ol{C}_j$ remains unchanged.

Now extend $a_1,b_1,a_2,b_2$ to a symplectic basis $a_1,b_1,\dots,a_g,b_g$ for $H_1(X;\Z)$. Let $H_1 \cong \Sp(2g - 2, \Z)$ be the subgroup of $\Sp(2g,\Z)$ fixing $a_1$ and $b_1$, and let $H_2 \cong \Sp(2g - 2, \Z)$ be the subgroup of $\Sp(2g,\Z)$ fixing $a_2$ and $b_2$. For $j = 1,2$, and for $1 \leq k \leq n_j$, the concatenation $\gam_j \cup \gam_{j,k}^\pr \cup \gam_{j,k}^{\pr\pr}$ and the matrix $A_{j,k}^\pr \in H_j$ determined by $A_{j,k}$ satisfy (\ref{eq:spper}). Since the matrices $A_{j,k}$, $1 \leq k \leq n_j$, generate $\Sp(2g - 2, \Z)$, the matrices $A_{j,k}^\pr$, $1 \leq k \leq n_j$, generate $H_j$. This shows that $G_\om$ contains the subgroups $H_1$ and $H_2$. Lastly, since $g \geq 4$, the subgroups $H_1,H_2$ generate $\Sp(2g,\Z)$, thus $G_\om = \Sp(2g,\Z)$.
\end{proof}

The construction in the proof of Lemma \ref{lem:sum2} can be adapted to show that, with a few exceptions in low genus, any nonhyperelliptic component $\cC$ of a stratum with two zeros contains holomorphic $1$-forms with a pair of splittings for which the associated cylinders are disjoint. See Figure \ref{fig:sumdisjoint} for an example in $\Om\cM_4(5,1)$. This will allow us to run an inductive argument with Lemma \ref{lem:spgensum} to construct holomorphic $1$-forms $(X,\om) \in \cC(\phi)$ with $G_\om = \Sp(2g,\Z)$ for all positive $\phi \in H^1(S_g;\C)$ such that $\Per(\phi)$ is not discrete. We now address the base cases needed for this inductive argument.

\begin{lem} \label{lem:spgenbase}
Let $\cC$ be one of the strata $\Om\cM_2(1,1)$, $\Om\cM_3(3,1)$, the nonhyperelliptic component of $\Om\cM_3(2,2)$, or a component of $\Om\cM_4(4,2)$. Fix a positive $\phi \in H^1(S_g;\C)$ such that $\Per(\phi)$ is not discrete. There is $(X,\om) \in \cC(\phi)$ such that $G_\om = \Sp(2g,\Z)$, where $g$ is the genus of $X$.
\end{lem}

\begin{proof}
The proof is in cases, one for each stratum component in the statement of Lemma \ref{lem:spgenbase}. \\

\paragraph{\bf Case 1.} Suppose $\cC = \Om\cM_2(1,1)$. Every holomorphic $1$-form in $\Om\cM_2(1,1)$ can be presented as a pair of flat tori glued along a pair of homologous saddle connections. (See, for instance, Proposition 1.16 in \cite{Wri:survey}.) Fix $(X,\om) \in \cC(\phi)$, and suppose $(X,\om)$ is obtained by gluing $T_1 = (\C / (\Z z_1 + \Z w_1), dz)$ and $T_2 = (\C / (\Z z_2 + \Z w_2), dz)$ along a pair of homologous saddle connections $s^{\pm}$. Since the set of holomorphic $1$-forms $(Y,\eta) \in \cC$ with $G_\eta = \Sp(4,\Z)$ is invariant under $\GL^+(2,\R)$, by applying an element of $\GL^+(2,\R)$ to $(X,\om)$ we may assume that $z_1 = 1$ and $w_1 = i$. Since $\Per(\phi)$ is not discrete, the lattices $\Z + \Z i$ and $\Z z_2 + \Z w_2$ are not commensurable. Thus, if $a^2 = \Area(T_2)$, by applying an element of $\SL(2,\Z)$ to $(X,\om)$ we may assume that $(z_2,w_2)$ is close to $(a,ia)$. Lastly, by moving along $L(\om)$ we may assume that the saddle connections $s^\pm$ are short. Let $u = \int_{s^{\pm}} \om$.

To show that $G_\om = \Sp(4,\Z)$, it is enough to show that $S_\om$ contains the generators $U_1,U_2,M,W_1$ for $\Sp(4,\Z)$. We will do this by describing paths $\gam_{U_1},\gam_{U_2},\gam_M,\gam_{W_1} : [0,1] \ra \cC$ starting at $(X,\om)$ in terms of the period coordinates $z_1,w_1,z_2,w_2,u$. Any period coordinates not mentioned will remain constant, and the period $u$ of the saddle connections $s^\pm$ will always remain constant. The paths are as follows.
\begin{itemize}
    \item On $\gam_{U_1}(t)$, we have $w_1(\gam_{U_1}(t)) = w_1 + tz_1$.
    \item On $\gam_{U_2}(t)$, we have $z_1(\gam_{U_2}(t)) = z_1 + tw_1$.
    \item On $\gam_M(t)$, we have $w_j(\gam_M(t)) = w_j + tz_{3-j}$ for $j = 1,2$.
    \item On $\gam_{W_1}(t)$, we have $z_j(\gam_{W_1}(t)) = (1 - t)z_j + tz_{3-j}$ and $w_j(\gam_{W_1}(t)) = (1 - t)w_j + tw_{3-j}$ for $j = 1,2$.
\end{itemize}
The paths $\gam_{U_j}$ are closed loops in $\cC$, and $\gam_{U_j}$ and the shear $U_j$ satisfy (\ref{eq:spper}). Since $(z_1,w_1) = (1,i)$ and $(z_2,w_2)$ is close to $(a,ia)$, the paths $\gam_M$ and $\gam_{W_1}$ are well-defined. The path $\gam_M$ and the factor mix $M$ satisfy (\ref{eq:spper}), and the path $\gam_{W_1}$ and the factor swap $W_1$ also satisfy (\ref{eq:spper}).

We are done with Case 1. The rest of the cases involve stratum components in genus $g \geq 3$, so by Lemma \ref{lem:spgenopen}, it is enough to find a nonempty open subset of each of these stratum components consisting of holomorphic $1$-forms $(Y,\eta)$ with $G_\eta = \Sp(2g,\Z)$. \\

\paragraph{\bf Case 2.} Suppose $\cC$ is the nonhyperelliptic component of $\Om\cM_3(2,2)$. This case is similar to Case 1. We have a nonempty open subset $\cU \subset \cC$ consisting of holomorphic $1$-forms $(X,\om)$ that can be presented as a triple of flat tori $T_j = (\C / (\Z z_j + \Z w_j), dz)$, $1 \leq j \leq 3$, glued along a triple of homologous saddle connections $s_j$, $1 \leq j \leq 3$, such that $(z_j,w_j)$ is close to $(1,i)$ and $u = \int_{s_j} \om$ is small. Here, $T_j$ is bounded by $s_j \cup s_{j+1}$, indices taken modulo $3$. To see that $(X,\om)$ lies in the nonhyperelliptic component of $\Om\cM_3(2,2)$, recall from Theorem \ref{thm:KZ} that this component is also the odd component of $\Om\cM_3(2,2)$. Since $H_1(X;\Z)$ has a symplectic basis represented by closed geodesics $\{\al_j,\bet_j\}_{j=1}^3$ with $\al_j,\bet_j \subset T_j$ such that $\int_{\al_j} \om = z_j$ and $\int_{\bet_j} \om = w_j$, and since these closed geodesics all have index $0$, the spin parity is $\phi(\om) = \sum_{j=1}^3 (0 + 1)(0 + 1) = 1 \hmod 2$.

We describe paths $\gam_{U_1},\gam_{U_2},\gam_M,\gam_{W_1},\gam_{W_2} : [0,1] \ra \cC$ starting at $(X,\om)$ in terms of the period coordinates $z_1,w_1,z_2,w_2,z_3,w_3,u$. The paths $\gam_{U_1},\gam_{U_2},\gam_M,\gam_{W_1}$ are as in Case 1, and they satisfy (\ref{eq:spper}) with associated matrices $U_1,U_2,M,W_1$, respectively. On $\gam_{W_2}(t)$, we have
\be
z_j(\gam_{W_2}(t)) = (1 - t)z_j + tz_{5-j}, \quad w_j(\gam_{W_2}(t)) = (1 - t)w_j + tw_{5-j}, \quad j = 2,3 .
\ee
The path $\gam_{W_2}$ and the factor swap $W_2$ satisfy (\ref{eq:spper}). Thus, $S_\om$ contains a generating set for $\Sp(6,\Z)$ and $G_\om = \Sp(6,\Z)$. \\

\paragraph{\bf Case 3.} Suppose $\cC = \Om\cM_3(3,1)$. There is a nonempty open subset of $\Om\cM_2(1,1)$ consisting of holomorphic $1$-forms that can be presented as a pair of flat tori $T_j = (\C / (\Z z_j + \Z w_j), dz)$, $1 \leq j \leq 2$, glued along a pair of homologous saddle connections $s_j$, $1 \leq j \leq 2$, such that $(z_j,w_j)$ is close to $(1,i)$ and $u = \int_{s_j} \om$ is close to $i/4$. By forming connected sums with a torus $T_3 = (\C / (\Z z_3 + \Z w_3), dz)$ with $(z_3,w_3)$ close to $((1+i)/\sqrt{2},(-1+i)/\sqrt{2})$, using the segment in $T_2$ that starts at the starting point of $s_j$ and has holonomy $(1+i)/\sqrt{2}$, we get a nonempty open subset $\cU \subset \Om\cM_3(3,1)$.

Fix $(X,\om) \in \cU$ with a presentation as above. The paths $\gam_{U_1},\gam_{U_2},\gam_M,\gam_{W_1}$ are as in Case 1, and they satisfy (\ref{eq:spper}) with associated matrices $U_1,U_2,M,W_1$, respectively. To describe $\gam_{W_2}$, let $\gam_1,\gam_2,\gam_3 : [0,1] \ra \cC$ be paths such that $\gam_1$ starts at $(X,\om)$ and $\gam_{j+1}$ starts at $\gam_j(1)$ for $j = 1,2$, defined as follows. Fix $\eps > 0$ small.
\begin{itemize}
    \item On $\gam_1(t)$, we have $z_3(\gam_1(t)) = (1 - t(1 - \eps/|z_3|))z_3$.
    \item On $\gam_2(t)$, we have $z_2(t) = e^{2\pi it/8} z_2$, $w_2(t) = e^{2\pi it/8} w_2$, $z_3(t) = e^{-2\pi it/8} \eps z_3/|z_3|$, $w_3(t) = e^{-2\pi it/8} w_3$.
    \item On $\gam_3(t)$, we have $z_2(t) = (1 - t) e^{2\pi i/8} z_2 + t z_3$, $w_2(t) = (1 - t) e^{2\pi i/8} w_2 + tw_3$, $z_3(t) = (1 - t) e^{-2\pi it/8} \eps z_3/|z_3| + t z_2$, $w_3(t) = (1 - t) e^{-2\pi it/8} w_3 + t w_2$.
\end{itemize}
Let $\gam_{W_2}$ be the concatenation $\gam_1 \cup \gam_2 \cup \gam_3$. Along $\gam_{W_2}$, we first shrink the splitting bounding $T_3$ until it has length $\eps$. We then rotate the periods $z_2,w_2$ counterclockwise and simultaneously rotate $z_3,w_3$ clockwise until they have approximately the desired arguments. Lastly, we linearly interpolate to get the desired final values of $z_2,w_2,z_3,w_3$, which amounts to perturbing $z_2,w_2,w_3$ slightly while restoring the length of $z_3$. The path $\gam_{W_2}$ and the factor swap $W_2$ satisfy (\ref{eq:spper}). Thus, $S_\om$ contains a generating set for $\Sp(6,\Z)$ and $G_\om = \Sp(6,\Z)$. \\

\paragraph{\bf Case 4.} Suppose $\cC$ is the odd component of $\Om\cM_4(4,2)$. This case is similar to Case 3. Recall we have a nonempty open subset of $\Om\cM_3(2,2)$ consisting of holomorphic $1$-forms that can be presented as a triple of flat tori $T_j = (\C / (\Z z_j + \Z w_j), dz)$, $1 \leq j \leq 3$, glued along a triple of homologous saddle connections $s_j$, $1 \leq j \leq 3$, such that $(z_j,w_j)$ is close to $(1,i)$ and $u = \int_{s_j} \om$ is close to $i/4$. By forming connected sums with a torus $T_4 = (\C / (\Z z_4 + \Z w_4), dz)$ with $(z_4,w_4)$ close to $((1+i)/\sqrt{2},(-1+i)/\sqrt{2})$, using the segment in $T_3$ that starts at the starting point of $s_j$ and has holonomy $(1 + i)/\sqrt{2}$, we get a nonempty open subset $\cU \subset \Om\cM_4(4,2)$. Moreover, since we started in the odd component of $\Om\cM_3(2,2)$ and forming connected sums with a torus preserves spin parity, $\cU$ is contained in the odd component of $\Om\cM_4(4,2)$.

The paths $\gam_{U_1},\gam_{U_2},\gam_M,\gam_{W_1},\gam_{W_2}$ are as in Case 2, and they satisfy (\ref{eq:spper}) with associated matrices $U_1,U_2,M,W_1,W_2$, respectively. The path $\gam_{W_3}$ is similar to the path for the last factor swap in Case 3. Let $\gam_1,\gam_2,\gam_3 : [0,1] \ra \cC$ be paths such that $\gam_1$ starts at $(X,\om)$ and $\gam_{j+1}$ starts at $\gam_j(1)$ for $j = 1,2$, defined as follows. Fix $\eps > 0$ small.
\begin{itemize}
    \item On $\gam_1(t)$, we have $z_4(\gam_1(t)) = (1 - t(1 - \eps/|z_4|))z_4$.
    \item On $\gam_2(t)$, we have $z_3(t) = e^{2\pi it/8} z_3$, $w_3(t) = e^{2\pi it/8} w_3$, $z_4(t) = e^{-2\pi it/8} \eps z_4/|z_4|$, $w_4(t) = e^{-2\pi it/8} w_4$.
    \item On $\gam_3(t)$, we have $z_3(t) = (1 - t) e^{2\pi i/8} z_3 + t z_4$, $w_3(t) = (1 - t) e^{2\pi i/8} w_3 + tw_4$, $z_4(t) = (1 - t) e^{-2\pi it/8} \eps z_4/|z_4| + t z_3$, $w_4(t) = (1 - t) e^{-2\pi it/8} w_4 + t w_3$.
\end{itemize}
Let $\gam_{W_3}$ be the concatenation $\gam_1 \cup \gam_2 \cup \gam_3$. Then $\gam_{W_3}$ and the factor swap $W_3$ satisfy (\ref{eq:spper}). Thus, $S_\om$ contains a generating set for $\Sp(8,\Z)$ and $G_\om = \Sp(8,\Z)$. \\

\paragraph{\bf Case 5.} Suppose $\cC$ is the even component of $\Om\cM_4(4,2)$. Recall from Theorem \ref{thm:KZ} that the hyperelliptic component of $\Om\cM_3(2,2)$ is also an even component. We can construct a holomorphic $1$-form in this hyperelliptic component from $6$ parallelograms as follows. Fix $z_0 \in \C$ close to $1/2$, and for $1 \leq j \leq 3$, fix $z_j,w_j \in \C$ such that $(z_j,w_j)$ is close to $(1,i)$. Up to translation, a parallelogram in $\C$ is specified by a pair of complex numbers, one for each pair of parallel sides. Let $P_1,Q_1,P_2,Q_2,P_3,Q_3$ be the following parallelograms.
\begin{itemize}
    \item $P_1$ has sides given by $z_0,w_1$.
    \item $Q_1$ has sides given by $z_1 - z_0, w_1$.
    \item $P_2$ has sides given by $z_1 - z_0, w_2$.
    \item $Q_2$ has sides given by $z_2 - z_1 + z_0, w_2$.
    \item $P_3$ has sides given by $z_2 - z_1 + z_0, w_3$.
    \item $Q_3$ has sides given by $z_3 - z_2 + z_1 - z_0, w_3$.
\end{itemize}
Glue left and right sides together in pairs, and top and bottom sides together in pairs, such that glued sides correspond to the same complex number. Specifically, for $1 \leq j \leq 3$, glue the left side of $P_j$ and the right side of $Q_j$, and glue the right side of $P_j$ and the left side of $Q_j$. Glue the top and bottom sides of $P_1$ together. For $1 \leq j \leq 2$, glue the top side of $Q_j$ and the bottom side of $P_{j+1}$, and glue the bottom side of $Q_j$ and the top side of $P_{j+1}$. Glue the top and bottom sides of $Q_3$ together.

Let $(X_0,\om_0)$ be the resulting holomorphic $1$-form. The corners on the left sides of $P_1,Q_2,P_3$ and the right sides of $Q_1,P_2,Q_3$ are identified to a zero of order $2$. Similarly, the corners on the right sides of $P_1,Q_2,P_3$ and the left sides of $Q_1,P_2,Q_3$ are identified to a zero of order $2$. There is an isometric involution of $(X_0,\om_0)$ that acts on the interior of each parallelogram by rotating by $\pi$. This involution exchanges the two zeros, and has a fixed point in the center of each parallelogram and in the center of the top side of $P_1$ and the bottom side of $Q_3$, for a total of $8$ fixed points. This verifies that $(X_0,\om_0)$ lies in the hyperelliptic component of $\Om\cM_3(2,2)$. The holomorphic $1$-forms with presentations by parallelograms as above form a nonempty open subset of the hyperelliptic component of $\Om\cM_3(2,2)$.

Next, by forming connected sums with the torus $T_4 = (\C / (\Z z_4 + \Z w_4), dz)$ with $(z_4,w_4)$ close to $(e^{2\pi i/16}, ie^{2\pi i/16})$, using the segment in $P_3 \cup Q_3$ that starts at the bottom-left corner of $Q_3$ and has holonomy $z_4$, we obtain a nonempty open subset $\cU \subset \Om\cM_4(4,2)$. Since we started in the even component of $\Om\cM_3(2,2)$, $\cU$ is contained in the even component of $\Om\cM_4(4,2)$.

Fix $(X,\om) \in \cU$. We describe paths $\gam_{U_1},\gam_{U_2},\gam_M,\gam_{W_1},\gam_{W_2},\gam_{W_3} : [0,1] \ra \cC$ starting at $(X,\om)$ in terms of the period coordinates $z_1,w_1,\dots,z_4,w_4,z_0$. Here, $z_1,w_1,\dots,z_4,w_4$ arise from a symplectic basis for absolute homology, and $z_0$ arises from a path joining the two zeros. As before, any period coordinates not mentioned remain constant.
\begin{itemize}
    \item On $\gam_{U_1}(t)$, we have $w_1(\gam_{U_1}(t)) = w_1 + tz_1$.
    \item On $\gam_{U_2}(t)$, we have $z_1(\gam_{U_2}(t)) = z_1 + tw_1$ and $z_0(\gam_{U_2}(t)) = z_0 + tw_1$.
    \item On $\gam_M(t)$, we have $w_j(\gam_M(t)) = w_j + tz_{3-j}$ for $j = 1,2$.
    \item For $j = 1,2$, on $\gam_{W_j}(t)$, we have $z_k(\gam_{W_j}(t)) = (1 - t)z_k + tz_{2j+1-k}$ and $w_k(\gam_{W_j}(t)) = (1 - t)w_k + tw_{2j+1-k}$ for $k = j,j+1$.
\end{itemize}
Along $\gam_{U_1}$, the parallelograms $P_1$ and $Q_1$ are sheared approximately horizontally. Along $\gam_{U_2}$, the parallelogram $P_1$ is sheared approximately vertically. Along $\gam_M$, the parallelograms $P_1,Q_1,P_2,Q_2$ are all sheared approximately horizontally. For $j = 1,2$, along $\gam_{W_j}$, the parallelograms $P_j,Q_j,P_{j+1},Q_{j+1}$ are slightly perturbed. Lastly, the path $\gam_{W_3}$ is a concatenation of $3$ paths $\gam_1,\gam_2,\gam_3$, similarly to Case 4, as follows. Fix $\eps > 0$ small.
\begin{itemize}
    \item On $\gam_1(t)$, we have $z_4(\gam_1(t)) = (1 - t(1 - \eps/|z_4|))z_4$.
    \item On $\gam_2(t)$, we have $z_3(t) = e^{2\pi it/16} z_3$, $w_3(t) = e^{2\pi it/16} w_3$, $z_4(t) = e^{-2\pi it/16} \eps z_4/|z_4|$, $w_4(t) = e^{-2\pi it/16} w_4$.
    \item On $\gam_3(t)$, we have $z_3(t) = (1 - t) e^{2\pi i/16} z_3 + t z_4$, $w_3(t) = (1 - t) e^{2\pi i/16} w_3 + tw_4$, $z_4(t) = (1 - t) e^{-2\pi it/16} \eps z_4/|z_4| + t z_3$, $w_4(t) = (1 - t) e^{-2\pi it/16} w_4 + t w_3$.
\end{itemize}
The paths $\gam_{U_1},\gam_{U_2},\gam_M,\gam_{W_1},\gam_{W_2},\gam_{W_3}$ satisfy (\ref{eq:spper}) with associated matrices $U_1,U_2,M,W_1,W_2,W_3$, respectively. Thus, $S_\om$ contains a generating set for $\Sp(8,\Z)$ and $G_\om = \Sp(8,\Z)$.
\end{proof}

\begin{lem} \label{lem:spgen2zeros}
Fix $g \geq 3$, and let $\cC$ be a nonhyperelliptic component of a stratum $\Om\cM_g(m_1,m_2)$. Fix a positive $\phi \in H^1(S_g;\C)$ such that $\Per(\phi)$ is not  discrete. There is $(X,\om) \in \cC(\phi)$ such that $G_\om = \Sp(2g,\Z)$.
\end{lem}

\begin{proof}
Lemma \ref{lem:spgenbase} handles the cases where $\cC$ is one of the strata $\Om\cM_2(1,1)$, $\Om\cM_3(3,1)$, the nonhyperelliptic component of $\Om\cM_3(2,2)$, or a component of $\Om\cM_4(4,2)$. By Lemma \ref{lem:sum2}, all other nonhyperelliptic components $\cC$ of strata $\Om\cM_g(m_1,m_2)$ with $m_1,m_2$ odd contain a nonempty open $\GL^+(2,\R)$-invariant subset $\cS_\cC \subset \cC$ consisting of holomorphic $1$-forms with a pair of splittings whose associated cylinders are disjoint. In Cases 4 and 5 of the proof of Lemma \ref{lem:spgenbase}, we constructed holomorphic $1$-forms in each component of $\Om\cM_4(4,2)$ with a splitting. By iteratively forming connected sums with a torus as in the proof of Lemma \ref{lem:sum2}, we see that all other nonhyperelliptic components $\cC$ of strata $\Om\cM_g(m_1,m_2)$ with $m_1,m_2$ even contain a nonempty open $\GL^+(2,\R)$-invariant subset $\cS_\cC \subset \cC$ consisting of holomorphic $1$-forms with a pair of splittings whose associated cylinders are disjoint.

Let $\cC \subset \Om\cM_g(m_1,m_2)$ be a nonhyperelliptic component not covered by Lemma \ref{lem:spgenbase}. By induction, each nonhyperelliptic component of $\Om\cM_{g-1}(m_1,m_2-2)$ and $\Om\cM_{g-1}(m_1-2,m_2)$ contains a nonempty open $\GL^+(2,\R)$-invariant set of holomorphic $1$-forms $(Y,\eta)$ such that $G_\eta = \Sp(2g-2,\Z)$. Thus, $\cS_\cC$ contains a nonempty open $\GL^+(2,\R)$-invariant subset of holomorphic $1$-forms $(X,\om)$ with a pair of splittings $\al_j^\pm$, $j = 1,2$, whose associated cylinders are disjoint, such that the holomorphic $1$-forms $(X_j,\om_j)$ in genus $g - 1$ obtained by slitting and regluing $\al_j^\pm$ lie in nonhyperelliptic stratum components $\cC_j$ and have $G_{\om_j} = \Sp(2g-2,\Z)$. For $j = 1,2$, let $z_j = \int_{\al_j^\pm} \om$ and let $I_j = \{t z_j : 0 \leq t \leq 1\}$. Let $\gam_{j,1},\dots,\gam_{j,n_j} : [0,1] \ra \cC_j$ be paths starting at $(X_j,\om_j)$ realizing $G_{\om_j} = \Sp(2g-2,\Z)$, and let $A_{j,1},\dots,A_{j,n_j}$ be the associated matrices that generate $\Sp(2g-2,\Z)$.

There are small open neighborhoods $\cU_j \subset \cC_j$ of $(X_j,\om_j)$ such that, as $(X_j,\om_j)$ varies over $\cU_j$, we can vary the paths $\gam_{j,k}$ slightly to keep the associated matrices $A_{j,k}$ constant. The resulting endpoints of the paths $\gam_{j,k}$ are contained in a finite union of small open subsets $\cU_{j,k} \subset \cC_j$. The set of holomorphic $1$-forms in $\cC$ with no saddle connections whose holonomy lies in $I_1 \cup I_2$ is dense in $\cC$. Moreover, by Lemma \ref{lem:Iclosed}, this set is open in $\cC$. Thus, possibly after replacing $(X,\om)$ with a nearby holomorphic $1$-form in $\cC$ and shrinking the neighborhoods $\cU_j$ of $(X_j,\om_j)$, we can ensure that the subsets $\cU_{j,k}$ do not contain any holomorphic $1$-forms with a saddle connection whose holonomy lies in $I_1 \cup I_2$. We can then apply Lemma \ref{lem:spgensum} to conclude that $G_\om = \Sp(2g,\Z)$, and we are then done by Lemma \ref{lem:spgenopen}.
\end{proof}

It remains to address the case of hyperelliptic stratum components. Since we are only addressing strata with at least two zeros, we only need to consider the hyperelliptic component of $\Om\cM_g(g-1,g-1)$, and we use a separate argument here.

\begin{lem} \label{lem:spgenhyp}
Fix $g \geq 2$, and let $\cC$ be the hyperelliptic component of $\Om\cM_g(g-1,g-1)$. Fix a positive $\phi \in H^1(S_g;\C)$ such that $\Per(\phi)$ is not discrete. There is $(X,\om) \in \cC(\phi)$ such that $G_\om = \Sp(2g,\Z)$.
\end{lem}

\begin{proof}
The case where $\cC$ is the stratum $\Om\cM_2(1,1)$ is covered by Lemma \ref{lem:spgenbase}. For the other cases, since $g \geq 3$, by Lemma \ref{lem:spgenopen} it is enough to find a nonempty open subset of $\cC$ consisting of holomorphic $1$-forms $(Y,\eta)$ with $G_\eta = \Sp(2g,\Z)$.

We generalize the construction of holomorphic $1$-forms in the hyperelliptic component of $\Om\cM_3(2,2)$ from Case 5 of the proof of Lemma \ref{lem:spgenbase}. Fix $z_0 \in \C$ close to $1/2$, and for $1 \leq j \leq g$, fix $z_j,w_j \in \C$ such that $(z_j,w_j)$ is close to $(1,i)$. For $1 \leq j \leq g$, let $P_j,Q_j$ be parallelograms given by a pair of complex numbers as follows.
\begin{itemize}
    \item $P_j$ has sides given by $z_{j-1} - z_{j-2} + \cdots + (-1)^{j-1} z_0$ and $w_j$.
    \item $Q_j$ has sides given by $z_j - z_{j-1} + \cdots + (-1)^j z_0$ and $w_j$.
\end{itemize}
For $1 \leq j \leq g$, glue $P_j$ and $Q_j$ along the pairs of sides given by $w_j$. For $1 \leq j \leq g-1$, glue $P_{j+1}$ and $Q_j$ along the pairs of sides given by $z_j - z_{j-1} + \cdots + (-1)^j z_0$. Glue the two sides of $P_1$ given by $z_0$ together, and glue the two sides of $Q_g$ given by $z_g - z_{g-1} + \cdots + (-1)^g z_0$ together. Let $(X,\om)$ be the resulting holomorphic $1$-form. The corners on the left sides of $P_1,Q_2,P_3,\dots$ and the right sides of $Q_1,P_2,Q_2,\dots$ are identified to form a zero of order $g - 1$. The corners on the right sides of $P_1,Q_2,P_3,\dots$ and the left sides of $Q_1,P_2,Q_3,\dots$ are identified to form another zero of order $g - 1$. There is an isometric involution of $(X,\om)$ that acts on the interior of each parallelogram by rotating by $\pi$, and so exchanges the two zeros of $\om$. This involution fixes the center of each parallelogram, and fixes the midpoints of the top side of $P_1$ and the bottom side of $Q_g$, for a total of $2g + 2$ fixed points. This verifies that $(X,\om)$ lies in the hyperelliptic component $\cC$ of $\Om\cM_g(g-1,g-1)$. The holomorphic $1$-forms with presentations by $2g$ parallelograms as above form a nonempty open subset $\cU \subset \cC$.

Now fix $(X,\om) \in \cU$. We describe paths $\gam_{U_1},\gam_{U_2},\gam_M,\gam_{W_1},\dots,\gam_{W_{g-1}} : [0,1] \ra \cC$ starting at $(X,\om)$ in terms of the period coordinates $z_1,w_1,\dots,z_g,w_g,z_0$. Here, $z_1,w_1,\dots,z_g,w_g$ arise from a symplectic basis for absolute homology, and $z_0$ arises from a path joining the two zeros. As before, any period coordinates not mentioned remain constant.
\begin{itemize}
    \item On $\gam_{U_1}(t)$, we have $w_1(\gam_{U_1}(t)) = w_1 + t z_1$.
    \item On $\gam_{U_2}(t)$, we have $z_1(\gam_{U_2}(t)) = z_1 + t w_1$ and $z_0(\gam_{U_2}(t)) = z_0 + t w_1$.
    \item On $\gam_M(t)$, we have $w_j(\gam_M(t)) = w_j + tz_{3-j}$ for $j = 1,2$.
    \item For $1 \leq j \leq g - 1$, on $\gam_{W_j}(t)$, we have $z_k(\gam_{W_j}(t)) = (1 - t)z_k + tz_{2j+1 - k}$ and $w_k(\gam_{W_j}(t)) = (1 - t)w_k + tw_{2j+1-k}$ for $k = j,j+1$.
\end{itemize}
Along $\gam_{U_1}$, the parallelograms $P_1$ and $Q_1$ are sheared approximately horizontally. Along $\gam_{U_2}$, the parallelogram $P_1$ is sheared approximately vertically. Along $\gam_M$, the parallelograms $P_1,Q_1,P_2,Q_2$ are all sheared approximately horizontally. Along $\gam_{W_j}$, the parallelograms $P_j,Q_j,P_{j+1},Q_{j+1}$ are slightly perturbed. The paths $\gam_{U_1},\gam_{U_2},\gam_M,\gam_{W_1},\dots,\gam_{W_{g-1}}$ satisfy (\ref{eq:spper}) with associated matrices $U_1,U_2,M,W_1,\dots,W_{g-1}$, thus $G_\om = \Sp(2g,\Z)$.
\end{proof}

We have completed the task formulated in Lemma \ref{lem:pathgen}, so we can now conclude with our results on disconnected spaces of isoperiodic forms.

\begin{lem} \label{lem:spgen}
Let $\cC$ be a component of a stratum $\Om\cM_g(\kap)$ with $|\kap| > 1$, and let $\phi \in H^1(S_g;\C)$ be a positive cohomology class such that $\Per(\phi)$ is not discrete. There is $(X,\om) \in \cC(\phi)$ such that $G_\om = \Sp(2g,\Z)$.
\end{lem}

\begin{proof}
Induct on $|\kap|$, using Lemma \ref{lem:spgen2zeros} for the base case $|\kap| = 2$, and Lemma \ref{lem:spgensplit} and Lemmas \ref{lem:split}-\ref{lem:splitspin} for the inductive step.
\end{proof}

\begin{thm} \label{thm:mono2}
Fix $g \geq 3$, and let $\cC$ be a component of a stratum $\Om\cM_g(\kap)$ with $|\kap| > 1$, and suppose that $\cC$ is a spin component or a hyperelliptic component. Fix $\phi \in H^1(S_g;\C)$ positive such that $\Per(\phi)$ is not discrete.
\begin{enumerate}
    \item If $\Per(\phi) \cong \Z^{2g}$, then $\cC(\phi)$ is disconnected.
    \item If $\Per(\phi)$ has rank less than $2g$ and $\cC(\phi)$ is connected, then the image of $\rho_{\cC(\phi)}$ does not contain the stabilizer of $\phi$ in $\Sp(2g,\Z)$.
\end{enumerate}
\end{thm}

\begin{proof}
If $\Per(\phi) \cong \Z^{2g}$, suppose that $\cC(\phi)$ is connected. If $\Per(\phi)$ has rank less than $2g$, suppose that $\cC(\phi)$ is connected and that the image of $\rho_{\cC(\phi)}$ contains the stabilizer of $\phi$ in $\Sp(2g,\Z)$. In either case, Lemma \ref{lem:spgen} tells us there is $(X,\om) \in \cC(\phi)$ such that $G_\om = \Sp(2g,\Z)$, and then by Lemma \ref{lem:pathgen}, the monodromy homomorphism $\rho_\cC : \pi_1(\cC) \ra \Sp(2g,\Z)$ is surjective. However, by Corollary 1.3 in \cite{Gut:Zorich}, this is a contradiction since $\cC$ is a spin component or a hyperelliptic component.
\end{proof}

Theorem \ref{thm:mono} is Case 1 of Theorem \ref{thm:mono2}. We note that for components of strata with at least two zeros that are not spin or hyperelliptic components, we recover the result in \cite{Gut:Zorich} that $\rho_\cC$ is surjective. \\

\paragraph{\bf Disconnected spaces of isoperiodic forms and covering constructions.} Lastly, we describe our examples of spaces of isoperiodic forms that have positive dimension and infinitely many connected components, which arise from covering constructions.

\begin{proof} (of Theorem \ref{thm:disconn})
Write $g = 2h$ with $h \geq 2$. Recall that $\cC = \Om\cM_g(2g-3,1)$, and let $\cC^\pr = \Om\cM_h(2h-2)$. Fix $(Y,\eta) \in \cC^\pr$ such that $\Per(\eta)$ is dense in $\C$. Choose an oriented geodesic segment $\gam$ on $(Y,\eta)$ that starts at the zero of $\eta$ and is otherwise disjoint from $Z(\eta)$. Take two copies of $(Y,\eta)$, slit each copy along $\gam$, and reglue opposite sides of the slits to obtain a holomorphic $1$-form $(X,\om) \in \cC$. The two zeros are identified to form a zero of order $2g-3$, and the other endpoints of the slits are identified to form a zero of order $1$. There is a degree $2$ holomorphic branched covering $f : X \ra Y$ such that $f^\ast \eta = \om$ that is branched over the two endpoints of $\gam$.

Choose $\phi^\pr \in H^1(S_h;\C)$ such that $(Y,\eta) \in \cC^\pr(\phi^\pr)$, and choose $\phi \in H^1(S_g;\C)$ such that $(X,\om) \in \cC(\phi)$. The connected component of $(X,\om)$ in $\cC(\phi)$ is a closed leaf of $\cA(2g-3,1)$ consisting of degree $2$ branched coverings of $(Y,\eta)$ branched over the zero of $\eta$ and a point in $Y \sm Z(\eta)$. Note that connected components of $\cC(\phi)$ have complex dimension $1$, while connected components of $\cC(\phi^\pr)$ are points.

Since $\Per(\phi^\pr)$ is dense in $\C$, when $h \geq 3$, Lemma \ref{lem:Kapovich} implies that $\cC^\pr(\phi^\pr)$ is dense in a fixed-area locus in $\cC^\pr$. For $h = 2$, there is one other possibility when $(Y,\eta)$ is an eigenform for real multiplication, in which case $\cC^\pr(\phi^\pr)$ is dense in the $\SL(2,\R)$-orbit closure of $(Y,\eta)$. In particular, $\cC^\pr(\phi^\pr)$ is infinite. For any $(Y^\pr,\eta^\pr) \in \cC^\pr(\phi^\pr)$, and any oriented geodesic segment $\gam^\pr$ starting at a zero of $\eta^\pr$ and otherwise disjoint from $Z(\eta^\pr)$, by slitting and regluing two copies of $(Y^\pr,\eta^\pr)$ along $\gam^\pr$ as above, we obtain a holomorphic $1$-form $(X^\pr,\om^\pr) \in \cC(\phi)$ as a branched double cover of $(Y^\pr,\eta^\pr)$. The cover $(X^\pr,\om^\pr)$ has an automorphism of order $2$ that exchanges the two sheets of the cover. Moreover, the automorphism group of any holomorphic $1$-form in $\cC$ has order at most $2$, since such an automorphism must fix the unique simple zero. Thus, each element of $\cC^\pr(\phi^\pr)$ determines a distinct connected component of $\cC(\phi)$ in this way. Since $\cC^\pr(\phi^\pr)$ is infinite, $\cC(\phi)$ has infinitely many connected components.
\end{proof}

%%%%%%%%%%%%%%%%%%%%%%%%%%%%%%%%%%%%%%%%%%%%%%%%%%
%%%%%%%%%%%%%%%%%%%%%%%%%%%%%%%%%%%%%%%%%%%%%%%%%%
%%%%%%%%%%%%%%%%%%%%%%%%%%%%%%%%%%%%%%%%%%%%%%%%%%

\bibliographystyle{math}
\bibliography{my.bib}

{\small
\noindent
Email: kgwinsor@gmail.com

\noindent
Department of Mathematics, Stony Brook University, Stony Brook, New York, USA
}

\end{document}